\newcommand{\luk}{\L u\-ka\-si\-e\-w\-icz}
\newtheorem{theorem}{Theorem}[section]
\newtheorem{lemma}[theorem]{Lemma}
\newtheorem{corollary}[theorem]{Corollary}
\newtheorem{proposition}[theorem]{Proposition}
\newtheorem{fact}{Fact}
\theoremstyle{definition}
\newtheorem{definition}[theorem]{Definition}
\newtheorem{remark}[theorem]{Remark}
\newtheorem{example}[theorem]{Example}
\newcommand{\msim}{\mathord{\sim}}
\newcommand{\ninv}{\mathord{\sim}}  %involutive negation
\newcommand{\finv}{\mathord{-}}  %filter involutive negation
\newcommand{\To}{\Rightarrow}
\newcommand{\TTo}{\Leftrightarrow}
\newcommand{\Ln}{\L_{n+1}}
\newcommand{\Lns}{\L^*_{n+1}}
\newcommand{\bL}{\textrm{\bf \L}}
\newcommand{\co}{{\bf c}}
\newcommand{\ntwrt}[1]{non-trivial with respect to \ensuremath{#1}}
\begin{document}

\title{On the expressive power of \luk's square operator}

\author{Marcelo E. Coniglio$^{1}$, Francesc Esteva$^{2}$, Tommaso Flaminio$^{2}$, Lluis Godo$^{2}$\\
\small $^1$Centre for Logic, Epistemology and the History of Science - CLE, and\\
\small Institute of Philosophy and the Humanities - IFCH\\
\small University of Campinas, Brazil\\
\small $^2$Artificial Intelligence Research Institute (IIIA) - CSIC\\ Barcelona, Spain\\
\small Email: {\tt coniglio@unicamp.br}  \ and \ {\tt $\{$esteva, tommaso, godo$\}$@iiia.csic.es}
\date{}
}

\maketitle

\begin{abstract}
The aim of the paper is to analyze the expressive power of the square operator of {\L}ukasiewicz logic: $\ast x=x\odot x$, where $\odot$ is the strong 
{\L}ukasiewicz conjunction. In particular, we aim at understanding and characterizing those cases in which the square operator is enough to construct a finite MV-chain from a finite totally ordered set endowed with an involutive negation. The first of our main results shows that, indeed, the whole structure of MV-chain can be reconstructed from the involution and the \luk\ square if and only if the obtained structure has only trivial subalgebras and, equivalently, if and only if the cardinality of the starting chain is 
of the form $n+1$ where $n$ belongs to a class of prime numbers that we fully characterise. Secondly, we axiomatize the algebraizable matrix logic whose semantics is given by the variety generated by a finite totally ordered set endowed with an involutive negation and \luk's square operator.
Finally, we propose an alternative way to account for \luk\ square operator on involutive G\"odel chains. 
In this setting, we show that such an operator can be captured by a rather intuitive set of equations. 
\end{abstract}

\tableofcontents

\newpage

%\input{section1.tex}

%!TEX root =  ../ImplicationFree(3)-root.tex

\section{Introduction}

The framework of the so-called {\em mathematical fuzzy logic} (MFL) encompasses a number of deductive systems conceived for reasoning with vague (in the sense of  gradual) information with a notion of comparative truth, and so formulas are usually interpreted in linearly ordered scales of truth values, which intend to represent gradual aspects of vagueness (or fuzziness). For a comprehensive and up-to-date account of MFL see the three volumes handbook \cite{Cintula-Hajek-Noguera:Handbook}. Two  interesting families of logics belonging to the family of MFL systems are given by the {\L}ukasiewicz hierarchy of $n$-valued logics \L$_n$ together with the infinite-valued version $\L$, on the one hand, and the G\"odel $n$-valued logics $G_n$, together with the infinite version $G$, on the other. 

The semantics of MFL systems follows, in general, the paradigm of (full) {\em truth-preservation}, according to which a formula is a consequence of a set of premises if every algebraic valuation that interprets the premises as absolutely true (value 1) also interprets the conclusion as absolutely true (value 1). It was observed  (see \cite{Font:DegreesSeriously}) that the {\em degree-preservation} paradigm (see \cite{Font-GTV:LogicPreservingDegrees,Bou-EFGGTV:PreservingDegreesResiduated}), according to which a formula follows from a set of premises if, for every evaluation, the truth degree of the conclusion is not lower than those of the premises, is more coherent with the many-valued approach to fuzzy logic. Indeed, within the degree-preserving consequence relations all the truth-values play an equally important role. 
As an intermediate alternative, it is possible to consider matrix logics in which the designated truth-values are given by (products of) order filters, see for instance ~\cite{CEGG} and~\cite{CEGG:Godel} for the case of (products of) {\L}ukasiewicz logics or  G\"odel's logics (possibly expanded with an involution) respectively.

%
%Deductive systems in MFL are conceived for reasoning with vagueness, and so formulas are usually interpreted in a linearly ordered scale of truth values, which intends to represent gradual aspects of vagueness (or fuzziness).  

%(products of) Lukasiewicz logics or  G\"odel's logics (possibly expanded with an involution) in which the designated (true) truth-values are given by (products of)  order filters (see, respectively, ~\cite{CEGG} and~\cite{CEGG:Godel}). 
Concerning {\L}ukasiewicz logics, it is well-known that $\L$ is algebraizable in the sense of Blok-Pigozzi (\cite{BP89}), having the variety $\mathbb{MV}$ of all MV-algebras as its equivalent quasivariety semantics, which is generated by the real interval $[0,1]$ equiped with suitable MV-operators, which is denoted by  $[0,1]_{MV}$. Algebraizability is preserved by finitary extensions, hence each finite-valued $\L$ukasiewicz logic $\L_{n+1}$ is also Blok-Pigozzi algebraizable by means of the subvariety  $\mathbb{MV}_{n+1}$ of MV-algebras generated by the standard $(n+1)$-valued {\L}ukasiewicz chain $\bL_{n+1}$ with domain $\{0, 1/n,\ldots, (n-1)/n, 1\}$. %By using a general result on equivalence between logics 
By means of a general result concerning equivalences between logics, based on  translations presented in~\cite{BP97}, the logic $\mathsf{L}_n^i$ characterized by the logical matrix $\langle \bL_{n+1}, F_{i/n}\rangle$ (where $F_{i/n}$ is the order filter generated by $i/n$) is also algebraizable by means of the variety $\mathbb{MV}_{n+1}$, see \cite{CEGG}\footnote{We warn the reader that the notation used in the present paper and that of \cite{CEGG} are not exactly the same. Indeed, while in \cite{CEGG} the MV-chain with $n+1$ elements is called {\em  MV$_n$-chain}, here, as we already used above, that chain will be called {\em MV$_{n+1}$-chain}. The same variation applies when we will speak about varieties generated by chains with $n+1$ elements.}.

Hilbert calculi characterizing the logics $\L_{n+1}$ are well known  (see, for instance, ~\cite{Cintula-Hajek-Noguera:Handbook}). By  a general result on equivalence between logics introduced in~\cite{BP97}, a sound and complete axiomatization can be obtained for each   logic $\mathsf{L}_n^i$ by translating the axioms and rules of a Hilbert calculus for $\L_{n+1}=\mathsf{L}_n^n$. However, the original signature of $\L_{n+1}$ does not result to be very natural for axiomatizing $\mathsf{L}_n^i$: in these logics the \luk\ implication is no longer an implication by any reasonable criteria, whenever $i < n$. 

Because of this, in~\cite{CEGG} we proposed an axiomatization of  $\mathsf{L}_3^1$ and $\mathsf{L}_3^2$ in terms of another signature $\Sigma$ given by $\neg$, $\vee$ and $*$, where $*$ represents the square (w.r.t. the strong {\L}ukasiewicz conjunction $\odot$) in  $\bL_{3+1}$, namely $*x:= x \odot x$. It turns out that in this signature it is possible to define the `classical' negation $\sim_{i/3}$ of the filter $F_{i/3}$ (for $i=1,2$):  ${\sim}_{i/3}x=0$ if $x \geq i/3$, and ${\sim}_{i/3}x=1$ otherwise. In turn, this induces  a `classical' (deductive) implication $x \to_{i/3} y := {\sim}_{i/3}x \vee y$, obtaining in this way a suitable and very natural language for axiomatizing the logics $\mathsf{L}_3^i$, for $i=1,2$. 
%The square operator $*$  can be interpreted as a kind of `truth-stresser', an unary operator introduced by H\'ajek in~\cite{Ha01c} in the context of MFL to formalize the fuzzy notion of `very true'. \red{In fact, $*$ is a model of   H\'ajek's operator for \luk's logics. [CONSTATAR!]} The interpretation of $*$ under this perspective will be briefly analised in Section~\ref{truth-stresser} below.
%{\color{red} %Francesc. Propongo poner: 

%***and also a model of the truth stresser defined in a core fuzzy logic with $\Delta$ operator in \cite{EsGoNo:13}***}.

Despite the success in axiomatizing $\mathsf{L}_3^1$ and $\mathsf{L}_3^2$ in the signature $\Sigma = \{\vee, \neg, *\}$, it was observed  in~\cite{CEGG} that the issue of obtaining a `natural' axiomatization defined over such signature  for every $\mathsf{L}_n^i$ with $n > 3$ is a problem which ``appears to be much more complicated, and certainly it lies outside the scope of this paper'' (\cite[p. 150]{CEGG}). A crucial feature for the case $n=3$ mentioned  in~\cite{CEGG} is that \luk\ implication can be recovered from such signature. This feature does not hold for any $n$, not even for any prime number, as it is the case e.g.\ of $n=17$, as we shall see in Section \ref{sec:Lstaralgebras}. 
%below {\color{blue} (Francesc. se quiere decir en el paper?? No deberiamos poner una seccion??)} 
%(see also~\cite{CEFG}). 
From this observation, a second question was posed in~\cite[p. 153]{CEGG}: the algebraic study of the fragment of $\bL_{n+1}$ defined in the signature $\Sigma = \{\vee, \neg, *\}$. These two questions stated in~\cite{CEGG}, namely, the formal study -- from the algebraic point of view -- of the implication-less reduct of the $(n+1)$-valued {\L}ukasiewicz chain $\bL_{n+1}$ expanded with the square operator $*$ (which will be denoted here by $\bL^*_{n+1}$), as well as the associated matrix logics $\Lambda_{n+1,i}^*=\langle \bL_{n+1}^*, F_{i/n}\rangle$ for every filter $F_{i/n}$ of designated values, constitute the starting point of the present paper.\footnote{To be precise, in~\cite{CEGG} both questions were posed only with respect to $n$ prime. This was motivated by the fact that $\mathsf{L}_n^i$, for $n$ prime and $i/n \leq 1/2$, constitute an interesting family of paraconsistent logics.}
In this manner, the present study, already initiated in two preliminary extended abstracts~\cite{CEFG} and~\cite{CEFG2}, will encompass both questions and more. 
%Indeed, besides  studying 

Note that the square operator $\ast$ in the logics $\L_{n+1}$, or in $\L$, can be interpreted as a truth-stresser operator, in the sense of the class of truth-hedge operators axiomatically introduced by H\'ajek in~\cite{Ha01c} in the context of H\'ajek's Basic Fuzzy Logic BL to formalize the notion of `very true'. In fact, $*$ is a model of H\'ajek's truth-stresser operators for both \luk's and G\"odel fuzzy logics, as well as of the operators considered in a more general logical in the setting of MFL studied in \cite{EsGoNo:13}.

With respect to expressiveness, it is firstly proved in Section \ref{sec:Lstaralgebras} that, for $n \neq 4$, 
$\bL^*_{n+1}$ can define \luk\ implication (in other words, $\bL^*_{n+1}$  is term-equivalent to $\bL_{n+1}$) iff it is stricly simple, that is, it
has no non-trivial proper subalgebras. Surprisingly, and in contrast with the case of finite \luk\ chains, it will be shown that this does not hold true for all $n$ prime. Indeed,  for any prime number  $n \geq 3$, $\bL^*_{n+1}$ is term-equivalent to $\bL_{n+1}$ if and only if $n$ satisfies certain aritmetic property (see Theorem~\ref{mainThm}). For instance, $\bL^*_{n+1}$ cannot define \luk\ implication whenever $n> 5$ is a Fermat prime number (that is, $n$ is a prime of the form $n = 2^{2^m} + 1$ for some $m > 1$) such as $n=17$, $n=257$ or $n=65537$.\footnote{As of 2020, these are the only known Fermat primes greater than $5$.} On the other hand, any $\bL^*_{n+1}$ ($n$ being prime or not) can always define the order implication ($x \Rightarrow_c y =1$ if $x \leq y$, and  $x \Rightarrow_c y =0$ otherwise), and G\"odel implication $\Rightarrow_G$. This is an important fact from the point of view of the agebraic study of these structures, as we shall see.  

Concerning axiomatizations, it is proved in Section \ref{sec:logics}  that all the  matrix logics $\Lambda_{n+1,i}^*=\langle \bL_{n+1}^*, F_{i/n}\rangle$ are Blok-Pigozzi algebraizable with the same quasivariety over the signature $\Sigma=\{\vee,*, \neg\}$. Then, an uniform axiomatization for all of these logics it is obtained. The definition of these Hilbert calculi, together with the results on (un)characterizability of  $\bL_{n+1}$ in terms of $\Sigma$, constitute a complete solution of (an extended version of) the first problem posed in~\cite{CEGG}.
Concerning the algebraic study of these structures --the second question posed in~\cite{CEGG}-- it is also proved in Section \ref{sec:logics} that the variety generated by $\bL_{n+1}^*$ is constituted by $(n+1)$-valued G\"odel algebras with  involution  expanded by an unary operator $\star$ satisfying certain equations. This means that this class of algebras  can be axiomatized by means of equations, thus being a variety. 

Since not every subalgebra of $\bL_{n+1}^*$ is isomorphic to $\bL_{m+1}^*$ for some $m \leq n$, the question of studying  the behaviour of the square operator in subalgebras of $\bL_{n+1}^*$ is also tackled in the first part of Section \ref{sec:representable}. Let $[0,1]_{MV}^*$ be the algebra defined over the real unit interval by the \luk\ operations $\vee, \neg, \ast$. Since every $\bL_{n+1}^*$ is a (finite) subalgebra of $[0,1]_{MV}^*$, such study is realized by analyzing the finite subalgebras of this algebra.

As observed above, every  $\bL_{n+1}^*$ can define the G\"odel implication $\Rightarrow_G$; however, this operator (as well as the Monteiro-Baaz $\Delta$ operator) is not definable in $[0,1]_{MV}^*$. This suggests the  definition of a more comprehensive class of algebras, obtained by adding a unary $*$-like operator to G\"odel chains with an involutive negation, as it is done in the second part of Section \ref{sec:representable}. Finally,  the  G\"odel algebras with involutive negation and a $\star$ operation such that its implication free-reducts coincide with subalgebras of $\bL_{n+1}^*$ are axiomatically characterized.

Finally, let us mention that the structure of the paper is completed with some needed preliminaries gathered in the next section and with Section \ref{sec:conclusions} containing some conclusions and open problems.

%\input{section2.tex}

%!TEX root =  ../ImplicationFree(3)-root.tex

\section{Preliminaries}\label{sec :preliminaries}
Along this paper we will be mainly concerned with the classes of finite chains belonging to the varieties $\mathbb{MV}$ of MV-algebras and $\mathbb{G}$ of G\"odel algebras. One of the most relevant class of algebras that contains both MV and G\"odel-algebras is the variety $\mathbb{BL}$ of H\'ajek's BL-algebras \cite{Ha98}. Let us start recalling that a BL-algebra is a bounded, integral and commutative residuated lattice ${\bf A}=(A, \land,\lor, \odot, \Rightarrow, 0,1)$ that further satisfies the following equations:
\begin{itemize}
	\item[-] $(x\Rightarrow y)\vee (y\Rightarrow x)=1$ \hfill ({\em prelinearity})
	\item[-] $x\wedge y=x\odot(x\To y)$ \hfill ({\em divisibility})
\end{itemize}
In every BL-algebra ${\bf A}$ one can define further operations. In particular, for all $a\in A$, the {\em residual negation} (or simply the {\em negation}) of $a$ is denoted by $\neg a$ and  stands for $a\Rightarrow 0$; also, for all  $a, b \in A$, $a\Leftrightarrow b$ is an abbreviation for $(a\Rightarrow b)\wedge(b\Rightarrow a)$. 

Further, a partial order relation $\leq$ can be defined: for all $a, b \in A$
\begin{center}
	$a\leq b$ iff $a\Rightarrow b=1$ holds.
\end{center}
The partial order $\leq$ coincides with the lattice order of ${\bf A}$. The BL-algebra ${\bf A}$ is said to be a {\em BL-chain} if $\leq$ is linear. 
\begin{definition}
	A BL-algebra ${\bf A}$ is said to be 
	\begin{itemize}
		\item[-] An {\em MV-algebra} if the equation $\neg\neg x=x$ holds in ${\bf A}$;
		\item[-] A {\em G\"odel-algebra} (or simply a G-algebra) if $x\odot y =x\wedge y$ holds in ${\bf A}$.
	\end{itemize}
	A BL-algebra, MV-algebra or G-algebra, is said to be {\em finite} if its universe is a finite set.
	% \blue{** que sea chain lo acabamos de decir antes** and it is said to be a {\em finite chain} if its universe is finite and linearly ordered.}
\end{definition}

It is worth to point out that finite MV and Godel chains are the ``building blocks'' of finite BL-chains. Indeed \cite[Corollary 3.7]{DBLP:journals/soco/CignoliEGT00} shows that finite BL-chains can only be ordinal sums of MV-chains and G-chains. One of the basic properties that distinguishes finite MV-chains from finite G-chains lies in the fact that, while MV-operations allow to describe the arithmetic sum between real numbers, in G\"odel chains is only possible to describe the order of their elements. 

In the rest of this paper, in order to ease the reading, we will distinguish MV-operations from G\"odel operations adopting subscripts: in particular, the implication operator of MV-algebras (also called \luk\ implication) will be denoted by $\Rightarrow_{\textrm \L}$, while G\"odel implication will be written $\Rightarrow_G$. The negation operators are defined as usual: MV-negation (or \luk\ negation) $\neg_{\textrm \L}x= x\Rightarrow_{\textrm \L}0$ and G\"odel negation $\neg_Gx=x\Rightarrow_G0$. 

The main differences between MV-algebras and G\"odel algebras can be easily grasped recalling how their operations behave in the standard algebras of the relative varieties. Recall in fact that both the variety $\mathbb{MV}$ and $\mathbb{G}$ can be generated by structures based on the real unit interval $[0,1]$. Those algebras, called respectively the {\em standard} MV-algebra (written $[0,1]_{MV}$) and the {\em standard} G\"odel algebra (denoted by $[0,1]_G$) interpret operations as follows: for all $x,y\in [0,1]$,
\begin{itemize}
	\item $x\odot y=\max\{0, x+y-1\}$; $x\wedge y=\min\{x,y\}$;
	\item $x\Rightarrow_{\textrm \L}y=\min\{1, 1-x+y\}$; $x\Rightarrow_G y=1$ if $x\leq y$ and $x\Rightarrow_G y=y$ otherwise;
	\item $\neg_{\textrm \L}x=1-x$; $\neg_G x=1$ if $x=0$ and $\neg_Gx=0$ otherwise.
\end{itemize}

In addition to the ones recalled above, in every MV-algebra, one can define further arithmetic operations like the bounded sum $x\oplus y=\neg_{\textrm \L} x\Rightarrow_{\textrm \L} y$ whose semantics in $[0,1]_{MV}$ is $x\oplus y=\min\{1, x+y\}$ and the square operator $\ast x=x\odot x$ that will play a main role in this paper and whose behavior in $[0,1]_{MV}$ is $\ast x=\max\{0, 2x-1\}$. 

Finite MV-chains are easily characterized. Indeed, for each natural number $n$, the set $\L_{n+1}=\{0, 1/n, 2/n, \ldots, (n-1)/n, 1\}$ is the domain of the $(n+1)$-valued MV-chain. Such algebra will be henceforth denoted by $\bL_{n+1}$. 
%\blue{Conversely finite G\"odel chains can be defined on every finite domain ** y las MV tambien, no?? **}. 
The G\"odel chain with $n+1$ elements will be denoted by ${\bf G}_{n+1}$.

Every finite MV-chain $\bL_{n+1}$   and every finite G\"odel chain ${\bf G}_{n+1}$ generate, respectively, proper subvarieties of $\mathbb{MV}$ and $\mathbb{G}$. Equations describing these subvarieties, within $\mathbb{MV}$ and $\mathbb{G}$, can be found e.g. in \cite{Grigolia} (for the case of MV) and \cite{Got01} (for the G\"odel case).

Notice that, by definition, \luk\ negation is involutive and thanks to this, all operations of any MV-algebra can be defined starting only from the signature 
$\{\Rightarrow_{\textrm \L}, 0\}$. 
%$\{\Rightarrow_{\textrm \L}, \neg_{\textrm \L}, 0\}$. 
%\blue{***De hecho, $ \neg_{\textrm \L}$ is definible en la signatura $\{\Rightarrow_{\textrm \L}, 0\}$ ... ***}. 
In fact, we will use that reduced signature when we will deal with MV-algebras in the remaining of the present paper.

In turn, G\"odel negation $\neg_G$ does not satisfy the involutive equation $\neg\neg x=x$. For this reason, an expansion of G\"odel algebras by an involution has been studied in \cite{EGHN00} (see also \cite{FM06}). The corresponding algebraic structures are defined as follows.

\begin{definition}\label{def:IG}
	A {\em G\"odel algebra with involution} (IG-algebra for short) is a pair $({\bf A}, \msim)$ where ${\bf A}$ is a G\"odel algebra and ${\msim}:A\to A$ is a unary operator satisfying the following equations: %\blue{*** no faltan los axiomas de $\Delta$? ***}
	\begin{enumerate}
		\item $\msim\msim x=x$
		\item $\neg_G x\leq \msim x$
		\item $\Delta(x\To_G y)=\Delta(\msim y\To_G\msim x)$
		\item $\Delta x \vee \neg_G\Delta x=1$
		\item $\Delta(x\vee y)\leq \Delta x\vee \Delta y$
		\item $\Delta (x\To_G y) \leq \Delta x \To_G \Delta y$
	\end{enumerate}
	where $\Delta x=\neg_G\msim x$. 
\end{definition}
The class of $IG$-algebras form a variety that will be denoted by $\mathbb{IG}$. As it is proved in \cite[Theorem 7]{EGHN00}, $\mathbb{IG}$ is generated by the IG-algebra $([0,1]_G, \msim)$ where $[0,1]_G$ is the standard G-algebra and $\msim x=1-x$. The variety generated by the IG-chain with $n+1$ elements will be henceforth denoted by $\mathbb{IG}_{n+1}$.

It is worth noticing that the operator $\Delta$ appearing in Definition \ref{def:IG}, and that is definable in IG-algebras by combining the two negations $\neg_G$ and $\msim$, is the Baaz-Monteiro operator \cite{Baaz}. In every totally ordered algebra, $\Delta$ behaves as follows: $\Delta(x)=1$ if $x=1$ and $\Delta(x)=0$ otherwise. Such a operator, is indeed also definable in every finite MV-chain $\bL_{n+1}$ by the term $\Delta(x)=x^n=x\odot\ldots\odot x$ ($n$-times). Indeed, for every $0\leq k<n$, $(k/n)^n=0$ while $1^n=1$.  However, $\Delta$ is not definable in infinite MV-chains, whence in particular, it is not definable in $[0,1]_{MV}$.

%\input{section3.tex}

%!TEX root =  ../ImplicationFree(3)-root.tex

\section{Analyzing the square operator: first steps} \label{sec:Lstaralgebras}

In this section we start the study on the expressive power of {\L}ukasiewicz's square operator $*$ by means of $(n+1)$-valued algebraic structures denoted by $\bL_{n+1}^*$.  After defining a fundamental algorithmic tool which allows to compute the subalgebras of $\bL_{n+1}^*$, we will analyse the relationship between primality of $n$ and term-equivalence between $\bL_{n+1}^*$ and $\bL_{n+1}$.

The next definition introduces the algebraic structures that will play a key role in this paper.  Let $\bL_{n+1}=(\L_{n+1}, \Rightarrow_{\textrm \L}, \neg_{\textrm \L}, 0,1)$ be the MV-chain with $n+1$ elements on the domain $\L_{n+1} = \{0, 1/n,\ldots, (n-1)/n, 1\}$ of $\bL_{n+1}$, where the strong conjunction $\odot$ is definable as usual, i.e.\ $x \odot y = \neg_{\textrm \L}(x \Rightarrow_{\textrm \L} \neg_{\textrm \L} y)$.

\begin{definition}
	The algebra $\bL_{n+1}^* = (\L_{n+1}, \vee, *, \neg_{\textrm \L},  0,1)$ is the structure obtained by adding the unary square operator $*:x\mapsto x\odot x$ and the join $\vee$ to the $\{\Rightarrow_{\textrm \L}\}$-free reduct of $\bL_{n+1}$.
\end{definition}
Therefore, for every $n$, $\bL^*_{n+1}$ is the linearly-ordered algebra on the domain $\{0, 1/n,\ldots, (n-1)/n, 1\}$ endowed with the operations $x \vee y= \max \{x,y\}$, $\neg_{\textrm \L} x=1-x$ and $$*x=\max\{0, 2x-1\},$$ besides the constants $0$ and $1$.  In every $\bL^*_{n+1}$-chain we can define the operation $+$ that is dual operation of $*$ w.r.t.\ to the negation $\neg$: $$+x=\neg_{\textrm \L}{*} \neg_{\textrm \L} x = \min\{1, 2x\}.$$  Furthermore, for every natural $n\geq 1$, we will denote by $*^n$ the $n$ times iteration of $*$, that is ${*}^{n}x=* x$ if $n=1$ and ${*}^{n}x={*}^{n-1}(* x)$ for $n>1$. This gives ${*}^{n}x= \max\{0, 2^n x -(2^n-1)\}$. 
Similarly, we define  ${+}^{n}x=+ x$ if $n=1$ and ${+}^{n}x={+}^{n-1}(+ x)$ otherwise, yielding ${+}^{n}x= \min\{1, 2^n x\}$. 

Recall that an element $x \in \L_{n+1}$ is called {\em positive} if $x > \neg_{\textrm \L} x$, i.e. $x > 1/2$, otherwise it is called {\em negative}. 

\subsection{On the subalgebras of $\bL^*_{n+1}$}

For every subset $X$ of $\L_{n+1}$ we will denote by $\langle X\rangle^*$ the subalgebra of $\bL_{n+1}^*$ generated by $X$. In case $X=\{x\}$ we will write $\langle x\rangle^*$ instead of $\langle\{x\}\rangle^*$. 

In the rest of this section we will only deal with \luk\ operations. Thus, in order to ease the reading, we will omit the subscript \L\ from operations without danger of confusion.

Let us present now an algorithmic tool that will be central  in the rest of this paper.

\begin{definition}[The procedure ${\tt P}$]\label{remProcP} Let us consider a procedure, that we will henceforth denote by ${\tt P}$, defined as follows: given $n$ and an element $a\in \L_{n+1}\setminus\{0,1\}$, ${\tt P}(n,a)$ iteratively computes a sequence $[a_1,\ldots, a_m,\ldots]$ of elements of $\L_{n+1}$ such that $a_1=a$ and for all $i\geq 1$,
	$$
	a_{i+1}=\left\{
	\begin{array}{ll}
	*a_i&\mbox{ if }a_i>1/2,\\
	\neg a_i&\mbox{ otherwise.}
	\end{array}
	\right.
	$$
	We say that ${\tt P}(n,a)$ {\em stops at $k$} (or {\em ${\tt P}(n,a)$ stops at $a_k$}) if $k$ is the first $i$ such that $a_{i+1}=a_j$ for some $j<i$.
	Since $\L_{n+1}$ is finite then, for every $a\in \L_{n+1}\setminus\{0,1\}$, there exists $k \geq 1$ such that ${\tt P}(n,a)$ stops at $k$. If ${\tt P}(n,a)$ stops at $k$, the {\em sequence generated by ${\tt P}(n,a)$} is denoted also by  ${\tt P}(n,a)=[a_1,\ldots, a_k]$, while the  {\em image} and the {\em negated image} of ${\tt P}(n,a)$ are ${\tt I}(n,a)=\{a_1,\ldots, a_k\}$ and ${\tt NI}(n,a)=\{\neg a_1,\ldots, \neg a_k\}$, respectively. The {\em range} of ${\tt P}(n,a)$ is ${\tt R}(n,a)={\tt I}(n,a) \ \cup \ {\tt NI}(n,a)$.
\end{definition}

%\begin{lemma}
%Let $a \in \Lns\setminus\{0,1\}$, then the set of positive elements of  ${\tt R}(n,a)$ coincide with the set of positive elements of ${\tt I}(n,a)$, i.e. the set ${\tt NI}(n,a)$ does not introduce new positive elements, i.e. all positive elements of ${\tt R}(n,a)$ belong to the sequence ${\tt P}(n,a)$ obtained by procedure $P$ from $a$.
%\end{lemma}
%\begin{proof}
%By definition a positive element of ${\tt R}(n,a)$ is element of ${\tt I}(n,a)$ or is a negation of a negative element of ${\tt I}(n,a)$ (an element of ${\tt NI}(n,a)$). But if 
%$a_i$ is a negative element of the sequence ${\tt P}(n,a)$ then $a_{i+1} = \neg a_i$ is also an element of ${\tt P}(n,a)$ and so a positive element of ${\tt I}(n,a)$. 
%\end{proof}

Observe that, for every 
$a \in \Lns\setminus\{0,1\}$,  the set of positive elements of  ${\tt R}(n,a)$ coincides with the set of positive elements of ${\tt I}(n,a)$, i.e. the set ${\tt NI}(n,a)$ does not introduce new positive elements, i.e. all positive elements of ${\tt R}(n,a)$ belong to the sequence ${\tt P}(n,a)$ obtained by the procedure $P$ starting at $a$.
%By definition a positive element of ${\tt R}(n,a)$ is element of ${\tt I}(n,a)$ or is a negation of a negative element of ${\tt I}(n,a)$ (an element of ${\tt NI}(n,a)$). But if 
%$a_i$ is a negative element of the sequence ${\tt P}(n,a)$ then $a_{i+1} = \neg a_i$ is also an element of ${\tt P}(n,a)$ and so a positive element of ${\tt I}(n,a)$. 

As a first application of the procedure ${\tt P}$, it will be shown that it allows us to compute the subalgebras of $\bL^*_{n+1}$ of the form  $\langle a\rangle^*$.

\begin{proposition} \label{subalgprin}
	Let $a \in \Lns\setminus\{0,1\}$. Then, the domain of the subalgebra $\langle a\rangle^*$ of $\bL^*_{n+1}$ is ${\tt R}(n,a) \cup \{0,1\}$.
\end{proposition}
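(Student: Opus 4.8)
The goal is to show that the universe of $\langle a\rangle^*$ equals ${\tt R}(n,a) \cup \{0,1\}$. The plan is to prove two inclusions.

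First I would establish that ${\tt R}(n,a) \cup \{0,1\} \subseteq \langle a\rangle^*$, i.e.\ that every element produced by the procedure ${\tt P}$, together with its $\neg$-image and the constants, is forced into any subalgebra containing $a$. This direction is the easy one: $0$ and $1$ are constants of the signature, and by induction along the sequence $[a_1,\ldots,a_k]$ generated by ${\tt P}(n,a)$, each $a_{i+1}$ is obtained from $a_i$ by applying either $*$ or $\neg$, both of which are fundamental operations of $\bL^*_{n+1}$. Hence ${\tt I}(n,a)\subseteq\langle a\rangle^*$, and applying $\neg$ once more gives ${\tt NI}(n,a)\subseteq\langle a\rangle^*$, so the whole range lies in the subalgebra.

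The substantial direction is the reverse inclusion: that $S := {\tt R}(n,a) \cup \{0,1\}$ is already closed under all the operations $\vee$, $*$, $\neg$ (and trivially contains the constants), so that it is a subalgebra and therefore contains $\langle a\rangle^*$. Closure under $\neg$ is immediate since ${\tt R}(n,a)$ is symmetric under $\neg$ by construction (it is ${\tt I}\cup{\tt NI}$) and $\neg 0 = 1$. The real work is closure under $*$ and $\vee$. For $*$, I would argue by cases on the sign of an element $x\in S$: if $x$ is positive then, using the observation recorded just before the proposition that all positive elements of ${\tt R}(n,a)$ already appear in the sequence ${\tt P}(n,a)$, I can locate $x$ as some $a_i>1/2$, and then $*x = a_{i+1}\in{\tt I}(n,a)\subseteq S$ by the defining recursion of ${\tt P}$; if $x\le 1/2$ is negative (or $x=1/2$), then $*x=\max\{0,2x-1\}=0\in S$. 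The key arithmetic fact making this clean is that $*x=0$ whenever $x\le 1/2$, so negative elements cause no trouble, and the procedure ${\tt P}$ was precisely designed so that squaring a positive element returns the next term of the sequence.

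The main obstacle I anticipate is closure under the join $\vee$: given $x,y\in S$, I must show $\max\{x,y\}\in S$. This does not follow from the one-generator dynamics of ${\tt P}$ in an obvious way, since $\vee$ mixes the image and the negated image. I expect the argument to rest on a monotonicity/structural property of the range, namely that among the positive elements of $S$ there is a largest element below any given threshold reachable by iterating $*$, and symmetrically for negative elements via $\neg$. Concretely, since $\max\{x,y\}$ is just whichever of $x,y$ is larger, and both already lie in $S$, closure under $\vee$ reduces to the tautology that $S$ is closed under taking the larger of two of its own elements --- which holds trivially because $\max\{x,y\}\in\{x,y\}\subseteq S$. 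So in fact $\vee$ presents no obstacle once one notices this; the only genuine content is the $*$-closure argument together with the preliminary observation about positive elements. I would therefore organize the proof as: (i) the forward inclusion by induction on the sequence; (ii) verification that $S$ is closed under $\neg$, $\vee$, $*$ and contains $0,1$, invoking the positivity observation for the $*$ case; and conclude that $S$ is the domain of $\langle a\rangle^*$.
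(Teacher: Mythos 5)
Your proof is correct and follows essentially the same route as the paper's: both directions are handled identically, with the reverse inclusion resting on the closure of ${\tt R}(n,a)\cup\{0,1\}$ under $\ast$ and $\neg$ (via the observation, recorded just before the proposition, that all positive elements of ${\tt R}(n,a)$ already occur in the sequence generated by ${\tt P}(n,a)$), and the forward inclusion on the fact that every element of ${\tt R}(n,a)$ is obtained from $a$ by applying $\ast$ and $\neg$. The paper leaves the closure check as ``easy to prove,'' so your case analysis --- including the correct remark that $\vee$-closure is trivial on a chain because $\max\{x,y\}\in\{x,y\}$ --- merely fills in the details the paper omits.
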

\begin{proof}
	By definition of the procedure $\tt P$ and the above observation it is easy to prove that ${\tt R}(n,a) \cup \{0,1\}$ is closed by $\ast$ and $\neg$ and so it is the domain of a subalgebra containing $a$, i.e.  $\langle a\rangle^* \subseteq {\tt R}(n,a) \cup \{0,1\}$. Moreover every element of ${\tt R}(n,a)$ is obtained from $a$ using only the operations $\ast$ and $\neg$, hence ${\tt R}(n,a) \cup \{0,1\} \subseteq \langle a\rangle^*$. Therefore ${\tt R}(n,a) \cup \{0,1\} = \langle a\rangle^*$.
	%Let $A=	{\tt R}(n,a) \cup \{0,1\}$. Observe first that $A$ is the domain of a subalgebra of  $\bL^*_{n+1}$ which contains $a$. Indeed, since  $\bL^*_{n+1}$ is a chain, it is enough to prove that $\# b \in A$ if $b \in A$, for $\# \in \{*,\neg\}$. If $b\in \{0,1\}$ the result is obvious. Thus, let $b \in {\tt R}(n,a)$. If $b \in {\tt I}(n,a)$, suppose that $b$ is positive. Then $* b \in {\tt I}(n,a)$, by definition of ${\tt P}$. If $b$ is negative, $* b=0$ which belongs to $A$. Now, if $b \in {\tt NI}(n,a)$ then $b=\neg a_i$ for some $a_i \in {\tt I}(n,a)$. If $b$ is negative then $* b=0$ which belongs to $A$. If $b=\neg a_i$ is positive then $a_i$ is negative. There are three possible cases: either ${\tt P}(n,a)$ stops at $i$, or stops at $i+1$, or stops at $k > i+1$. By definition of ${\tt P}$, $* b = * \neg a_i$ belongs to  ${\tt I}(n,a)$ in any of such cases.  This shows that $A$ is closed under the $*$ operation.
	%It is clear that $A$ is also closed under the $\neg$ operation, hence $A$ is the domain of a subalgebra {\bf A} of $\bL^*_{n+1}$ such that $a \in A$. Since $\langle a\rangle^*$ is the least of such subalgebras, it follows that  $\langle a\rangle^* \subseteq {\bf A}$. But ${\bf A} \subseteq \langle a\rangle^*$, by construction of $A$: every element of $A$ (besides $0$ and $1$) is obtained from $a$ by appying $*$ and $\neg$. Therefore  ${\bf A}=\langle a\rangle^*$.
\end{proof}

%\begin{corollary}
%Let $a \in \Lns\setminus\{0,1\}$, then the positive elements of $\langle a\rangle^*$ belong to ${\tt I}(n,a)$, i.e. the set ${\tt NI}(n,a)$ does not introduce new positive elements.
%\end{corollary}
%\begin{proof}
%Observe that if $a_i$ is a negative element of the sequence ${\tt P}(n,a)$ then $a_{i+1} = \neg a_i$. Then from by negation of a negative element appearing in ${\tt P}(n,a)$ we only obtain positive elements already in ${\tt P}(n,a)$. 
%\end{proof}

Notice that if $a$ and $b$ are positive elements, and $b$ is reached by the procedurre ${\tt P}(n,a)$, i.e.\ $b \in  {\tt I}(n,a)$, then the sequence generated by ${\tt P}(n,b)$ is in fact a subsequence of the one generated by ${\tt P}(n,a)$, and hence, $\langle b \rangle^* \subseteq \langle a\rangle^*$. On the other hand, 
it is clear that for each $a \in \Lns\setminus\{0,1\}$, $\langle a\rangle^*= \langle \neg a\rangle^*$. Therefore from now on we will consider only subalgebras generated by positive elements since these subalgebras covers all one generated subalgebras. \vspace{0.2cm}

\noindent {\underline{\em Notation}}: In what follows, for every $\bL^*_{n+1}$-algebra, we will denote by $\co$ its coatom $(n-1)/n$

\begin{lemma}\label{lemmaNew1}
	For every $\bL^*_{n+1}$-algebra, if $\langle \co \rangle^*=\bL^*_{n+1}$, then every positive element $a$ of $\L^*_{n+1} \setminus \{1\}$ is reached by the procedure $\tt P$ starting at $\co$, that is, $a \in {\tt I}(n, \co)$.% such that $a=a_i$. 
\end{lemma}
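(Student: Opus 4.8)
The plan is to obtain the statement as a short consequence of Proposition~\ref{subalgprin} together with the observation, recorded just before that proposition, that the negated image ${\tt NI}(n,a)$ never contributes a positive element beyond those already appearing in the sequence ${\tt P}(n,a)$.

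First I would unfold the hypothesis $\langle \co \rangle^*=\bL^*_{n+1}$ by means of Proposition~\ref{subalgprin}. Since the domain of $\langle \co \rangle^*$ is ${\tt R}(n,\co)\cup\{0,1\}$, the assumption that $\co$ generates the whole algebra is exactly the equality ${\tt R}(n,\co)\cup\{0,1\}=\L_{n+1}$; in other words, every element of the chain lies in ${\tt R}(n,\co)$, except possibly for $0$ and $1$.

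Next I would fix an arbitrary positive element $a\in\L_{n+1}\setminus\{1\}$ and locate it inside ${\tt R}(n,\co)$. Being positive, $a>1/2$, so in particular $a\neq 0$; and $a\neq 1$ by assumption. Hence $a\notin\{0,1\}$, and the equality from the previous step forces $a\in{\tt R}(n,\co)={\tt I}(n,\co)\cup{\tt NI}(n,\co)$. It then remains only to exclude the possibility that $a$ enters ${\tt R}(n,\co)$ solely through the negated image ${\tt NI}(n,\co)$.

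This last point is precisely the content of the observation preceding Proposition~\ref{subalgprin}: the positive elements of ${\tt R}(n,\co)$ coincide with the positive elements of ${\tt I}(n,\co)$. Applying it to the positive element $a\in{\tt R}(n,\co)$ yields $a\in{\tt I}(n,\co)$, which is the desired conclusion. I do not expect a genuine obstacle here, since all the real work is carried by that observation; were it not already available, the key step would be to note that whenever a negative element $a_i\leq 1/2$ occurs in the sequence, the procedure immediately sets $a_{i+1}=\neg a_i$, so the positive value $\neg a_i$ is itself a term of ${\tt P}(n,\co)$ and thus belongs to ${\tt I}(n,\co)$, rather than being a fresh contribution of ${\tt NI}(n,\co)$.
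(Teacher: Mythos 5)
Your proposal is correct and takes essentially the same route as the paper's own one-line proof: unfold the hypothesis $\langle \co\rangle^*=\bL^*_{n+1}$ via Proposition~\ref{subalgprin}, then invoke the observation recorded just before that proposition that the positive elements of ${\tt R}(n,\co)$ already lie in ${\tt I}(n,\co)$. The only hair worth noting, in your optional parenthetical justification of that observation, is the case where the negative element is the final term $a_k$: there $\neg a_k = a_{k+1}$ is not literally a term of the sequence $[a_1,\ldots,a_k]$, but by the stopping condition it equals some earlier $a_j$, so it still belongs to ${\tt I}(n,\co)$.
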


\begin{proof} %Let ${\tt P}(n, \co) = [a_1, \ldots, a_k]$. 
	It follows from the fact that the set of positive elements of ${\tt R}(n,\co)$ (which, by hypothesis and by Proposition \ref{subalgprin} is the set of positive elements of $\bL^*_{n+1} \setminus \{1\}$)  is included in ${\tt I}(n,\co)$.
	%Assume, by way of contradiction, there exists a positive element $a \neq \co$ of $\bL^*_{n+1}$ not belonging to ${\tt I}(n, \co)$. Since $\co$ generates the whole $\bL^*_{n+1}$ and $a\not \in {\tt I}(n,c)$, then necessarily $a=\neg a_t$ for some negative element $a_t \in {\tt I}(n,c)$. Then either $t = k$, and then $a=\neg a_t= \neg a_k = \co$, which is absurd, or $t < k$ and $a_{t+1} = \neg a_t=a$,  and hence $a$ is indeed met by ${\tt P}$, a contradiction as well. 
\end{proof}

In the rest of this paper we will make often  use of the notion of {\em stricly simple} algebra whose definition is recalled below adapting to our case the general definition that can be found in \cite{GJKO}.

\begin{definition}\label{def:strictlySimple}
	An algebra $\bL^*_{n+1}$ is said to be {\em stricly simple} if its unique proper subalgebra is the two-element chain $\{0,1\}$.\footnote{The definition of {\em strictly simple} algebra usually requires an algebra ${\bf A}$ to have no non-trivial proper subalgebras, in other words, ${\bf A}$ is strictly simple if the trivial, one element algebra, is its unique subalgebra. However, in our context we always assume $0 \neq 1$, so that algebras have at least two elements, and thus the trivial algebra is the (Boolean) two-element algebra $\{0, 1\}$. 
		%In the rest of the paper, we will however adopt the less strict condition stated in this definition without danger of confusion.
	}
	%(it has no non-trivial proper subalgebras.) 
\end{definition}

Then we can prove the following.

\begin{lemma}\label{lemmaNew2}
	For every $\bL^*_{n+1}$-algebra, if $\langle \co\rangle^*=\bL^*_{n+1}$ and ${\tt P}(n,\co)=[a_1,\ldots, a_k]$ with $a_k=1/n$, then $\bL^*_{n+1}$ is strictly simple.
\end{lemma}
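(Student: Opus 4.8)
The plan is to prove strict simplicity directly from Definition \ref{def:strictlySimple}: I would take an arbitrary subalgebra $B$ with $B \neq \{0,1\}$ and show $B = \bL^*_{n+1}$. The guiding observation is that $1/n = \neg\co$, so $\langle 1/n\rangle^* = \langle \co\rangle^* = \bL^*_{n+1}$ by hypothesis; hence it suffices to prove that every non-trivial subalgebra contains the atom $1/n$. The role of the hypothesis $a_k = 1/n$ is to force the orbit ${\tt P}(n,\co)$ to close up into a single cycle that passes through $1/n$, so that $1/n$ can be reached starting from any positive element of the chain.

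Before that I would record a preliminary fact: the hypotheses force $n$ to be odd (equivalently $1/2\notin\L_{n+1}$). Indeed, if $1/2$ were reached by ${\tt P}(n,\co)$, then since $1/2$ is negative we would have $\neg(1/2)=1/2$, the procedure would become stationary at $1/2$, and it would stop with $a_k = 1/2 \neq 1/n$ (for $n\geq 3$; the case $n=2$ is the trivial three-element chain). Thus $1/2\notin{\tt I}(n,\co)$, and since $\neg(1/2)=1/2$ also $1/2\notin{\tt NI}(n,\co)$, whence $1/2\notin{\tt R}(n,\co)\cup\{0,1\}=\langle\co\rangle^*=\bL^*_{n+1}$ by Proposition \ref{subalgprin} and the hypothesis. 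As $1/2\in\L_{n+1}$ whenever $n$ is even, this is a contradiction, so $n$ is odd. Consequently, for every $b$ with $0<b<1$, exactly one of $b,\neg b$ is positive.

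Next I would analyse the cycle structure of ${\tt P}(n,\co)=[a_1,\dots,a_k]$. Since $a_k=1/n<1/2$ is negative, the successor term is $\neg a_k = (n-1)/n = \co = a_1$. Because $1/2\notin\L_{n+1}$, the transition map underlying ${\tt P}$ (apply $\ast$ on positives, $\neg$ on negatives) has no fixed point in $\L_{n+1}\cap(0,1)$, so $a_{i+1}\neq a_i$ for every $i$; together with the stopping rule this guarantees that $a_1,\dots,a_k$ are pairwise distinct and that $a_{k+1}=a_1$. Hence ${\tt I}(n,\co)=\{a_1,\dots,a_k\}$ is a single cycle for the dynamics of ${\tt P}$, and running the (deterministic) procedure from any $a_j$ in this set traverses the entire cycle before repeating; in particular $1/n=a_k\in{\tt I}(n,a_j)$ for every $j$.

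To conclude, let $B\neq\{0,1\}$ be a subalgebra and choose $b\in B$ with $0<b<1$; replacing $b$ by $\neg b\in B$ if necessary (legitimate since $b\neq 1/2$), I may assume $b$ is positive. By Lemma \ref{lemmaNew1} and the hypothesis $\langle\co\rangle^*=\bL^*_{n+1}$ we have $b\in{\tt I}(n,\co)$, so $b=a_j$ for some $j$; by the cycle analysis $1/n\in{\tt I}(n,b)\subseteq\langle b\rangle^*\subseteq B$. Therefore $\langle 1/n\rangle^*=\langle\co\rangle^*=\bL^*_{n+1}\subseteq B$, i.e.\ $B=\bL^*_{n+1}$, and $\bL^*_{n+1}$ is strictly simple. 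The main obstacle, and the only genuinely delicate part, is the bookkeeping around $1/2$: one must exclude even $n$ (equivalently, rule out the neutral fixed point of the procedure) to be sure both that the chosen witness $b$ can always be taken positive and that the orbit of $\co$ really closes into one cycle reaching $1/n$ rather than stalling at $1/2$.
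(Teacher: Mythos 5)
Your proposal is correct and takes essentially the same route as the paper's proof: by Lemma \ref{lemmaNew1} any positive element $b$ lies on the orbit ${\tt P}(n,\co)$, the procedure started at $b$ then reaches $a_k=1/n$, whence $\langle b\rangle^*\supseteq\langle 1/n\rangle^*=\langle\co\rangle^*=\bL^*_{n+1}$ and no proper non-trivial subalgebra can exist. Your preliminary parity analysis excluding $1/2$ (showing the hypotheses force $n=2$ or $n$ odd, so the chosen witness can always be taken positive and the orbit genuinely closes into a cycle through $1/n$) is a careful explicit treatment of a point the paper leaves implicit, but it does not alter the underlying argument.
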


\begin{proof}
	%Let us assume that $\bL^*_{n+1}$ is not strictly simple and hence let $b$ be positive such that $\langle b\rangle^*$ is a proper subalgebra of $\bL^*_{n+1}$. Notice that the last element of ${\tt P}(n, b)$ cannot be $1/n$, for otherwise $c\in {\tt P}(n, b)$ contradicting the fact that $\langle b\rangle^*$ is a proper subalgebra of $\bL^*_{n+1}$. However, by Lemma \ref{lemmaNew1}, there is $a_t\in {\tt P}(n, c)$ such that $a_t=b$ 
	Let $\langle \co \rangle^*=\bL^*_{n+1}$ and let $b$ be a positive element of $\bL^*_{n+1}$. By Lemma \ref{lemmaNew1}, $b \in {\tt P}(n,\co)$ and so the initial segment of ${\tt P}(n,b)$ is a subsequence of ${\tt P}(n,\co)$. In particular, the last element $1/n$ of ${\tt P}(n,\co)$ belongs to ${\tt I}(n,b)$ and so $c \in {\tt R}(n,b)$. 
	%$b\in {\tt P}(n,\co)$ and hence the sequence generated by ${\tt P}(n,b)$ is a subsequence of the one generated by ${\tt P}(n,\co)$. Thus, the last element $1/n$ of ${\tt P}(n,\co)$ belongs to ${\tt I}(n,b)$ as well and so $c\in {\tt I}(n,b)$. 
	In other words $\langle b\rangle^*=\langle \co\rangle^*=\bL^*_{n+1}$, and the latter does not contain proper subalgebras and hence it is  strictly simple.
\end{proof}

Finally, we have the following characterization for strictly simple $\bL^*_{n+1}$-algebras.

\begin{theorem}\label{Thm:New1}
	For all $n> 1$ and $n\neq 4$, $\bL^*_{n+1}$ is strictly simple iff $\langle \co\rangle^*=\bL^*_{n+1}$. 
\end{theorem}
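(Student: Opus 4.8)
The plan is to treat the two implications separately. The left-to-right (``only if'') direction is immediate and in fact holds for every $n>1$: since $\co=(n-1)/n\notin\{0,1\}$, the subalgebra $\langle\co\rangle^*$ contains an element different from $0,1$, so it is not the two-element chain; hence, if $\bL^*_{n+1}$ is strictly simple, Definition \ref{def:strictlySimple} forces $\langle\co\rangle^*=\bL^*_{n+1}$. No restriction on $n$ is needed here.

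The substantial direction is the converse. Assuming $\langle\co\rangle^*=\bL^*_{n+1}$, I would reduce to Lemma \ref{lemmaNew2}: it suffices to prove that the sequence ${\tt P}(n,\co)=[a_1,\dots,a_k]$ ends at $a_k=1/n$. The first step is a rigidity property of the coatom. The procedure $\tt P$ applies $x\mapsto *x$ to positive elements and $x\mapsto\neg x$ otherwise, and I claim the unique $\tt P$-preimage of $\co$ is $1/n$: the equation $\neg x=\co$ forces $x=1/n$, while $*x=\co$ is impossible, because the numerator of $*x=(2k-n)/n$ always has the same parity as $n$, whereas $\co$ has numerator $n-1$. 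Consequently ${\tt P}(n,\co)$ returns to $\co$, equivalently $a_k=1/n$, precisely when $1/n$ is actually visited, i.e.\ $1/n\in{\tt I}(n,\co)$.

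To decide when this occurs I would pass to a ``folded'' description of $\tt P$: writing each nonconstant element as $k/n$ and identifying $k$ with $n-k$ (that is, $x$ with $\neg x$), a $*$-step multiplies the resulting class by $2$ modulo $n$ while a $\neg$-step leaves it unchanged. Thus the classes visited by ${\tt P}(n,\co)$ are exactly the orbit of the class of $1$ under doubling modulo $n$, and ${\tt R}(n,\co)$ is the union of the associated $\pm$-pairs. For $n$ odd, $2$ is invertible modulo $n$, so this orbit is a genuine cycle through the class of $\co$; moreover the cycle can only re-enter that class via a $*$-step, and by the parity remark above such a step must land on $1/n$ rather than on $\co$. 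Hence $a_k=1/n$ for every odd $n$, and Lemma \ref{lemmaNew2} yields strict simplicity regardless of any further arithmetic on $n$.

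The remaining case is $n$ even, where $1/2=(n/2)/n$ is a fixed point of $\tt P$ and $2$ is no longer invertible, so the doubling orbit of the class of $1$ need not return. Here I would exploit that the hypothesis $\langle\co\rangle^*=\bL^*_{n+1}$ is extremely restrictive: after the first step every class in the orbit has an even representative, so all odd classes (including the class of $1/2$) are missed as soon as $n\ge 6$, contradicting ${\tt R}(n,\co)=\Ln\setminus\{0,1\}$. Thus for even $n$ the hypothesis can hold only for $n\in\{2,4\}$; when $n=2$ one has $1/n=\co=1/2$, so $a_k=1/n$ holds trivially and Lemma \ref{lemmaNew2} applies, whereas $n=4$ is exactly the excluded value (there the orbit halts at the fixed point $1/2$, giving $\langle\co\rangle^*=\bL^*_5$ even though $\{0,1/2,1\}$ is a proper nontrivial subalgebra). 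Combining the cases gives $a_k=1/n$ for all $n\neq 4$ satisfying the hypothesis, whence strict simplicity. The main obstacle is precisely this dynamical analysis of the folded doubling map, and in particular isolating $n=4$ as the unique point where a full range coexists with the nontrivial subalgebra generated by $1/2$.
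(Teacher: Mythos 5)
Your proof is correct, but it takes a genuinely different route from the paper's on the crucial odd case. The paper, under the hypothesis $\langle \co\rangle^*=\bL^*_{n+1}$, invokes Lemma \ref{lemmaNew1} to conclude that the least positive element $a=((n+1)/2)/n$ occurs in ${\tt I}(n,\co)$, computes $*a=1/n$, and so gets $a_k=1/n$ directly; the hypothesis is thus used twice, once through Lemma \ref{lemmaNew1} and once through Lemma \ref{lemmaNew2}. You instead prove the \emph{unconditional} statement that for every odd $n$ the procedure ${\tt P}(n,\co)$ terminates at $1/n$, via the folded dynamics (identifying $x$ with $\neg x$, so that a $*$-step doubles the class mod $n$) together with the parity rigidity of preimages of $\co$: since $2$ is invertible mod $n$, the class orbit of $[1]$ is purely periodic, and first re-entry into $[1]$ must happen at $1/n$ via a $*$-step. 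This is in substance the paper's later Lemma \ref{lemmaKey}, which it proves separately (by a contradiction/parity argument) for use in Theorem \ref{mainThm}; your approach therefore buys that lemma for free and makes the role of the multiplicative order of $2$ mod $n$ explicit, foreshadowing the characterization via the set $\Pi$, at the cost of a slightly longer dynamical analysis (one should also note, as you implicitly do via injectivity, that no element-level repeat can occur before the class orbit closes, since each class is visited in a single block of at most two elements and $1/2\notin\Ln$ for odd $n$). Your even case is essentially the paper's numerator-parity argument, with one small slip: the parenthetical claim that the class of $1/2$ is among the missed odd classes holds only when $n\equiv 2 \pmod 4$ (for $n=8$, e.g., $1/2=4/8$ \emph{is} reached before the procedure halts at the fixed point); the contradiction for even $n\geq 6$ survives unharmed via any genuinely odd class such as that of $3/n$, which is never visited after the first $*$-step.
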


\begin{proof}
	The left-to-right direction is obvious.
	
	Let us hence assume that $\langle \co \rangle^*=\bL^*_{n+1}$. We distinguish the following cases:
	\begin{itemize}
		\item[-] $n$ is even: The case $n = 2$ clearly fulfils the claim. Then notice that for  any even number $n > 2$, $\langle 1/2 \rangle^*$ is a proper subalgebra of $\bL^*_{n+1}$, hence $\bL^*_{n+1}$ is not strictly simple. Thus the case $n = 4$ does not fulfill the claim since $\langle 3/4 \rangle^* = \bL^*_{5}$. 
		%, but $\bL^*_{5}$ is not strictly simple , it is clear that for $n>4$, $\bL^*_{n+1}$ is not strictly simple as the three element chain made of $0$, $1$ and the negation fixpoint 1/2, is a proper subalgebra of $\bL^*_{n+1}$. 
		Now suppose $n > 4$. In order to get the claim, we have hence to prove that $\langle \co\rangle^* \neq \bL^*_{n+1}$. Since $n$ is even, 
		%$c = (n-1)/n$ has an odd numerator, and 
		it is easy to see that every application of either $*$ or $\neg$ to a rational number with even denumerator, will output a rational with the same denominator and even numerator. In other words $\langle \co\rangle^* \setminus\{\co, \neg \co\}$ only contains rationals with even numerators, hence $\langle \co\rangle^*$ is a proper subalgebra of $\bL^*_{n+1}$.
		
		\item[-] $n > 1$ and odd: Let $a=((n+1)/2)/n$ be the least positive element of $\bL^*_{n+1}$ and let ${\tt P}(n,\co)=[a_1,\ldots, a_k]$. By Lemma \ref{lemmaNew1}, $a = a_t$ for some $a_t \in {\tt I}(n,\co)$. A direct computation shows that $a_{t+1} = *a=1/n$,  hence it must be $a_{t+1} = a_k$. 
		%necessarily coincides with the last element of the sequence ${\tt P}(n,c)$. 
		Thus, by Lemma \ref{lemmaNew2}, $\bL^*_{n+1}$ is strictly simple. 
	\end{itemize}
\end{proof}

Let us end this subsection with a comparison between $\bL_{n+1}$ and $\bL_{n+1}^*$-algebras concerning subalgebras and strictly simple algebras. In particular, recall that a finite MV-chain $\bL_{n+1}$ is strictly simple iff $n$ is prime \cite{Grigolia}. 
%Thus, 
%%Note that, for every $n$, if ${\bf B} = (B, \Rightarrow_{\textrm \L}, \neg, 0,1)$ is an MV-subalgebra of $\bL_{n+1}$, then ${\bf B}^* = (B, *, \neg, 0,1)$ is a subalgebra of $\bL_{n+1}^*$ as well. 
% if $\bL_{n+1}$ is not strictly simple, then $\bL_{n+1}^*$ is not strictly simple either. Therefore, if $n$ is not prime, $\bL_{n+1}^*$ is not strictly simple.

%To study the set of subalgebras of $\bL_{n+1}^*$ and its relation to the set of subalgebras of $\bL_{n+1}$ the following results are interesting.
\begin{proposition}\label{subalebras} The following holds for every $n,m \geq 2$:
	\begin{itemize}
		\item If $\bL_{n+1}$ is subalgebra of $\bL_{m+1}$, then $\bL_{n+1}^*$ is subalgebra of $\bL_{m+1}^*$.
		\item If $\bL_{n+1}^*$ is stricly simple, then $n$ is prime.
	\end{itemize}
\end{proposition}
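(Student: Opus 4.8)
The plan is to obtain the second item as an immediate consequence of the first, so that the only genuine content lies in item (1), which in turn is essentially an observation about reducts. Throughout I keep the convention of this section and suppress the subscript $\textrm{\L}$ on the \luk\ operations.

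First I would prove item (1). The key remark is that in the MV-signature the three operations $\vee$, $*$ and $\neg$ are all term-definable: the negation is $\neg x = x \Rightarrow 0$, the strong conjunction is recovered as $x \odot y = \neg(x \Rightarrow \neg y)$ so that $*x = x\odot x$, and the join is $x \vee y = (x \Rightarrow y) \Rightarrow y$. Hence $\bL^*_{n+1}$ is precisely the $\{\vee, *, \neg, 0, 1\}$-reduct of $\bL_{n+1}$, and likewise $\bL^*_{m+1}$ is that reduct of $\bL_{m+1}$. Now, assuming $\bL_{n+1}$ is a subalgebra of $\bL_{m+1}$, the domain $\Ln$ is contained in $\L_{m+1}$ and closed under every operation of $\bL_{m+1}$, hence under every term operation, in particular under $\vee$, $*$ and $\neg$; moreover these operations are computed by the same real-valued formulas ($\max$, $\max\{0,2x-1\}$, $1-x$) in both chains. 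Since passing to a common reduct preserves the subalgebra relation, it follows at once that $\bL^*_{n+1}$ is a subalgebra of $\bL^*_{m+1}$.

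Next I would establish item (2) by contraposition. Suppose $n\geq 2$ is not prime; then $n$ is composite and admits a divisor $d$ with $1 < d < n$. By the classical divisibility criterion for MV-subchains, $d \mid n$ gives that $\bL_{d+1}$ is a subalgebra of $\bL_{n+1}$ \cite{Grigolia}, and it is proper because $d < n$. Applying item (1) to the pair $(d,n)$ yields that $\bL^*_{d+1}$ is a subalgebra of $\bL^*_{n+1}$; and since $d > 1$ this chain has $d+1 \geq 3$ elements, so it is a proper subalgebra strictly larger than $\{0,1\}$. Therefore $\bL^*_{n+1}$ is not strictly simple, which is exactly the contrapositive of item (2).

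I do not expect a serious obstacle, as both items reduce to standard facts: the term-definability of the $\{\vee, *, \neg\}$-operations inside the MV-signature, and the divisibility characterization of MV-subchains. The only point that deserves a line of care is the justification that a common reduct of a subalgebra is again a subalgebra, together with the verification that the three operations coincide on $\Ln$ whether computed inside $\bL_{n+1}$ or inside $\bL_{m+1}$; both hold because all of them are given by the same formulas on $[0,1]$. Finally, it is worth emphasizing that only one direction is asserted here: the converse of item (2) fails in general, since $n$ prime does not force strict simplicity of $\bL^*_{n+1}$, precisely because the $*$-reduct may possess subalgebras that are not MV-subchains — the phenomenon analysed in the remainder of the paper.
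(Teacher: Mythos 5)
Your proof is correct and takes essentially the same route as the paper: item (1) via the term-definability of $\vee$, $*$ and $\neg$ in the MV-signature (so that $\bL^*_{n+1}$ is a reduct of $\bL_{n+1}$ and reducts preserve the subalgebra relation), and item (2) as a consequence of item (1) together with Grigolia's characterization of the subalgebras of finite MV-chains. Your contrapositive unpacking via the divisor $d$ of a composite $n$ is merely a more explicit rendering of the paper's terse appeal to the fact that $\bL_{n+1}$ is strictly simple iff $n$ is prime, and your closing caveat that the converse of (2) fails matches the paper's subsequent Example 3.9.
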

\begin{proof}
	The first item is a consequence of the fact that the  operation $\ast$ of $\bL_{n+1}^*$ is definable in $\bL_{n+1}$. The second item is a consequence of the first item plus the already recalled fact from \cite{Grigolia} stating that $\bL_{n+1}$ is strictly simple if and only if $n$ is prime.
\end{proof}

Notice that for every $\bL_{n+1}$, the subalgebras are algebras of type $\bL_{m+1}$ with $m$ being a divisor of $n$ and $\bL_{n+1}$ is strictly simple if $n$ is a prime number. Both statements are not true for $\bL_{n+1}^*$. There exists subalgebras of $\bL_{n+1}^*$ that are not of type $\bL_{m+1}^*$ and there exists prime numbers $n$ such that $\bL_{n+1}^*$ is not strictly simple as the following examples show.

\begin{example} \label{exemp1} 
	(1) Let $n=9$. Then, $\langle 8/9 \rangle^*=\{0,1/9, 2/9, 4/9, 5/9, 7/9, 8/9,1\}$. This subalgebra is a chain of 8 elements which is not isomorphic to $\bL_{7+1}^*$. Indeed,  in $\bL_{7+1}^*$, we have $\ast (5/7) = 3/7$, and the correspondent (w.r.t. the order) element  of $5/7$ in $\langle 8/9 \rangle^*$ is $7/9$. But in this algebra $\ast (7/9) = 2/9$, that corresponds to $2/7$ in $\bL_{7+1}^*$. This shows that, although both $\langle 8/9 \rangle^*$ and $\bL_{7+1}^*$ are  8-element algebras,  they are not isomorphic. \vspace{.1cm}
	
	(2) Let $n = 17$ that is prime and consider the subalgebra of $\bL^*_{18}$ generated by its co-atom $16/17$. A direct computation shows that
	$$\bigg\langle \frac{16}{17} \bigg\rangle^*=\bigg\{0,\frac{1}{17},\frac{2}{17},\frac{4}{17},\frac{8}{17},\frac{9}{17},\frac{13}{17},\frac{15}{17},\frac{16}{17},1\bigg\},$$
	which is in fact a proper non-trivial subalgebra of $\bL^*_{18}$ showing that the latter is not strictly simple.
\end{example} 
A more detailed study on the subalgebras of $\bL^*_{n+1}$-algebras will be the object of Subsection \ref{subsec:subLestrella1}.

\subsection{Term-equivalence between $\L_{n+1}^*$ and $\L_{n+1}$}\label{sec:termEquivalence}

In this section we will characterize those algebras $\bL^*_{n+1}$ that allow to define \luk\ implication $\Rightarrow_{\textrm \L}$ and hence, that are term-equivalent to the original finite MV-chain $\bL_{n+1}$. 

Let us start proving that in every $\bL^*_{n+1}$ we can define terms characterizing the principal order filter $F_a=\{b\in \L_{n+1}\mid b\geq a\}$ generated by $a$.

\begin{proposition}\label{prop3} For each  $a \in \L_{n+1}$, the unary operation $\Delta_a$ defined as 
	$$
	\Delta_a(x)=
	\left\{
	\begin{array}{ll}
	1&\mbox{ if } x\in F_{a}\\
	0 & \mbox{ otherwise.}
	\end{array}
	\right.
	$$
	is definable in  $\bL^*_{n+1}$. As a consequence, for every $a \in \L_{n+1}$, the operation $\chi_{a}$ that corresponds to the characteristic function of $a$ (i.e. $\chi_a (x)=1$ if $x = a$ and $\chi_a (x)=0$ otherwise) is definable as well.
\end{proposition}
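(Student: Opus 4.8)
The plan is to reduce the problem to producing, for each $a$, a single monotone unary term $h_a$ that \emph{saturates} to $1$ exactly on the filter $F_a$, and then to sharpen it into the genuine characteristic function by composing with the Baaz--Monteiro operator. Concretely, fix $M$ with $2^M\geq n$; then $*^M x=\max\{0,2^Mx-(2^M-1)\}$ equals $1$ if $x=1$ and $0$ otherwise, so $*^M$ is the operator $\Delta$. Hence if I can exhibit a term $h_a$ with $h_a(x)=1\iff x\geq a$ (and $h_a(x)<1$ otherwise), then $\Delta_a:=*^M\circ h_a$ is exactly the required indicator of $F_a$, since $*^M$ collapses every value below $1$ to $0$ and fixes $1$. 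The degenerate case $a=0$ is trivial ($\Delta_0$ is the constant $1$), so I assume $a\geq 1/n$ and write $a^-$ for the $\L_{n+1}$-predecessor of $a$ (with $a^-=0$ when $a$ is the atom).

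The key technical step is to realise \emph{dyadic thresholds} using only $*$ and $+$. I claim that, given a length-$r$ word in the alphabet $\{*,+\}$, the corresponding composition computes the map $x\mapsto\max\{0,\min\{1,2^rx-C\}\}$, where $C$ is the integer whose binary digits are read off from the word ($*\mapsto 1$, $+\mapsto 0$, the innermost letter being the most significant bit); consequently every integer $C$ with $0\leq C\leq 2^r-1$ is attainable. This is proved by an easy induction on $r$: applying $+$ to $\max\{0,\min\{1,2^{r-1}x-C'\}\}$ returns the same shape with $C=2C'$, while applying $*$ returns $C=2C'+1$, the intermediate clampings being absorbed when one checks the three positional cases ($\leq 0$, in $[0,1]$, $\geq 1$). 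Such a composition then takes the value $1$ precisely when $2^rx-C\geq 1$, i.e.\ exactly on $\{x\geq (C+1)/2^r\}$, so it is an $h$-style term that saturates at the dyadic rational $(C+1)/2^r$.

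To finish I would choose the threshold so that it isolates $a$ from $a^-$. Since $a^-<a$ as reals, the interval $(a^-,a]$ has positive length and therefore contains a dyadic rational $d=(C+1)/2^r$ (for the atom, take $d=1/2^r$ with $2^r\geq n$; for $a=1$, take $d=1$, giving $h_1=*^r$). Because $a^-$ is the immediate predecessor of $a$, the chain elements lying at or above $d$ are exactly those at or above $a$, that is $\{x\in\L_{n+1}:x\geq d\}=F_a$. Hence the composition $h_a$ associated with $C=2^rd-1$ satisfies $h_a(x)=1\iff x\in F_a$, all its values staying in $\L_{n+1}$ because $*$ and $+$ are operations of $\bL^*_{n+1}$; composing with $*^M$ yields $\Delta_a$. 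Finally, the stated consequence for $\chi_a$ is immediate using the already available $\wedge$ and $\neg$: for $0<a<1$ set $\chi_a=\Delta_a\wedge\neg\Delta_{a^+}$ (with $a^+$ the successor of $a$), and put $\chi_1=\Delta_1$, $\chi_0=\neg\Delta_{1/n}$, since $\Delta_a\wedge\neg\Delta_{a^+}$ equals $1$ exactly when $x\geq a$ and $x<a^+$, i.e.\ when $x=a$.

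I expect the main obstacle to be resisting the ``obvious'' recursion that mimics the procedure ${\tt P}$ (replace a positive threshold $a$ by $*a$ and a negative one by a reflection): that recursion tracks the doubling map and may cycle without ever reaching a saturable threshold — for instance $a=5/7$ in $\bL^*_{8}$ produces the loop $5/7\mapsto 3/7\mapsto 5/7$ — so it need not terminate. The dyadic-threshold construction above sidesteps this difficulty by building the separating threshold in one shot, and it is precisely the realizability of all dyadic thresholds by $*,+$-compositions that is the crux of the argument.
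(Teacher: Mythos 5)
Your proof is correct, and it takes a genuinely different route from the paper's own argument for Proposition~\ref{prop3}. The paper proceeds \emph{dynamically}: starting from the pair $(a,a^-)$ it runs an iterative separation procedure (Cases A and B in its proof), applying maximal blocks ${*}^{k_i}$ or ${+}^{k_i}$ and using the invariant that each unseparated step doubles the distance between the two points, so that termination follows from the finiteness of $\L_{n+1}$; a final term from its Fact~1 then sends the separated pair to $(1,0)$, and monotonicity of $\{*,+\}$-terms does the rest. You instead prove a \emph{static} classification lemma: every word of length $r$ in $\{*,+\}$ computes $x\mapsto\max\{0,\min\{1,2^r x-C\}\}$, where $C$ is the integer whose binary digits are read off the word (innermost letter most significant) --- I verified the induction, including the absorption of the intermediate clamps in all three positional cases --- so every dyadic saturation threshold $(C+1)/2^r$ is realizable; taking $2^r\geq n$ places such a threshold in $(a^-,a]$, and the final ${*}^M$ with $2^M\geq n$ collapses every chain value below $1$ to $0$ (this last composition is genuinely needed, since $h_a(a^-)$ can lie strictly between $0$ and $1$, and you correctly include it). Your boundary cases $a=0$, $a=1/n$, $a=1$ are all handled, and your $\chi_a$ definitions coincide with the paper's. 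What your route buys: an explicit closed form for \emph{all} $\{*,+\}$-terms, an $O(\log n)$ bound on the length of the defining term (the paper's procedure terminates but yields no comparably transparent bound), monotonicity and correctness for free, and a conceptual explanation of what the paper's pair-chasing is implicitly doing, namely steering toward a dyadic cut. What the paper's formulation buys is that its procedure, stated intrinsically on pairs of chain elements, is reused verbatim for subalgebras in Corollary~\ref{Delta} --- though your terms, being fixed words in $*$ and $\neg$, restrict to subalgebras equally well, since their pointwise behaviour on $\L_{n+1}$ determines it on any subset closed under the operations. Your closing caution about the naive threshold recursion is also apt: under the correct filter recursion ($a\mapsto *a$ for $a>1/2$ and $a\mapsto(\neg a)^{+}$ otherwise) one indeed gets the $2$-cycle $5/7\mapsto 3/7\mapsto 5/7$ in $\bL^*_{8}$, which is exactly why a one-shot construction such as yours, or the paper's distance-doubling argument, is required.
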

\begin{proof} The case $a= 1$ corresponds to the Monteiro-Baaz $\Delta$ operator and, as is well-known, it can be defined as $\Delta_1(x) = {*}^n x$. 
	%And the case $x = 0$ is simply $Delta_0(y) = $, and 
	For the case $a=0$, define $\Delta_0(x)=\Delta_1(x) \vee \neg\Delta_1(x)$; this gives $\Delta_0(x)=1$ for every $x$. 
	
	In order to define $\Delta_a(x)$ for  $0< a  < 1$, consider the following notions.
	
	Given $a,b \in \L_{n+1}$ such that $a>b$ we say that $(a,b)$ is {\em separated} if either: (1) $a > 1/2 \geq b$, or (2) $b=0$, or (3) $a=1$. Clearly, if $(a,b)$ is not separated then either $1 > a > b > 1/2$ or  $1/2 \geq a > b> 0$.
	
	From now on we will consider terms $t(x)$ formed by combining applications of $*$ and $+$. Such terms are  monotonic, i.e., if $a \geq b$ then $t(a) \geq t(b)$. Observe that, for any $0<a<1$ there exists $m$ and $k$ such that ${+}^m a=1$ and ${*}^k a=0$. % \\[2mm]
	%
	%\noindent {\bf Fact 1:} 
	\begin{fact}
		If $(a,b)$ is separated then there exists a term $t(x)$ as above such that $t(a)=1$ and $t(b)=0$.
	\end{fact}
	Indeed, if (1) holds then $*a>*b=0$. Let $t(x)={+}^k*x$ such that $k= \min\{ m  \mid {+}^m*a=1 \}$. If (2) holds let $t(x)={+}^k x$ such that $k= \min\{ m  \mid {+}^m a=1 \}$.  If (3) holds let $t(x)={*}^k x$ such that $k= \min\{ m  \mid {*}^m b=0 \}$. In any case, $t$ is as required, by observing that $*1=1$ and $+0=0$.
	
	Now, given $0< a  < 1$, let $a^-$ be its immediate predecessor in the chain, i.e. $a^- = a - 1/n$. If $(a,a^-)$ is separated then, by {\bf Fact 1}, $\Delta_a(x)=t(x)$ is as required, since $t$ is monotonic. If $(a,a^-)$ is not separated, a sequence of pairs $(x_i,y_i)$ of elements in $\L_{n+1}$ such that $x_i> y_i$ will be defined by taking $x_0=a$, $y_0=a^-$ and by considering, for every $i\geq 0$,   the following two cases:\\[2mm]
	{\bf Case A:} Let $(x_i,y_i)$ such that $1 > x_i > y_i > 1/2$. Let $k_i= \max\{ m  \mid {*}^m x_i > {*}^m y_i \}$. Let $x_{i+1}=t_{i+1}(x_i)$ and  $y_{i+1}=t_{i+1}(y_i)$ where $t_{i+1}(x)= {*}^{k_i}x$. Note that $x_{i+1} > y_{i+1}$. If $(x_{i+1},y_{i+1})$ is separated the procedure stops. Otherwise, note that $1/2 \geq x_{i+1} > y_{i+1}> 0$. Go to {\bf Case B} with input $(x_{i+1},y_{i+1})$.\\[2mm]
	{\bf Case B:} Let $(x_i,y_i)$ such that $1/2 \geq x_{i} > y_{i}> 0$. If $x_i=1/2$ let $x_{i+1}={+}x_i$ and  $y_{i+1}={+}y_i$. Then, $(x_{i+1},y_{i+1})$ is separated and the procedure stops. Otherwise, if  $1/2 > x_{i}$, let $k_i= \max\{ m  \mid {+}^m x_i > {+}^m y_i \}$. Let $x_{i+1}=t_{i+1}(x_i)$ and  $y_{i+1}=t_{i+1}(y_i)$ where $t_{i+1}(x)= {+}^{k_i}x$.  Note that $x_{i+1} > y_{i+1}$.  If $(x_{i+1},y_{i+1})$ is separated the procedure stops. Otherwise, note that $1 >  x_{i+1} > y_{i+1} > 1/2$. Go to {\bf Case A} with input $(x_{i+1},y_{i+1})$. \vspace{2mm}
	
	By definition, if the procedure defined above stops then the output $(x_{i+1},y_{i+1})$ is separated. Note that $x_{i+1}=\bar {t}(a)$ while $y_{i+1}=\bar {t}(a^-)$ for some term $\bar {t}(x)$. In such a case, $\Delta_a(x)$ can be defined by the term $t(\bar {t}(x))$, where $t(x)$ is a term as specified  in {\bf Fact 1}. Indeed,  $\Delta_a(a)=t(x_{i+1})=1$ and $\Delta_a(a^-)=t(y_{i+1})=0$ and so $\Delta_a(x)$ is as required, and being a term constructed by combining applications of $*$ and $+$, it is monotonic.  Thus, it remains to prove that the procedure above always stops. But this is easy to see from the following observation: if $(a, b)$ is not separated then  either $(\# a, \# b)$ is separated or $\# a - \# b= 2(a-b)$, for $\# \in \{+,*\}$. This means that, either $(x_{i+1},y_{i+1})$ is separated, or  the distance $x_{i+1} - y_{i+1}$  between $x_{i+1}$ and $y_{i+1}$ is strictly greater than the distance $x_{i} - y_{i}$  between $x_{i}$ and $y_{i}$. Given that the distance between two elements $a$ and $b$ of $\L_{n+1}$ is itself elements of $\L_{n+1}$ (which is a finite set) and it is defined as $\neg(a\Rightarrow b)\vee\neg(b\Rightarrow a)$, a separated $(x_{i+1},y_{i+1})$ must be found  at some  step $i+1$.
	
	From the previous constructions we have shown that $\Delta_a(x)$ can always be constructed by means of a term which combines applications of $*$ and $+$. Finally, as for the operations $\chi_a$, define $\chi_1 = \Delta_1$; $\chi_0(x)= \neg \Delta_{1/n}(x)$, and  if $0 < a < 1$, then define $\chi_a(x) =  \Delta_a(x) \land \neg \Delta_{a^+}(x)$, where $a^+ = a + 1/n$ is the immediate sucessor of $a$. 
\end{proof}

Next we show an example of the above procedure to find the unary operations $\Delta_a$. 

\begin{example}
	Let us consider the $\bL_{n+1}^*$-chain for $n = 11$, and let $a = 8/11$. We show how we can find the operation $\Delta_{8/11}$ according to the procedure described in the proof of the previous proposition. In this case $a$ and $a^- = 7/11$ are both positive, so it fits with Case A above.
	Hence, the procedure above produces the following sequence of pairs (by simplicity, the denominator 11 will be omitted): $(8,7) \stackrel{*}{\mapsto}(5,3) \stackrel{+}{\mapsto} (10,6) \stackrel{*^2}{\mapsto} (7,0)$. Since $+(7/11)=1$, we obtain that $\Delta_{8/11}(x) = t(t_3(t_2(t_1(x)))) = {+}{*}^2{+}{*}x$, by using the notation of the proof of Proposition~\ref{prop3}. Similarly, one can check that 
	$\Delta_{9/11}(x) = t(t_1(x))$, where $t_1(x) = {*}^2 x$ and $t(x) = {+}^2 x$, i.e. $\Delta_{9/11}(x) = {+}^2{*}^2x$.  In this case, the pairs produced by the procedure described in the proof above are  $(9,8) \stackrel{*^2}{\mapsto}(3,0)$. Therefore, $\chi_{8/11}(x) = \Delta_{8/11}(x) \land \neg \Delta_{9/11}(x) = \min({+}{*}^2{+}{*}x, 1-  {+}^2{*}^2x)$.

	Table \ref{table1} shows, besides the operations $*$ and $+$, the different steps to obtain $\Delta_{8/11}$ and $\Delta_{9/11}$. The reader can easily obtain the other operators $\Delta_a$ from such table by applying the given  procedure.

	\begin{table}
		$${\small
			\begin{tabular}{| c | c | c |  c | c | c | c | c | c | c | c | c | c |}
			\hline
			$x$ & 0 & $\frac{1}{11}$& $\frac{2}{11}$& $\frac{3}{11}$& $\frac{4}{11}$& $\frac{5}{11}$& $\frac{6}{11}$& $\frac{7}{11}$ & $\frac{\bf 8}{\bf 11}$&  $\frac{\bf 9}{\bf 11}$& $\frac{10}{11}$& 1 \\
			\hline \hline
			$*x$ & 0 & 0 & 0 & 0 & 0 & 0 & $\frac{1}{11}$& $\frac{3}{11}$& $\frac{5}{11}$& $\frac{7}{11}$& $\frac{9}{11}$& 1 \\
			\hline \hline
			$+x$ & 0 &$\frac{2}{11}$& $\frac{4}{11}$& $\frac{6}{11}$& $\frac{8}{11}$& $\frac{10}{11}$&1& 1& 1& 1& 1 & 1 \\
			\hline \hline
			${+}{*}x$ & 0 & 0 & 0 & 0 & 0 & 0 & $\frac{2}{11}$& $\frac{6}{11}$& $\frac{10}{11}$& 1& 1 & 1 \\
			\hline
			${*}^2{+}{*} x$ & 0 & 0 & 0 & 0 & 0 & 0 & 0 & 0 & $\frac{7}{11}$& 1& 1 & 1 \\
			\hline
			${+}{*}^2{+}{*} x$ & 0 & 0 & 0 & 0 & 0 & 0 & 0 & 0 & 1 & 1& 1 & 1 \\
			\hline \hline
			${*}^2 x$ & 0 & 0 & 0 & 0 & 0 & 0 & 0 & 0 & 0 & $\frac{3}{11}$ & 1 & 1 \\
			\hline
			${+}^2{*}^2 x$ & 0 & 0 & 0 & 0 & 0 & 0 & 0 & 0 & 0 & 1 & 1 & 1 \\
			%$(+)^2* x$ & 0 & 0 & 0 & 0 & 0 & 0 & 4/11& 1& 1& 1& 1 & 1 \\
			%\hline
			%$* (+)^2* x$ & 0 & 0 & 0 & 0 & 0 & 0 & 0 & 1& 1& 1& 1 & 1 \\
			\hline
			\end{tabular}}
		$$
		\caption{Some definable operations in {\bf \L}$^*_{12}$} \label{table1}
	\end{table}
	
\end{example}

Actually, Proposition \ref{prop3} can be straightforwardly generalized to any subalgebra of a $\bL_{n+1}^*$.

\begin{corollary}\label{Delta}  Let $\bf A$ be a subalgebra of $\bL_{n+1}^*$. Then, for any element $a \in \bf A$,  the operations $\Delta_a$ and $\chi_a$ are also definable in $\bf A$. 
\end{corollary}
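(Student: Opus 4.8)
The plan is to observe that Proposition~\ref{prop3} does not merely assert the existence of the operations $\Delta_a$ and $\chi_a$ on $\bL^*_{n+1}$: its proof actually exhibits, for each fixed $a \in \L_{n+1}$, a concrete term $t_a(x)$ over the signature $\{\vee, *, \neg\}$ (recall that $+$ is itself definable from $*$ and $\neg$) whose interpretation in $\bL^*_{n+1}$ equals $\Delta_a$, and likewise a term realizing $\chi_a$. The crucial feature is that these terms carry no constants beyond $0$ and $1$ and no reference to the particular values $a$, $a^-$, $a^+$ \emph{as elements} of the algebra; those values intervene only at the meta-level, to select which composition of $*$ and $+$ is to be taken. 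Hence each $t_a$ is a genuine term of the common signature, and the corollary reduces to transporting these terms along the inclusion ${\bf A}\hookrightarrow\bL^*_{n+1}$.

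First I would note that if ${\bf A}$ is a subalgebra of $\bL^*_{n+1}$, then its universe is a subset of $\L_{n+1}$ closed under $\vee$, $*$ and $\neg$ (and containing $0,1$), so any element $a\in{\bf A}$ is in particular an element of $\L_{n+1}$ to which Proposition~\ref{prop3} applies and yields a term $t_a$. The heart of the argument is then the standard fact that term functions commute with the inclusion of a subalgebra: for every term $t$ and every $b$ in the universe of ${\bf A}$ one has $t^{\bf A}(b)=t^{\bL^*_{n+1}}(b)$. Applying this to $t_a$ gives, for all $b\in{\bf A}$, that $t_a^{\bf A}(b)=\Delta_a(b)$, i.e.\ $t_a^{\bf A}(b)=1$ if $b\geq a$ and $t_a^{\bf A}(b)=0$ otherwise. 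This is precisely the operation $\Delta_a$ on ${\bf A}$, namely the characteristic function of the principal filter $F_a$ computed inside ${\bf A}$; thus $\Delta_a$ is definable in ${\bf A}$ by the very same term $t_a$.

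For $\chi_a$ the identical reasoning applies to the term $\chi_a(x)=\Delta_a(x)\wedge\neg\Delta_{a^+}(x)$ of Proposition~\ref{prop3}. The only point deserving a line of justification is that $a^+$, the immediate successor of $a$ in the full chain $\L_{n+1}$, need not belong to ${\bf A}$; nevertheless $\Delta_{a^+}$ remains a bona fide signature term, and since no element of ${\bf A}$ can lie strictly between $a$ and $a^+$, its interpretation in ${\bf A}$ separates the elements of ${\bf A}$ that are $>a$ from those that are $\leq a$. Consequently $\chi_a^{\bf A}$ is the characteristic function of $\{a\}$ in ${\bf A}$, as required. I do not expect any substantial obstacle here: the entire content is the preservation of term functions under subalgebras, together with the observation that the constructions of Proposition~\ref{prop3} produce terms of the signature rather than mere value-dependent definitions, so that they survive restriction to ${\bf A}$ verbatim.
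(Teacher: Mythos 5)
Your proposal is correct and takes essentially the same approach as the paper: the paper's proof of Corollary~\ref{Delta} likewise observes that the terms produced by the procedure of Proposition~\ref{prop3} are built only from $*$, $\neg$ (and the derived $+$, $\vee$), which are closed in ${\bf A}$, so the very same terms and the same termination argument carry over to the subalgebra. Your explicit treatment of the subtlety that $a^+$ (or $a^-$, $1/n$) need not belong to ${\bf A}$ is a point the paper leaves implicit, but it does not alter the argument.
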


\begin{proof} Indeed, the same procedure defined in the proof of  Proposition \ref{prop3} to find the terms for $\Delta_a$ and $\chi_a$ in $\bL_{n+1}^*$ works in $\bf A$ as well, as the operations $*$ and $\neg$ are obviously closed in $\bf A$.  The argument given in that proof to show that the procedure always stops remains the same. 
\end{proof}

It is now almost immediate to check that the crisp (or order) implication as well as the G\"odel implication are definable in every $\bL^*_{n+1}$.

\begin{proposition} \label{godimp} The order implication and G\"odel implication,
	\begin{center}
		$x\Rightarrow_c y=
		\left\{\begin{array}{ll}
		1 &\mbox{ if }x\leq y\\
		0 & \mbox{ otherwise} 
		\end{array}
		\right.
		$\hspace{2cm}
		$x\Rightarrow_G y=
		\left\{\begin{array}{ll}
		1 &\mbox{ if }x\leq y\\
		y & \mbox{ otherwise} 
		\end{array}
		\right.
		$
	\end{center}
	%as well as the G\"odel implication
	%\begin{center}
	%	$x\Rightarrow_G y=1$ if $x\leq y$, and $x\Rightarrow_c y=y$ otherwise 
	%\end{center}
	are both definable in $\bL^*_{n+1}$.
\end{proposition}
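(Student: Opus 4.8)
The plan is to build both implications on top of the definable operators $\Delta_a$ and $\chi_a$ supplied by Proposition~\ref{prop3}, so that the whole argument reduces to assembling a suitable finite combination of these together with $\vee$ and $\neg$. First I would record that $\wedge$ is available in the signature $\{\vee, *, \neg\}$, since $\neg$ is involutive and hence $x \wedge y = \neg(\neg x \vee \neg y)$. I would then observe that it suffices to define the crisp implication $\Rightarrow_c$, because the G\"odel implication is obtained from it by a single join, as explained below.

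For $\Rightarrow_c$, the idea is to detect the strict inequality $x > y$ by a threshold argument: $x > y$ holds iff there is some $a \in \L_{n+1}$ with $x \geq a$ and $y < a$, i.e. with $\Delta_a(x) = 1$ and $\Delta_a(y) = 0$. Since each $\Delta_a$ is $\{0,1\}$-valued, the finite term
$$x \Rightarrow_c y := \neg\bigvee_{a \in \L_{n+1}}\bigl(\Delta_a(x) \wedge \neg\Delta_a(y)\bigr)$$
is itself $\{0,1\}$-valued, with $\neg$ acting as Boolean complement. I would verify correctness by the two cases: if $x \leq y$, then $x \geq a$ forces $y \geq a$ for every $a$, so each disjunct vanishes and the outer $\neg$ yields $1$; if $x > y$, the disjunct at $a = x$ fires (as $\Delta_x(x) = 1$ and $\Delta_x(y) = 0$), so the disjunction equals $1$ and its negation is $0$. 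An entirely analogous definition using the characteristic operators, $x \Rightarrow_c y = \bigvee_{a \leq b}(\chi_a(x) \wedge \chi_b(y))$, would work just as well.

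Finally, the G\"odel implication is defined by $x \Rightarrow_G y := (x \Rightarrow_c y) \vee y$: when $x \leq y$ the first join-argument is $1$ and the value is $1$, and when $x > y$ it is $0$ and the value is $y$, matching the required truth table. I do not expect any real obstacle here, since Proposition~\ref{prop3} does all the heavy lifting; the only point requiring care is checking that the threshold disjunction correctly captures $\leq$, which is exactly the two-case verification above, and that all intermediate values stay within $\{0,1\}$ so that $\neg$ behaves as complementation.
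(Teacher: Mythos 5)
Your proposal is correct and takes essentially the same approach as the paper: everything rests on the definability of the $\Delta_a$'s and $\chi_a$'s from Proposition~\ref{prop3}, the crisp implication is a finite Boolean-valued combination of them, and $x \Rightarrow_G y = (x \Rightarrow_c y) \vee y$ is exactly the paper's definition. Your primary term $\neg\bigvee_{a}\bigl(\Delta_a(x) \wedge \neg\Delta_a(y)\bigr)$ is merely a De Morgan rewriting of the paper's positive form $\bigvee_{0\leq i \leq n}\bigl(\chi_{i/n}(x)\wedge \Delta_{i/n}(y)\bigr)$, and your alternative $\bigvee_{a \leq b}\bigl(\chi_a(x) \wedge \chi_b(y)\bigr)$ expands to it directly.
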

\begin{proof}
	Indeed, $\Rightarrow_c$ can be defined as 
	\begin{equation}\label{eqImplication}
	x \Rightarrow_c y =\bigvee_{0\leq i \leq n}(\chi_{i/n}(x)\wedge \Delta_{i/n}(y)) . 
	\end{equation}
	In turn, G\"odel implication is given by 
	$
	x \Rightarrow_G y = (x\Rightarrow_c y)\vee y.
	$ 
\end{proof}

Now we are ready to  prove the main result of this section, that is a characterization of those algebras $\bL^*_{n+1}$ that define \luk\ implication $\Rightarrow_{\textrm \L}$ or, equivalently, of those algebras $\bL^*_{n+1}$ that are term-equivalent to $\bL_{n+1}$. First, recall how strictly simple algebras are introduced in Definition \ref{def:strictlySimple}.

\begin{theorem}\label{thm:termEquivalence}
	For all $n\neq 4$, the finite MV-chain $\bL_{n+1}$ is term equivalent to $\bL^*_{n+1}$ iff $\bL^*_{n+1}$ is strictly simple. 
\end{theorem}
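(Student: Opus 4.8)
The plan is to prove both directions of the biconditional, using the machinery already developed in this section. For the left-to-right direction, I would argue by contraposition: assume $\bL^*_{n+1}$ is \emph{not} strictly simple, so it has a proper non-trivial subalgebra $\mathbf{A}$, and show that $\Rightarrow_{\textrm \L}$ cannot be definable. The key observation is that term-equivalence with $\bL_{n+1}$ would force $\bL^*_{n+1}$ to be strictly simple, because $\bL_{n+1}$ is strictly simple precisely when $n$ is prime (as recalled via Grigolia just before the statement), and more to the point, if $\Rightarrow_{\textrm \L}$ were definable then every subalgebra of $\bL^*_{n+1}$ would be closed under $\Rightarrow_{\textrm \L}$ and hence would be a genuine MV-subalgebra of $\bL_{n+1}$. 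By Proposition~\ref{subalebras} and the structure of MV-subalgebras, the proper non-trivial subalgebra $\mathbf{A}$ would then be isomorphic to some $\bL^*_{m+1}$ with $m \mid n$, $1 < m < n$; but Example~\ref{exemp1} shows that proper subalgebras of $\bL^*_{n+1}$ need \emph{not} be of this MV-type (e.g.\ the $8$-element subalgebra for $n=9$, or the subalgebra generated by $16/17$ for the prime $n=17$). The contradiction is that a definable $\Rightarrow_{\textrm \L}$ would make every subalgebra an MV-chain, contradicting the existence of such non-MV subalgebras.

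For the right-to-left direction, assume $\bL^*_{n+1}$ is strictly simple; I must show $\Rightarrow_{\textrm \L}$ is definable from $\{\vee, *, \neg\}$. Here the workhorse is Proposition~\ref{prop3}, which already gives us the definable characteristic functions $\chi_a$ and principal-filter indicators $\Delta_a$ for every $a \in \L_{n+1}$. The natural idea is to reconstruct \luk\ implication pointwise by cases on the value of $x$: since $x \Rightarrow_{\textrm \L} y = \min\{1, 1-x+y\}$ takes a definite value on $\L_{n+1}$ for each fixed $x = i/n$, I would write
\begin{equation*}
x \Rightarrow_{\textrm \L} y = \bigvee_{0 \leq i \leq n} \Big( \chi_{i/n}(x) \wedge \big( \textstyle\bigvee_{0 \leq j \leq n} (\chi_{j/n}(y) \wedge c_{ij}) \big) \Big),
\end{equation*}
where $c_{ij} = \min\{1, 1 - i/n + j/n\}$ is the constant value of the implication at $(i/n, j/n)$. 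The only gap is that each such constant $c_{ij} \in \L_{n+1}$ must itself be a definable (nullary) term. This is where strict simplicity is used: since $\langle \co \rangle^* = \bL^*_{n+1}$ by Theorem~\ref{Thm:New1}, every element of $\L_{n+1}$ is generated from the coatom $\co$ by $*$ and $\neg$, so every constant $c_{ij}$ is expressible, say by applying the already-definable $\chi_{c_{ij}}$ or by exhibiting it as a term in $\co$. Hence the whole right-hand side is a legitimate term in the signature $\{\vee, \wedge, *, \neg\}$, and since $\wedge$ is definable (via $\neg$ and $\vee$, or via the definable Gödel apparatus), $\Rightarrow_{\textrm \L}$ is definable, giving term-equivalence with $\bL_{n+1}$.

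I would organize the argument so that the exclusion $n \neq 4$ is tracked exactly as in Theorem~\ref{Thm:New1}: the case $n=4$ is anomalous because $\langle 3/4 \rangle^* = \bL^*_5$ makes the coatom generate the whole algebra even though $\bL^*_5$ is not strictly simple in the relevant arithmetic sense, so the equivalence between strict simplicity and $\langle \co \rangle^* = \bL^*_{n+1}$ breaks there. Keeping $n \neq 4$ lets me invoke Theorem~\ref{Thm:New1} cleanly in the right-to-left direction.

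The main obstacle I anticipate is the left-to-right direction, specifically making precise the claim that definability of $\Rightarrow_{\textrm \L}$ forces every subalgebra to be an MV-chain and hence to be of the form $\bL^*_{m+1}$ with $m \mid n$. The delicate point is that a subalgebra of $\bL^*_{n+1}$ is a subset closed under $\{\vee, *, \neg\}$, and it need not be closed under $\Rightarrow_{\textrm \L}$ a priori; but if $\Rightarrow_{\textrm \L}$ is a \emph{term} in these operations, then closure is automatic, and a subset of $\L_{n+1}$ closed under all MV-operations is exactly an MV-subchain, which by the classical theory is $\bL_{m+1}$ for $m \mid n$. The care needed is to verify that the non-MV subalgebras exhibited in Example~\ref{exemp1} really are closed under $*$ and $\neg$ but fail to be closed under $\Rightarrow_{\textrm \L}$ (equivalently, fail to be additively closed / to be of the form $\bL^*_{m+1}$), which is what blocks definability; this is the conceptual crux, whereas the right-to-left direction is essentially an assembly of already-proved definability facts.
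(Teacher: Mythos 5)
Both directions of your proposal contain a genuine gap. In the right-to-left direction, the step where you realize the constants $c_{ij} = 1 - i/n + j/n$ as terms fails: in the signature $\{\vee, *, \neg, 0, 1\}$ the only constants are $0$ and $1$, and since $\{0,1\}$ is itself a subalgebra, every variable-free term evaluates into $\{0,1\}$ — so no closed term can denote $c_{ij}$ when $0 < c_{ij} < 1$. Neither of your suggested fixes works: applying $\chi_{c_{ij}}$ to anything outputs $0$ or $1$, not $c_{ij}$, and ``a term in $\co$'' is unavailable because $\co$ is not in the signature. The repair, which is what the paper does, is to use strict simplicity to obtain for each $a \notin \{0,1\}$ and each $b$ a \emph{unary} term $t_{a,b}(x)$ with $t_{a,b}(a) = b$ (otherwise $\langle a \rangle^*$ would be a proper non-trivial subalgebra), and then to manufacture the constant from $x$ itself under the guard: inside the disjunct guarded by $\chi_{i/n}(x)$ one knows $x = i/n$, so $t_{i/n,\,c_{ij}}(x)$ evaluates to $c_{ij}$. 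The paper's term for $\Rightarrow_{\textrm \L}$ is exactly your formula with $c_{ij}$ replaced by $t_{i/n, c_{ij}}(x)$, together with an extra disjunct $x \Rightarrow_c y$ and the case ${\bf t}_{n,j}(x,y) = y$.

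In the left-to-right direction your contraposition does not close. From definability of $\Rightarrow_{\textrm \L}$ you correctly infer that every $\{\vee,*,\neg\}$-subalgebra is MV-closed, hence of the form $\bL_{m+1}$ with $m \mid n$; but the mere existence of a proper MV-type subalgebra is not in tension with anything (for composite $n$, the MV-chain $\bL_{n+1}$ itself has such subalgebras), and Example~\ref{exemp1} exhibits non-MV-type subalgebras only for the specific values $n=9$ and $n=17$ — for an arbitrary non-strictly-simple $n$ you have no guarantee that some subalgebra fails to be MV-closed, so no contradiction is produced. The paper instead argues directly: if $\odot$ is definable, then $\langle \co \rangle^*$ is closed under $\odot$ and contains $(n-i-1)/n = \co \odot ((n-i)/n)$ for $i = 1, \ldots, n-1$, hence $\langle \co \rangle^* = \bL^*_{n+1}$, and Theorem~\ref{Thm:New1} — this is precisely where $n \neq 4$ enters, as you correctly anticipated — upgrades coatom-generation to strict simplicity. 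If you insist on the contrapositive framing, the fix is: for $n \neq 4$ non-strictly-simple, Theorem~\ref{Thm:New1} gives $\langle \co \rangle^* \neq \bL^*_{n+1}$, which contradicts MV-closure of $\langle \co \rangle^*$, since $\gcd(n-1,n)=1$ forces the MV-subalgebra generated by the coatom to be the whole chain.
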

\begin{proof}
	Left-to-right: If $\bL_{n+1}$ is term-equivalent to $\bL^*_{n+1}$ then \luk\ product $\odot$ is definable in $\bL^*_{n+1}$, and hence $\langle (n-1)/n \rangle^*=\bL^*_{n+1}$. Indeed, we can obtain $(n-i-1)/n = ((n-1)/n) \odot ((n-i)/n)$ for $i=1, \ldots, n-1$,  and $1 = \neg 0$. By Theorem~\ref{Thm:New1} it follows that $\bL^*_{n+1}$ is strictly simple.
	\vspace{.1cm}
	
	Right-to-left: since $\bL^*_{n+1}$ is strictly simple then,  for each $a,b \in \Ln$ where $a \notin\{0,1\}$ there is a definable term ${t}_{a,b}(x)$ such that ${t}_{a,b}(a) = b$. Otherwise, if for some $a\notin\{0,1\}$ and $b  \in \Ln$ there is no such term then ${\bf A}=\langle a\rangle^*$ would be a proper subalgebra of $\bL^*_{n+1}$  (since $b \not \in {\bf A}$) different from $\{0,1\}$, a contradiction.  
	%
	%By Proposition~\ref{prop3} the operations $\chi_a(x)$ are definable for each $a \in \L_{n+1}$, then in  $\bL^*_{n+1}$ we can define {\L}ukasiewicz implication $\Rightarrow_{\textrm \L}$ as follows:  
	%$$x\Rightarrow_{\textrm \L} y=$$
	%$$ (x \Rightarrow_c y)  \lor  \left (\bigvee_{n > i > j \geq 0} \chi_{i/n}(x)\wedge \chi_{j/n}(y) \land {\bf t}_{i/n, a_{ij}}(x) \right) \lor  \left( \bigvee_{n > j \geq 0} \chi_{1}(x)\wedge \chi_{j/n}(y) \land y \right )$$
	%where $x\Rightarrow_c y$ is defined as in Proposition~\ref{godimp}  and $a_{ij} = 1-i/n+j/n$. 
	Now, for $0\leq j < i \leq n$ consider terms ${\bf t}_{i,j}(x,y)$ such that ${\bf t}_{i,j}(i/n,j/n)=(i/n) \Rightarrow_{\textrm \L}(j/n)$. Such terms can be defined as follows: if $n > i > j \geq 0$ then  ${\bf t}_{i,j}(x, y)= t_{i/n, a_{ij}}(x)$, where $a_{ij} = 1-i/n+j/n$; and ${\bf t}_{n,j}(x,y)= y$ for $0\leq j < n$. Since by Proposition~\ref{prop3} the operations $\chi_a(x)$ are definable for each $a \in \L_{n+1}$, then in  $\bL^*_{n+1}$ we can define {\L}ukasiewicz implication $\Rightarrow_{\textrm \L}$ as follows:  
	$$x\Rightarrow_{\textrm \L} y= (x \Rightarrow_c y)  \lor  \left (\bigvee_{n \geq i > j \geq 0} \chi_{i/n}(x)\wedge \chi_{j/n}(y) \land {\bf t}_{i,j}(x,y) \right) $$
	where $x\Rightarrow_c y$ is defined as in Proposition~\ref{godimp}.
\end{proof}

%\begin{remark}
%The case $n=4$ is a singular one: it is the only counterexample for %Propositions~\ref{simple} and~\ref{lemma4}, and for 
%Theorem~\ref{thm:termEquivalence}. First,  it is generated by its coatom: $\bL^*_{5}=\langle 3/4\rangle^*$. In addition, it is term-equivalent to $\bL_{5}$. Indeed,  {\L}ukasiewicz implication $\Rightarrow_{\textrm \L}$ can be defined in $\bL^*_{5}$ as in the proof of  Theorem~\ref{thm:termEquivalence}, with suitable adaptations. For $0\leq j < i < 4$ consider terms ${\bf t}_{i,j}(x,y)$ such that ${\bf t}_{i,j}(i/4,j/4)=(i/4) \Rightarrow_{\textrm \L}(j/4)$. Such terms can be defined as follows (observing that $1/2=2/4$ in $\bL^*_{5}$): ${\bf t}_{i,0}(x,y)=\neg x$ for $0<i<4$; ${\bf t}_{3,2}(x,y)=x$; ${\bf t}_{3,1}(x,y)=*x$; and ${\bf t}_{2,1}(x,y)=\neg y$. Then, $x\Rightarrow_{\textrm \L} y$ is given by the term
%$$ (x \Rightarrow_c y)  \lor  \left (\bigvee_{4 > i > j \geq 0} \chi_{i/4}(x)\wedge \chi_{j/4}(y) \land {\bf t}_{i, j}(x,y) \right) \lor  \left( \bigvee_{4 > j \geq 0} \chi_{1}(x)\wedge \chi_{j/4}(y) \land y \right ).$$
%However, $\bL^*_{5}$ is not strictly simple, since it has the non-trivial subalgebra with domain $\{0,1/2,1\}$, and $4$ is not prime. 
%\end{remark}

\begin{remark}
	The case $n=4$ is a singular one: it is the only counterexample for %Propositions~\ref{simple} and~\ref{lemma4}, and for
	Theorems~\ref{Thm:New1} and \ref{thm:termEquivalence}. First,  $\bL^*_{5}$ is generated by its coatom: $\bL^*_{5}=\langle 3/4\rangle^*$. In addition, it is term-equivalent to $\bL_{5}$. Indeed,  {\L}ukasiewicz implication $\Rightarrow_{\textrm \L}$ can be defined in $\bL^*_{5}$ as in the proof of  Theorem~\ref{thm:termEquivalence}, with suitable adaptations. For $0\leq j < i \leq 4$ consider terms ${\bf t}_{i,j}(x,y)$ such that ${\bf t}_{i,j}(i/4,j/4)=(i/4) \Rightarrow_{\textrm \L}(j/4)$. Such terms can be defined as follows (observing that $1/2=2/4$ in $\bL^*_{5}$): ${\bf t}_{4,j}(x,y)= y$ for $0\leq j < 4$; ${\bf t}_{i,0}(x,y)=\neg x$ for $0<i\leq 4$; ${\bf t}_{3,2}(x,y)=x$; ${\bf t}_{3,1}(x,y)=*x$; and ${\bf t}_{2,1}(x,y)=\neg y$. Then, $x\Rightarrow_{\textrm \L} y$ is given by the term
	$$x\Rightarrow_{\textrm \L} y= (x \Rightarrow_c y)  \lor  \left (\bigvee_{4 \geq i > j \geq 0} \chi_{i/4}(x)\wedge \chi_{j/4}(y) \land {\bf t}_{i, j}(x,y) \right).$$
	However, $\bL^*_{5}$ is not strictly simple, since it has the non-trivial subalgebra with domain $\{0,1/2,1\}$.%, and $4$ is not prime. 
\end{remark}

%\red{
%Let us end this subsection recalling that, as it is well-known,  a finite MV-chain $\bL_{n+1}$ is strictly simple iff $n$ is prime \cite{Grigolia}. Thus, 
%%Note that, for every $n$, if ${\bf B} = (B, \Rightarrow_{\textrm \L}, \neg, 0,1)$ is an MV-subalgebra of $\bL_{n+1}$, then ${\bf B}^* = (B, *, \neg, 0,1)$ is a subalgebra of $\bL_{n+1}^*$ as well. 
% if $\bL_{n+1}$ is not strictly simple, then $\bL_{n+1}^*$ is not strictly simple either. Therefore, if $n$ is not prime, $\bL_{n+1}^*$ is not strictly simple.
% 
% \begin{proposition}\label{lemma4}
% For all $n\neq 4$, if $\bL^*_{n+1}$ is strictly simple, then $n$ is prime. 
% \end{proposition}
% 
%  However, as we will show in the next section, in contrast with the case of $\bL_{n+1}$, $n$ being prime is not a sufficient condition for $\bL_{n+1}$ to be strictly simple.  %In Lemma~\ref{cor:Fermat} below we will provide some examples of prime $n$ for which $\bL_{n+1}^*$ is not strictly simple, in view of Theorem~\ref{lemma21}. 
%%\end{remark}
%}

\subsection{Strictly simple $\bL^*_{n+1}$-chains and prime numbers}\label{subsec:primes}
As we have shown in Proposition \ref{subalebras}, $n$ being prime is a necessary condition for $\bL^*_{n+1}$ to be strictly simple which, in turn, is equivalent to the term-equivalence between $\bL_{n+1}$ and $\bL^*_{n+1}$ (if $n\neq 4)$ by Theorem \ref{thm:termEquivalence}. However, the primality of $n$ is not a sufficient condition for $\bL^*_{n+1}$ to be strictly simple. In fact, as the following result shows, there are prime numbers $n$ for which  $\bL^*_{n+1}$ contains non-trivial subalgebras. This fact was already observed in Example \ref{exemp1} (2).

\begin{lemma}\label{cor:Fermat}
	If $n > 5$ is of the form $n = 2^m+1$ then $\bL_{n+1}$ and $\bL^*_{n+1}$ are not term-equivalent. 
\end{lemma}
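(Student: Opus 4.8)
The plan is to exhibit a proper non-trivial subalgebra of $\bL^*_{n+1}$; by Theorem~\ref{thm:termEquivalence} (which applies since $n = 2^m+1 \geq 9 \neq 4$) this immediately gives that $\bL^*_{n+1}$ is not strictly simple and hence not term-equivalent to $\bL_{n+1}$. The natural candidate is $\langle \co \rangle^*$, the subalgebra generated by the coatom $\co = (n-1)/n$, whose domain is computed through the procedure ${\tt P}$ by Proposition~\ref{subalgprin}.

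First I would run ${\tt P}(n,\co)$ explicitly, tracking the numerators of the elements written as $k/n$. Recall that $*$ sends a numerator $k$ with $k > n/2$ to $2k - n$, while $\neg$ sends $k$ to $n-k$. The hypothesis $n = 2^m + 1$ is exactly what makes this clean: since $n - 1 = 2^m$, a short induction shows $a_i = (n - 2^{i-1})/n$ for $i = 1, \ldots, m+1$, with each $a_i$ positive precisely while $i \le m$ (because $n - 2^{i-1} > n/2 \iff 2^i < 2^m+1 \iff i \le m$). Thus the square is applied exactly $m$ times and $a_{m+1} = (n - 2^m)/n = 1/n$, after which the procedure negates back to $\co$ and stops. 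Consequently ${\tt I}(n,\co) = \{(n-2^j)/n : 0 \le j \le m\}$ and ${\tt NI}(n,\co) = \{2^j/n : 0 \le j \le m\}$.

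By Proposition~\ref{subalgprin} the domain of $\langle \co \rangle^*$ is ${\tt R}(n,\co) \cup \{0,1\}$, so every element of $\langle \co \rangle^*$ other than $0,1$ has numerator either a power of $2$ (at most $n-1$) or $n$ minus a power of $2$. To finish, I would point to a value of $\bL^*_{n+1}$ lying outside this set, the simplest being $3/n$: as $n > 5$ forces $m \ge 3$, the numerator $3$ is not a power of $2$, and $n - 3 = 2^m - 2 = 2(2^{m-1}-1)$ has the odd factor $2^{m-1}-1 > 1$, so it is not a power of $2$ either. Hence $3/n \in \Lns \setminus \{0,1\}$ but $3/n \notin \langle \co \rangle^*$, so $\langle \co \rangle^*$ is a proper subalgebra; it is non-trivial since it contains $\co \neq 0,1$. (Equivalently, one can count $|\langle \co \rangle^*| = 2m + 2$, the sets ${\tt I}$ and ${\tt NI}$ overlapping exactly in $\co$ and $1/n$, and note $2m + 2 < 2^m + 2 = |\Lns|$ for $m \ge 3$.)

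The only delicate point — and the step I would treat most carefully — is the bookkeeping along ${\tt P}(n,\co)$: verifying the positivity threshold so that $*$ is applied exactly $m$ times before the atom is reached, and then checking the elementary $2$-adic fact that neither $3$ nor $n-3$ is a power of $2$ once $n-1$ is. Everything else is routine. I would also remark that when $n$ is composite (e.g.\ $n = 9$) the statement already follows from Proposition~\ref{subalebras}, so the genuine content concerns the Fermat-prime case.
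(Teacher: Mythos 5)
Your proposal is correct and follows essentially the same route as the paper: both generate the subalgebra $\langle \co \rangle^*$ via the procedure ${\tt P}$, invoke Proposition~\ref{subalgprin}, and conclude via Theorem~\ref{thm:termEquivalence} that $\bL^*_{n+1}$ is not strictly simple. The only difference is cosmetic: the paper stops at the cardinality bound $|\langle \co \rangle^*| \leq 2m+2 < n$ (which you include parenthetically), whereas you additionally work out the exact elements and exhibit the concrete witness $3/n$ -- a harmless refinement of the same argument.
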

\begin{proof}
	Let $n$ be of the form $n = 2^m +1$ for some $m>2$.
	If  $\co = (n-1)/n$ then $*^{m}\co=1/n$. By Proposition 
	%s~\ref{lemmaKey} and~
	\ref{subalgprin}, the algebra $\langle \co\rangle^*$ has domain ${\tt I}(n,\co) \ \cup \ {\tt NI}(n,\co) \cup \{0,1\}$ (recall Definition~\ref{remProcP}). Since ${\tt I}(n,\co)$ has $m+1$ elements then ${\tt NI}(n,\co)\setminus {\tt I}(n,\co)$ has at most $m-1$ elements (since $\co=\neg (1/n)$ and  $1/n=\neg \co$ belong to  ${\tt NI}(n,\co)\cap {\tt I}(n,\co)$). Hence, the  algebra $\langle \co\rangle^*$ has at most $2 + 2(m-1) + 2 = 2m+2$ elements. Since $2m+2< 2^m+1=n$  as $m>2$, $\langle \co\rangle^*$ is properly contained in $\bL_{n+1}$, and it is different from $\{0,1\}$. Therefore $\langle \co\rangle^*$ is a proper non trivial subalgebra of $\bL^*_{n+1}$, and the result follows from Theorem~\ref{thm:termEquivalence}. 
\end{proof}

It is well-known that if  $n = 2^m+1$  is prime then $m$ is of the form $2^k$; in such case, $n=2^{(2^k)}+1$ is said to be a {\em Fermat prime}. As mentioned in the introduction, up to 2020 the only known Fermat primes are  3, 5, 17, 257, and 65537, and it is an open problem to determine whether there are infinitely many such prime numbers. Therefore, for any prime  $n> 5$ of the form $n = 2^m+1$ (i.e., for any Fermat  prime $> 5$),
$\bL^*_{n+1}$ contains non-trivial subalgebras, and hence it is not strictly simple. 

Notice that, as we showed in Example \ref{exemp1} (2), the subalgebra of $\bL^*_{18}$ generated by its coatom is a proper subalgebra of $\bL^*_{18}$ and indeed, 17 is the first Fermat prime number greater than $5$.

%\begin{example}
%The first Fermat prime number greater than $5$ is $n=17$. Consider the subalgebra of $\bL^*_{18}$ generated by its co-atom $16/17$. A direct computation shows that
%
%$$\bigg\langle \frac{16}{17} \bigg\rangle^*=\bigg\{0,\frac{1}{17},\frac{2}{17},\frac{4}{17},\frac{8}{17},\frac{9}{17},\frac{13}{17},\frac{15}{17},\frac{16}{17},1\bigg\},$$
%which is in fact a proper non-trivial subalgebra of $\bL^*_{18}$ showing that the latter is not strictly simple.
%\end{example}

We have seen that, in contrast with the case of the $\bL_{n+1}$-algebras, which are strictly simple iff $n$ is prime, there are prime numbers for which $\bL^*_{n+1}$ contains proper non-trivial subalgebras. It is however possible to characterize those prime numbers which ensure the  term-equivalence between $\bL_{n+1}$ and $\bL^*_{n+1}$. Let us start by the following definition.

\begin{definition}\label{def:campinas}
	Let $\Pi$ be the set of 	
	odd primes $n$ such that $2^m$ is not {congruent with $\pm 1$} mod $n$ for {all $m$ such that} $0 < m < (n-1)/2$.\footnote{These prime numbers are known in the literature as those {\em odd primes with one coach}. Properties satisfied by such a set of prime numbers can be found in the following webpage of the Online Encyclopedia of Integer Sequences (OEIS): https://oeis.org/A216371.
		%\blue{** y si referenciamos la sequencia en particular?**} 
		Further interesting properties on the class $\Pi$ can be found in \cite{HP2010}}
\end{definition}

By  Fermat's little theorem, $2^{n-1}$ is congruent with 1 mod $n$, for every odd prime $n$. Since $2^{n-1}=b^2$ for $b=2^m$ and $m = (n-1)/2$, it follows that $b^2$ is congruent with 1 mod $n$. But then, using that $n$ is prime, we conclude that $b=2^m$ is congruent with $\pm 1$ mod $n$, for $m = (n-1)/2$. From this, $n$ is in  $\Pi$ iff $n$ is an odd prime such that $(n-1)/2$ is the least $m>0$ such that $2^m$ is congruent with $\pm 1$ mod $n$. 

As a matter of example, the first prime numbers (below 200) in the set $\Pi$ are:
3, 5, 7, 11, 13, 19, 23, 29, 37, 47, 53, 59, 61, 67, 71, 79, 83, 101, 103, 107, 131, 139, 149, 163, 167, 173, 179, 181, 191, 197 and 199.

The following Theorem \ref{mainThm} is the main result of this subsection and it characterizes the class of prime numbers for which the \luk\ implication is definable in $\bL^*_{n+1}$ (besides $n=2$).
Before proving it, we need to show the following lemma. 

\begin{lemma}\label{lemmaKey}
	For each odd number $n$, the procedure $\mathtt{P}$ starting at $\co = (n-1)/n$ stops after reaching $1/n$, that is, if $\mathtt{P}(n, \co) = [a_1, a_2, \ldots, a_t]$ then $a_t = 1/n$.
\end{lemma}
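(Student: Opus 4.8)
The plan is to track the procedure $\mathtt{P}$ at the level of numerators. Writing each element as $k/n$ with $1\le k\le n-1$, the two rules of Definition~\ref{remProcP} become the single map $f$ on $\{1,\dots,n-1\}$ given by $f(k)=2k-n$ when $k>n/2$ (the case $a_i>1/2$, where $*$ acts) and $f(k)=n-k$ when $k<n/2$ (the case $a_i<1/2$, where $\neg$ acts); since $n$ is odd, no numerator ever equals $n/2$, so the two cases are exhaustive and disjoint. First I would check that the procedure never leaves this set: if $k>n/2$ then $k\ge(n+1)/2$, so $f(k)=2k-n\ge 1$, and if $k<n/2$ then $f(k)=n-k$ is again in $\{1,\dots,n-1\}$. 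Hence $0$ and $1$ are never reached, and the whole dynamics is governed by $f$ on $\{1,\dots,n-1\}$, starting from $k_1=n-1$ (the coatom $\co$).

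The target $1/n$ is produced in exactly one way: $f(k)=1$ forces $2k-n=1$, i.e. $k=(n+1)/2$, the least positive element; and conversely $f(1)=n-1=\co$, so the numerator $1$ is the unique $f$-preimage of the starting point. Thus it suffices to show that the forward orbit of $\co$ reaches the numerator $1$: once it does, the next step returns to $\co$, producing the first repetition, so the last element listed is $a_t=1/n$, exactly as claimed. Distinctness of all earlier terms (hence that this really is the first repeat) will follow from the cycle structure established next.

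To prove reachability I would pass to the map $g$ induced on the set $P=\{k:n/2<k\le n-1\}$ of positive numerators, sending a positive element to the next positive element produced by $\mathtt{P}$. A direct computation gives $g(k)=2k-n$ if $k>3n/4$ (one application of $*$ stays in $P$) and $g(k)=2(n-k)$ if $k<3n/4$ (one $*$ lands below $1/2$, and the following $\neg$ returns to $P$); both values indeed lie in $P$. The decisive step is that $g$ is injective: within a single branch this is immediate, while a mixed collision $2k-n=2(n-k')$ would force $k+k'=3n/2$, impossible for $n$ odd. Being an injective self-map of the finite set $P$, $g$ is a bijection, so every element of $P$ — in particular $\co$ — lies on a $g$-cycle.

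Finally I would read off the conclusion. Since $\co$ lies on a $g$-cycle, its $g$-orbit returns to $\co$, and the predecessor of $\co$ on this cycle is its unique $g$-preimage; solving $g(k)=\co$ gives $k=(n+1)/2$ (the branch $2k-n=n-1$ has no integer solution for $n$ odd). Hence the orbit reaches the least positive element $(n+1)/2$, and applying $*$ to it yields $\ast\big((n+1)/(2n)\big)=1/n$, after which $\neg(1/n)=\co$ closes the loop. The positive numerators traversed are pairwise distinct (they form the $g$-cycle), and the interspersed negative numerators — each the unique $\ast$-image of a distinct positive one, and all $<n/2$ — are pairwise distinct and different from the positive ones; so every term before the return to $\co$ is distinct, whence $1/n$ is the last element of $\mathtt{P}(n,\co)$, i.e. $a_t=1/n$. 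The only genuinely non-routine point is the injectivity of $g$, and the whole argument hinges on the parity fact that $3n/2$ is not an integer when $n$ is odd.
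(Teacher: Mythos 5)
Your proof is correct, and it takes a genuinely different route from the one in the paper. The paper's proof is a local argument by contradiction based on parity bookkeeping of numerators: for odd $n$, applying $*$ to a positive element produces an odd numerator, while negating an odd numerator produces an even one (Fact~\ref{fact0}); from this it deduces that if the procedure stopped at $k/n$ with $k>1$, the repeated element would have to have been produced in a way forcing the procedure to have stopped one step earlier, a contradiction, so $k=1$. You instead analyze the dynamics globally: you pass to the integer map on numerators, show that the induced ``next positive element'' map $g$ on $P=\{k : n/2 < k \leq n-1\}$ is injective --- the only delicate point being the mixed collision $k+k'=3n/2$, impossible for odd $n$ --- hence a bijection of a finite set, so that the numerator $n-1$ of $\co$ lies on a pure $g$-cycle; you then compute that its unique $g$-predecessor is $(n+1)/2$, whose $*$-image is $1/n$, and your distinctness argument for the interleaved negative elements correctly rules out any earlier repetition (this last check matters, since $g$ is defined as an abstract self-map of $P$ independently of the stopping rule). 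Both arguments hinge on the same arithmetic fact, oddness of $n$ (your non-integrality of $3n/2$ and $(2n-1)/2$ plays the role of the paper's odd/even numerator dichotomy), but yours is constructive and proves strictly more: it shows $a_{t+1}=a_1=\co$, i.e.\ the procedure returns to its starting point, which by Proposition~\ref{strictly} yields as a byproduct that $\langle\co\rangle^*$ is strictly simple for every odd $n$ --- a cycle-structure insight the paper only develops later (Lemma~\ref{lemmaNew2} and Subsection~\ref{subsec:subLestrella1}), whereas the paper's parity proof is shorter and needs no global structure.
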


\begin{proof}
	We already observed that $\mathtt{P}$ always stops since $\Lns$ is finite. Thus assume, by way of contradiction, that $\mathtt{P}$  stops at $a_t=k/n$ with $k>1$. %\blue{Since $n-1$ is even, by Fact \ref{fact0}, $k$ is odd}  
	
	\begin{fact}\label{fact0}
		%Let $q$ and $n>q$ be positive integers and $n$ odd. If $q$ is odd,  ${^*}(q/n)$ has odd numerator and $\neg(q/n)$ has even numerator. If $q$ is even, then $^*(q/n)$ has odd numerator. 
		Let $q$ and $n>q$ be positive integers and $n$ odd. Then: (1) if $q > n/2$,  $*(q/n)$ has always an odd numerator; (2) if $q$ is odd, then $\neg(q/n)$ has even numerator. 
	\end{fact}
	
	By Fact \ref{fact0}, $k$ cannot be even. Indeed, if $k$ were even, then $a_t$ would be obtained from $a_{t-1}$ by negating it, i.e.\  $a_t = \neg a_{t-1}$, and then $a_{t+1} = *(a_t)$ should coincide with a previous element $a_i$ in the list $[a_1, a_2, \ldots, a_t]$ such that $ a_i < a_1 = (n-1)/n$. But then $a_{t+1} = a_i$ should have an odd numerator, and hence $a_i = *(a_{i-1})$, that is, we would have  $a_{t+1} = *(a_t) = *(a_{i-1})$, hence $a_t = a_{i-1}$, and the procedure should have stopped at $a_{t-1}$, contradiction. Therefore $k$ must be odd. 
	
	Since $\mathtt{P}$ stops at $a_m = k/n$, 
	there exists an $a_j < a_1$ already met by the procedure such that $a_{t+1} = a_j$. If $a_{t+1} = *(a_t)$, then we reason as above and get $a_{j-1} = a_{t}$, contradiction. So let us assume  $a_{t+1} = \neg (a_t) = n - (k/n)=(n-k)/n=a_j$. 
	%and since $\mathtt{P}$ stops at $k/n$, there exists an $a_j$ already met by the procedure such that  $$\neg (k/n)=(n-k)/n=a_j$$ for some $j>1$.
	Notice that the numerator $n-k$ of $a_j$ is even. 
	%Our absurdum hypothesis states that $a_j < (n_1)/n$. 
	Thus, $a_j$ must have been obtained as $\neg (a_{j-1})$ in a previous step, that is, $a_j = \neg a_{j-1}=\neg a_t= a_j$, and hence it must be the case that $a_{j-1}=a_t$. In other words, the procedure should have stopped earlier at $a_{t-1}$, contradiction. 
	% but $j-1<t$ and this is absurd because two distinct elements of an MV-chain cannot have the same negation. 
	Therefore, necessarily $k=1$, that is to say, $a_t=1/n$. 
\end{proof}

\begin{theorem}\label{mainThm}
	Let  $n \geq 3$ be an odd number. Then: $\bL_{n+1}$ and $\bL^*_{n+1}$ are term-equivalent iff $n$ is  a prime number belonging to the set $\Pi$.
\end{theorem}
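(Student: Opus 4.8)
The plan is to combine the equivalences already established with an arithmetic analysis of the subalgebra $\langle \co\rangle^*$. Since $n \geq 3$ is odd we have $n \neq 4$, so Theorem~\ref{thm:termEquivalence} gives that $\bL_{n+1}$ and $\bL^*_{n+1}$ are term-equivalent iff $\bL^*_{n+1}$ is strictly simple, and Theorem~\ref{Thm:New1} gives that the latter holds iff $\langle \co\rangle^* = \bL^*_{n+1}$. Thus the whole theorem reduces to showing that, for odd $n \geq 3$, $\langle \co\rangle^* = \bL^*_{n+1}$ iff $n \in \Pi$. I would first dispose of the composite case: if $n$ is not prime then, by the second item of Proposition~\ref{subalebras}, $\bL^*_{n+1}$ is not strictly simple, hence $\langle \co\rangle^* \neq \bL^*_{n+1}$; and since $\Pi$ consists of primes, the right-hand side fails as well, so both sides are false. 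It therefore remains to treat $n$ an odd prime.

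For $n$ prime I would pass to numerators modulo $n$. Writing each element of $\Lns$ as $q/n$ with $0 \le q \le n$, the two generating operations act as $*(q/n) = (2q-n)/n$ when $q > n/2$ and $\neg(q/n) = (n-q)/n$, which modulo $n$ read as multiplication by $2$ and by $-1$, respectively. By Proposition~\ref{subalgprin} the domain of $\langle \co\rangle^*$ is ${\tt R}(n,\co)\cup\{0,1\}$, so I would study the set $\bar T$ of residues mod $n$ of the numerators occurring in it. The key claim is that $\bar T \setminus \{0\}$ equals the multiplicative subgroup $H = \langle 2, -1\rangle$ of $(\mathbb{Z}/n\mathbb{Z})^*$. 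One inclusion is immediate: every element produced from $\co$ by iterating $*$ and $\neg$ has numerator $\equiv \pm 2^j \pmod n$ (starting from $\co$, whose numerator is $n-1\equiv -1$), so its residue lies in $H$. For the reverse inclusion I would use closure of the domain under the operations: closure under $\neg$ makes $\bar T$ symmetric under $q\mapsto -q$; closure under $*$ shows that if $q \in \bar T$ with $q > n/2$ then $2q \in \bar T$; and for $q < n/2$, where applying $*$ directly gives $0$, one first applies $\neg$ to land in the upper half and then $*$, obtaining $-2q$ and hence, by symmetry, $2q \in \bar T$. Since $n$ is odd, $n \nmid 2q$ for $0 < q < n$, so multiplication by $2$ never produces $0$; thus $\bar T \setminus\{0\}$ is closed under multiplication by $2$ and by $-1$ and contains $-1$, whence it contains all of $H$. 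This is exactly where I expect to invoke Lemma~\ref{lemmaKey}: it guarantees that the procedure actually closes up by reaching $1/n$, so the set of numerators produced is the full orbit of $-1$ rather than a proper initial segment truncated by premature halting.

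Having identified ${\tt R}(n,\co)$ (modulo $n$) with $H$, the condition $\langle \co\rangle^* = \bL^*_{n+1}$ becomes $H = (\mathbb{Z}/n\mathbb{Z})^*$, i.e.\ $|H| = n-1$. I would then compute $|H|$ in terms of the suborder $\ell$ of $2$, defined as the least $m>0$ with $2^m \equiv \pm 1 \pmod n$. A short case analysis on whether $-1$ lies in the cyclic group $\langle 2\rangle$ shows that $|H| = 2\ell$ in both cases: if $-1 \in \langle 2\rangle$ then $H = \langle 2\rangle$ has order $d = \mathrm{ord}_n(2)$ with $\ell = d/2$, so $|H| = d = 2\ell$; if $-1 \notin \langle 2\rangle$ then $H = \langle 2\rangle \cup (-1)\langle 2\rangle$ has order $2d$ with $\ell = d$, so $|H| = 2d = 2\ell$. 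Consequently $|H| = n-1$ iff $\ell = (n-1)/2$, which by the reformulation of Definition~\ref{def:campinas} stated right after it is precisely the condition $n \in \Pi$. Chaining all the equivalences (using in the forward direction that term-equivalence forces $n$ prime, via Theorem~\ref{thm:termEquivalence} and Proposition~\ref{subalebras}) yields the theorem.

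The main obstacle is the middle step: pinning down that ${\tt R}(n,\co)$ is exactly the orbit $H$ and nothing smaller. The delicate point is that the procedure ${\tt P}$ does not apply $*$ and $\neg$ freely but conditionally on which half of $[0,1]$ the current element lies in, so one must verify that this conditional dynamics nevertheless sweeps out the whole subgroup $\langle 2, -1\rangle$; controlling the halting behaviour through Lemma~\ref{lemmaKey} and exploiting the symmetry furnished by $\neg$-closure is what makes the argument go through.
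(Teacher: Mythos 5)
Your proof is correct, but its core is genuinely different from the paper's. Both arguments begin identically: since $n$ odd implies $n\neq 4$, Theorems~\ref{thm:termEquivalence} and~\ref{Thm:New1} reduce the statement to ``$\langle \co\rangle^*=\bL^*_{n+1}$ iff $n\in\Pi$'', and composite $n$ is dispatched by Proposition~\ref{subalebras}. From there the paper stays \emph{dynamical}: it traces the actual run of the procedure $\mathtt{P}(n,\co)$, decomposes it into maximal blocks of consecutive $*$-applications separated by negations, computes that each block ends in an element of the form $(k_jn-2^{m_j})/n$ or $(2^{m_j}-k_jn)/n$, and then uses Lemma~\ref{lemmaKey} (termination exactly at $1/n$) to extract the congruence $2^m\equiv\pm 1 \pmod n$, where $m$ is the number of positive elements swept out; fullness of $\langle\co\rangle^*$ is then equivalent to $m=(n-1)/2$ with no earlier stopping, i.e.\ $n\in\Pi$. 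You instead work \emph{statically} in $(\mathbb{Z}/n\mathbb{Z})^*$: Proposition~\ref{subalgprin} gives that the domain ${\tt R}(n,\co)\cup\{0,1\}$ is closed under $*$ and $\neg$, and your two-sided inclusion argument (residues generated are of the form $\pm 2^j$; conversely, closure under $\neg$ plus the ``negate-then-square'' trick for lower-half elements yields closure of the residue set under multiplication by $2$ and $-1$) identifies the nonzero residues of the subalgebra exactly with the subgroup $H=\langle 2,-1\rangle$. The count $|H|=2\ell$, with $\ell$ the least $m>0$ such that $2^m\equiv\pm1\pmod n$, then converts fullness into $\ell=(n-1)/2$, which is precisely the reformulation of Definition~\ref{def:campinas} stated in the paper. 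Your route buys a cleaner conceptual picture (it even recovers the Fermat bound $\ell\leq (n-1)/2$ for free, since $2\ell=|H|\leq n-1$, and yields the cardinality formula $|\langle\co\rangle^*|=2\ell+2$ for any odd $n$, subsuming Lemma~\ref{cor:Fermat}); the paper's route buys an explicit description of the order in which elements are visited, which it reuses elsewhere.

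One inaccuracy in your own assessment: Lemma~\ref{lemmaKey} is \emph{not} where your argument's weight rests, and you do not actually need it. Your reverse inclusion is carried entirely by the closure of the subalgebra domain under $*$ and $\neg$ (Proposition~\ref{subalgprin}), which holds regardless of where or how $\mathtt{P}$ halts; the conditional dynamics of $\mathtt{P}$ never enters your subgroup computation. In the paper's proof, by contrast, Lemma~\ref{lemmaKey} is load-bearing: without knowing the procedure terminates at $1/n$, the congruence $2^m\equiv \pm1\pmod n$ could not be read off from the last element. So your final paragraph misdiagnoses the dependency, but since the closure argument you give is complete on its own, this is a misattribution rather than a gap.
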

\begin{proof}
	%The proof %of theorem above 
	%makes use of the procedure $\mathtt{P}$ introduced in Definition~\ref{remProcP}.
	%
	Let  $\co = (n-1)/n$ and let $\mathtt{P}(n, \co) = [a_1, \ldots, a_l]$ be the sequence generated by the procedure $\mathtt{P}(n, \co)$. This sequence can be regarded as the concatenation of a number $r$ of subsequences 
	\begin{center}
		$[a^1_1, \ldots, a^1_{l_1}]$, $[a^2_1, \ldots, a^2_{l_2}]$, \ldots, $[a^r_1, \ldots, a^r_{l_r}]$, 
	\end{center}
	with $a^1_1 = a_1$ and $a^r_{l_r} = a_l$, where for each subsequence $1\leq j \leq r$, only the last element $a^j_{l_j}$ is below 1/2, while the rest of elements are above 1/2.  
	
	By the very definition of $*$, it follows that the last elements $a^j_{l_j}$ of every subsequence are of  the form
	\begin{center}
		$a^j_{l_j} =  
		\begin{cases} 
		\frac{k_j n-2^{m_j}}{n}, \mbox{ if $j$ is odd} \\[2mm] 
		\frac{2^{m_j}-k_j n}{n}, \mbox{ otherwise, i.e. if $j$ is even} 
		\end{cases}$\\[4mm]
	\end{center}
	for some $m_j,k_j>0$,  where in particular $m_j$ is the number of strictly positive elements of $\Ln$ which are obtained by the procedure before getting $a^j_{l_j}$.
	
	By Lemma \ref{lemmaKey}, since $n$ is odd, $\mathtt{P}(n, \co)$ stops at $1/n$, i.e. $a_l = a^r_{l_r} = 1/n$. 
	Thus, writing $m$ and $k$ istead of $m_r$ and $k_r$:
	\begin{center}
		$\begin{cases} kn-2^m = 1, \mbox{ if $r$ is odd (i.e., $2^m \equiv -1$ (mod $n$) if $r$ is odd)} \\[2mm] 
		2^m-kn = 1, \mbox{ otherwise (i.e., $2^m \equiv 1$ (mod $n$) if $r$ is even)}  \end{cases}$\\[2mm]
	\end{center}
	where $m$ is now the number of strictly positive elements in the list $\mathtt{P}(n, \co)$, i.e. that are reached by the procedure before stopping. Therefore, $2^m$ is congruent with $\pm 1$ mod $n$.
	
	Suppose now that $n\geq 3$ is an odd number such that $\bL_{n+1}$ is term equivalent to $\bL^*_{n+1}$. Then, $\bL^*_{n+1}$ is strictly simple and by Proposition~\ref{subalebras}, $n$ is  prime. This being so, the integer $m$ defined above must be exactly $(n-1)/2$, the number of strictly positive elements of  $\L_{n+1}$ (different from 1). Otherwise, $\langle \co\rangle^*$ would be a proper subalgebra of it, which is absurd. Moreover, for no $m'<m$ one has that $2^{m'}$ is congruent with $\pm 1$ mod $n$ because, in this case, the algorithm would stop producing, again, a proper subalgebra of $\bL^*_{n+1}$. This shows that $n \in \Pi$, i.e., the left-to-right direction of our claim.
	
	In order to show the other direction, assume that $\bL_{n+1}$ and $\bL^*_{n+1}$ are not term-equivalent. By Theorem~\ref{thm:termEquivalence}, this implies that  $\bL^*_{n+1}$ is not strictly simple. Thus, by Theorem~\ref{Thm:New1}, $\langle a_1\rangle^*$ is a proper subalgebra of $\bL^*_{n+1}$ and hence the algorithm above stops in $1/n$, after reaching $m<(n-1)/2$ strictly positive elements of $\Ln$. Thus, $2^m$ is congruent with $\pm1$ mod $n$ (depending on whether $r$ is even or odd, where $r$ is the number of subsequences in the sequence $\mathtt{P}(n, \co)$ as described above), showing that $n\not\in \Pi$.
\end{proof}

Observe that 3 and 5 are the only Fermat primes belonging to $\Pi$. Indeed, by Lemma~\ref{cor:Fermat}, if $n$ is a Fermat prime such that $n> 5$ then $\bL_{n+1}$ and $\bL^*_{n+1}$ are not term-equivalent. By Theorem~\ref{mainThm}, $n$ does not belong to the set $\Pi$.

%\input{section4.tex}

%!TEX root =  ../ImplicationFree(3)-root.tex

\section{The matrix logics of $\L^*_{n+1}$-chains} \label{sec:logics}
Given the algebra $\bL_{n+1}^*$, it is possible to consider, for every $1\leq i\leq n$,  the matrix logic $\Lambda_{n+1,i}^*=\langle \bL_{n+1}^*, F_{i/n}\rangle$, where $F_{i/n} = \{ a \in \L_{n+1} \mid a \geq i/n\}$. 
Recall that the logic $\Lambda_{n+1,i}^*$, regarded as a consequence relation over a propositional language $\mathcal L$ with signature $(\vee,*, \neg, \bot)$ of type $(2,1,1,0)$, is defined as follows: for every subset of formulas $\Gamma \cup \{\varphi\} \subseteq {\mathcal L}$,\\

\begin{tabular}{lll}
	$\Gamma \models_{{\Lambda}_{n+1,i}^*} \varphi$ & if & for every  $\bL_{n+1}^*$-evaluation $e$, \\
	&& $e(\psi) \geq i/n$ for every $\psi \in \Gamma$ implies $e(\varphi) \geq i/n$
\end{tabular}
\mbox{} \\

In the following subsections we will first show that the logics $\Lambda_{n+1,i}^*$ are algebraizable, then we will describe their equivalent algebraic semantics, and finally we will provide an axiomatization. 

\subsection{Algebraizability of the logics $\Lambda_{n+1,i}^*$}

In this section we show that all the  logics $\Lambda_{n+1,i}^*$ are algebraizable in the sense of Blok-Pigozzi \cite{BP89}, and that, for every $i,j$, the quasi-varieties associated to $\Lambda_{n+1,i}^*$ and $\Lambda_{n+1,j}^*$ are  the same. 

%
%It is now almost immediate to check that the following implication-like operation is definable in $\bL_{n+1}^*$:
%$x\Rightarrow y=1$ if $x\leq y$ and $0$ otherwise. %, and $x\Rightarrow_G y=1$ if $x\leq y$ and $y$ otherwise. 
%Indeed, $\Rightarrow$ can be defined as 
%$$
%x \Rightarrow y =\bigvee_{0\leq i\leq j\leq n}(\chi_{i/n}(x)\wedge \chi_{j/n}(y)),
%$$ 
%Actually, one can also define a G\"odel implication on $\bL_{n+1}^*$ by putting $x \Rightarrow_G y = (x\Rightarrow y)\vee y$. 
%

Consider in  $\bL_{n+1}^*$ the strict equality operation $\approx$ defined as $x \approx y = 1$ if $x=y$, and $x \approx y =0$ otherwise. In fact, in $\bL_{n+1}^*$ the operation $\approx$ is definable by means of the implication $\Rightarrow_c$ (recall Proposition \ref{godimp}) %. Indeed, it can be defined
as follows: 
$$
x \approx y = (x \Rightarrow_c y) \wedge (y \Rightarrow_c x).
$$ 
Equivalently, it can also be defined by means of the G\"odel implication $\Rightarrow_G$ and the unary operator $\Delta_1$ (Propositions \ref{prop3} and \ref{godimp}) as 
$$
x \approx y = \Delta_1((x \Rightarrow_G y) \wedge (y \Rightarrow_G x)).
$$
This fact allows us to show the algebraizability of the logics $\Lambda_{n+1,n}^*$. 
%In order to prove the main result of this section, we state the following:
\begin{lemma}\label{lemma1}
	For every $n$, the logic $\Lambda_{n+1}^*:=\Lambda_{n+1,n}^*=\langle\bL_{n+1}^*, \{1\} \rangle$ is algebraizable. 
\end{lemma}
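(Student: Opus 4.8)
The plan is to establish algebraizability in the sense of Blok--Pigozzi \cite{BP89} by exhibiting a set of equivalence formulas together with a set of defining equations and then verifying the standard intrinsic characterization of algebraizable logics. As equivalence formula I would take the single definable term $x \approx y$: recall that, as noted just above the statement, $\approx$ is definable in $\bL_{n+1}^*$ from $\Rightarrow_c$, hence ultimately from the primitive signature $(\vee, *, \neg, \bot)$. As defining equation I would take the single equation $x \approx 1$, that is, the equation asserting that $x$ equals the top element $1 = \neg\bot$ (itself a term of the signature). The entire argument rests on one feature: $x \approx y$ names the \emph{strict equality} of the chain, so that $a \approx b = 1$ precisely when $a = b$ and $a \approx b = 0$ otherwise.

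Since $\Lambda_{n+1,n}^*$ is the logic of a single finite matrix with designated set $\{1\}$, its consequence relation $\models_{\Lambda_{n+1,n}^*}$ is completely determined semantically, and therefore each clause to be checked reduces to an elementary statement about the finite algebra $\bL_{n+1}^*$. Concretely, I would verify: (i) reflexivity $\models x \approx x$; (ii) symmetry $x \approx y \models y \approx x$; (iii) transitivity $x \approx y,\, y \approx z \models x \approx z$; (iv) the congruence/replacement condition $\{x_i \approx y_i\}_i \models \star(\bar x) \approx \star(\bar y)$ for each primitive connective $\star \in \{\vee, *, \neg, \bot\}$; and (v) the bridge $x \dashv\vdash x \approx 1$ linking the defining equation with the designated values. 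Clauses (i)--(iii) are immediate because $\approx$ names genuine equality of elements of $\bL^*_{n+1}$; clause (iv) is equally immediate, since operations are functions and hence preserve equality of their arguments (for the constant $\bot$ it is just reflexivity at $\bot$); and clause (v) holds because $e(x) = 1$ iff $e(x \approx 1) = 1$, which is exactly the assertion that $\{1\}$ is the set of designated values.

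I do not expect a deep obstacle here: once the strict equality $\approx$ and the constant $1$ are available as terms --- which is guaranteed by the definability results already proved --- every clause of the Blok--Pigozzi characterization collapses to a triviality about a finite structure. The only points demanding genuine care are bookkeeping ones, namely invoking the correct necessary-and-sufficient form of the intrinsic characterization, and checking clause (iv) for \emph{each} connective of the actual primitive signature $(\vee, *, \neg, \bot)$ rather than for the richer derived signature in which $\approx$ and $1$ are introduced. Recording the chosen pair $\tau(x) = \{x \approx 1\}$ and $\Delta(x,y) = x \approx y$ explicitly will also be useful, since the same equivalence formula and defining equation can then be reused to identify the equivalent algebraic semantics in the subsequent subsections.
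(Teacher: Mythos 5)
Your proposal is correct and follows essentially the same route as the paper's own proof: both take the single equivalence formula $x \approx y$ (strict equality, definable from $\Rightarrow_c$) together with $\delta(p)=p$ and a constantly-true term for $\epsilon$ (you use $\neg\bot$, the paper uses $\Delta_0(p)$, which denote the same value $1$ under every evaluation), and both reduce the Blok--Pigozzi conditions of \cite[Th.~4.7]{BP89} to immediate semantic facts about the finite matrix $\langle \bL^*_{n+1},\{1\}\rangle$. The only cosmetic difference is the choice of constant term in the defining equation, which is immaterial.
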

%\begin{proof}
%It is immediate to see that the set of formulas $\Delta(p,q)=\{p \approx q\}$ and the set of pairs of formulas $E(p,q)=\{\langle p,\Delta_0(p)\rangle\}$ satisfy the requirements of algebraizability.  \blue{*** faltaria poner unos detalles mas sobre los terminos  $\Delta(p,q)$ y $E(p,q)$ ***}
%\end{proof}
\begin{proof}
	Consider the set of formulas in two variables $\Theta(p,q)=\{p \approx q\}$ and the formulas in one variable $ \delta(p), \epsilon(p) $, where $\delta(p) = p$ and $\epsilon(p) = \Delta_0(p)$. Then, according to \cite[Th. 4.7]{BP89}, we have to show that the following conditions hold:
	\begin{itemize}
		\item[(i)]   $\models_{{\Lambda}_{n+1}^*} \Theta(\varphi, \varphi)$
		\item[(ii)] $ \Theta(\varphi, \psi) \models_{{\Lambda}_{n+1}^*} \Theta(\psi, \varphi)$
		\item[(iii)] $\Theta(\varphi, \psi), \Theta(\psi, \chi) \models_{{\Lambda}_{n+1}^*} \Theta(\varphi, \chi)$
		\item[(iv-a)]  $\Theta(\varphi, \psi) \models_{{\Lambda}_{n+1}^*} \Theta( \# \varphi, \# \psi)$, for $\# \in \{\neg, *\}$
		\item[(iv-b)]  $\Theta(\varphi, \psi), \Theta(\chi, \nu) \models_{{\Lambda}_{n+1}^*} \Theta( \varphi \lor \chi, \psi \lor \nu)$
		\item[(v)] $\varphi \models_{{\Lambda}_{n+1}^*} \Theta( \delta(\varphi), \epsilon(\varphi))$, and $\Theta( \delta(\varphi), \epsilon(\varphi)) \models_{{\Lambda}_{n+1}^*} \varphi$
	\end{itemize}
	Observe that for every $\bL_{n+1}^*$-evaluation $e$ and formulas $\varphi, \psi$, we have $e(\Theta(\varphi, \psi)) = 1$ iff $e(\varphi) = e(\psi)$, and since $e(\epsilon(\varphi)) = 1$, we also have $e(\Theta( \delta(\varphi), \epsilon(\varphi))) = 1$ iff $e(\varphi) = 1$. Then it is very easy to check that all the above conditions are satisfied. 
\end{proof}

Actually, the logic $\Lambda_{n+1}^*$ is a (Rasiowa) {\em implicative}  logic, since it satisfies the following characteristic properties (see e.g. \cite{CN2010}):
\begin{itemize}
	\item[(R1)] $\models_{\Lambda_{n+1}^*} \varphi \To \varphi$
	\item[(R2)] $\varphi \To \psi, \psi \To \chi \models_{\Lambda_{n+1}^*}  \varphi \To \chi$
	\item[(R3)] $\varphi \Leftrightarrow \psi  \models_{\Lambda_{n+1}^*}  \neg \varphi \To \neg \psi$, \quad 
	$\varphi \Leftrightarrow \psi \models_{\Lambda_{n+1}^*}  * \varphi \To * \psi$
	
	$\varphi_1 \Leftrightarrow \psi_1, \varphi_2 \Leftrightarrow \psi_2 \models_{\Lambda_{n+1}^*}  \varphi_1 \lor  \varphi_2  \To \psi_1 \lor \psi_2$
	
	\item[(R4)] $\varphi, \varphi \To \psi  \models_{\Lambda_{n+1}^*}  \psi$
	\item[(R5)] $ \varphi  \models_{\Lambda_{n+1}^*} \psi \To \varphi$
\end{itemize}
where $\To\; \in \{\To_c, \To_G\}$. And it is well-known that implicative logics are algebraizable, see  \cite[in Vol. 1 of \cite{Cintula-Hajek-Noguera:Handbook}]{Cintula-Noguera}. 

Blok and Pigozzi  introduce in \cite{BP97} the following notion of equivalent deductive systems. Two propositional deductive systems $S_{1}$ and $S_{2}$ in the same language are {\em equivalent} if there are translations $\tau_{i}:S_i \to S_j$ for $i \neq j$ such that: $\Gamma\vdash_{S_{i}}\varphi$ iff $\tau_{i}(\Gamma)\vdash_{S_{j}}\tau_{i}(\varphi)$, and $\varphi\dashv\vdash_{S_{i}}\tau_{j}(\tau_{i}(\varphi))$. 
From very general  results in \cite{BP97}, it follows that two equivalent logic systems  are indistinguishable from the algebraic point of view, namely: if one of the systems is algebraizable  then the other will be also algebraizable w.r.t. the same quasivariety. This  can be applied to %the systems 
$\Lambda_{n+1,i}^*$.
%fact

\begin{lemma}\label{lemma2}
	For  every $n$ and every $1\leq i\leq n-1$, the logics $\Lambda_{n+1}^*$ and $\Lambda_{n+1,i}^*$ are equivalent. 
\end{lemma}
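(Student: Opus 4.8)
The plan is to exhibit explicit syntactic translations between the two matrix logics and to verify the Blok--Pigozzi conditions recalled just above, exploiting that the filter-characterising operators $\Delta_a$ of Proposition~\ref{prop3} are definable terms of the common signature $(\vee,*,\neg,\bot)$. Writing $S_1 = \Lambda_{n+1,i}^*$ (with designated set $F_{i/n}$) and $S_2 = \Lambda_{n+1}^* = \Lambda_{n+1,n}^*$ (with designated set $\{1\}$), I would set the translation $\tau_1\colon S_1 \to S_2$ to be $\tau_1(\varphi) = \{\Delta_{i/n}(\varphi)\}$ and the translation $\tau_2\colon S_2 \to S_1$ to be $\tau_2(\varphi) = \{\Delta_1(\varphi)\}$. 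The two guiding semantic facts are that, for every $\bL_{n+1}^*$-evaluation $e$, one has $e(\Delta_{i/n}(\varphi)) = 1$ iff $e(\varphi) \geq i/n$, and that, since $\Delta_1(\varphi)$ takes values only in $\{0,1\}$ while $i/n > 0$, one has $e(\Delta_1(\varphi)) \geq i/n$ iff $e(\Delta_1(\varphi)) = 1$ iff $e(\varphi) = 1$.

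With these translations in hand, the first Blok--Pigozzi condition reduces to a direct unwinding of the two consequence relations. For $\tau_1$, the statement $\tau_1(\Gamma) \models_{S_2} \tau_1(\varphi)$ asserts that whenever $e(\Delta_{i/n}(\psi)) = 1$ for all $\psi \in \Gamma$ then $e(\Delta_{i/n}(\varphi)) = 1$; by the first semantic fact this is literally the assertion that $e(\psi) \geq i/n$ for all $\psi \in \Gamma$ implies $e(\varphi) \geq i/n$, i.e.\ $\Gamma \models_{S_1} \varphi$. Symmetrically, for $\tau_2$ the second semantic fact identifies ``$e(\Delta_1(\psi)) \geq i/n$ for all $\psi$ implies $e(\Delta_1(\varphi)) \geq i/n$'' with ``$e(\psi) = 1$ for all $\psi$ implies $e(\varphi) = 1$'', so that $\Gamma \models_{S_2} \varphi$ iff $\tau_2(\Gamma) \models_{S_1} \tau_2(\varphi)$. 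Thus both halves of the first equivalence hold essentially by definition.

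For the round-trip (invertibility) conditions, I would observe that composing the two translations yields, semantically, a $\{0,1\}$-valued term that is already fixed by the outer operator. Indeed $\tau_2(\tau_1(\varphi)) = \{\Delta_1(\Delta_{i/n}(\varphi))\}$ and, since $\Delta_{i/n}(\varphi) \in \{0,1\}$ and $\Delta_1$ fixes both $0$ and $1$, this coincides semantically with $\Delta_{i/n}(\varphi)$; one then checks $\varphi \dashv\vdash_{S_1} \Delta_{i/n}(\varphi)$, which amounts to the trivial equivalences ``$e(\varphi) \geq i/n$ iff $e(\Delta_{i/n}(\varphi)) \geq i/n$''. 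Dually, $\tau_1(\tau_2(\varphi)) = \{\Delta_{i/n}(\Delta_1(\varphi))\}$ coincides semantically with $\Delta_1(\varphi)$, and $\varphi \dashv\vdash_{S_2} \Delta_1(\varphi)$ is immediate from ``$e(\varphi) = 1$ iff $e(\Delta_1(\varphi)) = 1$''.

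There is no serious obstacle here: the whole argument rests on the definability of $\Delta_{i/n}$ and $\Delta_1$ (Proposition~\ref{prop3}) together with the fact that these operators collapse the order filters into the crisp values $\{0,1\}$. The only point requiring a little care is to present the verifications as statements about the interderivability relations $\models_{S_1},\models_{S_2}$ rather than as bare semantic identities, but this is routine precisely because each $\Delta_a$ is $\{0,1\}$-valued and is fixed by the competing $\Delta$-operator, so no genuine computation is needed. Once this equivalence is established, the general transfer results of~\cite{BP97} together with Lemma~\ref{lemma1} will yield that every $\Lambda_{n+1,i}^*$ is algebraizable with respect to the same quasivariety as $\Lambda_{n+1}^*$.
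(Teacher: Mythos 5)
Your proof is correct and follows essentially the same route as the paper: the paper's proof consists precisely of exhibiting the translations $\tau_1(\varphi)=\Delta_1(\varphi)$ and $\tau_{i,2}(\varphi)=\Delta_{i/n}(\varphi)$, leaving the Blok--Pigozzi verification implicit, and your proposal supplies exactly that verification using the same maps. The two semantic facts you isolate (that $\Delta_{i/n}$ collapses the filter $F_{i/n}$ to $\{0,1\}$ and that each $\Delta$-operator fixes the other's $\{0,1\}$-valued outputs) are the right ones, and your appeal to Proposition~\ref{prop3} for the definability of these terms in the signature $(\vee,*,\neg)$ is precisely what makes the translations structural.
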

\begin{proof}
	Indeed, it is enough to consider the translation mappings $\tau_1:\Lambda_{n+1}^* \to \Lambda_{n+1,i}^*$, $\tau_1(\varphi)=\Delta_1(\varphi)$, and  $\tau_{i,2}:\Lambda_{n+1,i}^* \to \Lambda_{n+1}^*$, $\tau_{i,2}(\varphi)=\Delta_{i/n}(\varphi)$. 
\end{proof}

Therefore, as a direct consequence of Lemma \ref{lemma1}, Lemma \ref{lemma2} and the above observations, the algebraizability of $\Lambda_{n+1,i}^*$ easily follows. 

\begin{theorem}\label{thm:alg}
	For every $n$ and every $1\leq i\leq n$, the logic $\Lambda_{n+1,i}^*$ is algebraizable. 
\end{theorem}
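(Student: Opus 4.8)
The plan is to reduce the claim entirely to the two preceding lemmas together with the transfer theorem of Blok and Pigozzi from \cite{BP97}. First I would split the range of $i$ into the boundary case $i = n$ and the remaining cases $1 \leq i \leq n-1$. For $i = n$ the matrix $\langle \bL_{n+1}^*, F_{n/n}\rangle$ is literally $\langle \bL_{n+1}^*, \{1\}\rangle = \Lambda_{n+1}^*$, which is already known to be algebraizable by Lemma \ref{lemma1}, so nothing further is needed there.

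For $1 \leq i \leq n-1$, I would invoke Lemma \ref{lemma2}, which supplies the explicit translation maps $\tau_1$ and $\tau_{i,2}$ witnessing that $\Lambda_{n+1}^*$ and $\Lambda_{n+1,i}^*$ are equivalent deductive systems in the sense of \cite{BP97}. The decisive step is then the general principle, recalled just before the statement, that equivalence of deductive systems preserves algebraizability: if one of two equivalent systems is algebraizable, then so is the other, and moreover with respect to the same equivalent quasivariety semantics. Since Lemma \ref{lemma1} gives the algebraizability of $\Lambda_{n+1}^*$, applying this principle yields the algebraizability of $\Lambda_{n+1,i}^*$, which completes the argument.

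I do not expect a genuine obstacle here, as all the substantive work is carried out in Lemmas \ref{lemma1} and \ref{lemma2}. The only points requiring care are purely bookkeeping: treating the endpoint $i = n$ separately because Lemma \ref{lemma2} is stated only for $1 \leq i \leq n-1$, and checking that the translations of Lemma \ref{lemma2} indeed satisfy the two conditions $\Gamma \vdash_{S_i} \varphi$ iff $\tau_i(\Gamma) \vdash_{S_j} \tau_i(\varphi)$ and $\varphi \dashv\vdash_{S_i} \tau_j(\tau_i(\varphi))$ demanded by the transfer theorem. A worthwhile addition, in line with the uniformity announced in the introduction, would be to record explicitly that the quasivarieties obtained as $i$ ranges over $1, \ldots, n$ all coincide, since the transfer result returns in each case the same quasivariety associated to $\Lambda_{n+1}^*$.
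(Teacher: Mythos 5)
Your proposal is correct and follows exactly the paper's own argument: the case $i=n$ is Lemma \ref{lemma1}, and for $1\leq i\leq n-1$ one combines the equivalence of Lemma \ref{lemma2} with the Blok--Pigozzi transfer principle from \cite{BP97}, which is precisely how the paper derives Theorem \ref{thm:alg}. Your closing remark that all the quasivarieties coincide is also made by the paper immediately afterwards (it is the starting point of Subsection \ref{subsec:EASLambda}), so nothing is missing.
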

%\begin{proof}
%It is a direct consequence of Lemma \ref{lemma1}, Lemma \ref{lemma2}, and the observations above.
%

Therefore, for each logic $\Lambda_{n+1,i}^*$ there is a quasivariety $\mathbb{\Lambda}(i,n)$ which is its equivalent algebraic semantics. 
%Moreover, by  Lemma~\ref{lemma2} and by Blok and Pigozzi's results, $\mathbb{\Lambda}(i,n)$ and $\mathbb{\Lambda}(j,n)$ coincide, for every $i,j$.  
%
The question of describing $\mathbb{\Lambda}(i,n)$ is dealt with in the next section, where it is shown that it is in fact a variety.   

%$\mathbb{\Lambda}(i,n)$

\subsection{The equivalent algebraic semantics of $\Lambda^*_{n+1,i}$}\label{subsec:EASLambda}

%In the previous subsection, we  noticed that for every $n$ and for every $1\leq i\leq n$ the matrix logic $\Lambda_{n+1, i}^*$ is algebraizable. Here we present its equivalent algebraic semantics. 

Due to Lemma \ref{lemma2}, all logics $\Lambda_{n+1, i}^*$'s are equivalent to  $\Lambda_{n+1}^*$ and hence they have the same equivalent algebraic semantics, i.e. $\mathbb{\Lambda}(i,n) = \mathbb{\Lambda}(j,n)$, for every $1\leq i,j \leq n$, and hence we will simplify the notation and refer to $\mathbb{\Lambda}(n)$ for this common quasi-variety. In order to characterize $\mathbb{\Lambda}(n)$, in the following we consider, without loss of generality, the case $i = n$, i.e. the algebras corresponding to the matrix logic $\Lambda_{n+1}^*= \langle \bL_{n+1}^*, F_{1}\rangle$ defined by the filter $F_1 = \{1\}$. %We will denote the corresponding (quasi) variety by $\mathbb{\Lambda}(n)$.  %By translating the axioms and rules of ${\sf H}_n^1$ into equations, we can define the following class of algebras. 

We start by observing that from the chain  $\bL_{n+1}^* = (\L_{n+1}, \land, \lor, *, \neg, 0, 1)$, we can define the algebra
$${\bf IG}_{n+1} = (\L_{n+1}, \land, \lor, \To_G, \neg, 0, 1)$$ 
where $\To_G$ is G\"odel implication, that is definable as shown in Proposition \ref{godimp}, that is in fact the standard $(n+1)$-valued G\"odel algebra with an involution \cite{EGHN00,FM06}. Conversely, $\bL_{n+1}^*$ can be seen as the expansion of ${\bf IG}_{n+1}$ with the $*$ operation. Recalling also from Proposition \ref{prop3} the definition, for a given $n$ and for every $a \in \L_{n+1}$, of the terms $\Delta_a$ (as suitable sequences of the $\neg$ and $*$ operations), the above motivates the following definition.

\begin{definition}\label{def:GInvStarAlgebras}
	An  {\em $\Lambda^\star_{n+1}$-algebra} 
	%over ${\bf A}$ 
	is a triple $({\bf A}, \ninv, \star)$, where  
	\begin{itemize}
		\item ${\bf A} = (A, \land, \lor, \Rightarrow, 0, 1)$ is a $(n+1)$-valued G\"odel algebra (a $G_{n+1}$-algebra for short), 
		\item $({\bf A}, \ninv)$ is a $(n+1)$-valued G\"odel algebra with involution (a $IG_{n+1}$-algebra for short), and
		%, or a $G_{n\neg}$-algebra, 
		\item  $\star$ is a unary operation on $A$ such that the following equations hold, where for every $a \in \{0, 1/n, \ldots, (n-1)/n, 1\}$, the operation $\Delta_a$  is defined as a sequence of $\ninv$'s and $\star$'s obtained from its definition in Proposition \ref{prop3} by replacing the occurrences of $\neg$ and $*$ by $\ninv$ and $\star$ respectively: 
		% in  , as a given sequence of $\neg$'s and $*$'s by $\star$'s, but replacing the occurrences of $\neg$ and $*$ by $\ninv$ and $\star$ respectively: 
		\begin{itemize}
			%\blue{$* 1 = 1$, $* 0 = 0$, $\Delta (x \To y) \To (*x \To *y) = 1$?}
			\item[(Eq1)]   $\Delta_1 (x) = \Delta(x)$, $\Delta_0(x) = 1$
			\item[(Eq2)]   {$\Delta_a \Delta_b x = \Delta_b x$ } 
			\item[(Eq3)]   $\Delta_{a}x \vee \ninv\Delta_{a}x = 1$  
			\item[(Eq4)]    $\Delta_{a^+} x \Rightarrow \Delta_{a} x = 1$, \ \ if \ $a < 1$
			%		\item[(Eq2)]   \red{$\Delta_{a}\alpha \rightarrow (\neg\Delta_{a}\alpha \rightarrow \beta)$} 
			
			\item[(Eq5)]   { $  \Delta_a(x \vee y) = (\Delta_a x \vee \Delta_a y) $ }

			%		\item[(Eq5)]    \red{$\Delta_{a^+}\alpha \rightarrow (\Delta_{\neg a}\neg\alpha \rightarrow \beta)$  \ \ if \ $0 \leq a < 1$}
			%		\item[(Eq6)]   \red{ $\Delta_{a^+}\alpha \vee \Delta_{\neg a}\neg\alpha$  \ \ if \ $0 \leq a < 1$}
			\item[(Eq6)]   {$ \Delta_{\neg a}\ninv x  \Rightarrow \ninv \Delta_{a^+} x = 1$,  \ \ if \ $a < 1$}

			%		\item[(Eq7)]    \red{$\Delta_{a}\alpha \leftrightarrow \Delta_{a}\neg\neg\alpha$  \ \ if \ $0 \leq a \leq 1$ }\\
			
			%		\item[(Eq8)]   \red{ $\alpha \leftrightarrow \Delta_{i/n}\alpha$}
			\item[(Eq7)]   %\red{$\Delta_{1}x \To x = 1$}  *** it is derivable from Ax1 ***
			$\Delta (x \To y) \To (\star x \To \star y) = 1$
			
			%		\item[(Eq9)]    $\chi_a \alpha \rightarrow \chi_{\ast a}{\ast}\alpha$  \ \ if \ $0 \leq a \leq 1$ 
			\item[(Eq8)]    { $\Delta_a x \To \Delta_{{*}a} {\star}x = 1$}
			\item[(Eq9)]    { $\Delta_{({*}a)^+} {\star}x  \To \Delta_{a^+} x = 1$} %\\  
			
		\end{itemize}
		where $\Delta(x) =  \ninv x \To 0$ is the Baaz-Monteiro operator, and $a^+ = a + 1/n$. % and  $x < y := \Delta(x \To y) \land \neg\Delta(y \To x)$. 
		%{\color{red} No se deberia poner quien es $a^+$?}
	\end{itemize}
\end{definition}

Since the class of $IG_{n+1}$-algebras is a variety (it is a subvariety of the class of G\"odel algebras with an involution), from the above definition it is clear that the quasi-variety  $\mathbb{\Lambda}(n)$ of $\Lambda^\star_{n+1}$-algebras  is in fact a variety.

Moreover, defining $ x \Leftrightarrow y = (x \Rightarrow y) \land (y \Rightarrow x)$,  the following congruence law holds for $\star$: 
\begin{center}
	\hfill $ \mbox{if } x \Leftrightarrow y = 1 \mbox{ then } {\star}x \Leftrightarrow \star y = 1$ \hfill (Cong)
\end{center}
If we look at a $\Lambda^\star_{n+1}$-algebra as an axiomatic expansion of its underlying (prelinear) $IG_{n+1}$-algebra with the additional $\star$ operation, (Cong) is in fact the necessary condition to be satisfied by $\star$ to keep the prelinearity property in the expanded algebra, see e.g.  \cite[in Vol. 1 of \cite{Cintula-Hajek-Noguera:Handbook}]{Cintula-Noguera}. Therefore, the variety $\mathbb{\Lambda}(n)$ is semilinear and the following subdirect representation holds.

\begin{proposition}\label{LambdaSemilinear} Every $\Lambda^\star_{n+1}$-algebra is a subdirect product of linearly ordered $\Lambda^\star_{n+1}$-algebras. 
\end{proposition}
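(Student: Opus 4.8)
The plan is to invoke the standard theory of semilinear (prelinear) varieties rather than proving subdirect representation from scratch. Recall that for varieties of residuated-lattice-based algebras, there is a general theorem (see e.g.\ the cited Cintula--Noguera chapter in Vol.\ 1 of \cite{Cintula-Hajek-Noguera:Handbook}) stating that a variety whose members are based on a prelinear residuated structure is \emph{semilinear}, meaning every algebra is a subdirect product of linearly ordered members, precisely when the variety is generated by its chains, equivalently when every algebra embeds subdirectly into a product of totally ordered algebras. The key hypothesis needed is that the additional operation $\star$ is compatible with the congruences determined by the prelinear structure, and this is exactly what the congruence law (Cong) just established in the text provides.

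First I would make explicit that the underlying $IG_{n+1}$-reduct of any $\Lambda^\star_{n+1}$-algebra is a prelinear (G\"odel-based) residuated structure, so the general semilinearity machinery applies to that reduct: every $IG_{n+1}$-algebra is a subdirect product of $IG_{n+1}$-chains. This is the content already available from the fact that $\mathbb{IG}$ is a variety generated by the chain $([0,1]_G,\msim)$ together with the standard prelinearity argument. The subdirect factors arise as quotients ${\bf A}/\theta$ by the congruences $\theta$ that are \emph{prime} (meaning ${\bf A}/\theta$ is totally ordered); these are intersected trivially, $\bigcap \theta = \Delta_{\bf A}$, because of prelinearity.

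The main step is then to verify that these same congruences $\theta$ of the $IG_{n+1}$-reduct are in fact congruences of the \emph{full} $\Lambda^\star_{n+1}$-algebra, i.e.\ that they respect $\star$ as well. This is where (Cong) does the work: since the congruence of an $IG_{n+1}$-algebra is determined by the equivalence $x \Leftrightarrow y = 1$ under the filter of designated elements (the congruence-filter correspondence valid in these semilinear residuated varieties), the implication ``if $x \Leftrightarrow y = 1$ then ${\star}x \Leftrightarrow \star y = 1$'' says precisely that whenever $(x,y)\in\theta$ one also has $({\star}x,{\star}y)\in\theta$. Hence every congruence of the reduct lifts to a congruence of the whole algebra, the totally ordered quotients ${\bf A}/\theta$ are genuine $\Lambda^\star_{n+1}$-chains (the equations (Eq1)--(Eq9) being preserved under quotients, as they hold in the whole variety), and the subdirect embedding ${\bf A}\hookrightarrow \prod_\theta {\bf A}/\theta$ of the reduct is an embedding of $\Lambda^\star_{n+1}$-algebras.

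The step I expect to be the main obstacle is the bookkeeping of the congruence--filter correspondence: one must be sure that in the expanded signature the lattice of congruences is still controlled by the $\Leftrightarrow$-filters (so that (Cong) really is equivalent to $\star$-compatibility of every congruence), and that the totally ordered quotients satisfy all of (Eq1)--(Eq9) and remain $IG_{n+1}$-chains rather than degenerating. Both points follow because $\mathbb{\Lambda}(n)$ is a variety (already noted in the text) whose defining equations are inherited by quotients, and because (Cong) was shown to be exactly the condition ensuring prelinearity survives the $\star$-expansion; I would simply cite the general semilinear representation theorem to conclude.
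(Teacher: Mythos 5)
Your proof is correct and takes essentially the same route as the paper, which likewise regards a $\Lambda^\star_{n+1}$-algebra as an axiomatic expansion of its prelinear $IG_{n+1}$-reduct and appeals, via the congruence condition (Cong) for $\star$, to the general semilinearity machinery of Cintula--Noguera to obtain the subdirect representation. The one point worth sharpening in your write-up is that, since $x \Leftrightarrow y = 1$ forces $x = y$ in a G\"odel algebra, the form of the congruence condition that actually transfers along arbitrary $\Delta$-filters (and hence along all congruences of the reduct) is the equational axiom (Eq7), $\Delta(x \Rightarrow y) \leq \star x \Rightarrow \star y$, rather than (Cong) itself---a gloss the paper also leaves implicit.
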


Looking at the above axioms, we observe that (Eq7) requires $\star$ to be a non-decreasing operation, while (Eq1) declares that the $n$-iteration of $\star$ results in the well-known Baaz-Monteiro's $\Delta$ operator. These two properties allows us to prove the following three further basic properties of the $\star$ operation. 

\begin{lemma} The following identities hold in any $\Lambda^\star_{n+1}$-algebra:
	\begin{itemize}
		\item[(i)]  $\star x \Rightarrow x = 1$
		\item[(ii)] $\star 1 = 1$
		\item[(iii)] $\star 0 = 0$
		%\item[(iii)] $\Delta (x \To y) \To (*x \To *y) = 1$
	\end{itemize}
\end{lemma}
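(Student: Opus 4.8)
The plan is to reduce everything to the linearly ordered case and then to use only the two features of $\star$ singled out just before the statement: monotonicity (coming from (Eq7)) and the identity $\star^n x = \Delta x$ (coming from (Eq1)). By Proposition~\ref{LambdaSemilinear} every $\Lambda^\star_{n+1}$-algebra is a subdirect product of chains, and since (i)--(iii) are all equations of the form $t(x)=1$, it suffices to verify them in an arbitrary linearly ordered $\Lambda^\star_{n+1}$-algebra $\mathbf{A}$. The point of passing to a chain is that there the Baaz--Monteiro operator is two-valued: $\Delta 1 = 1$ and $\Delta y = 0$ for every $y \neq 1$ (in particular $\Delta 0 = 0$). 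Before starting I would record the two tools. Reading (Eq7) as $\Delta(x \Rightarrow y) \Rightarrow (\star x \Rightarrow \star y) = 1$, using $u \Rightarrow v = 1 \iff u \leq v$ and $\Delta 1 = 1$, one sees that $x \leq y$ forces $\star x \Rightarrow \star y = 1$, i.e. $\star$ is non-decreasing. From (Eq1), together with the fact that $\Delta_1 = \star^n$ is its own defining term (Proposition~\ref{prop3}), one gets $\star^n x = \Delta x$ for all $x$.

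For (i) I would argue by contradiction on $\mathbf{A}$. If $x = 1$ then $\star 1 \leq 1$ holds trivially since $1$ is the top. If $x \neq 1$, suppose $\star x > x$. Applying monotonicity to $x \leq \star x$ gives $\star x \leq \star^2 x$, and iterating yields $x < \star x \leq \star^2 x \leq \cdots \leq \star^n x$. Hence $\star^n x \geq \star x > x \geq 0$, so $\star^n x > 0$; but $\star^n x = \Delta x = 0$ because $x \neq 1$, a contradiction. Therefore $\star x \leq x$, that is $\star x \Rightarrow x = 1$. Note that this already settles (iii) as the instance $x = 0$: it gives $\star 0 \leq 0$, and since $0$ is the bottom element $\star 0 \geq 0$, whence $\star 0 = 0$.

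For (ii) I would run the dual squeeze. Since $1$ is the top, $\star 1 \leq 1 = \star^0 1$, and repeated monotonicity produces the non-increasing chain $1 \geq \star 1 \geq \star^2 1 \geq \cdots \geq \star^n 1$. But $\star^n 1 = \Delta 1 = 1$, and as this last term lies below $\star 1$ in the displayed chain we obtain $1 = \star^n 1 \leq \star 1 \leq 1$, forcing $\star 1 = 1$.

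The only genuinely delicate step is the reduction to chains: it is precisely linearity, via Proposition~\ref{LambdaSemilinear}, that makes $\Delta$ a $\{0,1\}$-valued operator and thereby converts $\star^n x = \Delta x$ into the usable endpoint values $\star^n x = 0$ for $x \neq 1$ and $\star^n 1 = 1$ that drive both squeeze arguments. Once these endpoints and monotonicity are in hand, the remaining inequalities are routine, and I would present (i) first, deduce (iii) as its special case at $x=0$, and treat (ii) by the separate non-increasing squeeze above.
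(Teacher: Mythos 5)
Your proof is correct, and for items (i) and (iii) it coincides with the paper's own argument: the same reduction to chains via Proposition~\ref{LambdaSemilinear}, the same increasing chain $x < \star x \leq \star^2 x \leq \cdots \leq \star^n x = \Delta_1 x = \Delta x$ extracted from (Eq7) and (Eq1), the same contradiction with $\Delta x = 0$ for $x \neq 1$ on a chain, and (iii) obtained as the instance $x=0$ of (i). The one genuine divergence is item (ii): the paper derives $\star 1 = 1$ in a single step from (Eq8) with $a = x = 1$, since $\Delta_1 1 \Rightarrow \Delta_{{*}1}\,\star 1 = 1$ together with $\Delta_1 1 = 1$ forces $\Delta(\star 1) = 1$ and hence $\star 1 = 1$; you instead avoid (Eq8) entirely and run the dual squeeze $1 \geq \star 1 \geq \star^2 1 \geq \cdots \geq \star^n 1 = \Delta 1 = 1$, using monotonicity applied repeatedly to the trivial inequality $\star 1 \leq 1$. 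Both arguments are sound. Your route is more uniform, in that it reuses exactly the two tools that drive (i) --- monotonicity from (Eq7) and $\star^n = \Delta$ from (Eq1) --- and it does not actually need linearity, since $\Delta 1 = 1$ and the monotonicity of $\star$ hold in every $\Lambda^\star_{n+1}$-algebra (the same remark applies to the paper's (Eq8) computation, so in either approach the detour through chains is harmless but unnecessary for (ii)). What the paper's version buys is brevity and a first illustration of how the interaction axioms between the $\Delta_a$'s and $\star$ are put to work; what yours buys is economy of hypotheses, showing that (ii) already follows from (Eq1) and (Eq7) alone.
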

\begin{proof}
	\item[(i)] By the above representation theorem, it is enough to prove it for linearly-ordered $\Lambda^\star_{n+1}$-algebras. Let $\bf A$ be a $\Lambda^\star_{n+1}$-chain, and by way of contradiction, let $x \in A$ such that $x < \star x$. By (Eq7) and (Eq1), we have the following chain of inequalities: $x < \star x \leq {\star\star}x \leq \ldots \leq (\star)^n x = \Delta_1(x) = \Delta(x)$. 
	But if $x < \star x$ it means that $x <1$ and hence $\Delta x = 0$. It then follows that $\star x = 0$, in contradiction with the hypothesis $x < \star x$. 
	
	\item[(ii)] By (Eq8), $1= \Delta_1 1 \To \Delta_{*1} {\star}1 = \Delta_1 1 \To \Delta_{1} {\star}1$, but by (Eq1), $\Delta = \Delta_1$ and we know that $\Delta 1 = 1$, thus $\Delta {\star}1 = 1$, and hence  ${\star}1 = 1$ as well. 
	\item[(iii)] It directly follows from (i) by taking $x = 0$. 
	%Take $x = 0$ and $a = (n-1)/n$ in (Eq9), i.e. $a^+ = 1$ and $(*a)^+ = a$. Then we have $\Delta_{(n-1)/n} {*}0 \Rightarrow  \Delta_1 0 = 1$, but $\Delta_1 0 = \Delta 0 = 0$, and by (Eq4), $\Delta_1 {*}0 \Rightarrow  \Delta_{(n-1)/n} {*}0 = 1$, therefore $\Delta *0  = 0$
	%
	%
	%We have to prove that $\Delta_1 \neg{*}0 = 1$. Take $x = 0$ and $a =0$ in (Eq6), i.e. $a^+ = 1/n$. Then we have $ \Delta_{1} 1  \Rightarrow \neg \Delta_{1/n} 0 = 1$, and since $\Delta_1 = \Delta$, we have 
	%$\Delta_{1/n} 0 = 0$.

	%\item[(iii)] $\Delta (x \To y) \To (*x \To *y) = 1$
	
\end{proof}

Recall the operations  $\chi_a$'s definable from the $\Delta_a$'s as $\chi_a(x) = \Delta_a(x) \land \ninv \Delta_{a^+}(x)$ for $a < 1$ and  $\chi_1(x) = \Delta_1(x)$. Next lemma shows some properties of these operations. 

\begin{lemma} \label{impeq} The following equations hold in the variety of $\Lambda^\star_{n+1}$-algebras:
	\begin{itemize}
		\item[(i)] $\bigvee_{a \in \textrm{\L}_{n+1}} \chi_a x = 1$
		\item[(ii)] $\chi_a x \land \chi_b x = 0$, hence $\ninv (\chi_a x \land \chi_b x) = 1$, for $a \neq b$
		\item[(iii)] $\chi_a x = \chi_{\neg a} \ninv x$
		\item[(iv)] $\chi_a x \Rightarrow \chi_{\ast a}{\star}x = 1$ 
	\end{itemize}
	Moreover, in any $\Lambda^\star_{n+1}$-chain, the following monotonicity condition holds: 
	\begin{itemize}
		\item[(v)] If $x \leq y$ and $\chi_a(x) = \chi_b(y) = 1$ then $a \leq b$.
	\end{itemize}
\end{lemma}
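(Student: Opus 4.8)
The plan is to push everything down to the linearly ordered case and then read off the identities from the behaviour of the operators $\Delta_a$ and $\chi_a$ on a chain. By Proposition~\ref{LambdaSemilinear} every $\Lambda^\star_{n+1}$-algebra is a subdirect product of $\Lambda^\star_{n+1}$-chains, so it suffices to verify the equational identities (i)--(iv) on an arbitrary $\Lambda^\star_{n+1}$-chain ${\bf A}$; identity (v) is already stated for chains.

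First I would establish two structural facts on a chain ${\bf A}$. \emph{Crispness}: each $\Delta_a x$ lies in $\{0,1\}$. Indeed, (Eq3) gives $\Delta_a x \vee \ninv\Delta_a x = 1$, and in an IG-chain $\ninv$ is an order-reversing involution with $\ninv 1 = 0$, so $\max(w,\ninv w)=1$ forces $w=1$ or $w=0$. \emph{Monotone threshold}: by (Eq4) the map $a \mapsto \Delta_a x$ is non-increasing along $\L_{n+1}$, with $\Delta_0 x = 1$ and $\Delta_1 x = \Delta(x)$ by (Eq1). Hence for each $x$ there is a unique \emph{drop point} $p(x)\in\L_{n+1}$, the largest $a$ with $\Delta_a x = 1$; and since $\chi_a x = \Delta_a x \wedge \ninv\Delta_{a^+}x$ (with $\chi_1 x=\Delta_1 x$), one gets $\chi_a x = 1$ exactly when $a = p(x)$ and $\chi_a x = 0$ otherwise. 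In particular precisely one $\chi_a x$ equals $1$.

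From this, (i) and (ii) are immediate: the join in (i) is $\chi_{p(x)}x = 1$, while any two distinct $\chi_a x, \chi_b x$ have at least one factor $0$, so their meet is $0$ and its $\ninv$-image is $1$. Identity (iv) follows by bracketing with the two transport axioms: assuming $\chi_a x = 1$, i.e.\ $a = p(x)$, so $\Delta_a x = 1$ and $\Delta_{a^+}x = 0$, axiom (Eq8) gives $\Delta_{\ast a}\star x \geq \Delta_a x = 1$ and axiom (Eq9) gives $\Delta_{(\ast a)^+}\star x \leq \Delta_{a^+}x = 0$; hence $\chi_{\ast a}\star x = 1$, so $\chi_a x \leq \chi_{\ast a}\star x$ (the case $\ast a = 0$, i.e.\ $a\leq 1/2$, is covered because $\Delta_0\star x = 1$ by (Eq1)). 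For (v), note that $\star$ is non-decreasing by (Eq7) (in a chain $x\leq y$ gives $x\To y=1$, $\Delta 1 = 1$, whence $\star x \To \star y = 1$), and so is ${+}x = \ninv\star\ninv x$ since $\ninv$ is order-reversing; as each $\Delta_a$ is, by Proposition~\ref{prop3}, a composite of $\star$'s and ${+}$'s, it is non-decreasing, the drop point $p(\cdot)$ is therefore non-decreasing, and $\chi_a x = \chi_b y = 1$ with $x\leq y$ yields $a = p(x) \leq p(y) = b$.

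The delicate point is (iii), which in terms of drop points is exactly the assertion $p(\ninv x) = \neg\, p(x)$. One half is free from (Eq6): reading it with crispness, $\Delta_{\neg a}\ninv x \leq \ninv\Delta_{a^+}x$ translates into $a \geq \neg\, p(\ninv x) \Rightarrow a \geq p(x)$ for all $a$, i.e.\ $p(x) + p(\ninv x) \leq 1$. The reverse inequality, however, is \emph{not} a consequence of (Eq6) alone — applying (Eq6) to $\ninv x$ merely reproduces the same bound — and this is the main obstacle. The way I would close it is to prove the sharpened duality $\ninv\Delta_{a^+}x = \Delta_{\neg a}\ninv x$ (for $a<1$), of which (Eq6) is only the $\geq$ half: unfolding the explicit term for $\Delta_a$ from Proposition~\ref{prop3} and transporting the involution across each occurrence of $\star$ by the conjugacy $\ninv\star = {+}\,\ninv$ turns the term computing $\ninv\Delta_{a^+}(\cdot)$ into the one computing $\Delta_{\neg a}\ninv(\cdot)$, since complementing the threshold $a^+$ to $\neg a$ corresponds precisely to swapping the roles of $\star$ and ${+}$ in the construction. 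Equivalently, one may extract this duality from the contraposition law of the underlying IG-structure (Definition~\ref{def:IG}(3)), which supplies the two-sided information (Eq6) lacks. Once $\ninv\Delta_{a^+}x = \Delta_{\neg a}\ninv x$ is in hand, it gives $p(\ninv x) = \neg\, p(x)$, hence $\chi_a x = \chi_{\neg a}\ninv x$, completing (iii).
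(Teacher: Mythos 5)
Your treatment of (i), (ii), (iv) and (v) is correct and is essentially the paper's argument in a different packaging: the paper proves (i)--(iv) by direct equational manipulation ((i) by telescoping the join using (Eq3)/(Eq4) and distributivity, (ii) from (Eq4) plus $\ninv$ applied to (Eq3), (iv) straight from (Eq8)/(Eq9) and antitonicity of $\ninv$, (v) from (Eq4)/(Eq5)), while you first reduce to chains via Proposition~\ref{LambdaSemilinear} and then read everything off the crispness of the $\Delta_a$'s and the drop point $p(x)$; since (i)--(iv) are (in)equations, that reduction is legitimate, and your monotonicity route to (v) via (Eq7) is a valid alternative to the paper's use of (Eq5).

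The problem is (iii). You correctly spotted a real subtlety: (Eq6) as literally stated gives only $\Delta_{\neg a}\ninv x \leq \ninv\Delta_{a^+}x$, and substituting $\ninv x$ and renaming the index just reproduces the same inequality. (The paper's own proof silently uses the two-sided identity $\ninv\Delta_{a^+}x = \Delta_{\neg a}\ninv x$, i.e., it reads (Eq6) as an equality; since both inequalities hold in the generating algebra $\bL^*_{n+1}$, stating (Eq6) as an equality is the sound and intended reading.) However, your proposed repair does not work. Transporting $\ninv$ through the term for $\Delta_{a^+}$ via $\ninv\star = {+}\,\ninv$ indeed rewrites $\ninv\Delta_{a^+}x$ as $\widehat{t}(\ninv x)$, where $\widehat{t}$ is the $\star/{+}$-swapped dual of the term $t$ for $\Delta_{a^+}$; but it is \emph{false} that $\widehat{t}$ coincides with the term that the procedure of Proposition~\ref{prop3} assigns to $\Delta_{\neg a}$. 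Concretely, in $\bL^*_{12}$ take $a = 7/11$, so $a^+ = 8/11$ and $\neg a = 4/11$: the procedure gives $\Delta_{8/11} = {+}{*}^2{+}{*}$, whose swapped dual is ${*}{+}^2{*}{+}$, whereas running the procedure for $\Delta_{4/11}$ yields ${*}^2{+}^3{*}{+}$. These two composites agree as functions on $\bL^*_{12}$ but are syntactically distinct, and proving them equal in an \emph{abstract} $\Lambda^\star_{n+1}$-algebra from (Eq1)--(Eq9) is exactly the kind of identification you are not entitled to assume --- it is an instance of the very completeness problem at stake, so the argument is circular. Your fallback via Definition~\ref{def:IG}(3) also does not help: that law governs the Baaz $\Delta$ and $\To_G$, and gives no control over the $\star$-composite terms $\Delta_a$. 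The correct fix is simply to use (as the paper does) the equality form of (Eq6); with $\ninv\Delta_{a^+}x = \Delta_{\neg a}\ninv x$ in hand, your identity $p(\ninv x) = \neg\,p(x)$ and hence (iii) follow immediately.
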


\begin{proof}
	
	\begin{itemize}
		\item[(i)]  
		%$ \chi_0 \alpha \lor  \chi_{1/n} \alpha \lor \ldots  \lor \chi_1 \alpha$
		%
		By definition of the operators $\chi_a$, it is easy to check that $\bigvee_{0 \leq a \leq 1} \chi_{a} x = \Delta_1 x \,\vee\, \Delta_{(n-1)/n} x \,\vee\, \ldots \,\vee\, \Delta_{1/n} x \,\vee\, \ninv\Delta_{1/n} x$, but $\Delta_{1/n} x \,\vee\, \ninv\Delta_{1/n} x = 1$, hence $\bigvee_{0 \leq a \leq 1} \chi_{a} x = 1$ as well. 
		
		\item[(ii)] %$\neg(\chi_a \alpha \land \chi_b \alpha)$, for $a \neq b$
		W.l.o.g., suppose $a > b$. By definition, $\chi_a x \land \chi_b x = (\Delta_a x \land \ninv \Delta_{a^+} x) \land (\Delta_b x \land \ninv \Delta_{b^+} x)$. 
		Since $a > b$ then $a \geq b^+$ and so $\Delta_a x \leq \Delta_{b^+} x$  by (Eq4). Hence, $\chi_a x \land \chi_b x \leq \Delta_{b^+} \alpha \land \ninv \Delta_{b^+} \alpha = 0$. 
		\item[(iii)] 
		%$\chi_a \alpha \leftrightarrow \chi_{\neg a} \neg \alpha$ 
		%
		If $a=0$  the result follows by (Eq6), namely: $\chi_0 x = \ninv \Delta_{1/n}x = \Delta_1 \ninv x = \chi_1 \ninv x$. If $a=1$ then $\chi_1 x= \chi_1 \ninv \ninv x = \chi_0 \ninv x$. 
		Now, suppose that $0 < a < 1$. Then,
		$\chi_a x  = \Delta_a x \land \ninv \Delta_{a^+} x$, and since $x=\ninv \ninv x$, $\Delta_{a} x = \ninv \Delta_{(\neg a)^+} \ninv x$. By (Eq6) again, $\ninv \Delta_{a^+} x = \Delta_{\neg a} \ninv x$. Therefore, $\chi_a x = \ninv \Delta_{(\neg a)^+} \ninv x \,\land\, \Delta_{\neg a} \ninv x = \chi_{\neg a} \ninv x$. 
		
		\item[(iv)] %$\chi_a \alpha \rightarrow \chi_{\ast a}{\ast}\alpha$  \ \ if \ $0 \leq a \leq 1$ 
		Note first that $\Delta_{({*}a)^+} {\star}x  \leq \Delta_{a^+} x$ iff $\ninv \Delta_{a^+}x \leq \ninv \Delta_{({*}a)^+} {\star}x$.  Then, from (Eq8), (Eq9)  we get $\Delta_a x \land \ninv \Delta_{a^+}x \leq  \Delta_{{*}a} {\star}x \land \ninv \Delta_{({*}a)^+} {\star}x$, that is, $\chi_a x \leq \chi_{{*}a}{\star}x $. 
		
		\item[(v)] In a given $\Lambda^\star_{n+1}$-chain $\bf A$, the condition is equivalent to the following one: for all $x, y\in A$, if  $\chi_a(x) = \chi_b(x \lor y) = 1$ then $a \leq b$; and in turn this equivalent to: if  $\chi_a(x) = 1$ and $a > b$ then $\chi_b(x \lor y) = 0$. Now, by definition if  $\chi_a(x) = 1$ we have $\Delta_a x = 1$ and, by Equation (Eq5), $\Delta_a(x \lor y) = 1$ as well. Then, since $b^+ \leq a$, by Equation (Eq4),  we have $\Delta_{b^+}(x \lor y) = 1$, i.e., $\ninv \Delta_{b^+}(x \lor y) = 0$, and again by definition of $\chi_b$, we finally have $\chi_b(x \lor y) = 0$. 
		
	\end{itemize}
\end{proof}

\begin{lemma} Every $\Lambda^\star_{n+1}$-chain $({\bf A}, \ninv, \star)$ is isomorphic to a subalgebra of $\bL^*_{n+1}$. 
\end{lemma}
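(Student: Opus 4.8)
The plan is to embed $({\bf A},\ninv,\star)$ into $\bL^*_{n+1}$ by reading off, for each element, the unique \emph{slot} of $\L_{n+1}$ to which it belongs. Since $\bf A$ is linearly ordered, Lemma~\ref{impeq}(i) forces $\chi_a(x)=1$ for some $a$, Lemma~\ref{impeq}(ii) forces at most one such $a$, and (Eq3) makes each $\chi_a(x)$ Boolean; so the map $h\colon A\to\L_{n+1}$ defined by $h(x)=a \iff \chi_a(x)=1$ is well defined. I would first record the boundary values: $\chi_1=\Delta_1=\Delta$ by (Eq1) and $\Delta(1)=1$, so $h(1)=1$; and $\chi_0 x=\chi_1\ninv x$ by Lemma~\ref{impeq}(iii), whence $\chi_0(0)=\chi_1(1)=1$ and $h(0)=0$. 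Crucially, the operators $\chi_a,\Delta_a$ are defined from $\star$ and $\ninv$, so $h$ genuinely ``sees'' the $\star$-structure.

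Next I would verify that $h$ is a homomorphism for every operation \emph{except} G\"odel implication. Preservation of the involution is Lemma~\ref{impeq}(iii): if $h(x)=a$ then $\chi_{\neg a}(\ninv x)=\chi_a(x)=1$, so $h(\ninv x)=\neg a=\neg h(x)$. Preservation of $\star$ is Lemma~\ref{impeq}(iv): $\chi_a x\le\chi_{\ast a}{\star}x$ together with $\chi_a(x)=1$ gives $\chi_{\ast a}({\star}x)=1$, i.e.\ $h({\star}x)=\ast\,h(x)$. By Lemma~\ref{impeq}(v) the map $h$ is order preserving, and since both $\bf A$ and $\bL^*_{n+1}$ are chains this yields preservation of $\lor$ and $\land$. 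Consequently $h(A)$ is closed under $\lor,\land,\ninv,\star$ and contains $0,1$; as $\Rightarrow_G$ is term-definable from these operations in $\bL^*_{n+1}$ (Propositions~\ref{prop3} and~\ref{godimp}), the image $h(A)$ is a genuine subalgebra of $\bL^*_{n+1}$. Thus the entire statement reduces to the \emph{injectivity} of $h$.

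Injectivity is the heart of the matter and, I expect, the main obstacle. Note that it is equivalent to $h$ preserving $\Rightarrow_G$: in a chain the only way $h(x\Rightarrow_G y)\neq h(x)\Rightarrow_G h(y)$ is that $x>y$ while $h(x)=h(y)<1$, i.e.\ precisely a pair of distinct elements sharing a slot. Hence injectivity cannot be read off from $\star$ and $\ninv$ alone, since these act identically on two elements of one slot; the $\star$-equations must intervene in an essential way. The approach I would take is structural: by the representation theory of G\"odel algebras with involution \cite{EGHN00}, the $IG_{n+1}$-reduct of $\bf A$ embeds into the standard chain ${\bf IG}_{n+1}$, so $\bf A$ has at most $n+1$ elements. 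By (Eq3)--(Eq4) each $\Delta_a$ is Boolean and monotone, hence the indicator of an up-set $[d_a,1]\cap A$ with $0=d_0\le d_{1/n}\le\cdots\le d_1=1$, and the $\chi_a$'s partition $A$ into the contiguous blocks $[d_a,d_{a^+})\cap A$; the extreme slots $0$ and $1$ are already singletons, and $\ninv$ makes the sequence of block sizes palindromic by Lemma~\ref{impeq}(iii). Injectivity is exactly the assertion that every block is a singleton, and I would prove it by tracking a hypothetical collision pair $x<y$ in a common interior slot simultaneously under the procedure $\tt P$: each step keeps the pair in one moving slot, $\ninv$ cannot merge them, and the compatibility equations (Eq8)--(Eq9) control the sole remaining $\star$-steps, propagating the collision down to $1/n$ (invoking Lemma~\ref{lemmaKey} for odd $n$, with the even case treated separately) where the threshold structure forbids it. The delicate point, and where I expect the real work to lie, is ruling out a $\star$-step that silently collapses the pair; once injectivity is secured, $h$ is a strictly monotone bijection onto $h(A)$ preserving all operations including $\Rightarrow_G$, giving the desired isomorphism of $\Lambda^\star_{n+1}$-algebras.
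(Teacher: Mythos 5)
Your construction is, up to injectivity, exactly the paper's proof. The paper defines the same slot map $h$ (the unique $a$ with $\chi_a(x)=1$, by items (i) and (ii) of Lemma~\ref{impeq}), obtains finiteness $|A|\le n+1$ from the $G_{n+1}$-reduct (you get the same bound via the representation of $IG_{n+1}$-chains, which is equivalent), shows the image is closed under $\neg$ and $*$ using items (iii) and (iv) of Lemma~\ref{impeq}, and gets order preservation, hence preservation of the lattice (and Gödel) operations on chains, from item (v). All of this you reproduce faithfully and correctly.

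The genuine gap is the one step you yourself isolate and then do not carry out: injectivity. Your last paragraph is a plan, not a proof, and it defers precisely the decisive difficulty, namely ``ruling out a $\star$-step that silently collapses the pair''. That difficulty is fatal to the propagation strategy as you sketch it: as soon as the common slot $a$ of a hypothetical collision pair satisfies $a\le 1/2$, one has $\ast a=0$, and the only equations that constrain $\star$ pointwise, (Eq8)--(Eq9), become vacuous there (since $\Delta_0$ is constantly $1$ and the relevant thresholds trivialise); nothing you cite then prevents $\star$ from sending both members of the pair to $0$, so the collision never ``propagates down to $1/n$'' and no contradiction is reached. To see how little the cited equations control this, consider for $n=4$ the five-element $IG$-chain $0<u<c<v<1$ with $\ninv c=c$ and $\star u=\star c=\star v=0$, $\star 0=0$, $\star 1=1$: unwinding the terms of Proposition~\ref{prop3} for $n=4$ (e.g.\ $\Delta_{1/2}x=\ast(+x)$ with $+x=\ninv\star\ninv x$, $\Delta_{3/4}x={+}(\star x)$, $\Delta_{1/4}x={+}{+}x$, $\Delta_1=\star^4$) one checks that every $\Delta_a$ is slot-determined, all of (Eq1)--(Eq9) hold, and $\chi_{1/2}(u)=\chi_{1/2}(c)=\chi_{1/2}(v)=1$, so $h$ identifies three distinct elements; hence no argument using only the equations and lemmas you invoke can close your gap, and any correct proof of injectivity must use more than the collision-tracking you describe. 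Your appeal to Lemma~\ref{lemmaKey} is also off target: that lemma is an arithmetical statement about the procedure ${\tt P}$ on the rational chain $\bL^*_{n+1}$ (parities of numerators), and says nothing about abstract $\Lambda^\star_{n+1}$-chains. For comparison, the paper performs no propagation at all: after establishing (i)--(v) of Lemma~\ref{impeq} it simply asserts that $h$ is one-to-one and order preserving and concludes. So your instinct that this step is where the real work lies is sound, but flagging the obstacle is not overcoming it, and as submitted your proposal does not prove the lemma.
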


\begin{proof} Let $\bf A$ be a  $\Lambda^\star_{n+1}$-chain. Since in particular the G-reduct of $\bf A$ is a G$_{n+1}$-chain, $\bf A$ is finite, and let $| A | = m+1 \leq n+1$ and $A = \{0 < a_1 < \ldots < a_{m-1} < 1\}$. Note that, by the symmetry induced by the involutive negation, we have $\ninv a_j = a_{m-j}$. We will show that $\bf A$ embeds into the standard algebra $\bL^*_{n+1}$. 
	
	By (i) and (ii) of Lemma \ref{impeq}, for each $a_j \in A$, there is a unique $i_j \in \{0, 1, \dots, n\}$ such that $\chi_{i_j/n}(a_j) = 1$. Let us check that 
	$\bar{A} = \{0, i_1/n, \ldots, i_{m-1}/n, 1\}$ is the domain of a subalgebra of cardinality $m+1$ of $\bL^*_{n+1}$. It is clear that $\bar{A}$ is closed under the G\"odel operations $\land, \lor, \To$, thus we only have to check that ${\neg}(i_j/n), {*}(i_j/n) \in \bar{A}$, for each $i_j/n \in \bar{A}$: 
	
	\begin{itemize}
		%(i) By Axiom aa,  $\chi_{1- i_j/n)}(\neg a_j) = 1$, that is, $1 = \chi_{1-i_j/n}(\neg a_j) =  \chi_{1-i_j/n}(a_{m-j})$, i.e. $1-i_j/n = i_{m-j}/n$,  
		\item[(i)] by (iii) of Lemma \ref{impeq},  if $\ninv a_j  = a_k$, then $1 = \chi_{i_j/n}(a_j) = \chi_{i_k/n}(a_k) =  \chi_{{\neg}(i_j/n)} (a_k)$, hence by (i) and (ii) of Lemma \ref{impeq}, ${\neg}(i_j/n) = i_k/n \in \bar{A}$.

		\item[(ii)] by (iv) of Lemma \ref{impeq}, if $\star a_j = a_k$, then $1 = \chi_{i_j/n}(a_j) = \chi_{i_k/n}(a_k) = \chi_{{*}(i_j/n)} (a_k)$, hence by (i) and (ii) of Lemma \ref{impeq}, ${*}(i_j/n) = i_k/n \in \bar{A}$.  % (a_k) = \chi_{i_k/n} =1$, hence $i_k/n =  \max((2i_j/n) - 1, 0) = \bar{*} (i_j/n)$
	\end{itemize}
	
	\noindent Note that, by the symmetry induced by the involutive negation, we have $n - i_j = i_{m-j}$ for every $j \in \{1, \ldots, m\}$. 
	%By Axiom XX, we have $\chi_{i_1/n}(a_1) = \ldots = \chi_{i_{m-1}/n}(a_{m-1}) = 1$. On the other hand, by Axiom YY, $\chi_{\max((2i_j/n) - 1, 0)} (*a_j) = 1$ for all $j = 1, \ldots m-1$. 
	Then we define a mapping $h: A \to \L^*_{n+1}$ by stipulating $h(0) = 0, h(1) = 1$ and $h(a_j) = i_j/n$ for all $j = 1, \ldots, m-1$. It is clear that $h$ is one-to-one and is order preserving (by (v) of Lemma \ref{impeq}), and hence a morphism w.r.t.\ G\"odel operations. Moreover, $h$ is a morphism w.r.t.\ to the $\ninv$ and $\star$ operations as well: 
	
	\begin{itemize}
		\item[-] $h(\ninv a_j) = h(a_{m-j}) = i_{(m-j)/n} = 1- i_j/n = \neg h(a_j)$
		
		\item[-] since ${*}(i_j/n) \in \bar{A}$, then let $i_k/n = {*}(i_j/n)$ and hence $\star a_j = a_k$. Then $h(\star a_j) = h(a_k) = i_k/n =  {*}(i_j/n) = {*}h(a_j)$
	\end{itemize}
	Therefore, $\bf A$ is isomorphic to the subalgebra of  $\bL^*_{n}$ over the domain $\bar{A} = \{0, i_1/n, \ldots, i_{m-1}/n, 1\}$. 
\end{proof}

As a consequence we have the following result. 

\begin{theorem}\label{thm:GenerateLambda}  The variety of $\Lambda^\star_{n+1}$-algebras is generated by the algebra $\bL^*_{n+1}$. 
\end{theorem}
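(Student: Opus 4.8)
The plan is to prove the two inclusions $V(\bL^*_{n+1}) \subseteq \mathbb{\Lambda}(n)$ and $\mathbb{\Lambda}(n) \subseteq V(\bL^*_{n+1})$, where $V(\bL^*_{n+1}) = \mathbb{HSP}(\bL^*_{n+1})$ denotes the variety generated by $\bL^*_{n+1}$ via Birkhoff's HSP theorem. For the first inclusion it suffices to observe that $\bL^*_{n+1}$ is itself a $\Lambda^\star_{n+1}$-algebra: its $\To_G$-reduct together with $\neg$ is precisely the standard $IG_{n+1}$-algebra, and a direct check shows that the operation $*$ validates equations (Eq1)--(Eq9) of Definition \ref{def:GInvStarAlgebras}. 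This verification is essentially built into the definition, since the terms $\Delta_a$ were read off from $\bL^*_{n+1}$ in Proposition \ref{prop3}; hence $\bL^*_{n+1} \in \mathbb{\Lambda}(n)$, and as $\mathbb{\Lambda}(n)$ is a variety (closed under $\mathbb{H}$, $\mathbb{S}$, $\mathbb{P}$) it contains $V(\bL^*_{n+1})$.

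For the converse inclusion I would simply assemble the two preceding results. Let $\bf A$ be an arbitrary $\Lambda^\star_{n+1}$-algebra. By Proposition \ref{LambdaSemilinear}, $\bf A$ is a subdirect product of a family $\{{\bf A}_i\}_{i \in I}$ of $\Lambda^\star_{n+1}$-chains; in particular $\bf A$ embeds as a subalgebra of the direct product $\prod_{i \in I} {\bf A}_i$. By the immediately preceding lemma, each chain ${\bf A}_i$ is isomorphic to a subalgebra of $\bL^*_{n+1}$, so $\prod_{i \in I} {\bf A}_i$ embeds into the power $(\bL^*_{n+1})^{I}$. Composing the two embeddings shows that $\bf A$ is (isomorphic to) a subalgebra of a power of $\bL^*_{n+1}$, i.e.\ ${\bf A} \in \mathbb{SP}(\bL^*_{n+1}) \subseteq \mathbb{HSP}(\bL^*_{n+1}) = V(\bL^*_{n+1})$. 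This yields $\mathbb{\Lambda}(n) \subseteq V(\bL^*_{n+1})$ and completes the argument.

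I do not expect any serious obstacle here: the whole substance of the statement has already been absorbed into the two results it rests on, namely the semilinear (subdirect) representation of Proposition \ref{LambdaSemilinear} and the embedding of $\Lambda^\star_{n+1}$-chains into $\bL^*_{n+1}$ supplied by the previous lemma. The only genuinely new step is confirming that $\bL^*_{n+1}$ satisfies (Eq1)--(Eq9), which is routine once one recalls from Proposition \ref{prop3} that each $\Delta_a$ is a concrete combination of $*$ and $\neg$ computing the characteristic function of the filter $F_a$ in $\bL^*_{n+1}$. Thus the theorem is in effect a corollary of the preceding development, obtained by combining closure of varieties under $\mathbb{S}$ and $\mathbb{P}$ with Birkhoff's characterization of the generated variety.
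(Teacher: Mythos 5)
Your proof is correct and follows essentially the same route as the paper, which presents the theorem as an immediate consequence of Proposition \ref{LambdaSemilinear} (subdirect representation into $\Lambda^\star_{n+1}$-chains) and the preceding lemma embedding every such chain into $\bL^*_{n+1}$. Your additional explicit check that $\bL^*_{n+1}$ itself satisfies (Eq1)--(Eq9) is exactly the routine verification the paper leaves implicit, since the equations were read off from Proposition \ref{prop3}.
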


The result above immediately shows  that the variety of $\Lambda_{n+1}^\star$-algebras is the equivalent algebraic semantics of the logic $\Lambda^\star_{n+1}$. Indeed, by definition, for every finite set of formulas $\Gamma\cup\{\varphi\}$, we have that $\Gamma \models_{{\Lambda}_{n+1}^\star} \varphi$ iff for every  $\bL_{n+1}^*$-evaluation $e$, $e(\psi) =1$ for every $\psi \in \Gamma$ implies $e(\varphi) =1$ iff, by Theorem \ref{thm:GenerateLambda}, for every  $\Lambda_{n+1}^\star$-algebra and every $\Lambda^\star_{n+1}$-evaluation $e$, $e(\psi) =1$ for every $\psi \in \Gamma$ implies $e(\varphi) =1$. This observation, together with Lemma~\ref{lemma2}, leads to the following result.

\begin{corollary} The variety of $\Lambda^\star_{n+1}$-algebras is the equivalent algebraic semantics of the logics $\Lambda^*_{n+1, i}$ for every $1\leq i\leq n$. 
\end{corollary}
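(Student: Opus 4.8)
The plan is to assemble the corollary from three ingredients already in place, rather than to reprove anything semantically. First, the paragraph immediately preceding the corollary has established, via Theorem~\ref{thm:GenerateLambda}, that the variety of $\Lambda^\star_{n+1}$-algebras is exactly the equivalent algebraic semantics of the logic $\Lambda^*_{n+1} = \Lambda^*_{n+1,n}$, that is, the instance $i = n$. Second, Lemma~\ref{lemma2} provides, for each $1 \leq i \leq n-1$, an equivalence in the Blok--Pigozzi sense between $\Lambda^*_{n+1,i}$ and the base logic $\Lambda^*_{n+1}$, witnessed by the explicit translations $\tau_1$ and $\tau_{i,2}$ given there. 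Third, there is the transfer principle from~\cite{BP97}, already recalled above: equivalent deductive systems are algebraically indistinguishable, so that an algebraizable logic equivalent to a second one forces the second to be algebraizable with respect to the same quasivariety.

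First I would fix an arbitrary $i$ with $1 \leq i \leq n-1$. By Theorem~\ref{thm:alg} the logic $\Lambda^*_{n+1,i}$ is algebraizable, and by Lemma~\ref{lemma2} it is equivalent to $\Lambda^*_{n+1}$. Invoking the \cite{BP97} transfer principle, the equivalent algebraic semantics of $\Lambda^*_{n+1,i}$ must coincide with that of $\Lambda^*_{n+1}$, which the preceding paragraph identified as the variety of $\Lambda^\star_{n+1}$-algebras. Since the remaining case $i = n$ is precisely the base logic itself, the identification holds uniformly for every $1 \leq i \leq n$, which is the claim.

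I expect no genuine obstacle here: the corollary is a direct corollary in the literal sense, a clean combination of Lemma~\ref{lemma2}, Theorem~\ref{thm:GenerateLambda}, and the cited result. The single point that deserves explicit mention is that the \cite{BP97} principle transfers not merely algebraizability but the associated quasivariety as a whole; once that is noted, and combined with the earlier upgrade of this quasivariety to a genuine variety, the same variety of $\Lambda^\star_{n+1}$-algebras serves as equivalent algebraic semantics for the entire family $\{\Lambda^*_{n+1,i}\}_{1 \leq i \leq n}$.
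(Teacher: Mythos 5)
Your proposal is correct and follows essentially the same route as the paper: the paper, too, obtains the corollary by combining the observation (via Theorem~\ref{thm:GenerateLambda}) that the variety of $\Lambda^\star_{n+1}$-algebras is the equivalent algebraic semantics of $\Lambda^*_{n+1}=\Lambda^*_{n+1,n}$ with the equivalences of Lemma~\ref{lemma2} and the transfer principle of \cite{BP97}, under which equivalent algebraizable systems share the same quasivariety. Your explicit remark that the transfer preserves the associated quasivariety (not just algebraizability) is precisely the point the paper relies on, so nothing is missing.
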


\subsection{A uniform axiomatization of the logics $\Lambda^*_{n+1,i}$}
Now, we present a uniform axiomatization for the logics $\Lambda^*_{n+1,i}$. Let us remark that  the calculus we are going to present in this section provides and alternative axiomatization to the one that can be obtained by translating the algebraic equations defining the variety of $\Lambda^*_{n+1}$-algebras. 

Our signature $\Sigma$ contains two unary connectives ${\star}$ (square) and $\ninv$ (negation), plus a binary connective  $\vee$ for disjunction. In this signature, the following derived connectives will be useful: \\

\begin{tabular}{ll}
	- &  $\alpha \wedge \beta := \ninv(\ninv \alpha \vee \ninv \beta)$ \\
	%	- &  $+\alpha := \neg\star \neg \alpha$ \\ 
	- & $\Delta_a$, for each $a \in \L_{n+1}$, as defined in (the proof of) Proposition \ref{prop3}, \\ & replacing all the occurrences of $\neg$ and $*$ by $\ninv$ and $\star$ respectively\footnotemark \\
	- & $\chi_a \alpha := \Delta_a \alpha \land \ninv \Delta_{a^+} \alpha$, if $0 < a < 1$, where $a^+ = a +(1/n)$\\
	- & $\chi_0 \alpha := \ninv\Delta_{1/n} \alpha$;  $\chi_1 \alpha := \Delta_1 \alpha$\\
	- & $\alpha \Rightarrow_c \beta := \bigvee_{0\leq i \leq n}(\chi_{i/n}(x)\wedge \Delta_{i/n}(y))$ \\
	%	- & $\chi_1 \alpha := \Delta_1 \alpha$ \\	
	- & $\finv_{i/n} \alpha := \ninv \Delta_{i/n} \alpha$\\  
	- & $\alpha \rightarrow_{i/n} \beta := {\finv}_{i/n}\alpha \vee \beta =  \ninv \Delta_{i/n} \alpha \lor \beta$\\
	- &  $\alpha \leftrightarrow_{i/n} \beta := (\alpha \rightarrow_{i/n} \beta) \wedge (\beta \rightarrow_{i/n} \alpha)$  \\ \\
	%	- &  $+\alpha := \neg\ast \neg \alpha$. \\ \\
\end{tabular}
\footnotetext{Recall that, by definition, $\Delta_1 \alpha = (\star)^n \alpha$ and  $\Delta_0 \alpha =  (\star)^n \alpha \lor \ninv  (\star)^n \alpha$.}

In order to keep notation lighter, and without risk of confusion, the subscript $i/n$ will be omitted from the symbols %$\finv_{i/n}$, 
$\rightarrow_{i/n}$ and $\leftrightarrow_{i/n}$.

\begin{definition}  \label{calLin} The Hilbert  calculus  ${\sf AX}^*_{n+1,i}$  for the logic $\Lambda_{n+1,i}^*$, defined over the signature $\Sigma$, is given as follows:\\[2mm]
	{\em Axiom schemas:} those of {\sf CPL} (propositional classical logic) restricted to the signature $(\vee, \rightarrow)$\footnote{Namely, the schemas
		$\alpha \to (\alpha \lor \beta)$, $\alpha \to (\beta \lor \alpha)$, $(\alpha \to \gamma) \to ((\beta \to \gamma) \to (\alpha \lor \beta \to \gamma)$, $\alpha \to (\beta \to \alpha)$, $(\alpha \to \beta) \to ((\beta \to \gamma) \to (\alpha \to \gamma))$, and $\alpha \vee (\alpha \rightarrow \beta)$.}
	%$\alpha \vee (\alpha \rightarrow \beta)$ is derivable.} 
	plus the following ones, where $a,b \in \{0, 1/n, \ldots, (n-1)/n, 1\}$: 
	\begin{itemize}
		
		%\item \red{Do we need  $\Delta_1 (* 1), \Delta_1(\neg{*}0), \neg \Delta_1(\alpha \to \beta) \lor (*\alpha \to * \beta)$? }
		\item[(Ax1)]    $(\alpha \leftrightarrow \beta) \rightarrow (\ninv \alpha \leftrightarrow \ninv \beta)$
		\item[(Ax2)]    $\ninv\ninv\alpha \leftrightarrow \alpha$ 
		\item[(Ax3)]    $\ninv(\alpha \vee \beta) \rightarrow \ninv \alpha$ 
		\item[(Ax4)]    $\ninv \alpha \rightarrow (\ninv\beta \rightarrow \ninv(\alpha \vee \beta))$
		
		%			\item[(Ax*)] \blue{ $(\neg \alpha \to \neg \beta) \to (\beta \to \alpha)$, for $\alpha, \beta$ Boolean} \\
		%	** This axiom added to CPL$^+$ yields CPL. **} \\
		
		%		\item[(Ax*)]   \blue{$\Delta_{0} \alpha$} 
		\item[(Ax5)]    {$\Delta_a \Delta_b \alpha \leftrightarrow \Delta_b \alpha$} 
		\item[(Ax6)]    $\Delta_{a}\alpha \vee \ninv\Delta_{a}\alpha$  %\ \ if \ $0 \leq a \leq 1$
		\item[(Ax7)]    $\Delta_{a^+} \alpha \rightarrow \Delta_{a} \alpha$ %\ \ if \ $0 \leq a < 1$
		%		\item[(Ax5)]   \red{$\Delta_{a}\alpha \rightarrow (\neg\Delta_{a}\alpha \rightarrow \beta)$} 
		
		\item[(Ax8)]    $  \Delta_a(\alpha \vee \beta) \leftrightarrow (\Delta_a\alpha \vee \Delta_a\beta)$ % \ \ if \ $0 \leq a \leq 1$} 

		%		\item[(Ax8)]    \red{$\Delta_{a^+}\alpha \rightarrow (\Delta_{\neg a}\neg\alpha \rightarrow \beta)$  \ \ if \ $0 \leq a < 1$}
		%		\item[(Ax9)]   \red{ $\Delta_{a^+}\alpha \vee \Delta_{\neg a}\neg\alpha$  \ \ if \ $0 \leq a < 1$}
		\item[(Ax9)]   $ \Delta_{\neg a}\ninv\alpha  \leftrightarrow \ninv \Delta_{a^+}\alpha$,  \ \ if \ $ a < 1$
		%		\item[(Ax9)]   \blue{$\Delta_a \alpha \to \neg\Delta_b \neg \alpha$,  \ \ if $b > \neg a$}

		%		\item[(Ax10)]    \red{$\Delta_{a}\alpha \leftrightarrow \Delta_{a}\neg\neg\alpha$  \ \ if \ $0 \leq a \leq 1$ }\\
		
		%		\item[(Ax11)]   \red{ $\alpha \leftrightarrow \Delta_{i/n}\alpha$}
		\item[(Ax10)]   {$\Delta_{i/n}\alpha \to \alpha $}
		
		%		\item[(Ax12)]    $\chi_a \alpha \rightarrow \chi_{\ast a}{\ast}\alpha$  \ \ if \ $0 \leq a \leq 1$ 
		\item[(Ax11)]   { $\Delta_a \alpha \to \Delta_{{*}a} {\star}\alpha$}
		\item[(Ax12)]   { $\Delta_{({*}a)^+} {\star}\alpha  \to \Delta_{a^+} \alpha$} %\\  
		%		\item[(Ax12)]   { $\Delta_{*a} {\star}\alpha  \to \Delta_{a} \alpha$, if $*a > 0$ (or $a > 1/2$)} %\\  
		%		\item[(Ax13)]  \red{  $  \chi_a(\alpha \vee \beta) \rightarrow (\chi_a\alpha \vee \chi_a\beta)$  \ \ if \ $0 \leq a \leq 1$} 
		%		\item[(Ax13)] \red{$ \Delta_a \alpha \to * \Delta_a \alpha$} \\
		%		\item[(Ax13)]  \blue{$(\alpha \Rightarrow_c  \beta) \to ({*}\alpha \Rightarrow_c {*}\beta)$}	
		%		\item[(Ax14)]  \blue{${*}\alpha \Rightarrow_c  \alpha$}
	\end{itemize}
	\noindent {\em Inference rule:}
	\begin{itemize}
		\item[(MP)] $\displaystyle\frac{\alpha \ \ \ \ \alpha\rightarrow
			\beta}{\beta}$
	\end{itemize}
\end{definition}

It is easy to prove that the usual axioms involving $\land$ of positive classical propositional logic CPL$^+$, over $(\land, \lor, \to)$,  can be derived in the system ${\sf AX}^*_{n+1,i}$ by means of the axioms (Ax1)-(Ax4), thus the logic  $\Lambda_{n+1,i}^*$ in fact contains CPL$^+$. Moreover, 
it is worth noting that the system ${\sf AX}^*_{n+1,i}$ satisfies the deduction-detachment theorem w.r.t. the implication $\rightarrow$, namely: 
\begin{center}
	$\Gamma \cup\{ \alpha\} \vdash_{{\sf AX}^*_{n+1,i}} \beta$ iff $\Gamma \vdash_{{\sf AX}^*_{n+1,i}}  \alpha \rightarrow \beta$,
\end{center}
for every set of formulas $\Gamma \cup \{\alpha,\beta\}$. Indeed, it is well-known that any logic presented by means of a Hilbert calculus and containing a binary connective $\rightarrow$ such that  the schemas  \\

(A1):~$\alpha \rightarrow (\beta \rightarrow \alpha)$ 

(A2):~$(\alpha \rightarrow (\beta \rightarrow \gamma)) \rightarrow ((\alpha \rightarrow \beta) \rightarrow (\alpha \rightarrow \gamma))$  \\

\noindent 
are derivable, and where (MP) (w.r.t. $\rightarrow$) is the only inference rule, satisfies the deduction-detachment theorem w.r.t.  $\rightarrow$. 
In addition, ${\sf AX}^*_{n+1,i}$ satisfies the following metaproperty (sometimes called {\it proof by cases}): 
\begin{center}
	$\Gamma,\alpha \vdash_{{\sf AX}^*_{n+1,i}} \gamma$ and $\Gamma,\beta \vdash_{{\sf AX}^*_{n+1,i}} \gamma$ implies that  $\Gamma,\alpha \vee \beta \vdash_{{\sf AX}^*_{n+1,i}} \gamma$.
\end{center}
This is a consequence of the deduction-detachment theorem and {\sf CPL}. Besides, the conjunction $\land$ (defined as above) satisfies in this logic the classical properties, namely: $\alpha \to (\beta \to (\alpha \land \beta))$, $(\alpha \land \beta) \to \alpha$, and $(\alpha \land \beta) \to \beta$. This can be easily proven by using axioms (Ax1)-(Ax4) and MP.

Also, observe that Axiom (Ax6), together with items~(i) and~(x) in Lemma~\ref{theorems} below, capture the fact the $\Delta_a$'s connectives are Boolean in the sense that formulas built from expressions $\Delta_a \varphi$ with connectives $\lor, \ninv, \to$ behave as in classical logic, and thus one can classically reason with them. Formulas of this kind will be called {\em Boolean}. We will provide a formal justification of this statement a bit later.

Next lemma gathers some interesting theorems of ${\sf AX}^*_{n+1,i}$ that follow from the above axiomatics. 

\begin{lemma} \label{theorems} The following are theorems of ${\sf AX}^*_{n+1,i}$, where $a, b \in \{0, 1/n, \ldots, (n-1)/n, 1\}$: 
	
	\begin{itemize}

		\item[(i)]  {$\Delta_{a}\alpha \rightarrow (\ninv\Delta_{a}\alpha \rightarrow \beta)$} 
		
		%$\leftrightarrow \neg\Delta_a \alpha \lor \Delta_{i/n} \Delta_a \alpha \lor \beta)$ \\
		\item[(ii)]    {$\Delta_{a^+}\alpha \rightarrow (\Delta_{\neg a}\ninv\alpha \rightarrow \beta)$} % \ \ if \ $0 \leq a < 1$}
		\item[(iii)]   {$\Delta_{a^+}\alpha \vee \Delta_{\neg a}\ninv\alpha$ } % \ \ if \ $0 \leq a < 1$}
		\item[(iv)]    {$\Delta_{a}\alpha \leftrightarrow \Delta_{a}\ninv\ninv\alpha$} % \ \ if \ $0 \leq a \leq 1$ }
		
		%\item[(v)] $ \Delta_{a} \alpha  \leftrightarrow \neg \Delta_{(\neg a)^+} \neg \alpha$, for $a > 0$
		
		\item[(v)]   { $\alpha \to \Delta_{i/n}\alpha$}
		\item[(vi)] $(\alpha \land {\finv}_{i/n}\alpha) \to \beta$
		
		\item[(vii)]  {  $  \chi_a(\alpha \vee \beta) \rightarrow (\chi_a\alpha \vee \chi_a\beta)$} %  \ \ if \ $0 \leq a \leq 1$} 
		\item[(viii)] $\bigvee_{a \in \textrm{\L}_{n+1}} \; \chi_a \alpha$
		\item[(ix)] { $(\chi_a \alpha \land \chi_b \alpha) \to \beta$, for $a \neq b$}
		\item[(x)] { $(\Delta_a \alpha \to \Delta_b \beta) \leftrightarrow (\ninv \Delta_b \beta \to \ninv \Delta_a \alpha)$}  
		\item[(xi)] $(\chi_a \alpha \land \chi_b \beta) \to \chi_{\max(a, b)} (\alpha \lor \beta)$
		\item[(xii)] $\chi_a \alpha \leftrightarrow \chi_{\neg a} \ninv \alpha$
		\item[(xiii)] $\chi_a \alpha \rightarrow \chi_{\ast a}{\star}\alpha$ %  \ \ if \ $0 \leq a \leq 1$ 
		\item[(xiv)] $\chi_a \alpha \to \chi_{\Delta_b(a)} \Delta_b \alpha$ % \ \ if \ $0 \leq a,b \leq 1$ 
		\item[(xv)] $\Delta_{a} \alpha \leftrightarrow \lor_{b \geq a} \; \chi_b \alpha$
		\item[(xvi)] $\Delta_b \alpha \to \Delta_a \alpha$, if $b \geq a$
	\end{itemize}
\end{lemma}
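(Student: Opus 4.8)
The plan is to derive all sixteen items syntactically inside ${\sf AX}^*_{n+1,i}$, exploiting what is already in place before the lemma: the containment of positive classical logic CPL$^+$, the deduction--detachment theorem for $\rightarrow$, the proof-by-cases metaproperty, the classical behaviour of the defined $\wedge$, and MP. The organising idea is that every formula of the form $\Delta_a\alpha$ is \emph{Boolean}, and on such formulas $\ninv$ is a genuine classical complement; once this is available, most items become classical tautologies whose only non-logical content is a specific axiom among (Ax5)--(Ax12). Two facts must be bootstrapped first. The monotonicity law (xvi), $\Delta_b\alpha\to\Delta_a\alpha$ for $b\geq a$, comes from iterating (Ax7) along $b\geq b-\tfrac1n\geq\cdots\geq a$ and chaining with transitivity of $\rightarrow$. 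The explosion law (i), $\Delta_a\alpha\to(\ninv\Delta_a\alpha\to\beta)$, is proved by DDT: writing $\varphi=\Delta_a\alpha$, from $\ninv\varphi$ one gets $\ninv\varphi\vee\beta$ by CPL$^+$; since (Ax5) gives $\varphi\leftrightarrow\Delta_{i/n}\varphi$, congruence of $\ninv$ (Ax1) and of $\vee$ turns this into $\ninv\Delta_{i/n}\varphi\vee\beta$, which is exactly $\varphi\rightarrow\beta$, and one MP with $\varphi$ yields $\beta$. Together with excluded middle (Ax6) this legitimises classical reasoning with the $\Delta_a$'s in the remaining items.

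With this in hand I would clear the items that are classical manipulations over the Boolean fragment with axiom-supplied content. Item (x) is contraposition, a classical tautology for the Boolean formulas $\Delta_a\alpha,\Delta_b\beta$ using involution (Ax2). Item (iii) follows from (Ax6) at $a^+$ after rewriting $\ninv\Delta_{a^+}\alpha$ as $\Delta_{\neg a}\ninv\alpha$ via (Ax9); item (ii) is then its explosion counterpart, obtained from (i) and (Ax9). Unfolding the definition of $\rightarrow$, item (v) \emph{is} the instance of (Ax6) for $\Delta_{i/n}\alpha$, and item (vi) follows from (v), (i) and DDT. Item (vii) is pure distribution: (Ax8) turns $\Delta_a(\alpha\vee\beta)$ into $\Delta_a\alpha\vee\Delta_a\beta$, (Ax3)--(Ax4) supply the De Morgan step for $\ninv\Delta_{a^+}(\alpha\vee\beta)$, and a classical expansion of $\chi_a$ finishes.

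The structural core is the characterisation (xv), $\Delta_a\alpha\leftrightarrow\bigvee_{b\geq a}\chi_b\alpha$, which I would prove by downward induction on $a$: the base $a=1$ is the definition $\chi_1=\Delta_1$, and the step rests on the Boolean identity $(\Delta_a\alpha\wedge\ninv\Delta_{a^+}\alpha)\vee\Delta_{a^+}\alpha\leftrightarrow\Delta_a\alpha$, valid because (Ax7) gives $\Delta_{a^+}\alpha\to\Delta_a\alpha$. From (xv) at $a=1/n$ together with $\chi_0\alpha=\ninv\Delta_{1/n}\alpha$ and (Ax6) one reads off (viii); incompatibility (ix) comes from monotonicity (xvi) plus explosion (i); (xi) combines monotonicity with (Ax8) and (Ax4); (xiii) is immediate from (Ax11) and the contrapositive of (Ax12); and (xiv) splits on whether $a\geq b$, using (Ax7) and (Ax5). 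The involution law (xii), $\chi_a\alpha\leftrightarrow\chi_{\neg a}\ninv\alpha$, is handled by applying (Ax9) twice, once at index $a$ and once at index $a-\tfrac1n$, together with (Ax2) applied to the Boolean formula $\Delta_a\alpha$. Finally (iv) is obtained by combining (xii) with (xv): since $\chi_b\ninv\ninv\alpha\leftrightarrow\chi_b\alpha$ (two applications of (xii) and the arithmetic identity $\neg\neg b=b$), (xv) propagates the equivalence up to $\Delta_a$.

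The main obstacle I anticipate is not any single computation but the need to route the involution items \emph{through the $\chi$-profile} rather than by substituting equivalents inside $\Delta_a$. One is tempted to prove (iv) directly by replacing $\ninv\ninv\alpha$ with $\alpha$ under $\Delta_a$, but $\star$ is \emph{not} congruent with respect to $\leftrightarrow_{i/n}$ (it may separate two distinct designated values, one of whose squares drops below the filter), so replacement of $\leftrightarrow_{i/n}$-equivalent subformulas occurring inside $\star$ is unsound; only the purely arithmetic relabellings $b\mapsto\neg b$ and $b\mapsto {*}b$ in (xii)--(xiii) are available. Care must therefore be taken to keep the dependency graph acyclic --- in particular (iv) and (xii) must come \emph{after} (xv) --- and to treat separately the endpoint cases where the definitions of $\chi_a$ and $\Delta_a$ are special, namely $a\in\{0,1\}$, $a^+=1$, and ${*}a=0$. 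Once (xv) and (xii) are secured, the remaining bookkeeping is routine.
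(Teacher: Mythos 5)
Your proposal is correct, and for roughly twelve of the sixteen items it follows the paper's own derivations step for step: (i) via (Ax5), (Ax1), (Ax2) and (Ax6); (v) as an unfolding of (Ax6); (vi) from (v) and (i); (vii) from (Ax8) with De Morgan via (Ax1)--(Ax4); (ix) from monotonicity plus explosion; (x) by proof-by-cases from (Ax6) and (i); (xi), (xiii), (xv), (xvi) likewise. The genuine divergence is in the dependency ordering of the involution items. The paper proves (iv) \emph{first} and directly, by two applications of (Ax9) --- at index $a^-$ for $\alpha$, giving $\Delta_a\alpha \leftrightarrow \ninv\Delta_{(\neg a)^+}\ninv\alpha$, and then at index $\neg a$ with $\ninv\alpha$ in place of $\alpha$, giving $\ninv\Delta_{(\neg a)^+}\ninv\alpha \leftrightarrow \Delta_a\ninv\ninv\alpha$ --- and then uses (iv) inside its proof of (xii). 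You invert this: you prove (xii) self-contained (again two instances of (Ax9), which indeed also handles the endpoints $a\in\{0,1\}$ with a single application each) and recover (iv) from (xii) plus the telescoping law (xv). Both routes are sound; note, however, that your stated motivation --- that replacing $\leftrightarrow_{i/n}$-equivalents under $\star$ is unsound --- while a correct and important caveat in general, does not actually threaten the paper's direct proof of (iv), since (Ax9) relates whole $\Delta$-formulas as an axiom schema and no substitution inside a $\star$-context ever occurs there; your detour through (xv) is therefore unnecessary, though harmless. Your (xiv) is also genuinely different and arguably more elementary: a meta-level case split on $a\geq b$ versus $a<b$, discharged by (Ax5) (which yields $\Delta_1\Delta_b\alpha\leftrightarrow\Delta_b\alpha$ and $\Delta_{1/n}\Delta_b\alpha\leftrightarrow\Delta_b\alpha$), monotonicity (Ax7)/(xvi) and contraposition (x), whereas the paper composes the transfer laws (xii) and (xiii) along the word of $\ninv$'s and $\star$'s defining $\Delta_b$ --- a compositional argument that is reused later in the treatment of Boolean formulas. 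Your derivation of (viii) from (xv) at $a=1/n$ plus (Ax6) is a mild reshuffling of the paper's direct telescoping. Finally, you supply explicit arguments for (ii) and (iii) (from (i), (Ax6) and (Ax9), with the antecedent-replacement justified via (Ax10) and (v)), which the paper's proof silently omits; those check out.
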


\begin{proof} The proofs of all the cases are as follows.   
	\begin{itemize}
		
		\item[(i)] %\red{$\Delta_{a}\alpha \rightarrow (\neg\Delta_{a}\alpha \rightarrow \beta)$} 
		By definition of $\to$, we have $\Delta_{a}\alpha \rightarrow (\ninv\Delta_{a}\alpha \rightarrow \beta) = \ninv\Delta_{i/n} \Delta_a \alpha \lor ( \ninv \ninv\Delta_{i/n} \Delta_a \alpha \lor \beta)$, and by applying (Ax5), (Ax1) and (Ax2) (as well as {\sf CPL}), the latter is equivalent to 
		$ (\ninv \Delta_a \alpha \lor  \Delta_a \alpha) \lor \beta$, which is clearly a theorem of ${\sf AX}^*_{n+1,i}$ by axiom (Ax6) and {\sf CPL}.

		% SUGIERO ELIMINAR ITEMS (ii) Y (iii) DE LA PRUEBA, PORQUE SON FACILMENTE DEDUCIDOS DE LOS AXIOMAS
		%\item[(ii)] \red{$\Delta_{a^+}\alpha \rightarrow (\Delta_{\neg a}\neg\alpha \rightarrow \beta)$  \ \ if \ $0 \leq a < 1$}

		%\item[(iii)] \red{ $\Delta_{a^+}\alpha \vee \Delta_{\neg a}\neg\alpha$  \ \ if \ $0 \leq a < 1$}

		\item[(iv)] %\red{$\Delta_{a}\alpha \leftrightarrow \Delta_{a}\neg\neg\alpha$  \ \ if \ $0 \leq a \leq 1$ }
		The case $a=0$ is obviously true, by definition of $\Delta_0$. Suppose now that $a > 0$. 
		From (Ax9), $\Delta_{\neg b}\ninv\alpha  \leftrightarrow \ninv \Delta_{b^+}\alpha$ is a theorem, for every $0 \leq b < 1$. By taking $b=a^- = a - 1/n$  we get  $ \Delta_{\neg (a^-)}\ninv\alpha  \leftrightarrow \ninv \Delta_{a}\alpha$, and so  $ \Delta_{a}\alpha \leftrightarrow \ninv \Delta_{\neg (a^-)}\ninv\alpha$, by (Ax1), (Ax2) and  {\sf CPL}. Noticing that $\neg(a^-) = (\neg a)^+$, $\ninv \Delta_{\neg (a^-)}\ninv\alpha$ is  $\ninv \Delta_{(\neg a)^+}\ninv\alpha$. By applying (Ax9) again to this last formula, and taking into account that $\neg\neg a=a$, we finally have the following chain of equivalences: 
		$\Delta_a \alpha \leftrightarrow  \ninv \Delta_{(\neg a)^+}\ninv\alpha  \leftrightarrow  \Delta_a\ninv \ninv \alpha$.

		\item[(v)]  %\red{ $\alpha \to \Delta_{i/n}\alpha$}
		It directly follows by definition of $\to$: $\alpha \to \Delta_{i/n}\alpha = \ninv \Delta_{i/n} \alpha \lor \Delta_{i/n} \alpha$, the latter being a theorem by (Ax6). 
		
		%\item $ \Delta_{a} \alpha  \leftrightarrow \neg \Delta_{(\neg a)^+} \neg \alpha$
		
		\item[(vi)] Notice that $\alpha \land {\finv}_{i/n}\alpha = \alpha \land \ninv \Delta_{i/n} \alpha$  and, due to~(v), this implies $\Delta_{i/n} \alpha \land \ninv \Delta_{i/n} \alpha$, which implies any $\beta$ by~(i).

		\item[(vii)] %\red{  $  \chi_a(\alpha \vee \beta) \rightarrow (\chi_a\alpha \vee \chi_a\beta)$  \ \ if \ $0 \leq a \leq 1$}  
		%\noindent  
		If $a=1$ the result follows by~(Ax8). If $a=0$ then $\chi_a(\alpha \vee \beta) =  \ninv \Delta_{1/n}  (\alpha \vee \beta)$, which implies $\ninv (\Delta_{1/n} \alpha \vee \Delta_{1/n}\beta)$, by (Ax8), (Ax1) and  {\sf CPL}. The latter implies $\ninv \Delta_{1/n} \alpha$, by (Ax3), and this implies  $\ninv \Delta_{1/n} \alpha \vee \ninv \Delta_{1/n}\beta$, by  {\sf CPL}.
		Suppose now that $0< a < 1$. Then,
		$\chi_a(\alpha \vee \beta) =   \Delta_a(\alpha \vee \beta) \land \ninv \Delta_{a^+}  (\alpha \vee \beta)$ is equivalent to $(\Delta_a\alpha \vee \Delta_a\beta) \land \ninv (\Delta_{a^+}\alpha \vee \Delta_{a^+}\beta)$, by (Ax8) and (Ax1). The latter is equivalent to 	$(\Delta_a\alpha \vee \Delta_a\beta) \land  \ninv \Delta_{a^+}\alpha \land \ninv \Delta_{a^+}\beta$, by definition of $\land$ and (Ax1)-(Ax4). But this is equivalent to $(\Delta_a\alpha \land  \ninv \Delta_{a^+}\alpha \land \ninv \Delta_{a^+}\beta) \lor ( \Delta_a\beta \land  \ninv \Delta_{a^+}\alpha \land \ninv \Delta_{a^+}\beta)$, by  {\sf CPL}. By using {\sf CPL} again, this formula implies $(\Delta_a\alpha \land  \ninv \Delta_{a^+}\alpha) \lor ( \Delta_a\beta \land \ninv \Delta_{a^+}\beta)$, that is, $\chi_a\alpha \vee \chi_a\beta$. 
		
		\item[(viii)]  
		%$ \chi_0 \alpha \lor  \chi_{1/n} \alpha \lor \ldots  \lor \chi_1 \alpha$
		%
		By item (i) and {\sf CPL} it is easy to see that $\bigvee_{0 \leq a \leq 1} \chi_{a} \gamma$ is equivalent to $\Delta_1\gamma \,\vee\, \Delta_{(n-1)/n} \gamma \,\vee\, \ldots \,\vee\, \Delta_{1/n} \gamma \,\vee\, \ninv\Delta_{1/n} \gamma$, and the latter is a theorem of  ${\sf AX}^*_{n+1,i}$, by (Ax6) and the properties of $\vee$ coming from  {\sf CPL}.
		
		\item[(ix)] %$\neg(\chi_a \alpha \land \chi_b \alpha)$, for $a \neq b$
		W.l.o.g., suppose $a > b$. By definition, $\chi_a \alpha \land \chi_b \alpha = (\Delta_a \alpha \land \ninv \Delta_{a^+} \alpha) \land (\Delta_b \alpha \land \ninv \Delta_{b^+} \alpha)$. 
		Since $a > b$ then $a \geq b^+$ and so $\Delta_a \alpha \to \Delta_{b^+}\alpha$ is a theorem, by (Ax7) and {\sf CPL}. Hence, by  {\sf CPL}  once again, $\chi_a \alpha \land \chi_b \alpha$ implies $\Delta_{b^+} \alpha \land \ninv \Delta_{b^+} \alpha$, which implies $\beta$ by (i). From this $(\chi_a \alpha \land \chi_b \alpha) \to \beta$ is a theorem, for any $\beta$. 
		
		\item[(x)] Let $\Gamma = \{\Delta_a \alpha \to \Delta_b \beta, \,\ninv \Delta_b \beta \}$. By (i) it is easy to see that $\Gamma, \Delta_a\alpha \vdash \ninv \Delta_a \alpha$. Clearly $\Gamma, \ninv \Delta_a\alpha \vdash \ninv \Delta_a \alpha$ and so, by proof by cases, $\Gamma, \Delta_a \vee \ninv \Delta_a\alpha \vdash \ninv \Delta_a \alpha$. From this, $\Gamma \vdash \ninv \Delta_a \alpha$, by (Ax6).
		By the deduction-detachment theorem, $(\Delta_a \alpha \to \Delta_b \beta) \to (\ninv \Delta_b \beta \to \ninv \Delta_a \alpha)$ is a theorem. The proof that $(\ninv \Delta_b \beta \to \ninv \Delta_a \alpha) \to (\Delta_a \alpha \to \Delta_b \beta)$ is a theorem is analogous, but now by considering the set $\Gamma'=\{\ninv \Delta_b \beta \to \ninv \Delta_a \alpha, \,\Delta_a\}$.
		
		\item[(xi)] %$\chi_a \alpha \land \chi_b \beta \leftrightarrow \chi_{\max(a, b)} \alpha \lor \beta$
		W.l.o.g., we can assume $a \leq b$. Suppose also that $0< a \leq b < 1$. Then, 		
		$\chi_a\alpha \land \chi_b\beta =   \Delta_a \alpha \land \ninv \Delta_{a^+}  \alpha \land \Delta_b \beta \land \ninv \Delta_{b^+}  \beta$. Since $a \leq b$ then $a^+ \leq b^+$. By (Ax7) and  {\sf CPL}, $\Delta_{b^+}\alpha \to \Delta_{a^+}\alpha$ is a theorem. By (x), $\ninv\Delta_{a^+}\alpha \to \ninv\Delta_{b^+}\alpha$ is a theorem. Using this, (Ax8) and  {\sf CPL}, $\chi_a\alpha \land \chi_b\beta$ implies
		$\Delta_b (\alpha \lor \beta) \land \ninv \Delta_{b^+}  \alpha \land \ninv \Delta_{b^+}  \beta$. This implies
		$\Delta_b(\alpha \lor \beta) \land \ninv ( \Delta_{b^+}  \alpha \lor  \Delta_{b^+}  \beta)$, which implies
		$\Delta_b(\alpha \lor \beta) \land \ninv \Delta_{b^+}  (\alpha \lor  \beta) = \chi_b( \alpha \lor  \beta)$, by (Ax8), (Ax1) and  {\sf CPL}. The cases involving $a=0$ or $b=1$ can be proved analogously, and are left to the reader.
		
		\item[(xii)] 
		%$\chi_a \alpha \leftrightarrow \chi_{\neg a} \neg \alpha$ 
		%
		If $a=0$  the result follows by (Ax9), namely: $\ninv \Delta_{1/n}\alpha$ is equivalent to $\Delta_1 \ninv \alpha$. If $a=1$ then $\chi_1\alpha=\Delta_1\alpha$, which is equivalent to $\Delta_1\ninv\ninv\alpha$, by (iv). By the first part of the proof of this item, this is equivalent to  $\ninv \Delta_{1/n}\ninv\alpha$, that is, $\chi_0 \ninv\alpha$.
		Now, suppose that $0 < a < 1$. Then,
		$\chi_a \alpha  = \Delta_a \alpha \land \ninv \Delta_{a^+} \alpha$. Observe that, since $a=\neg \neg a$, $\Delta_{a} \alpha$ is equivalent to $\ninv \Delta_{(\neg a)^+} \ninv \alpha$, by (Ax9) and item (iv). By (Ax9) again, $\ninv \Delta_{a^+} \alpha$ is equivalent to $\Delta_{\neg a} \ninv \alpha$. Therefore, by  {\sf CPL}, $\chi_a \alpha$ is equivalent to $\ninv \Delta_{(\neg a)^+} \ninv \alpha \,\land\, \Delta_{\neg a} \ninv \alpha$, that is, to $\chi_{\neg a} \ninv \alpha$. 
		
		\item[(xiii)] %$\chi_a \alpha \rightarrow \chi_{\ast a}{\ast}\alpha$  \ \ if \ $0 \leq a \leq 1$ 
		Note first that $(\Delta_{({*}a)^+} {\star}\alpha  \to \Delta_{a^+} \alpha) \leftrightarrow (\ninv \Delta_{a^+}\alpha \to \ninv \Delta_{({*}a)^+} {\star}\alpha)$, by item (x). Then, from (Ax11), (Ax12) and {\sf CPL}, we get  $(\Delta_a \alpha \to \Delta_{{*}a} {\star}\alpha) \land (\ninv \Delta_{a^+}\alpha  \to \ninv \Delta_{({*}a)^+} {\star}\alpha)$ is a theorem. By using {\sf CPL} once again, we get that the latter formula implies $(\Delta_a \alpha \land \ninv \Delta_{a^+}\alpha) \to (\Delta_{{*}a} {\star}\alpha \land \ninv \Delta_{({*}a)^+} {\star}\alpha)$. From this,  $\chi_a \alpha \rightarrow \chi_{{*}a}{\star}\alpha$ is a theorem, by definition. 
		
		%		\item[(Ax12'')]   \blue{ $\Delta_{(*a)^+} *\alpha  \to \Delta_{a^+} \alpha$} 
		
		\item[(xiv)] Immediate from (xii) and (xiii) and the definition of the $\Delta_a$'s operations and connectives as sequences of $\star$'s and $\ninv$'s. 
		
		\item[(xv)] By the proof of item (vii), $\bigvee_{b \geq 0} \; \chi_b \alpha$ is equivalent to $\Delta_1\alpha \,\vee\, \Delta_{(n-1)/n} \alpha \,\vee\, \ldots \,\vee\, \Delta_{1/n} \alpha \,\vee\, \ninv\Delta_{1/n} \alpha$. Thus, if $a=0$ then the result holds, since $\Delta_{0} \alpha$ and  $\bigvee_{b \geq 0} \; \chi_b \alpha$ are both theorems. Suppose now that $a = k/n > 0$. By reasoning as in item (viii) it is easy to prove that $\bigvee_{b \geq a} \; \chi_b \alpha$ is equivalent to $\Delta_1\alpha \,\vee\, \Delta_{(n-1)/n} \alpha \,\vee\, \ldots \,\vee\, \Delta_{k/n} \alpha$. By (Ax7) and {\sf CPL} it follows that $\Delta_b\alpha \to \Delta_{k/n} \alpha$ is  a theorem, for $b \geq a$. From this $\bigvee_{b \geq a} \; \chi_b \alpha$ is equivalent to $\Delta_{k/n} \alpha$, by {\sf CPL}.
		
		%Then, $\lor_{b \geq a} \; \chi_b \alpha = \chi_{k/n} \alpha \lor  \chi_{(k+1)/n} \alpha \ldots \lor \chi_1 \alpha$. By definition, $\lor_{b \geq a} \; \chi_b \alpha = (\Delta_{k/n} \alpha \land \neg \Delta_{(k+1)/n} \alpha) \lor (\Delta_{(k+1)/n} \alpha \land \neg \Delta_{(k+2)/n} \alpha) \lor \ldots \lor \Delta_1 \alpha$. It is easy to check that, due to axiom (Ax7), $\Delta_a \alpha \land \Delta_b \alpha \leftrightarrow \Delta_a \alpha$ is a theorem when $a \leq b$, and hence, $\neg(\neg \Delta_a \alpha \land \Delta_b \alpha$ is a theorem as well. By repeatedly using them, one can check that the previous disjunction is in fact equivalent to $\Delta_{k/m} \alpha$. 
		
		\item[(xvi)] It directly follows by an iterative application of (Ax7). 
		
	\end{itemize}
\end{proof}

The following shows that the logic ${\sf AX}^*_{n+1,i}$ proves two basic properties of the unary connective $*$: that $\star\alpha$ is stronger than $\alpha$ and that $\star$ preserves the ordering given by $\Rightarrow_c$. 
%\blue{en lo que sigue hay que cambiar $\star$'s por $\ast$'s, o quizas al reves en todo el paper??}

\begin{proposition} The following formulas are theorems of ${\sf AX}^*_{n+1,i}$:\\[1mm]
	(1) $\star\alpha \Rightarrow_c \alpha$;\\[1mm]
	(2) $(\alpha \Rightarrow_c \beta) \to (\star\alpha \Rightarrow_c \star\beta)$.
\end{proposition}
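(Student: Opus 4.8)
The plan is to prove both formulas by reducing them, via the proof-by-cases metaproperty, to a finite case analysis on the possible truth values of $\alpha$ and $\beta$. The engine is Lemma~\ref{theorems}(viii), which gives $\bigvee_{a\in\textrm{\L}_{n+1}}\chi_a\alpha$ as a theorem, so that one may argue under each hypothesis $\chi_{j/n}\alpha$ separately, knowing that distinct $\chi$'s are mutually exclusive by Lemma~\ref{theorems}(ix). Throughout I would exploit that $\alpha\Rightarrow_c\beta$, $\star\alpha\Rightarrow_c\star\beta$ and all the $\Delta_a$, $\chi_a$ are Boolean formulas, so that the fragment of {\sf CPL} available in ${\sf AX}^*_{n+1,i}$ lets me reason classically with them. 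The only nontrivial ingredients beyond this are two arithmetic facts about $*$ on $\textrm{\L}_{n+1}$: that $*(j/n)\le j/n$ (used for (1)) and that $*$ is non-decreasing, i.e.\ $j\le l$ implies $*(j/n)\le *(l/n)$ (used for (2)); both are immediate from $*x=\max\{0,2x-1\}$ and are reflected syntactically by Lemma~\ref{theorems}(xiii).

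For \textbf{(1)} I would fix $j$ and work under $\chi_{j/n}\alpha$. By Lemma~\ref{theorems}(xiii) we obtain $\chi_{*(j/n)}\star\alpha$; write $k/n=*(j/n)$, so $k\le j$ by the first arithmetic fact. Unfolding $\star\alpha\Rightarrow_c\alpha=\bigvee_{0\le i\le n}(\chi_{i/n}\star\alpha\wedge\Delta_{i/n}\alpha)$, it suffices to establish the single disjunct with $i=k$, namely $\chi_{k/n}\star\alpha\wedge\Delta_{k/n}\alpha$. Its left conjunct is already in hand, while for the right one I would invoke Lemma~\ref{theorems}(xv), $\Delta_{k/n}\alpha\leftrightarrow\bigvee_{b\ge k/n}\chi_b\alpha$: since $\chi_{j/n}\alpha$ holds and $j/n\ge k/n$, one of these disjuncts is available. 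Hence $\chi_{j/n}\alpha\vdash\star\alpha\Rightarrow_c\alpha$ for every $j$, and proof by cases together with (viii) yields $\vdash\star\alpha\Rightarrow_c\alpha$.

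For \textbf{(2)} I would first apply the deduction--detachment theorem (valid for $\to$ in this calculus) to reduce the goal to $\alpha\Rightarrow_c\beta\vdash\star\alpha\Rightarrow_c\star\beta$, and then run proof by cases on both $\alpha$ and $\beta$ using (viii): it is enough to derive $\star\alpha\Rightarrow_c\star\beta$ from the hypotheses $\alpha\Rightarrow_c\beta,\ \chi_{j/n}\alpha,\ \chi_{l/n}\beta$ for each pair $(j,l)$. A preliminary observation, proved from the definition of $\Rightarrow_c$ together with (ix) and (xv), is that $\chi_{j/n}\alpha$ forces $(\alpha\Rightarrow_c\beta)\leftrightarrow\Delta_{j/n}\beta$; combined with $\chi_{l/n}\beta$ this makes the hypotheses contradictory whenever $l<j$ (so $\star\alpha\Rightarrow_c\star\beta$ follows ex falso), and otherwise forces $l\ge j$. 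In the remaining case $l\ge j$, monotonicity of $*$ gives $*(j/n)\le *(l/n)$; writing $k/n=*(j/n)$ and $k'/n=*(l/n)$ with $k\le k'$, Lemma~\ref{theorems}(xiii) provides $\chi_{k/n}\star\alpha$ and $\chi_{k'/n}\star\beta$, and exactly as in part (1) the disjunct $\chi_{k/n}\star\alpha\wedge\Delta_{k/n}\star\beta$ of $\star\alpha\Rightarrow_c\star\beta$ is derivable (using (xv) and $k'\ge k$). Proof by cases and the deduction theorem then close the argument.

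I expect the main obstacle to be the bookkeeping of the Boolean reductions rather than any conceptual difficulty: formally justifying that, under $\chi_{j/n}\alpha$, the whole disjunction defining $\alpha\Rightarrow_c\beta$ collapses to its $i=j$ term $\Delta_{j/n}\beta$ (which rests on the mutual exclusivity (ix) and on treating $\Delta_a$-formulas classically), and likewise that a clash between $\Delta_{j/n}\beta$ and $\chi_{l/n}\beta$ with $l<j$ really is a contradiction in the system. Once this classical-reasoning layer is set up cleanly, both statements reduce to the two elementary monotonicity properties of $*$ encoded in (xiii), and the case analysis is routine.
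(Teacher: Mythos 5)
Your proof is correct. Part (1) coincides with the paper's own argument essentially step for step: $\chi_a\alpha \vdash \chi_{*a}\star\alpha$ by Lemma~\ref{theorems}(xiii), $\Delta_{*a}\alpha$ from $\chi_a\alpha$ via $(xv)$ and the meta-level fact $*a \leq a$, conjoin to get one disjunct of $\star\alpha \Rightarrow_c \alpha$, and close with proof-by-cases and $(viii)$. Part (2) is where you genuinely diverge. The paper never cases on the value of $\beta$: since $\alpha \Rightarrow_c \beta$ is \emph{by definition} the disjunction $\bigvee_a (\chi_a\alpha \wedge \Delta_a\beta)$, it applies proof-by-cases directly to these disjuncts, and from the hypothesis $\chi_a\alpha \wedge \Delta_a\beta$ it obtains $\chi_{*a}\star\alpha$ by $(xiii)$ and $\Delta_{*a}\star\beta$ by axiom (Ax11) in a single step, immediately yielding the disjunct $\chi_{*a}\star\alpha \wedge \Delta_{*a}\star\beta$ of $\star\alpha \Rightarrow_c \star\beta$. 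Your double case split on the pair $(j,l)$ instead obliges you to (a) prove the auxiliary collapse of $\alpha\Rightarrow_c\beta$ to $\Delta_{j/n}\beta$ under $\chi_{j/n}\alpha$, (b) dispatch the incompatible combinations $l<j$ by ex falso (which does go through: $\Delta_{j/n}\beta$ together with $\chi_{l/n}\beta$ for $l<j$ yields $\Delta_{(l+1)/n}\beta \wedge \mathord{\sim}\Delta_{(l+1)/n}\beta$ via $(xvi)$ and then anything via $(i)$), and (c) invoke the semantic monotonicity of $*$ to compare $*(j/n)$ and $*(l/n)$, recovering $\Delta_{k/n}\star\beta$ from $\chi_{k'/n}\star\beta$ through $(xv)$. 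All three steps are sound with the tools you cite, so there is no gap; what the paper's route buys is that (Ax11) already encodes the $\Delta$-transfer under $\star$ syntactically, so the argument stays uniform in $\beta$'s value and avoids both the collapse lemma and the contradictory-case bookkeeping. One final remark: your use of the arithmetic facts $*a \leq a$ and monotonicity of $*$ only at the meta-level, to select indices in finitely many instances, is legitimate and mirrors what the paper itself does implicitly in part (1) when passing from $\chi_a\alpha$ to $\bigvee_{b \geq *a}\chi_b\alpha$.
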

\begin{proof}
	(1) From Lemma 4.12(xiii), $\chi_a(\alpha) \vdash_{{\sf AX}^*_{n+1,i}} \chi_{* a}(\star\alpha)$. By {\sf CPL}, it follows that $\chi_a(\alpha) \vdash_{{\sf AX}^*_{n+1,i}} \bigvee_{b \geq * a}\chi_b(\alpha)$. But $\bigvee_{b \geq * a}\chi_b(\alpha) \vdash_{{\sf AX}^*_{n+1,i}} \Delta_{* a}(\alpha)$,  by Lemma 4.12(xv),  hence $\chi_a(\alpha) \vdash_{{\sf AX}^*_{n+1,i}} \Delta_{* a}(\alpha)$. By {\sf CPL}, $\chi_a(\alpha) \vdash_{{\sf AX}^*_{n+1,i}} \chi_{* a}(\star\alpha) \land \Delta_{* a}(\alpha)$. By using {\sf CPL} once again, $\chi_a(\alpha) \vdash_{{\sf AX}^*_{n+1,i}} \bigvee_{b}\chi_{b}(*\alpha) \land \Delta_{b}(\alpha)$, that is, $\chi_a(\alpha) \vdash_{{\sf AX}^*_{n+1,i}} \star\alpha \Rightarrow_c \alpha$.  Using proof-by-cases, $\bigvee_{a} \chi_a(\alpha) \vdash_{{\sf AX}^*_{n+1,i}} \star\alpha \Rightarrow_c \alpha$. But then $\vdash_{{\sf AX}^*_{n+1,i}} \star\alpha \Rightarrow_c \alpha$, by Lemma 4.12(viii).\\[1mm]
	(2) By Lemma 4.12(xiii), (Ax11), and by {\sf CPL}, $\chi_a(\alpha) \land \Delta_a(\beta)\vdash_{{\sf AX}^*_{n+1,i}} \chi_{* a}(\star\alpha) \land \Delta_{* a}(\star \beta)$. By {\sf CPL}, $\chi_a(\alpha) \land \Delta_a(\beta)\vdash_{{\sf AX}^*_{n+1,i}} \bigvee_{b}\chi_{b}(\star\alpha) \land \Delta_{b}(\star \beta)$, that is, $\chi_a(\alpha) \land \Delta_a(\beta)\vdash_{{\sf AX}^*_{n+1,i}} (\star\alpha \Rightarrow_c \star\beta)$. Using proof-by-cases and the definition of $\Rightarrow_c$ it follows that $(\alpha \Rightarrow_c \beta) \vdash_{{\sf AX}^*_{n+1,i}} (\star\alpha \Rightarrow_c \star\beta)$. The result follows by the deduction-detachment theorem w.r.t. $\to$. 
\end{proof}

Next we prove that Boolean formulas behave as in classical propositional logic. First we need a previous lemma with some further provabilities in ${\sf AX}^*_{n+1,i}$. 

\begin{lemma}  \label{boolean}
	(1) ${\sf AX}^*_{n+1,i}$ proves $\star  \alpha \to  \alpha$. 
	
	(2) If $\alpha$ is Boolean, then ${\sf AX}^*_{n+1,i}$ proves $\chi_0 \alpha \lor \chi_1 \alpha$. 
	
	(3) Further, if $\alpha$ is Boolean, then ${\sf AX}^*_{n+1,i}$ proves $\alpha \to \star \alpha$.
	
\end{lemma}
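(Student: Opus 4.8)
The three claims concern the behaviour of $\star$ and the characteristic connectives $\chi_a$ on Boolean formulas, where a formula is Boolean precisely when it is built from expressions of the form $\Delta_a\varphi$ using only $\vee,\ninv,\to$. The key insight to exploit is that, by Axiom (Ax6) together with items (i) and (x) of Lemma~\ref{theorems}, every atom $\Delta_a\varphi$ satisfies the classical excluded-middle and explosion laws, and hence any Boolean formula $\alpha$ provably satisfies $\alpha \vee \ninv\alpha$ and $(\alpha \wedge \ninv\alpha)\to\beta$ — in other words, within the $(\vee,\ninv,\to)$-fragment one may reason exactly as in {\sf CPL}. I would make this principle explicit first, since all three parts rest on it.

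\textbf{Part (1).} For $\star\alpha\to\alpha$ I would argue semantically-flavoured but syntactically, via the $\chi_a$ machinery. Using Lemma~\ref{theorems}(viii), $\bigvee_{a}\chi_a\alpha$ is a theorem, so by proof-by-cases it suffices to derive $\chi_a\alpha\vdash \star\alpha\to\alpha$ for each fixed $a$. From $\chi_a\alpha$ one gets $\chi_{\ast a}\star\alpha$ by (xiii), hence $\Delta_{\ast a}\star\alpha$ by the definition of $\chi$ and {\sf CPL}. Since $\ast a\leq a$ in $\L_{n+1}$, axiom (Ax10)/(Ax7)-type reasoning together with (xvi) lets us pass from a lower $\Delta$-level to $\alpha$ itself; concretely $\Delta_{\ast a}\star\alpha\to\star\alpha$ is available, and combining $\chi_a\alpha$ (which yields $\Delta_a\alpha$, hence $\alpha$ via (Ax10) when $a\geq i/n$, or more robustly via (v)) with $\star\alpha$ gives the implication. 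I expect the cleanest route mirrors the preceding Proposition's proof of $\star\alpha\Rightarrow_c\alpha$, from which $\star\alpha\to\alpha$ follows by the definition of $\Rightarrow_c$ and (Ax10).

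\textbf{Parts (2) and (3).} For (2), I would induct on the structure of the Boolean formula $\alpha$. The base case $\alpha=\Delta_a\varphi$ is exactly axiom (Ax6), rewritten as $\chi_0(\Delta_a\varphi)\vee\chi_1(\Delta_a\varphi)$ after observing that $\Delta_a\varphi$ only ever takes values $0$ or $1$, so its sole relevant $\chi$-values are $\chi_0$ and $\chi_1$. The inductive steps for $\vee,\ninv,\to$ then follow purely by {\sf CPL}, using (xii) to handle negation ($\chi_0\ninv\alpha\leftrightarrow\chi_1\alpha$) and the distributivity item (vii)/(xi) for $\vee$. Finally (3): once $\alpha$ is Boolean we have $\chi_0\alpha\vee\chi_1\alpha$ by (2), so by proof-by-cases it is enough to prove $\alpha\to\star\alpha$ under each disjunct. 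If $\chi_1\alpha$ holds then $\Delta_1\alpha$ holds, and $\star\alpha$ follows since $\star$ fixes the top (Lemma preceding, item (ii): $\star 1=1$); if $\chi_0\alpha$ holds then $\ninv\alpha$ holds, so $\alpha\to\beta$ for every $\beta$ by explosion, in particular $\alpha\to\star\alpha$.

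\textbf{Main obstacle.} The delicate point is making the ``classical reasoning with Boolean formulas'' rigorous without circularity, since the justification of that meta-principle is only sketched in the text just before the lemma. I would therefore spend the most care establishing that Boolean atoms obey excluded middle and explosion (from (Ax6), (i), (x)) and that these two laws propagate through $\vee,\ninv,\to$, so that proof-by-cases on $\chi_0\alpha\vee\chi_1\alpha$ is legitimate; the remaining computations are routine {\sf CPL} manipulations.
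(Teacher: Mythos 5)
Your proposal is correct in substance and, for parts (1) and (2), runs essentially along the paper's own lines: the paper proves (1) directly by cases over the $\chi_a\alpha$'s split at the threshold $i/n$, using (xiii) of Lemma~\ref{theorems}, the arithmetic fact that $a < i/n$ implies $(*a)^+ \leq i/n$, and then (Ax7) with item (x); your detour through the preceding Proposition's theorem $\star\alpha \Rightarrow_c \alpha$ unpacks, via proof-by-cases on the disjunction defining $\Rightarrow_c$, to exactly that same case analysis (the case $b \geq i/n$ uses (xvi) and (Ax10) as you say, but the case $b < i/n$ still needs $\ninv\Delta_{b^+}{\star}\alpha \vdash \ninv\Delta_{i/n}{\star}\alpha$ via (xvi) and (x), which your sketch leaves implicit). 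For (2) the paper settles the base case via item (xiv) ($\chi_a\beta \vdash \chi_{\Delta(a)}\Delta\beta$ plus (viii) and proof-by-cases), while you reduce it to (Ax6); your reduction also works, but only after invoking (Ax5) to rewrite $\chi_1\Delta_a\varphi = \Delta_1\Delta_a\varphi$ and $\chi_0\Delta_a\varphi = \ninv\Delta_{1/n}\Delta_a\varphi$ as $\Delta_a\varphi$ and $\ninv\Delta_a\varphi$ respectively, a step you gloss over with a semantic remark. Where you genuinely diverge is part (3): the paper gives a second structural induction (base case $\Delta_a\beta$ via the chain $\Delta_1\beta \vdash \chi_1\beta \vdash \chi_{*1}{\star}\beta = \chi_1{\star}\beta$, then cases $\ninv\beta$, $\beta\lor\gamma$, $\star\beta$), whereas you derive (3) directly from (2) by proof-by-cases on $\chi_0\alpha \lor \chi_1\alpha$. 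Your argument is valid and in fact shorter, making (3) a corollary of (2) rather than a parallel induction. One caveat: the paper's inductive proof of (3) includes the case $\alpha = \star\beta$, i.e., it implicitly treats Boolean formulas as closed under $\star$; on that reading your induction in (2) needs the corresponding (easy) extra step $\chi_0\beta \lor \chi_1\beta \vdash \chi_0{\star}\beta \lor \chi_1{\star}\beta$, immediate from (xiii) since $*0 = 0$ and $*1 = 1$.

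Two spots in your (3) need tightening, though both are repairable in a line. In the case $\chi_1\alpha$ you justify $\star\alpha$ by the identity $\star 1 = 1$; that identity is proved for $\Lambda^\star_{n+1}$-algebras, and completeness is not yet available at this point of the paper, so it cannot be imported into a syntactic derivation. The licensed move is $\chi_1\alpha \vdash \chi_{*1}{\star}\alpha = \chi_1{\star}\alpha = \Delta_1{\star}\alpha$ by (xiii), then $\Delta_{i/n}{\star}\alpha$ by (xvi) and $\star\alpha$ by (Ax10). In the case $\chi_0\alpha$, your appeal to "explosion from $\ninv\alpha$" conflates the involutive negation $\ninv\alpha$ with the filter negation $\finv_{i/n}\alpha$: the explosion theorem (vi) of Lemma~\ref{theorems} requires the latter, and $\to$ is $\to_{i/n}$, so having $\ninv\alpha$ alone does not license $\alpha\to\beta$. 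The direct repair: $\chi_0\alpha = \ninv\Delta_{1/n}\alpha \vdash \ninv\Delta_{i/n}\alpha = \finv_{i/n}\alpha$ by (xvi) and (x), which is literally the first disjunct of $\alpha \to \star\alpha$. Relatedly, in your discussion of (1) the intermediate claim that "$\Delta_{\ast a}{\star}\alpha \to \star\alpha$ is available" is false whenever $\ast a < i/n$ (such detachment holds only for levels at or above $i/n$); since your final route through $\star\alpha \Rightarrow_c \alpha$ bypasses that claim, this is an imprecision rather than a fatal gap, but it should not survive into a written-out proof.
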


\begin{proof}
	
	(1) By definition $\star  \alpha \to  \alpha = \ninv \Delta_{i/n} {\star} \alpha \lor \alpha$. We reason by cases: 
	
	Let $a\geq i/n$. Then  $\chi_a \alpha \vdash \Delta_{i/n} \alpha$, and $\vdash  \Delta_{i/n} \alpha  \leftrightarrow \alpha$, therefore, 
	$\chi_a \alpha \vdash {\ninv \Delta_{i/n} {\star} \alpha} \lor \alpha$. 
	
	Let $a < i/n$. Then  $\chi_a \alpha \vdash  \chi_{*a} \star \alpha$, and $ \chi_{*a} {\star} \alpha = \Delta_{*a} {\star} \alpha \land \ninv \Delta_{(*a)^+} \star \alpha$. 
	But an easy computation shows that if $a < i/n$, then $(*a)^+ \leq i/n$, and hence, by (Ax7) and (x) of Lemma \ref{theorems}, we have that $\ninv \Delta_{(*a)^+} {\star} \alpha \vdash \ninv \Delta_{i/n} {\star}\alpha$. By {\sf CPL}, we have therefore $\chi_a \alpha \vdash \ninv \Delta_{i/n} {\star}\alpha \lor \alpha$. 
	
	Finally, by (viii) of Lemma \ref{theorems} we get the desired result. \\
	
	\noindent (2)  By induction. If $\alpha = \Delta_a \beta$ (base case),  observe that $\chi_a \beta \vdash \chi_{\Delta(a)} \Delta \beta$, but $\Delta(a) \in \{0, 1\}$, hence  $\chi_a \beta \vdash \chi_{0} \Delta \beta \lor  \chi_{1} \Delta\beta$. The other cases are proved analogously, noticing that all connectives are closed on the set of classical values $\{0, 1\} \subseteq \L_{n+1}$.  \\

	\noindent (3) We have to prove that, if $\alpha$ is Boolean, then ${\sf AX}^*_{n+1,i}$ proves $\varphi = \alpha \to \star \alpha$. 
	
	We prove it by induction. %, the base case being (Ax13). The inductive cases are: 
	\begin{itemize}
		\item $\alpha = \Delta_a \beta$ (base case). Then we have to prove $\varphi =   \Delta_a \beta \to \star \Delta_a \beta$. We first prove the case $a = 1$, i.e. 
		$\varphi = \Delta_1 \beta \to \star \Delta_1 \Delta_1 \beta$, that is equivalent by (Ax5) to  $\Delta_1 \beta \to \star  \Delta_1 \beta$.  But by definition $\Delta_1 = \star \stackrel{n}{\dots} \star$, and hence $\star \Delta_1 = \Delta_1 \star$, therefore $\varphi$ is equivalent to $\Delta_1 \beta \to \star \beta$. But we have the following chain of derivations in ${\sf AX}^*_{n+1,i}$: $\Delta_1 \beta \vdash \chi_1 \beta \vdash \chi_{*1} {\star} \beta = \chi_{1} {\star} \beta \vdash \Delta_1 {\star} \beta = {\star} \Delta_1 \beta$.  
		
		Now let $a < 1$. Then $\Delta_a \beta$ is equivalent to $\Delta_1 \Delta_a \beta$, i.e. $\star \stackrel{n}{\dots} \star \Delta_a \beta$, and now using repeatedly (1) above $n-1$ times, it follows that  $\star\stackrel{n}{\dots} \star \Delta_a \beta$ implies $\star \Delta_a \beta$. 
		
		\item $\alpha = \ninv \beta$, with $\beta$ Boolean.  Then $\varphi =  \ninv \Delta_{i/n} \ninv \beta \lor \star \ninv \beta$. By (2), ${\sf AX}^*_{n+1,i}$ proves $\chi_0 \beta \lor \chi_1 \beta$. Then $\chi_0 \beta \vdash \chi_1 \ninv \beta$ and then $\chi_0 \beta \vdash \chi_1 \star \ninv \beta$  as well. The latter implies $\Delta_{i/n}  \star \ninv \beta$, that in turn implies $ \star \ninv \beta$  by (Ax10). On the other hand, $\chi_1 \beta \vdash  \chi_0 \ninv \beta$ by (xii) of Lemma \ref{theorems}, and then  by 1),  $\chi_1 \beta \vdash \chi_0 \Delta_{i/n} \ninv \beta$, and hence  $\chi_1 \beta \vdash \chi_1 \ninv \Delta_{i/n} \ninv \beta$, and thus $\chi_1 \beta \vdash \Delta_1\ninv \Delta_{i/n} \ninv \beta$. By (Ax9), the latter is equivalent to $\ninv \Delta_{1/n} \Delta_{i/n} \ninv \beta$, and in turn equivalent to $\ninv \Delta_{i/n} \ninv \beta$ by (Ax5) and (Ax1). 
		
		\item The remaining cases $\alpha = \beta \lor \gamma$, with $\beta, \gamma$ Boolean and $\alpha = \star \beta$ with $\beta$ Boolean can be proved  by cases in a similar way. 
	\end{itemize}
	
\end{proof}

\begin{proposition}
	The sublanguage of Boolean formulas obeys the axioms of classical propositional logic.
\end{proposition}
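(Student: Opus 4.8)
The statement to prove is that every substitution instance, by Boolean formulas, of a classical propositional axiom (equivalently, every classical tautology in the connectives $\ninv,\vee,\to,\wedge$) is a theorem of ${\sf AX}^*_{n+1,i}$, and that classical inference is available. The plan is to exploit that ${\sf AX}^*_{n+1,i}$ already contains classical positive logic CPL$^+$ over $(\wedge,\vee,\to)$, together with the deduction-detachment theorem and proof-by-cases; to upgrade to full classical logic on the Boolean fragment it then suffices to show that the involutive negation $\ninv$ behaves classically there. Concretely, I would establish, for every Boolean formula $\varphi$, the law of excluded middle $\vdash \varphi \vee \ninv\varphi$ (EM) and ex contradictione quodlibet $\varphi,\ninv\varphi \vdash \beta$ for arbitrary $\beta$ (EXP). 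It is a well-known metatheorem that classical positive logic extended by (EM) and (EXP) coincides with {\sf CPL}, so these two facts, proved on Boolean formulas, give the result; recall in addition that double negation (Ax2) and the De Morgan laws (Ax3)--(Ax4) hold for \emph{all} formulas.

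Before the induction I would reduce the generating connectives. Since $\alpha\wedge\beta := \ninv(\ninv\alpha\vee\ninv\beta)$ and $\alpha\to\beta := \ninv\Delta_{i/n}\alpha\vee\beta$, every Boolean formula is in fact generated from the atomic Boolean expressions $\Delta_a\psi$ using only $\ninv$ and $\vee$: the subformula $\Delta_{i/n}\alpha$ produced by unfolding an implication is itself a Boolean atom (an application of some $\Delta_a$ to a formula), irrespective of $\alpha$. Hence it is enough to prove (EM) and (EXP) by structural induction on Boolean formulas with base case $\Delta_a\psi$ and inductive clauses only for $\ninv$ and $\vee$.

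For the base case, (EM) is exactly axiom (Ax6) and (EXP) is exactly item (i) of Lemma~\ref{theorems}. For the clause $\varphi=\ninv\rho$ with $\rho$ Boolean: using (Ax2) and the congruence (Ax1) to rewrite $\ninv\ninv\rho$ as $\rho$, (EM) for $\ninv\rho$ reduces by CPL$^+$ to (EM) for $\rho$ (the inductive hypothesis), and (EXP) for $\ninv\rho$ reduces, again via (Ax2), to (EXP) for $\rho$. For the clause $\varphi=\rho\vee\sigma$ with $\rho,\sigma$ Boolean, I would first derive the De Morgan equivalence $\ninv(\rho\vee\sigma)\leftrightarrow(\ninv\rho\wedge\ninv\sigma)$ from (Ax3) (applied to both disjuncts, using commutativity of $\vee$ and (Ax1)) and (Ax4). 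Then (EM) for $\rho\vee\sigma$ follows by a nested proof-by-cases on the inductive hypotheses $\vdash\rho\vee\ninv\rho$ and $\vdash\sigma\vee\ninv\sigma$, rewriting the conclusion through De Morgan; and (EXP) follows by extracting $\ninv\rho$ and $\ninv\sigma$ from $\ninv(\rho\vee\sigma)$ via De Morgan and discharging each case of $\rho\vee\sigma$ with the inductive explosion for $\rho$ and for $\sigma$ respectively.

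With (EM) and (EXP) secured on the whole Boolean fragment, the standard characterisation of classical negation over positive logic yields at once that every classical-logic axiom instantiated with Boolean formulas is provable in ${\sf AX}^*_{n+1,i}$, with modus ponens as the rule; equivalently, the Lindenbaum algebra of the Boolean fragment is a Boolean algebra. The step that must be handled with care is the disjunction clause: correctly internalising De Morgan and laying out the nested case analysis, and---slightly more subtly---checking that the reduction of $\to$ and $\wedge$ to $\ninv,\vee$ over Boolean atoms keeps the induction well-founded, so that no genuine $\to$/$\wedge$ clause is needed. I would also point out that item~(2) of Lemma~\ref{boolean}, namely $\vdash \chi_0\alpha\vee\chi_1\alpha$, records the semantic content underlying this argument: Boolean formulas are provably two-valued.
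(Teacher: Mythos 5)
Your proof is correct, but it takes a genuinely different route from the paper's. The paper does not prove excluded middle and explosion on the Boolean fragment; instead it proves by induction the generalized collapse law (Ax5'): $\Delta_a\alpha \leftrightarrow \alpha$ for every Boolean $\alpha$ (base case = (Ax5), with inductive cases for $\ninv$ via (Ax9), for $\lor$ via (Ax8), and for $\star$ via (Ax12) together with Lemma \ref{boolean}(1),(3)). Once (Ax5') is available, the defined negation $\finv_{i/n}\alpha = \ninv\Delta_{i/n}\alpha$ coincides with $\ninv\alpha$ on Boolean formulas, so the single contraposition schema $(\alpha\to\ninv\beta)\to(\beta\to\ninv\alpha)$ reduces, by unfolding $\to$, to the commutativity of $\lor$; the paper then declares this one schema, on top of CPL$^+$, sufficient for full classical behaviour. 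Your decomposition --- reducing the Boolean fragment to $\{\ninv,\lor\}$-combinations of $\Delta_a$-atoms (correctly observing that unfolding $\to$ produces the atom $\Delta_{i/n}\alpha$ regardless of $\alpha$, so the induction stays well-founded), proving (EM) from (Ax6) and (EXP) from Lemma \ref{theorems}(i) at the atoms, and propagating them through $\ninv$ via (Ax2)/(Ax1) and through $\lor$ via De Morgan (Ax3)--(Ax4) with proof-by-cases --- is sound, and the metatheorem you invoke (positive classical logic plus EM plus EXP yields CPL; e.g.\ reductio is derivable by cases on EM, discharging the contradictory branch by EXP) holds schematically within any fragment closed under the connectives. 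What each approach buys: yours is more self-contained at the final step, needing only (Ax1)--(Ax4), (Ax6), Lemma \ref{theorems}(i), the deduction theorem and proof-by-cases, and it sidesteps $\star$ entirely; the paper's yields (Ax5') as an independently useful by-product (the provable two-valuedness of $\Delta_a$ on Boolean formulas, which also explains why $\finv_{i/n}$ and $\ninv$ agree there), at the cost of the auxiliary Lemma \ref{boolean}. One caveat: the paper's induction also treats $\alpha = \star\beta$ as a Boolean constructor, so if the Boolean fragment is meant to be closed under $\star$, you need one extra clause --- immediate from Lemma \ref{boolean}(1),(3), which give $\star\rho\leftrightarrow\rho$ for Boolean $\rho$, so (EM) and (EXP) transfer.
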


\begin{proof} Since all the formulas obey the axioms of CPL$^+$, over $(\land, \lor, \to)$, it is enough to check that, if $\alpha$ and $\beta$ are Boolean formulas, then the formula $(\alpha\to \ninv \beta)\to (\beta \to \ninv \alpha)$  is a theorem of ${\sf AX}^*_{n+1,i}$. We first prove by induction that (Ax5) can be generalized to  \\
	
	(Ax5') $\Delta_a \alpha \leftrightarrow \alpha$, if $\alpha$ is Boolean. \\
	
	\noindent The Base case is axiom (Ax5). Then we consider the following inductive steps:
	\begin{itemize}
		\item $\alpha = \ninv \beta$. In this case $\Delta_a \alpha = \Delta_a \ninv \beta$, and, replacing $\neg a$ by $a$ in (Ax9), we get that the latter is equivalent to  $\ninv \Delta_{(\neg a)^+}\beta$, and by I.H., this is equivalent to $\ninv\beta$. 
		
		\item $\alpha = \beta_1 \lor \beta_2$. In this case, $\Delta_a \alpha = \Delta_a (\beta_1 \lor \beta_2)$, that by (Ax8) is equivalent to $(\Delta_a \beta_1) \lor (\Delta_a \beta_2)$, and by I.H., this is equivalent to $\beta_1 \lor \beta_2$. 
		
		\item $\alpha = \star \beta$.  In this case $\Delta_a \alpha = \Delta_a {\star}\beta$. Let $b$ the smallest element of $\L_{n+1}$ such that $a \leq (*b)^+$, then, by (Ax12),  $\Delta_a {\star}\beta$ is equivalent to $\Delta_{b^+} \beta$, and by I.H., this is equivalent to $\beta$, and by (1) and (3) of Lemma \ref{boolean}, $\beta$ is equivalent to $\star \beta$. 
		
	\end{itemize}
	
	Then let $\alpha$ and $\beta$ be Boolean. By definition, $ \alpha \to \ninv  \beta = \ninv \Delta_{i/n} \alpha \lor \ninv \beta$, and due to the above  (Ax5'),  the latter is equivalent to $ \ninv \alpha \lor \ninv \Delta_{i/n} \beta$, that, by definition is in fact, $\beta \to \ninv \alpha$. 
\end{proof}

Finally we prove soundness and completeness of the logic ${\sf AX}^*_{n+1,i}$. 

\begin{proposition} [Soundness of ${\sf AX}^*_{n+1,i}$] The calculus ${\sf AX}^*_{n+1,i}$ is sound w.r.t. the matrix $\langle \bL_{n+1}^*, F_{i/n}\rangle$, that is: $\Gamma \vdash_{{\sf AX}^*_{n+1,i}} \varphi$ implies that $\Gamma \vDash_{\langle \bL_{n+1}^*, F_{i/n}\rangle} \varphi$, for every finite set of formulas $\Gamma \cup \{\varphi\}$.
\end{proposition}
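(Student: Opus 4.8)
The plan is the standard induction-on-derivations argument: prove that each axiom schema is \emph{valid} in the matrix $\langle\bL^*_{n+1},F_{i/n}\rangle$ (i.e.\ $e(\varphi)\in F_{i/n}$ for every $\bL^*_{n+1}$-evaluation $e$) and that the rule (MP) preserves membership in $F_{i/n}$; soundness for arbitrary $\Gamma$ then follows by induction on the length of a derivation of $\varphi$ from $\Gamma$. The only facts I would need are that, since $i\ge 1$, for every $a\in\L_{n+1}$ one has $a\in F_{i/n}$ iff $\Delta_{i/n}(a)=1$ iff $\finv_{i/n}(a)=0$, together with the characterisation of $\Delta_a$ and $\chi_a$ as the indicator functions of $F_a$ and of $\{a\}$ given by Proposition~\ref{prop3}.

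First I would dispatch (MP) and the whole block of classical axioms at once. For (MP): if $e(\alpha)\in F_{i/n}$ then $\finv_{i/n}(e(\alpha))=0$, so $e(\alpha\rightarrow\beta)=\max\{\finv_{i/n}(e(\alpha)),e(\beta)\}=e(\beta)$; hence $e(\alpha)\in F_{i/n}$ and $e(\alpha\rightarrow\beta)\in F_{i/n}$ force $e(\beta)\in F_{i/n}$. The same computation shows that the projection $\pi:\L_{n+1}\to\{0,1\}$, $\pi(a)=\Delta_{i/n}(a)$, sends $\vee$ to Boolean disjunction and $\rightarrow$ to Boolean implication, so that $\pi(e(\varphi))$ equals the classical truth value of $\varphi$ under the Boolean assignment $\pi\circ e$. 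Consequently every instance of a classical $(\vee,\rightarrow)$-tautology is valid, which settles the CPL axiom block uniformly.

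For the proper axioms I would read each schema as an assertion about $\bL^*_{n+1}$ and verify it there, exploiting that $\bL^*_{n+1}$ is itself a $\Lambda^\star_{n+1}$-algebra (Theorem~\ref{thm:GenerateLambda}), so that the equational content of (Ax5)--(Ax12) already holds. Concretely, the biconditional axioms (Ax2),(Ax5),(Ax8),(Ax9) reduce to genuine identities of $\bL^*_{n+1}$ — e.g.\ $\ninv\ninv x=x$, $\Delta_a\Delta_b x=\Delta_b x$, $\Delta_a(x\vee y)=\Delta_a x\vee\Delta_a y$, and $\Delta_{\neg a}\ninv x=\ninv\Delta_{a^+}x$ (both sides being $1$ exactly when $x\le a$) — so the two sides take equal values and the biconditional is designated. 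The implicational axioms (Ax3),(Ax4),(Ax6),(Ax7),(Ax10),(Ax11),(Ax12) reduce to pointwise inequalities $s\le t$, which suffice because $F_{i/n}$ is an up-set; for instance (Ax11) and (Ax12) amount precisely to the monotonicity of $*x=\max\{0,2x-1\}$.

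The step I expect to be the real obstacle is the negation-congruence axiom (Ax1), $(\alpha\leftrightarrow\beta)\rightarrow(\ninv\alpha\leftrightarrow\ninv\beta)$. It is the unique schema whose two inner biconditionals need not take equal values under $e$, so it cannot be handled by the ``equal values'' argument, and a naive reduction to filter-membership breaks down because the involution $\ninv$ does not preserve $F_{i/n}$. Securing this axiom requires pinning down that designatedness of the antecedent forces $e(\alpha)$ and $e(\beta)$ to coincide tightly enough for the \emph{injective} map $\ninv$ to carry the relation over to $\ninv\alpha,\ninv\beta$; this is the one verification that genuinely uses the congruential (strict) reading of $\leftrightarrow$ rather than merely its filter-relative behaviour. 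Once (Ax1) is settled, the remaining checks are routine case distinctions on the two regimes $x\le 1/2$ and $x>1/2$ of $*$ and $\neg$.
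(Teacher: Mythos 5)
Your overall plan and most of the verifications are right, and they flesh out what the paper compresses into the single sentence ``Straightforward, taking into account the definitions of the terms $\Delta_a$'s and $\chi_a$'s'': (MP) preserves $F_{i/n}$ because $\finv_{i/n}$ is $\{0,1\}$-valued; the map $\pi = \Delta_{i/n}\circ e$ is a $\{\vee,\rightarrow\}$-homomorphism onto $\{0,1\}$ with $e(\gamma)\in F_{i/n}$ iff $\pi(\gamma)=1$, which disposes of the CPL block uniformly; and the schemas (Ax2)--(Ax12) follow from pointwise identities or from monotonicity of $\Delta_{i/n}$. But the step you flag as the ``real obstacle'' is a genuine gap, and the repair you sketch cannot be carried out. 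Designatedness of $e(\alpha\leftrightarrow_{i/n}\beta)$ does \emph{not} force $e(\alpha)$ and $e(\beta)$ to coincide tightly: unwinding the definitions, $e(\alpha\leftrightarrow_{i/n}\beta)\in F_{i/n}$ holds exactly when $e(\alpha)$ and $e(\beta)$ lie on the same side of the threshold $i/n$ (if both are designated the value is $\min(e(\alpha),e(\beta))$, which is then automatically designated; if both are undesignated the value is $1$), and the involution $\ninv$ does not respect sides. Concretely, take $n=3$, $i=1$, $e(\alpha)=1/3$, $e(\beta)=1$: then $e(\alpha\leftrightarrow\beta)=\min(1,1/3)=1/3\in F_{1/3}$, whereas $e(\ninv\alpha\rightarrow\ninv\beta)=\max(\finv_{1/3}(2/3),\,0)=0$, so $e(\ninv\alpha\leftrightarrow\ninv\beta)=0$ and the instance of (Ax1) evaluates to $0\notin F_{1/3}$. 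The assignment $e(\alpha)=i/n$, $e(\beta)=1$ refutes (Ax1) whenever $i/n\leq 1/2$, and similar assignments (e.g.\ $e(\alpha)=0$ and $e(\beta)$ strictly between $1-i/n$ and $i/n$) cover the remaining cases except the single one where $n$ is odd and $i=(n+1)/2$. So at this point your proof does not merely need more care: the schema (Ax1), read with $\leftrightarrow_{i/n}$ as Definition \ref{calLin} stipulates, is not valid in the matrix $\langle\bL^*_{n+1},F_{i/n}\rangle$, and there is no ``congruential (strict) reading'' of $\leftrightarrow_{i/n}$ available to appeal to -- the connective is the filter-relative biconditional by definition.

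What your equal-values observation does salvage is every application of (Ax1) actually made in the completeness argument: there (e.g.\ in item (3) of Proposition \ref{prop-maxim2}) the two formulas are permuted disjunctions, so $e(\alpha)=e(\beta)$ holds outright, $\ninv$ transports this identity, and both biconditionals are designated. Soundness can thus be restored by weakening (Ax1) to such instances, or by formulating its antecedent with the strict equivalence $\approx$ used to prove algebraizability in Lemma \ref{lemma1}; as stated, the paper's one-line proof glosses over exactly the point where your attempt stalls. A second, smaller instance of the same phenomenon undermines your claim that (Ax5) reduces to a ``genuine identity'': at $a=0$ one has $\Delta_0\Delta_b x=1$ while $\Delta_b x$ may be $0$, and $1\leftrightarrow_{i/n}0$ is undesignated, so (Ax5) needs the proviso $a>0$. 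Apart from (Ax1) and this edge case, your verification is correct.
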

\begin{proof}
	Straightforward, taking into account the definitions of the terms $\Delta_a$'s and $\chi_a$'s in Proposition \ref{prop3}. 
\end{proof}

Since ${\sf AX}^*_{n+1,i}$ is a finitary Tarskian logic, completeness can be proved by using maximal consistent sets of formulas. Thus, as a consequence of the well-known Lindenbaum-{\L}os theorem, if $\Gamma \nvdash_{{\sf AX}^*_{n+1,i}} \varphi$ then  $\Gamma$ can be extended to a maximal set  $\Lambda$ such that $\Lambda  \nvdash_{{\sf AX}^*_{n+1,i}} \varphi$. We will call the set $\Lambda$ {\em maximal \ntwrt\varphi}  in  ${\sf AX}^*_{n+1,i}$. 

In the following proposition, we list the main properties of maximal consistent sets in  ${\sf AX}^*_{n+1,i}$.

%\begin{remark} \label{prop-maxim}
%	Observe that, if $\Lambda$ is a maximal set \ntwrt\varphi\  in  ${\sf AX}^*_{n+1,i}$, then, $\Lambda$ is closed, i.e., $\Lambda \vdash_{{\sf AX}^*_{n+1,i}}\psi$ iff $\psi\in\Lambda$,  for every formula $\psi$. Moreover, $\alpha \vee \beta \in \Lambda$ \ iff \ either $\alpha \in \Lambda$ or $\beta \in \Lambda$. The `only if' part  follows by the fact that $\Lambda$ is maximal non-trivial w.r.t. $\varphi$, and by taking into account   that ${\sf AX}^*_{n+1,i}$ satisfies the following metaproperty: $\Gamma,\alpha \vdash \gamma$ and $\Gamma,\beta \vdash \gamma$ implies that  $\Gamma,\alpha \vee \beta \vdash \gamma$. In addition, $\alpha \wedge \beta \in \Lambda$ iff $\alpha,\beta \in \Lambda$ (in order to prove that $\alpha \land \beta=\neg(\neg\alpha \vee \neg \beta) \in \Lambda$ implies that $\beta \in \Lambda$ it is necessary to use (Ax1), showing that $\neg(\neg\beta \vee \neg \alpha) \in \Lambda$, and so apply (Ax3) and (Ax2)). Besides, ${\ninv}\alpha \in \Lambda$ iff $\alpha \notin \Lambda$, hence $\alpha \rightarrow \beta \in \Lambda$ iff either $\alpha \notin \Lambda$ or $\beta \in \Lambda$. From this, $\alpha \leftrightarrow \beta \in \Lambda$ iff either $\alpha,\beta \in \Lambda$ or $\alpha,\beta \notin \Lambda$.  
%\end{remark}

\begin{proposition} \label{prop-maxim2} %Let $\Lambda$ be a maximal set %\ntwrt\varphi\  
	Let $\Lambda$ be a set of formulas which is maximal non-trivial w.r.t. some formula $\varphi$ % in AX^*_{n+1,i}.
	in  ${\sf AX}^*_{n+1,i}$. Then:%
	\begin{enumerate}
		\item $\Lambda$ is closed, i.e., $\Lambda \vdash_{{\sf AX}^*_{n+1,i}}\psi$ iff $\psi\in\Lambda$,  for every formula $\psi$
		\item $\alpha \vee \beta \in \Lambda$ \ iff \ either $\alpha \in \Lambda$ or $\beta \in \Lambda$
		\item $\alpha \wedge \beta \in \Lambda$ iff $\alpha,\beta \in \Lambda$
		\item ${\finv}_{i/n} \alpha \in \Lambda$ iff $\alpha \notin \Lambda$
		\item $\alpha \rightarrow \beta \in \Lambda$ iff either $\alpha \notin \Lambda$ or $\beta \in \Lambda$
		\item  $\alpha \leftrightarrow \beta \in \Lambda$ iff either $\alpha,\beta \in \Lambda$ or $\alpha,\beta \notin \Lambda$
		\item For every formula $\gamma$, one and only one of the conditions  `$\chi_{a} \gamma   \in \Lambda$', holds, for $a \in \L_{n+1}$.
		\item  $\chi_a \alpha \in \Lambda$ iff $\chi_{\neg a}\ninv\alpha \in \Lambda$.
		\item  If $\chi_a \alpha \in \Lambda$ then $\chi_{*a} \star \alpha \in \Lambda$.
	\end{enumerate}
\end{proposition}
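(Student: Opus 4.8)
The plan is to treat item~(1) as the engine that powers everything else, and then to reduce each of the remaining items to a theorem already established in Lemma~\ref{theorems} together with the two structural metaproperties of ${\sf AX}^*_{n+1,i}$ recorded after Definition~\ref{calLin}: the deduction--detachment theorem with respect to $\to$ and proof by cases. For item~(1) I would argue as for any relatively maximal set: if $\Lambda \vdash_{{\sf AX}^*_{n+1,i}} \psi$ but $\psi \notin \Lambda$, then by maximality the proper extension $\Lambda \cup \{\psi\}$ proves $\varphi$, and cutting $\psi$ (which $\Lambda$ already proves) yields $\Lambda \vdash \varphi$, contradicting the non-triviality of $\Lambda$ w.r.t.\ $\varphi$; the converse is immediate. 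From this point on, ``$\psi \in \Lambda$'' and ``$\Lambda \vdash \psi$'' are interchangeable, and this is what I would use silently in every subsequent item.

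For the purely propositional items I would exploit that the $\{\vee, \to\}$-fragment contains {\sf CPL} and that $\wedge$, $\leftrightarrow$ are the classical derived connectives. Item~(2): the right-to-left direction is $\alpha \vdash \alpha \vee \beta$ plus closure, while left-to-right is proof by cases, since $\alpha, \beta \notin \Lambda$ forces $\Lambda, \alpha \vdash \varphi$ and $\Lambda, \beta \vdash \varphi$ by maximality, hence $\Lambda, \alpha \vee \beta \vdash \varphi$, which is absurd once $\alpha \vee \beta \in \Lambda$. Item~(3) follows from the classical conjunction theorems $(\alpha \wedge \beta) \to \alpha$, $(\alpha \wedge \beta) \to \beta$, $\alpha \to (\beta \to \alpha \wedge \beta)$ together with closure and (MP). Items~(5) and~(6) are then formal consequences of the definitions: unfolding $\alpha \to \beta = {\finv}_{i/n}\alpha \vee \beta$ reduces~(5) to items~(2) and~(4), and unfolding $\alpha \leftrightarrow \beta = (\alpha \to \beta) \wedge (\beta \to \alpha)$ reduces~(6) to items~(3) and~(5).

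The real linchpin is item~(4), which ties membership in $\Lambda$ to the designated-value operator $\Delta_{i/n}$, so I would prove it carefully and before~(5). Left-to-right: if both ${\finv}_{i/n}\alpha$ and $\alpha$ were in $\Lambda$, then by~(3) so is $\alpha \wedge {\finv}_{i/n}\alpha$, which by Lemma~\ref{theorems}(vi) proves every $\beta$, in particular $\varphi$ --- impossible. Right-to-left: the theorem $\Delta_{i/n}\alpha \vee \ninv\Delta_{i/n}\alpha$, instance of (Ax6), lies in $\Lambda$, so by~(2) either $\Delta_{i/n}\alpha \in \Lambda$ or ${\finv}_{i/n}\alpha \in \Lambda$; the first case gives $\alpha \in \Lambda$ via (Ax10), contradicting $\alpha \notin \Lambda$, so the second must hold. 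The semantic reason this works is that (Ax10) and Lemma~\ref{theorems}(v) together make $\alpha \leftrightarrow \Delta_{i/n}\alpha$ a theorem.

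Finally, items~(7)--(9) follow by feeding the relevant theorems of Lemma~\ref{theorems} through items~(2), (3) and~(6). For~(7), closure applied to Lemma~\ref{theorems}(viii) plus~(2) gives at least one $\chi_a\gamma \in \Lambda$, while Lemma~\ref{theorems}(ix) with~(3) rules out two distinct indices, since their conjunction would again prove $\varphi$. Item~(8) is Lemma~\ref{theorems}(xii), a theorem of the form $\chi_a\alpha \leftrightarrow \chi_{\neg a}\ninv\alpha$, combined with item~(6). Item~(9) is immediate from Lemma~\ref{theorems}(xiii) by (MP) and closure. The step I expect to require the most care is item~(4): everything downstream depends on correctly matching the index $i/n$ appearing in ${\finv}_{i/n}$ and $\Delta_{i/n}$ to the filter $F_{i/n}$ defining the matrix, and on having the pair (Ax6)/(Ax10) and Lemma~\ref{theorems}(v),(vi) available in exactly the right form; the remaining items are essentially bookkeeping once~(1)--(4) are in place.
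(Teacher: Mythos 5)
Your proposal is correct and follows essentially the same route as the paper's proof: item~(1) by the standard relative-maximality argument, items~(2)--(6) via {\sf CPL}, proof by cases and the pair (Ax6)/(Ax10) with Lemma~\ref{theorems}(v),(vi), and items~(7)--(9) by feeding Lemma~\ref{theorems}(viii),(ix),(xii),(xiii) through the earlier items. The only divergences are cosmetic --- e.g.\ you derive item~(3) from the classical conjunction theorems noted after Definition~\ref{calLin} rather than manipulating (Ax1)--(Ax3) directly, and in item~(4) you correctly cite (Ax10) where the paper's text mistakenly writes (Ax11).
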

\begin{proof}
	\begin{enumerate}
		\item This holds by construction of the maximal consistent sets
		\item  The `only if' part  follows by the fact that $\Lambda$ is maximal non-trivial w.r.t. $\varphi$, and by taking into account   that ${\sf AX}^*_{n+1,i}$ satisfies proof by cases (recall the observations after Definition~\ref{calLin}). Indeed, if $\alpha \notin \Lambda$ and $\beta \notin \Lambda$ then $\Lambda,\alpha \vdash \varphi$ and $\Lambda,\beta \vdash \varphi$, hence $\Lambda,\alpha \vee \beta \vdash \varphi$. From this, $\alpha \vee \beta \notin \Lambda$.
		
		\item In order to prove that $\alpha \land \beta=\ninv(\ninv\alpha \vee \ninv \beta) \in \Lambda$ implies that $\beta \in \Lambda$ it is necessary to use (Ax1), showing that $\ninv(\ninv\beta \vee \ninv \alpha) \in \Lambda$, and so apply (Ax3) and (Ax2).
		
		\item Suppose ${\finv}_{i/n}\alpha \in \Lambda$, i.e. $\ninv \Delta_{i/n}\alpha \in \Lambda$. Then,  by (i) of Lemma \ref{theorems} it follows that $\Delta_{i/n} \alpha \notin \Lambda$ and,  by (v) of the same Lemma, it must be $\alpha \notin \Lambda$ as well. Conversely, assume ${\finv}_{i/n}\alpha \notin \Lambda$, that is, $\ninv \Delta_{i/n} \alpha \not\in \Lambda$.  Then, by (Ax6), $\Delta_{i/n} \alpha \in \Lambda$, and hence $\alpha\in \Lambda$, by (Ax11). That is: $\alpha\notin \Lambda$ implies that ${\finv}_{i/n} \alpha \in \Lambda$.
		
		\item By definition,  $\alpha \rightarrow \beta = {\finv}_{i/n} \alpha \lor \beta$.  Then, by item (2), $\alpha \to \beta \in \Lambda$ iff  either ${\finv}_{i/n} \alpha \in \Lambda$ or $\beta \in \Lambda$, iff  either $\alpha \not\in \Lambda$ or $\beta \in \Lambda$, by (4).
		
		\item Easily follows from (3) and (5).
		
		\item  By (viii) of Lemma~\ref{theorems} and by (1), $\bigvee_{0 \leq a \leq 1} \chi_{a} \gamma \in \Lambda$. By (2), $\chi_{a} \gamma \in \Lambda$ for some $a \in \L_{n+1}$. By (ix) of Lemma~\ref{theorems}, there are no $a \neq b$ such that $\chi_{a} \gamma, \chi_{b}\gamma \in \Lambda$, since $\varphi \notin \Lambda$. From this, $\chi_{a} \gamma \in \Lambda$ for one and only one $a \in \L_{n+1}$. 
		%	It is easy to see that $\bigvee_{0 \leq a \leq 1} \chi_{a} \gamma$ is equivalent to $\Delta_1\gamma \,\vee\, \Delta_{n-1/n} \gamma \,\vee\, \ldots \,\vee\, \Delta_{1/n} \gamma \,\vee\, \neg\Delta_{1/n} \gamma$, and the latter is a theorem of  ${\sf AX}^*_{n+1,i}$, by (Ax6) and the properties of $\vee$ coming from  {\sf CPL}. Then, $\bigvee_{0 \leq a \leq 1} \chi_{a} \gamma \in \Lambda$ and so $\chi_{a} \gamma \in \Lambda$ for some $0 \leq a \leq 1$, by the above (2).  On the other hand, if $a > b$ then $(\chi_{a} \gamma \wedge \chi_{b} \gamma) \to \beta$ for every $\beta$, because of (i) of Lemma \ref{theorems} and (Ax7). This means that there are no $a \neq b$ such that $\chi_{a} \gamma, \chi_{b}\gamma \in \Lambda$.
		
		\item  It follows from (xii) of Lemma~\ref{theorems} and by (5).
		%		Suppose that $\chi_a \alpha \in \Lambda$. If $a=1$ then   $\chi_a \alpha = \Delta_1\alpha \in \Lambda$, hence $\Delta_{\neg 0}\neg\neg\alpha \in \Lambda$, by (iv) of Lemma \ref{theorems}. Then $\Delta_{1/n}\neg\alpha \not\in \Lambda$, by (iv) of Lemma \ref{theorems}. Therefore $\chi_0(\neg\alpha)=\neg\Delta_{1/n}\neg\alpha \in \Lambda$, by (Ax6). If $a=0$, then $\neg \Delta_{1/n}\alpha \in \Lambda$, that is, $\neg \Delta_{(\neg 1)^+}\alpha \in \Lambda$. By (i) of Lemma \ref{theorems}, $\Delta_{(\neg 1)^+}\alpha \notin \Lambda$ and so, by (iii) of Lemma \ref{theorems}, $\Delta_{\neg\neg 1}\neg\alpha \in \Lambda$. That is, $\chi_1\neg\alpha \in \Lambda$. If $0< a < 1$ then $\Delta_a \alpha \land \neg \Delta_{a^+} \alpha \in \Lambda$ and so $\Delta_a \alpha \in \Lambda$ and  $\Delta_{a^+} \alpha \notin \Lambda$. From this, and reasoning as above, it follows that $\Delta_{\neg a}\neg \alpha \land \neg \Delta_{(\neg a)^+}\neg \alpha \in \Lambda$. That is, $\chi_{\neg a} \neg \alpha \in \Lambda$
		
		\item If directly follows from (xiii) of Lemma \ref{theorems} together with (5).
	\end{enumerate}
\end{proof}

\begin{lemma} [Truth Lemma for ${\sf AX}^*_{n+1,i}$] \label{TL} Let $\Lambda$ be a maximal set of formulas \ntwrt\varphi\  in  ${\sf AX}^*_{n+1,i}$. Consider the following mapping $e_\Lambda$ of formulas to $\L_{n+1}$ defined as follows: for each formula $\gamma$, 
	%for ${\sf AX}^*_{n+1,i}$ such that, for every propositional variable $\gamma$:
	$$%(T) \hspace*{2cm}
	e_\Lambda(\gamma) = a \ \mbox{ if } \  \chi_{a} \gamma   \in \Lambda. $$
	Then,  $e_\Lambda$ is a $\langle \bL_{n+1}^*, F_{i/n}\rangle$-evaluation. % holds for every complex formula $\gamma$.
\end{lemma}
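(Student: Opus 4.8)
The plan is to show that $e_\Lambda$ is a well-defined function on formulas and that it commutes with each of the three primitive connectives of $\Sigma$, namely $\ninv$, $\star$ and $\vee$; this makes $e_\Lambda$ a homomorphism from the formula algebra into $\bL^*_{n+1}$, i.e.\ a $\langle \bL^*_{n+1}, F_{i/n}\rangle$-evaluation. Well-definedness is precisely item~(7) of Proposition~\ref{prop-maxim2}: for every formula $\gamma$ there is one and only one $a \in \L_{n+1}$ with $\chi_a\gamma \in \Lambda$, so $e_\Lambda(\gamma)$ is uniquely determined. Since every other connective ($\land$, $\to$, $\leftrightarrow$, $\Rightarrow_c$, $\Delta_a$, $\chi_a$, $\finv_{i/n}$, $\dots$) is by definition built from $\ninv$, $\star$ and $\vee$, it suffices to treat these three cases.

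For negation, suppose $e_\Lambda(\alpha)=a$, i.e.\ $\chi_a\alpha \in \Lambda$. By item~(8) of Proposition~\ref{prop-maxim2} we get $\chi_{\neg a}\ninv\alpha \in \Lambda$, whence $e_\Lambda(\ninv\alpha)=\neg a = 1-a = \neg e_\Lambda(\alpha)$. For the square, again with $\chi_a\alpha \in \Lambda$, item~(9) of Proposition~\ref{prop-maxim2} yields $\chi_{{*}a}\star\alpha \in \Lambda$, so $e_\Lambda(\star\alpha)={*}a={*}e_\Lambda(\alpha)$, matching the interpretation of $\star$ in $\bL^*_{n+1}$. For disjunction, let $e_\Lambda(\alpha)=a$ and $e_\Lambda(\beta)=b$, i.e.\ $\chi_a\alpha,\chi_b\beta \in \Lambda$. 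Since $\Lambda$ is closed under $\land$ (item~(3) of Proposition~\ref{prop-maxim2}), we have $\chi_a\alpha \land \chi_b\beta \in \Lambda$; combining the theorem $(\chi_a\alpha \land \chi_b\beta) \to \chi_{\max(a,b)}(\alpha\vee\beta)$ from Lemma~\ref{theorems}(xi) with closure under deduction (item~(1) of Proposition~\ref{prop-maxim2}) and (MP), we obtain $\chi_{\max(a,b)}(\alpha\vee\beta)\in\Lambda$. By the uniqueness from item~(7) this gives $e_\Lambda(\alpha\vee\beta)=\max(a,b)=\max\{e_\Lambda(\alpha),e_\Lambda(\beta)\}$, as required. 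Hence $e_\Lambda$ respects all operations and is an evaluation.

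To connect the evaluation to the designated filter $F_{i/n}$ (the genuinely \emph{truth}-theoretic content that makes the lemma usable in the completeness argument), one further checks the equivalence $\gamma \in \Lambda$ iff $e_\Lambda(\gamma)\geq i/n$, i.e.\ $e_\Lambda(\gamma)\in F_{i/n}$. This holds because $\alpha \leftrightarrow \Delta_{i/n}\alpha$ is provable (from (Ax10) together with Lemma~\ref{theorems}(v)), so $\gamma\in\Lambda$ iff $\Delta_{i/n}\gamma\in\Lambda$; and by Lemma~\ref{theorems}(xv), $\Delta_{i/n}\gamma \leftrightarrow \bigvee_{b\geq i/n}\chi_b\gamma$, so $\Delta_{i/n}\gamma\in\Lambda$ iff some $\chi_b\gamma$ with $b\geq i/n$ belongs to $\Lambda$, i.e.\ iff $e_\Lambda(\gamma)\geq i/n$.

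The bulk of the argument is a direct translation of the maximal-set properties already collected in Proposition~\ref{prop-maxim2}, so the only clause demanding real input is the disjunction case: it is the sole binary connective and requires the ``max-merging'' theorem Lemma~\ref{theorems}(xi) rather than a one-line lookup. I expect that to be the main (mild) obstacle, while the unary clauses and well-definedness are immediate consequences of items~(7)--(9).
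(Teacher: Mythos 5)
Your proof is correct and follows the paper's overall template --- well-definedness from item~(7) of Proposition~\ref{prop-maxim2}, and the unary clauses for $\ninv$ and $\star$ from items~(8) and~(9) --- but it handles the one substantive case, disjunction, by a genuinely different and in fact cleaner route. The paper works backwards: it starts from $c = e_\Lambda(\alpha \lor \beta)$, uses Lemma~\ref{theorems}(vii) to split $\chi_c(\alpha \lor \beta)$ into $\chi_c\alpha \lor \chi_c\beta$, applies item~(2) of Proposition~\ref{prop-maxim2} to conclude one disjunct takes value $c$, and then rules out the other disjunct taking a strictly larger value $d>c$ by a contradiction that invokes (xi). You instead argue forwards: from $\chi_a\alpha, \chi_b\beta \in \Lambda$, closure under $\land$ (item~(3)), the theorem (xi) and deductive closure with (MP) give $\chi_{\max(a,b)}(\alpha \lor \beta) \in \Lambda$, and uniqueness (item~(7)) immediately yields $e_\Lambda(\alpha \lor \beta) = \max(a,b)$. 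This dispenses with (vii) entirely and sidesteps a slightly garbled step in the paper's contradiction argument (where the text asserts $\chi_c(\alpha), \chi_d(\alpha) \in \Lambda$, evidently a typo for $\chi_c(\beta), \chi_d(\alpha) \in \Lambda$, since two distinct $\chi$-values for the same formula would already contradict item~(7)); both proofs draw on the same pool of lemmas, yours simply needs fewer of them. Your closing paragraph relating membership in $\Lambda$ to the filter $F_{i/n}$ is not required by the lemma as stated --- the paper defers exactly that equivalence to the proof of Theorem~\ref{complLin} --- but it is correct as written, and your citation of Lemma~\ref{theorems}(xv) there is the right one (the paper's completeness proof cites (xiv), apparently another typo).
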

\begin{proof} First, observe that $e_\Lambda$ is well-defined, i.e. every formula gets a unique value. This is an immediate consequence of (7) of Proposition~\ref{prop-maxim2}. We have to prove that the following conditions are satisfied for every formulas $\alpha$ and $\beta$:
	
	\begin{itemize}
		
		\item[(i)] $e_\Lambda(\alpha \lor \beta) = \max(e_\Lambda(\alpha), e_\Lambda(\beta))$.
		Indeed, let $c = e_\Lambda(\alpha \lor \beta)$. By definition, $\chi_c(\alpha \lor \beta) \in \Lambda$, and so  $\chi_c(\alpha) \lor \chi_c(\beta) \in \Lambda$,  by (vii) of Lemma \ref{theorems}. By (2) of Proposition~\ref{prop-maxim2}, either $\chi_c(\alpha) \in \Lambda$ or  $\chi_c(\beta) \in \Lambda$. That is, either $e_\Lambda(\alpha) = c$ or $e_\Lambda(\beta) = c$. By way of contradiction, suppose e.g. $e_\Lambda(\alpha) = d > c$ and  $e_\Lambda(\beta) = c$. Then $\chi_c(\alpha) \in \Lambda$ and $\chi_d(\alpha) \in \Lambda$ and  so, by  (xi) of Lemma \ref{theorems}, $\chi_d(\alpha \lor \beta) \in \Lambda$. Hence $e_\Lambda(\alpha \lor \beta) = d  > c$, contradiction. From this, $d \leq c$ and $c = \max(e_\Lambda(\alpha), e_\Lambda(\beta))$. 
		%\\[1mm]
		
		%Let $a = e_\Lambda(\alpha)$ and $b = e_\Lambda(\beta)$, then by definition $\chi_a(\alpha), \chi_b(\beta) \in \Lambda$
		
		\item[(ii)] $e_\Lambda(\ninv \alpha) = 1- e_\Lambda(\alpha)$.
		Indeed, let $c = e_\Lambda(\alpha)$, that is, $\chi_c \alpha  \in \Lambda$. By (8) of Proposition~\ref{prop-maxim2}, $\chi_{1-c} \ninv\alpha \in \Lambda$, i.e. $e_\Lambda(\ninv \alpha) = 1-c$. %\\[1mm]
		
		\item[(iii)]  $e_\Lambda(\star \alpha) = *(e_\Lambda(\alpha))$. 
		Indeed, let $c = e_\Lambda(\alpha)$. By definition, $\chi_c \alpha  \in \Lambda$. By (9) of Proposition~\ref{prop-maxim2}, $\chi_{{*}c} {\star}\alpha \in \Lambda$, i.e. $e_\Lambda({\star}\alpha) = {*}c$.
		
	\end{itemize}
	This ends the proof. 
\end{proof}

Finally, we can state the completeness result for ${\sf AX}^*_{n+1,i}$. 

\begin{theorem} [Completeness of ${\sf AX}^*_{n+1,i}$] \label{complLin} The calculus ${\sf AX}^*_{n+1,i}$ is complete w.r.t. $\langle \bL_{n+1}^*, F_{i/n}\rangle$, that is: $\Gamma \vDash_{\langle \bL_{n+1}^*, F_{i/n}\rangle} \varphi$ implies that $\Gamma \vdash_{{\sf AX}^*_{n+1,i}} \varphi$, for every finite set of formulas $\Gamma \cup \{\varphi\}$.
\end{theorem}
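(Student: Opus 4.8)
The plan is to prove completeness by contraposition, via the standard canonical-model (Lindenbaum--{\L}os) construction, reusing the Truth Lemma and the properties of maximal consistent sets already established. Concretely, I would assume $\Gamma \nvdash_{{\sf AX}^*_{n+1,i}} \varphi$ and produce a single $\langle \bL_{n+1}^*, F_{i/n}\rangle$-evaluation witnessing $\Gamma \nvDash \varphi$.

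First, since ${\sf AX}^*_{n+1,i}$ is a finitary Tarskian logic, by the Lindenbaum--{\L}os theorem the set $\Gamma$ extends to a set $\Lambda \supseteq \Gamma$ that is maximal non-trivial with respect to $\varphi$, so that $\Lambda \nvdash_{{\sf AX}^*_{n+1,i}} \varphi$; as $\Lambda$ is closed (Proposition \ref{prop-maxim2}(1)), this gives $\varphi \notin \Lambda$. I then consider the canonical assignment $e_\Lambda$ sending each formula $\gamma$ to the unique $a \in \L_{n+1}$ with $\chi_a\gamma \in \Lambda$; this is well-defined by Proposition \ref{prop-maxim2}(7), and by the Truth Lemma (Lemma \ref{TL}) it is a genuine $\bL_{n+1}^*$-evaluation.

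The crux is a bridge lemma identifying membership in $\Lambda$ with designation under $e_\Lambda$: for every formula $\psi$, $\psi \in \Lambda$ iff $e_\Lambda(\psi) \geq i/n$. To establish it I would first argue that $\Delta_{i/n}\psi \in \Lambda$ iff $\psi \in \Lambda$: axiom (Ax6) together with Proposition \ref{prop-maxim2}(2) places exactly one of $\Delta_{i/n}\psi$, $\ninv\Delta_{i/n}\psi$ in $\Lambda$ (both cannot occur, by Lemma \ref{theorems}(i), since $\varphi \notin \Lambda$), while Proposition \ref{prop-maxim2}(4) gives $\ninv\Delta_{i/n}\psi = {\finv}_{i/n}\psi \in \Lambda$ iff $\psi \notin \Lambda$. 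On the other hand, Lemma \ref{theorems}(xv) supplies $\Delta_{i/n}\psi \leftrightarrow \bigvee_{b \geq i/n}\chi_b\psi$, so that $\Delta_{i/n}\psi \in \Lambda$ iff some $\chi_b\psi \in \Lambda$ with $b \geq i/n$, which by the uniqueness in Proposition \ref{prop-maxim2}(7) is precisely $e_\Lambda(\psi) \geq i/n$. Chaining these equivalences yields the bridge lemma.

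With the bridge lemma in hand the conclusion is immediate: every $\psi \in \Gamma \subseteq \Lambda$ satisfies $e_\Lambda(\psi) \geq i/n$, so all premises are designated, whereas $\varphi \notin \Lambda$ forces $e_\Lambda(\varphi) < i/n$, so the conclusion fails to be designated. Hence $\Gamma \nvDash_{\langle \bL_{n+1}^*, F_{i/n}\rangle} \varphi$, which is exactly the contrapositive of the claim. I expect the only genuine work to lie in the bridge lemma: the semantic content is already packaged in the Truth Lemma, and the combinatorics of the $\chi_a$'s is handled by Proposition \ref{prop-maxim2}, so the main (mild) obstacle is assembling the correct chain of axioms and previously proved theorems that matches $\Lambda$-membership with the designated filter $F_{i/n}$ uniformly in $i$.
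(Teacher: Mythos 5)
Your proposal is correct and follows essentially the same route as the paper's own proof: Lindenbaum--{\L}os extension, the canonical evaluation $e_\Lambda$ via the Truth Lemma, and the bridge equivalence $\gamma \in \Lambda$ iff $e_\Lambda(\gamma) \in F_{i/n}$, which the paper obtains from Lemma \ref{theorems}(xv) together with (Ax10) and Lemma \ref{theorems}(v), while you route the last step through Proposition \ref{prop-maxim2}(4) and (Ax6) --- an equivalent packaging of the same facts.
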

\begin{proof}
	Let $\Gamma\cup \{\varphi\}$ be a set of formulas  of ${\sf AX}^*_{n+1,i}$	such that $\Gamma\nvdash_{{\sf AX}^*_{n+1,i}} \varphi$.
	By Lindenbaum-\L os, there exists a set $\Lambda$  maximal \ntwrt\varphi\ in ${\sf AX}^*_{n+1,i}$
	such that $\Gamma \subseteq \Lambda$.
	Let $e_\Lambda$ be the evaluation defined as in the Truth Lemma~\ref{TL}.
	Then, it follows that, for every formula $\gamma$: $e_\Lambda(\gamma) \in F_{i/n}$ iff $\chi_1\gamma \,\vee\, \chi_{(n-1)/n}\gamma \,\vee\, \ldots \,\vee\, \chi_{i/n}\gamma \in \Lambda$, by the Truth Lemma~\ref{TL}. Moreover, by (xiv) of Lemma \ref{theorems}, this is equivalent to the condition 
	% and by Remark~\ref{prop-maxim}, 
	%iff $\Delta_1\gamma \,\vee\,  \Delta_{(n-1)/n}\gamma \,\vee\, \ldots \,\vee\, \Delta_{i/n}\gamma \in \Lambda$, 
	%by definition of $\chi_a\gamma$ and the axioms of ${\sf AX}^*_{n+1,i}$, iff 
	$\Delta_{i/n}\gamma \in \Lambda$. By (Ax10) and by (v) of Lemma~\ref{theorems}, the latter is equivalent to the condition $\gamma \in \Lambda$. That is, for every formula $\gamma$ we have: $e_\Lambda(\gamma) \in F_{i/n}$ iff $\gamma \in \Lambda$. Therefore, $e_\Lambda$ is an evaluation such that $e_\Lambda[\Gamma] \subseteq F_{i/n}$ but $e_\Lambda(\varphi) \notin F_{i/n}$, since $\varphi \not\in \Lambda$. This means that $\Gamma \not\vDash_{\langle \bL_{n+1}^*, F_{i/n}\rangle} \varphi$. 
\end{proof}

%\input{section5.tex}

%!TEX root =  ../ImplicationFree(3)-root.tex

\section{Subalgebras of $\bL_{n+1}^*$ and G\"odel algebras with an involutive negation and a $\star$ operation} \label{sec:representable}

In this  section we present an alternative approach to capture the behaviour of the square operator in  structures obtained by adding a unary operator $\star$ to G\"odel chains with an involutive negation.  In order to do so, in a first subsection we characterize the subalgebras of a $\bL_{n+1}^*$ algebra. 
%studied in previous sections. 
%defined by the Lukasiewicz operatins on $[0,1]$. 
The second subsection is devoted to the study of structures obtained by adding a unary operation $\star$ to G\"odel algebras with an involutive negation in general.  %In particular we will consider the case of axiomatizing $\star$ without making use of further algebraic structure. 
There, we will axiomatically characterize the class of those algebras, that we will call {\em representable}, 
%G\"odel algebras with involutive negation and a $\ast$ operation.  
whose implication free-reducts are isomorphic to subalgebras of the $\L_{n+1}^*$'s.

From now on, we will denote by $[0,1]_{MV}^*$ the algebra defined over the real unit interval by the \luk\ operations $\vee, \neg, \ast$. 

\subsection{Finite subalgebras of $[0,1]_{MV}^*$}\label{subsec:subLestrella1}

We start by noticing that, for every $n >1$, $\bL_{n+1}^*$ and its subalgebras are subalgebras of $[0,1]_{MV}^*$. Conversely, as it will be shown in Proposition \ref{prop:generated}, every finite subalgebra of $[0,1]_{MV}^*$ is a subalgebra of some $\bL_{n+1}^*$  (although, as seen in Example \ref{exemp1}(1), it is not necessarily of the form $\bL^*_{m+1}$ for some $m \leq n$).  Then  studying  the subalgebras of  $\bL_{n+1}^*$ (for any $n > 1$) turns out to be  equivalent to study the finite subalgebras of $[0,1]_{MV}^*$. %The goal of this section is thus the study of the subalgebras of a  $\bL_{n+1}^*$ algebra. 

For what follows it is useful to introduce the notion of {\em skeleton} of an element of a finite subalgebra of $[0,1]^*_{MV}$. In order for the next definition to be precise, let us notice that  Definition \ref{remProcP}, introducing the procedure ${\tt P}$, can be easily adapted to any finite subalgebra ${\bf A}$ of $[0,1]^*_{MV}$.
\begin{definition}
	Let ${\bf A}$ be a finite subalgebra of $[0,1]^*_{MV}$, let $a$ be a positive element of $A\setminus\{1\}$ and let ${\tt P}({\bf A}, a)=[a_1,\ldots, a_k]$, with $a_{k+1} = a_j$ for some $1 \leq j \leq k$. Then we define the {\em skeleton of $a$ in ${\bf A}$}, denoted by $Sk({\bf A}, a)$ as the finite string of symbols $[o_1,\ldots, o_{k}]$, where $o_i\in \{\ast, \msim\}$ is such that  $o_i(a_i)=a_{i+1}$ for all $i=1,\ldots,k$ and thus $o_{k}(a_k) = a_j$.  
\end{definition}

Due to the way procedure $\tt P$ is defined, one can notice that any skeleton will be a string of symbols of the form
$$ [*^{n_1}, \ninv, *^{n_2}, \ldots, \ninv, *^{n_k} ],$$
with $k > 1$ and $n_1, \ldots, n_{k-1} > 0$,  where $*^{n_i}$ is a shorthand for `$*, \stackrel{n_i}{\ldots}, *$', i.e.\ the string with $n_i$ repetitions of $*$. Moreover, if $n_k = 0$ then we assume the sk-sequence reduces to $ [*^{n_1}, \ninv, *^{n_2}, \ldots, \ninv ]$. In what follows, we will call this kind of strings {\em sk-sequences}.

Let us notice that, as in the case of $\bL^*_{n+1}$-algebras, if ${\bf A}$ is a finite strictly simple subalgebra of $[0,1]^*_{MV}$ and $\co$ is the coatom of ${\bf A}$, then ${\tt P}({\bf A}, \co)=[a_1,\ldots, a_k]$ is such that $a_k=\msim \co$, i.e.,  ${\tt P}({\bf A}, \co)$ ends with the atom of ${\bf A}$. Thus, $Sk({\bf A}, \co)=[o_1,\ldots, o_k]$ is such that $o_k=\msim$. 

The following result presents a slight generalization of the above argument.

\begin{proposition}\label{strictly}
	A finite subalgebra ${\bf A}$ of $[0,1]_{MV}^*$ is strictly simple iff ${\bf A} = \langle a \rangle^*$ for a positive element $a \in A$ and ${\tt P}({\bf A},a) = [a_1, \ldots,a_k]$ with $a_{k+1} = a$. 
\end{proposition}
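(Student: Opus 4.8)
The plan is to derive both implications from the description of principal subalgebras in Proposition~\ref{subalgprin}, adapted to finite subalgebras of $[0,1]_{MV}^*$: the domain of $\langle a\rangle^*$ is ${\tt R}({\bf A},a)\cup\{0,1\}$, where by the observation following Definition~\ref{remProcP} the positive elements of ${\tt R}({\bf A},a)$ are exactly those of ${\tt I}({\bf A},a)$. I will also use repeatedly that $\langle a\rangle^*=\langle\neg a\rangle^*$ and that ${\tt P}$ never leaves the open interval $(0,1)$. Two degenerate algebras are handled separately, namely $\{0,1\}$ and $\{0,1/2,1\}$. In fact any strictly simple ${\bf A}$ different from $\{0,1/2,1\}$ cannot contain $1/2$, since otherwise $\langle 1/2\rangle^*=\{0,1/2,1\}$ would be a proper non-trivial subalgebra; I will use this to guarantee that every element of ${\bf A}\setminus\{0,1\}$ is either strictly positive or strictly negative.

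For the right-to-left direction, assume ${\bf A}=\langle a\rangle^*$ with $a$ positive and ${\tt P}({\bf A},a)=[a_1,\dots,a_k]$ with $a_{k+1}=a_1=a$. Since ${\tt P}$ is deterministic, the identity $a_{k+1}=a_1$ forces the whole sequence to be purely periodic, so ${\tt I}({\bf A},a)=\{a_1,\dots,a_k\}$ and these $k$ elements are traversed cyclically; note also that this precludes $1/2\in{\bf A}$, as a value $1/2$ occurring in the sequence would make the procedure stop with $a_{k+1}=a_j$ for some $j\neq 1$. I would then take an arbitrary subalgebra ${\bf B}\neq\{0,1\}$: it contains some $c\notin\{0,1\}$, and replacing $c$ by $\neg c$ if necessary it contains a positive element $b\neq 1$. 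As $b$ is a positive element of ${\tt R}({\bf A},a)$, we have $b=a_m\in{\tt I}({\bf A},a)$; starting ${\tt P}$ at $a_m$ runs once around the same cycle, whence ${\tt I}({\bf A},a_m)={\tt I}({\bf A},a)$ and $\langle a_m\rangle^*=\langle a\rangle^*={\bf A}$. Since $a_m\in{\bf B}$ this gives ${\bf B}={\bf A}$, so $\{0,1\}$ is the only proper subalgebra and ${\bf A}$ is strictly simple.

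For the left-to-right direction, let ${\bf A}$ be strictly simple and let $\co$ be its coatom. Using ${\bf A}\neq\{0,1/2,1\}$ one checks that $\co>1/2$, so $\co$ is positive; moreover $\langle\co\rangle^*$ is a subalgebra containing $\co\notin\{0,1\}$, hence $\langle\co\rangle^*={\bf A}$, which supplies the positive generator $a=\co$. It remains to prove $a_{k+1}=\co$, that is, that ${\tt P}({\bf A},\co)=[a_1,\dots,a_k]$ enters its cycle immediately, with no initial transient. Suppose not, so $a_{k+1}=a_j$ with $j\geq 2$ and $\co=a_1$ lies outside the cycle $\{a_j,\dots,a_k\}$. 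Since $1/2\notin{\bf A}$, this cycle contains a positive element $a_m$ (negating a negative element yields a positive one, and there is no $1/2$ fixed point). Then the positive elements of $\langle a_m\rangle^*$ are precisely the positive elements of the cycle and do not include $\co$, so $\langle a_m\rangle^*$ is a proper subalgebra distinct from $\{0,1\}$, contradicting strict simplicity. Hence $j=1$ and $a_{k+1}=\co=a$. This is exactly the observation recorded just before the statement, namely that for strictly simple ${\bf A}$ the procedure started at $\co$ terminates at the atom $\neg\co$, whence $a_{k+1}=\neg\neg\co=\co$.

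The crux of the proof --- and its only non-routine step --- is this no-transient claim in the forward direction: ruling out that ${\tt P}({\bf A},\co)$ passes through some elements before settling into a shorter cycle. The mechanism is that any cycle sitting strictly inside the generated sequence is itself closed under $\ast$ and $\neg$, so by Proposition~\ref{subalgprin} it already generates a proper non-trivial subalgebra; strict simplicity therefore forces pure periodicity. The rest is bookkeeping: the two degenerate algebras $\{0,1\}$ and $\{0,1/2,1\}$, and the verification that $1/2\notin{\bf A}$ in the non-degenerate strictly simple case.
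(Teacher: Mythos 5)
Your proof is correct and takes essentially the same route as the paper's: for right-to-left, the purely periodic ${\tt P}$-sequence means every positive element of ${\bf A}$ lies on the cycle and regenerates it as a cyclic permutation, so any non-trivial subalgebra is all of ${\bf A}$; for left-to-right, an initial transient before the cycle would let a cycle element generate a proper non-trivial subalgebra not containing the coatom, contradicting strict simplicity. Your extra bookkeeping (excluding $1/2$, choosing a positive cycle element explicitly) is care the paper's terser proof omits, though note that the degenerate chain $\{0,1/2,1\}$, which you set aside as "handled separately," is in fact strictly simple while admitting no positive generator --- a boundary case that the statement itself, and the paper's own proof, silently gloss over.
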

\begin{proof}
	Left-to-right. It is obvious that if ${\bf A}$ is strictly simple, then for any $a \in A \setminus \{0, 1\}$, $A = \langle a \rangle^*$ (otherwise $\langle a \rangle^*$ would be a proper subalgebra of $\bf A$). Moreover, if $A = \langle a \rangle^*$ but ${\tt P}({\bf A},a) = [a_1, \ldots, a_k]$ with $a_{k+1} = a_i$ for $i > 1$, then $\langle a_i \rangle^* \subsetneq A$ (since $a \not\in \langle a_i \rangle^*$) and $\bf A$ would not be strictly simple.
	
	Right-to-left. If ${\bf A} = \langle a \rangle^*$ for some positive element $a \in A\setminus\{1\}$ and ${\tt P}({\bf A},a) = [a_1, \ldots ,a_k]$ with $a_{k+1} = a$, then every positive element of $\bf A$ belongs to ${\tt P}({\bf A},a)$ and, since $a_{k+1} = a$ for any $a_i$ we have ${\tt P}({\bf A},a_i) = [b_1, \ldots,b_k]$ with $a_i = b_1 = b_{k+1}$, i.e.\ ${\tt P}({\bf A},a_i)$ is a cyclic permutation of the sequence ${\tt P}({\bf A},a)$. Therefore, for any positive element $a_i \in A$, ${\bf A} = \langle a_i \rangle^*$ and $\bf A$ has no subalgebras, i.e.,  it is strictly simple. 
\end{proof}

%\begin{itemize}
%\item[] $\ast^{n_1}a$ is negative while $\ast^{n_1-1}a$ is positive; 
%\item[] $\ast^{n_2}(\neg(\ast^{n_1}a))$ is negative while $\ast^{n_2-1}(\neg(\ast^{n_1}a))$ is positive;
%\item[] ...
%\item[] $\neg\ast^{n_k}(\ldots(\ast^{n_2}(\neg(\ast^{n_1}a)))\ldots)  = b$ is an element  of $\langle a \rangle^*$ already met at a previous step. 
%\end{itemize}
%For every finite subalgebra ${\bf A}$ of $[0,1]_{MV}^*$ and for every positive $a\in A\setminus\{1\}$, the sequence $[n_1,\ldots, n_k]$ exists and is unique. It will be henceforth called the {\em skeleton of $a$ in ${\bf A}$} and it will be denoted by $Sk(a, {\bf A})$.
%\begin{definition}
% Let $S$ be a finite strictly simple subalgebra $S$ of $[0,1]_{MV}^*$. Then for every $a \in S$ we define the skeleton of $S$ w.r.t. $a$, denoted $SK(S,a)$, as the sequence obtained by: If  ${\tt P}(S,a) = [a_1 = a, ...,a_k]$ with $a_{k+1} = a$, we define $SK(S,a)$ as the sequence $[t_1,t_2...t_k]$ with $t_i \in \{\ast , \sim\}$ such that $t_i(a_i) = a_{i+1}$. In particular we will denote by $SK(S)$ the sequence $SK(S,\co)$ being $\co$ the coatom of $S$.
% \end{definition}

\begin{example}\label{ex:new2}
	Consider Example \ref{exemp1} (1). There we have $$ {\bf A} = \langle 8/9 \rangle^*=\{0,1/9, 2/9, 4/9, 5/9, 7/9, 8/9,1\}$$
	with $\co = 8/9$ and ${\tt P}({\bf A},8/9 ) = [8/9,7/9,5/9,1/9]$. Then,  $Sk({\bf A}, 8/9) = [\ast, \ast, \ast, \sim]$. Observe that $8/9$ is the solution of the equation  $\ninv (\ast^3(x)) = x$. Indeed using the semantics of $\ast, \sim$ in $[0,1]_{MV}^*$, the equation $\ninv (\ast^3(x))=x$ can be written as $1 - (2(2(2x-1)-1)-1) = x$ which has a unique solution $x = 8/9$. Notice also that $Sk({\bf A},7/9) = [\ast, \ast, \sim, \ast]$ and $Sk({\bf A},5/9) = [\ast, \sim, \ast, \ast]$  are cyclic permutations of $Sk({\bf A}, \co)$.
\end{example}
The example above anticipates a general result that we are going to prove in the next proposition. Henceforth, if $R = [o_1,\ldots, o_k]$ is  any sequence where every $o_i \in \{*, \ninv\}$, 
we will adopt the notation $f_{R}$ to indicate the unary function in $[0, 1]$ defined as
$$
f_R(x) = o_k(o_{k-1}(\ldots o_1(x)\ldots)) .
$$
In particular,
any finite subalgebra  ${\bf A}$ of $[0,1]^*_{MV}$  and any $a\in A$ will have an associated function $f_{Sk({\bf A},a)}$. 
%  to indicate the function
% $$
% o_k(o_{k-1}(\ldots o_1(x)\ldots)),
% $$
% where $[o_1,\ldots, o_k]=Sk({\bf S}, a)$. 
For instance, taking into account Example \ref{ex:new2} above, one has 
$$
f_{Sk(\langle 8/9\rangle^*, 8/9)}(x)=\msim(\ast(\ast(\ast(x)))),
$$
while
$$
f_{Sk(\langle 8/9\rangle^*, 5/9)}(x)=\ast(\ast(\msim(\ast(x)))).
$$

%\blue{***Atencion, utlizamos $S$ tanto para subalgebras como para secuencias y en las $f_S$!!!****}

\begin{proposition}\label{propSolutions} Let $S$ be a sequence of symbols from $\{*, \ninv\}$, and let $f_S$ be its corresponding function defined as above. 
	%
	%\blue{Let $S$ be any finite non-empty string of symbols in $\{\ast,\msim\}$ such that $f_S$ is of the form
	%$$f_S(x) = *^{n_1} (\ninv (*^{n_2} .... (\ninv (*^{n_k}(x))) \ldots ))$$ with $k > 1$ and $n_2, \ldots, n_k > 0$.}
	%
	Then we have:
	%Let $f_S$ be a composition of the $*$ and $\neg$ operations of the form $f_S(x) = \ast^{n_1}( \neg(\ast^{n_{2}} ...(\neg(\ast^{n_k}(x)))...)$,  where $n_1, \ldots n_k$ are $k$ natural numbers, with $k > 1$, such that $n_2, \ldots n_k > 0$. Then, we have: 
	\begin{itemize}
		\item[(i)] if $S$ is a sk-sequence, the equation $f_{S}(x) = x$ has a unique and rational solution $x_S > 1/2$; 
		\item[(ii)] the equation $f_S(x) = d$ has a unique and rational solution for every rational number $0 < d < 1$.
	\end{itemize}
\end{proposition}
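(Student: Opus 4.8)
The plan is to exploit that both generators are affine where they are ``active'': $\ninv$ is the isometry $z \mapsto 1-z$ on all of $[0,1]$, while $*$ restricts to the affine bijection $z \mapsto 2z-1$ of $[1/2,1]$ onto $[0,1]$ and collapses $[0,1/2]$ to $0$. Hence any composition that never triggers the clamping of $*$ coincides with the genuine affine map $\phi_S$ obtained by replacing each $*$ by $z\mapsto 2z-1$; its slope is $(-1)^p 2^m$, where $m\ge 1$ and $p$ count the occurrences of $*$ and $\ninv$ in $S$, and since $|(-1)^p2^m| = 2^m \ge 2 \ne 1$ this affine map has a single fixed point and each value has a single preimage. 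Items (ii) and (i) are the preimage and fixed-point incarnations of this one fact; the only genuine work is controlling the clamping of $*$, i.e.\ keeping the relevant orbit inside $(1/2,1)$.

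For item (ii) I would induct on the length of $S$, peeling off its last symbol $o_L$. Writing $f_S = o_L \circ f_{S''}$, the equation $f_S(x)=d$ becomes $f_{S''}(x) = o_L^{-1}(d)$. If $o_L = \ninv$ then $o_L^{-1}(d) = 1-d$; if $o_L = *$ then, because $d>0$, the equation $*y = d$ has the unique solution $y = (1+d)/2$ (the clamped branch gives $0 \ne d$). In both cases the new target is again a rational number strictly inside $(0,1)$, so the induction hypothesis delivers a unique rational solution, the base case $S = \varnothing$ giving $x = d$. The decisive point is exactly the hypothesis $d>0$, which forbids the non-invertible branch of $*$.

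For item (i), existence, rationality and location of $x_S$ would come from the branch-inverse map $\psi = o_1^{-1}\circ\cdots\circ o_L^{-1}$, where $*^{-1}(z) = (z+1)/2$ and $\ninv^{-1} = \ninv$. Since a sk-sequence contains at least one $*$, the map $\psi$ is a contraction of $[0,1]$ of ratio $\le 1/2$, so by Banach it has a unique fixed point $x_S$, which is rational (it solves a linear equation with rational coefficients) and lies in $[1/2,1]$ because the outermost symbol $o_1 = *$ forces $\operatorname{range}(\psi) \subseteq [1/2,1]$. A telescoping computation shows $f_S \circ \psi = \mathrm{id}$ on $[0,1]$ (using $*((z+1)/2) = z$ and that all maps preserve $[0,1]$), whence $f_S(x_S) = x_S$.

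Uniqueness, and the strict inequality $x_S > 1/2$, would rest on the observation that applying $*$ to any argument $\le 1/2$ yields $0$, after which the orbit stays in $\{0,1\}$ (fixed by $*$, swapped by $\ninv$). So if $x_0 \in (1/2,1)$ satisfies $f_S(x_0)=x_0$, no $*$ along its orbit can receive an input $\le 1/2$, else $f_S(x_0) \in \{0,1\}$, contradicting $x_0 \in (1/2,1)$; therefore $f_S(x_0) = \phi_S(x_0)$, so $x_0$ is the unique fixed point of the affine $\phi_S$. This shows at once that $(1/2,1)$ holds at most one solution and identifies it with $x_S$, and the same collapse remark excludes $x_S = 1/2$; on $[0,1/2]$ the map $f_S$ is constant with value in $\{0,1\}$, so no solution lives in $(0,1/2]$. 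I expect the main obstacle to be precisely this bookkeeping around the clamping of $*$: the map $f_S$ genuinely fixes $0$ and/or $1$ according to the parity of $p$ and is constant on $[0,1/2]$, so ``unique solution'' must be read over $(0,1)$, and one must still rule out the contraction fixed point accidentally sitting at the endpoint $1$. I would settle that last point by checking directly that $\psi(1)\neq 1$, since tracing the requirement $\psi(1)=1$ backwards through the sk-sequence eventually forces some $*^{-1}$ to output $0$, which is impossible.
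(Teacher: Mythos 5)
Your proof is correct, but it takes a genuinely different route from the paper's. The paper argues via a normal form: it first establishes the global shape of any composition $f_S$ --- a monotone piecewise-linear map (increasing or decreasing according to the parity of the number of $\ninv$'s), constant equal to $0$ (resp.\ $1$) outside a single linear ramp over a rational interval $[a,b]$, with $a \geq 1/2$ when $S$ is a sk-sequence --- and then reads both claims off this shape, with the explicit formulas $x_S = a/(1-b+a)$ for (i) and $x_d = (b-a)d + a$ for (ii). You never establish this global picture: for (ii) you induct on the length of $S$, peeling the outermost symbol, where the hypothesis $0<d<1$ is exactly what forces the invertible branch of $*$ at every stage; for (i) you build the branch-inverse $\psi$, obtain existence of $x_S$ from the Banach fixed-point theorem (contraction ratio $\leq 1/2$, since a sk-sequence contains at least one $*$) together with the telescoping identity $f_S \circ \psi = \mathrm{id}$, and obtain uniqueness from the collapse observation that once some $*$ receives an input $\leq 1/2$ the value lands in $\{0,1\}$ forever, so any fixed point in $(1/2,1)$ must be the unique fixed point of the unclamped affine map $\phi_S$ of slope $(-1)^p 2^m$ with $2^m \geq 2$. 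Your route buys precisely the endpoint bookkeeping that the paper elides: when the number of $\ninv$'s is even, $0$ and $1$ are genuine fixed points of $f_S$, so ``unique'' must be read over $(0,1)$, and one must separately exclude $x_S \in \{1/2, 1\}$ --- your collapse remark and your check that $\psi(1) \neq 1$ do exactly this, whereas the paper's conclusion $a < x_S < b$ silently presumes $b < 1$ in the increasing case. Conversely, the paper's route buys closed-form expressions for $x_S$ and $x_d$ in terms of the rational breakpoints and an explicit ramp description of $f_S$ that is reused in the surrounding discussion. One small touch-up to your last step: in the backward trace showing $\psi(1) \neq 1$, if the first $\ninv$ met from the outside is the last symbol of $S$ (the case $n_k = 0$), the contradiction is that the initial argument $1$ would itself have to equal $0$, rather than some $*^{-1}$ outputting $0$; both cases are immediate (in a sk-sequence every $\ninv$ other than a final one is followed by a $*$), but your statement should cover both.
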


\begin{proof} First of all, observe that for any sequence $S$, as a function $f_S:[0, 1] \to [0, 1]$, $f_S$ is continuous, and it is increasing if the number of negations $\msim$ involved is even, otherwise it is decreasing. Let us assume then that $f_S$ involves an even number of negations, and hence $f_S$ is increasing with $f_S(0 ) = 0$ and $f_S(1) = 1$. By composing the functions $*$ and $\msim$ in the required form, one can easily check that $f_S$ is of the following form: there are rationals $a, b \in [0, 1]$, with $0 \leq a < b \leq 1$ such that:
	$$f_S(x) = \left \{ 
	\begin{array}{ll}
	0, & \mbox{if } 0 \leq x \leq a\\
	(x-a)/(b-a), &  \mbox{if } a \leq x \leq b \\
	1, & \mbox{if } b \leq x \leq 1\\
	\end{array}
	\right .
	$$
	As for (i), if $S$ is a sk-sequence, by construction, the rational $a$ is such that $1/2 \leq a$. Therefore, it is clear that the equation  $f_S(x) = x$ has as a unique rational solution $x_S = a / (1-b+a)$, satisfying $a < x_S < b$. %This proves (i).

	As for (ii), since $f_S$ is always strictly increasing in the open interval $(a, b)$, the graph $y = f_S(x)$ always intersects the horizontal line $y = d$ if $0 < d < 1$, and hence the equation $f_S(x) = d$ has always as unique solution $x_d = (b-a)d + a$. 
	
	If $f_S$ involves an odd number of negations, then $f_S$ is decreasing, with $f_S(0) = 1$ and $f_S(0) = 1$, and the arguments for (i) and (ii) are completely dual to the ones above. 
\end{proof}

%**In order for the above result to be immediately clear, in Figure \ref{figEstrellas} we show examples of functions $f_S$ for $S$ containing odd and even occurrences of $\msim$.**

To graphically exemplify the above result, Figure \ref{figEstrellas} displays examples of functions $f_S$ for a sk-sequence $S$ containing odd and even occurrences of $\msim$ and how they intersect the diagonal in a single point. 

\begin{center}
\begin{figure}[h!]
\vspace*{-0.25cm}
  \centering
    \includegraphics[width=0.85\textwidth]{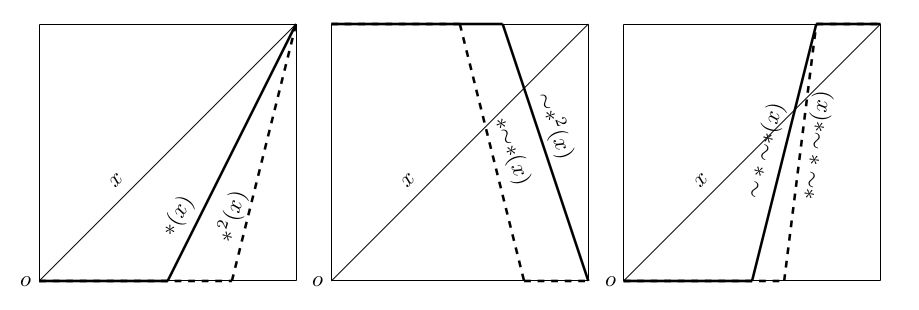} \caption{\small Examples of functions $f_S(x)$ with odd (central figure) and even (right-most figure) occurrences of $\msim$.} \label{figEstrellas}
\end{figure}
\end{center}

\vspace*{-0.75cm}

The following result is an easy consequence of the previous Proposition \ref{propSolutions}.

\begin{corollary}\label{determined}
	Let ${\bf A}$ be a finite strictly simple subalgebra of $[0,1]_{MV}^*$, then
	\begin{enumerate}
		\item[(1)] If $a$ is a positive element of $A \setminus\{1\}$, then $a$ is the unique rational  solution of the equation $f_{Sk({\bf A},a)} (x) = x$.
		\item[(2)] ${\bf A}$ is completely determined by $Sk({\bf A}, \co)$, meaning that for any two different strictly simple subalgebras ${\bf A}$ and ${\bf A}'$ of $[0,1]^*_{MV}$, $Sk({\bf A}, \co)\neq Sk({\bf A}', \co')$, where $\co$ and $\co'$ denote the coatoms of ${\bf A}$ and ${\bf A}'$ respectively.
	\end{enumerate}
\end{corollary}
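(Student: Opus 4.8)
The plan is to read off both items from Proposition \ref{strictly} and Proposition \ref{propSolutions}, which already carry the analytic content; almost nothing new has to be computed.

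For item (1), I would fix a positive element $a \in A \setminus \{1\}$ and first invoke Proposition \ref{strictly}: since ${\bf A}$ is strictly simple it is generated by any of its nontrivial elements, so ${\bf A} = \langle a \rangle^*$ and the procedure ${\tt P}({\bf A}, a) = [a_1, \ldots, a_k]$ with $a_1 = a$ closes up, i.e.\ $a_{k+1} = a$. Unwinding the definition of the skeleton $Sk({\bf A}, a) = [o_1, \ldots, o_k]$, where $o_i(a_i) = a_{i+1}$, I would then observe that
$$ f_{Sk({\bf A}, a)}(a) = o_k(o_{k-1}(\cdots o_1(a) \cdots)) = a_{k+1} = a, $$
so that $a$ solves $f_{Sk({\bf A}, a)}(x) = x$. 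Since every skeleton is a sk-sequence (as noted right after its definition), Proposition \ref{propSolutions}(i) guarantees that this equation has a unique rational solution, and moreover one lying strictly above $1/2$. As $a$ is itself a positive rational solution, it must coincide with that unique solution, which gives item (1).

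For item (2) I would argue by contradiction. Assuming two distinct finite strictly simple subalgebras ${\bf A}, {\bf A}'$ with coatoms $\co, \co'$ satisfy $Sk({\bf A}, \co) = Sk({\bf A}', \co')$, and calling this common sk-sequence $S$, I would apply item (1) to both coatoms: each is the unique rational solution of $f_S(x) = x$, forcing $\co = \co'$. Then, using once more that a strictly simple algebra is generated by any of its positive elements (Proposition \ref{strictly}), I get ${\bf A} = \langle \co \rangle^* = \langle \co' \rangle^* = {\bf A}'$, contradicting ${\bf A} \neq {\bf A}'$; hence the coatom skeletons must differ.

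The only point demanding some care, and the step I expect to be the main obstacle, is making sure item (1) is actually applicable to the coatom in item (2), i.e.\ that $\co$ is positive. For this I would note that, ${\bf A}$ being closed under $\msim = 1 - (\cdot)$, its coatom and atom are symmetric about $1/2$, so the coatom lies strictly above $1/2$ as soon as ${\bf A}$ has more than three elements. The sole degenerate case is the three-element chain $\{0, 1/2, 1\}$, whose coatom $1/2$ is not positive; but this algebra is unique and has skeleton $[\msim]$, distinct from the $*$-initial skeletons of all larger algebras, so it is trivially distinguished and does not affect item (2). Everything else is a direct citation of the already-established propositions.
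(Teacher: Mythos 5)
Your proof is correct and follows essentially the same route as the paper's: Proposition \ref{strictly} yields $f_{Sk({\bf A},a)}(a)=a$, Proposition \ref{propSolutions}(i) supplies uniqueness of the rational fixed point, and item (2) is obtained exactly as in the paper by applying this to the common skeleton $S$ of the two coatoms to force $\co=\co'$ and then invoking strict simplicity to conclude ${\bf A}=\langle \co\rangle^*=\langle \co'\rangle^*={\bf A}'$. Your closing caveat about the three-element chain $\{0,1/2,1\}$ --- whose coatom $1/2$ is not positive, so that its ``skeleton'' $[\msim]$ is not a sk-sequence and Proposition \ref{propSolutions}(i) does not directly apply --- identifies a degenerate case the paper's proof silently passes over, and your resolution of it is sound.
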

\begin{proof}
	(1) By Proposition \ref{strictly}, ${\bf A}= \langle a \rangle^*$ and $f_{Sk({\bf A}, a)}(a)=a$. In  other words $a$ is a solution of $f_{Sk({\bf A}, a)}(x)=x$. Thus, by Proposition \ref{propSolutions} (i), $a$ is the unique and rational solution of the equation above. 
	
	(2) Suppose $Sk({\bf A}, \co) = Sk({\bf A}', \co') = S$, this means that $f_S(\co) = \co$ as well as $f_S(\co') = \co'$. But since by (1) the solution of the equation $f_S(x) = x$ is unique, we have that $\co = \co'$.  Since ${\bf A}$ and ${\bf A}'$ are assumed to be strictly simple, we finally have ${\bf A} = \langle \co \rangle^* = \langle \co' \rangle^* = {\bf A}'$.
	% 
	% \blue{***(2) Notice that  $\co$ is the unique and rational solution of the equation $f_{Sk({\bf S}, \co)}(x) = x$ and since ${\bf S}$ is strictly simple,  $S = \langle \co \rangle^*$. Thus $Sk({\bf S}, \co)$ completely determines ${\bf S}$.*** no veo que esto demuestre el claim (2)}
\end{proof}

% Observe that being $\co$ the coatom, $SK(S)$ is a sequence that ends with $\sim$.
% \begin{example}
% Take the example \ref{exemp1} (1). There we have $$ S = \langle 8/9 \rangle^*=\{0,1/9, 2/9, 4/9, 5/9, 7/9, 8/9,1\}$$
% with $\co = 8/9$ and ${\tt P}(S,8/9 ) = [8/9,7/9,5/9,1/9]$. Then  $SK(S) = [\ast \ast \ast \sim]$. Observe that $8/9$ is the solution of the equation  $\sim (\ast^3(x)) = x$. Indeed using the values of $\ast, \sim$ in $[0,1]_{MV}^*$, the equation became $1 - (2(2(2x-1)-1)-1) = x$ which has a unique solution $x = 8/9$. 
% \end{example} 
%\begin{proposition}\label{sssubalgebras:SK}
%Let $S$ be a finite strictly simple subalgebra of $[0,1]_{MV}^*$.  Then $S$ is completely determined by $SK(S)$, and $S$ contains only rational numbers.
% \end{proposition}
% \begin{proof}
% If $S$ is a finite strictly simple subalgebra of $[0,1]_{MV}^*$ and $\co$ is the coatom of $S$, then $\co$ must be the solution of the equation $f_{SK(S)}(x) = x$. Being $\ast(x) = 2x-1$ and $\sim(x) = 1-x$ it is clear that the equation becames an equation with integer coeficients that has a unique solution which is a rational number. From $\co$ all other elements of $S$ are obtained using operations $\ast, \sim$ and therefore it will be also rational. Indeed the same reasoning from an element $a \in S$ imply that $a$ must be the solution of the equation $f_{SK(S,a)}(x) = x$ and thus $a$ must be the unique rational solution of that equation.
% \end{proof}

In the corollary above, the hypothesis of ${\bf A}$ being strictly simple cannot be relaxed as the following example shows.
%
% The last proposition say that for all finite strictly simple subalgebra there exists its associated finite sequence $SK(S)$. But the converse is not true as the following example shows.
%
The following example proves that not any sk-sequence % all string of elements of $\{\ast, \msim\}$ 
can be the skeleton of a strictly simple subalgebra of $[0,1]^*_{MV}$.
\begin{example}
	Consider the sk-sequence $S = [\ast, \ast, \sim, \ast, \ast, \sim]$ and suppose it is a skeleton of the coatom $\co$ of strictly simple subalgebra ${\bf A}$ of $[0,1]^*_{MV}$. Then $\co$  is the rational solution of the equation $f_S(x) = x$. However, the equation  $f_S(x)= 1-2(2(1-(2(2x-1)-1)-1)-1) = x$ has as   unique solution $4/5$, and ${\tt P}(\bL_{5+1}^*,4/5) = [4/5,3/5,1/5]$ and hence $Sk(\langle 4/5\rangle^*) = [ \ast, \ast, \sim]$, that is different from the initial sequence $S$. 
\end{example}

Proposition \ref{propSolutions} and Corollary \ref{determined} allows us to prove, as announced above, that the set of all finite subalgebras of $[0, 1]_{MV}^*$ coincides in fact with the set of subalgebras of all the $\bL_{n+1}^*$ algebras. 

\begin{proposition}\label{prop:generated}
	The following conditions hold: 
	\begin{itemize}
		\item[(1)] The subalgebra of $[0,1]_{MV}^*$ generated by an element $a \in [0,1]$ is finite iff $a$ is a rational number.
		\item[(2)] The finite subalgebras of $[0,1]_{MV}^*$ contain only rational numbers.
		\item[(3)]  Any finite subalgebra of $[0, 1]_{MV}^*$ is a subalgebra of some $\bL_{n+1}^*$.
	\end{itemize}
\end{proposition}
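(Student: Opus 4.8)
The plan is to observe that parts (2) and (3) are short consequences of part (1), so that essentially all the work lies in proving (1). A preliminary remark that streamlines everything is that, since $[0,1]$ is totally ordered, $\ast$ and $\msim$ are the only operations capable of producing genuinely new elements: for any $X\subseteq[0,1]$ one has $x\vee y=\max\{x,y\}\in\{x,y\}$, so a set closed under $\ast$ and $\msim$ is automatically closed under $\vee$. Hence the universe of $\langle a\rangle^*$ is exactly the closure of $\{a,0,1\}$ under $\ast$ and $\msim$, which is precisely what the procedure $\tt P$ computes (compare Proposition~\ref{subalgprin}).

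I would prove (1) as follows. For the backward direction, if $a=p/q$ with $0\le p\le q$, then both generating operations preserve the denominator $q$, since $\ast(k/q)=\max\{0,(2k-q)/q\}$ and $\msim(k/q)=(q-k)/q$; therefore $\langle a\rangle^*\subseteq\{k/q : 0\le k\le q\}$ is finite. For the forward direction, assume $\langle a\rangle^*$ is finite and set ${\bf A}=\langle a\rangle^*$. We may assume $a>1/2$ (if $a=1/2$ it is already rational, and if $a<1/2$ we replace $a$ by $\msim a=1-a$, using $\langle a\rangle^*=\langle\msim a\rangle^*$ and that $a$ is rational iff $1-a$ is). Running $\tt P$ from $a$ inside the finite algebra ${\bf A}$ must terminate in a cycle, say ${\tt P}({\bf A},a)=[a_1,\dots,a_k]$ with $a_{k+1}=a_j$ for some $1\le j\le k$. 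Let $S=Sk({\bf A},a_j)$ be the skeleton of the cyclic part; by the way $\tt P$ is built $S$ is a genuine sk-sequence (a negation is always immediately followed by a $\ast$), and $a_j$ is a solution of $f_S(x)=x$. By Proposition~\ref{propSolutions}(i) this equation has a unique, rational solution, so $a_j$ is rational with $0<a_j<1$. Finally, the prefix $R=[o_1,\dots,o_{j-1}]$ of the skeleton satisfies $f_{R}(a)=a_j$, and Proposition~\ref{propSolutions}(ii) then forces $a$ itself to be rational.

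Part (2) is then immediate: if ${\bf A}$ is a finite subalgebra of $[0,1]_{MV}^*$ and $a\in A$, then $\langle a\rangle^*\subseteq{\bf A}$ is finite, so by (1) the element $a$ is rational. For part (3), let ${\bf A}$ be finite; by (2) all its elements are rationals, and taking $n$ to be the least common multiple of their denominators gives $A\subseteq\{0,1/n,\dots,(n-1)/n,1\}=\L_{n+1}$. Since the operations $\vee,\ast,\msim$ of $\bL^*_{n+1}$ are exactly the restrictions to $\L_{n+1}$ of the corresponding operations of $[0,1]_{MV}^*$, and ${\bf A}$ (containing $0,1$) is already closed under the latter, ${\bf A}$ is a subalgebra of $\bL^*_{n+1}$.

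The main obstacle is the forward direction of (1), and specifically the bookkeeping around the cycle of $\tt P$: one must verify that the generated sequence never reaches $0$ or $1$, so that the cyclic value $a_j$ lies in the open interval $(0,1)$ where Proposition~\ref{propSolutions}(ii) applies, and one must handle the degenerate situation in which the orbit meets $1/2$ (where a negation can fail to be followed by a $\ast$); in that case $a_j=1/2$ is already rational and the conclusion still follows by pulling back through $f_R$. An alternative, fully self-contained route for this direction, which sidesteps these edge cases, is to note that along $\tt P$ the $\ast$ operation is only ever applied to elements $>1/2$, so no clamping occurs and every $a_i$ has the form $a_i=\pm 2^{k_i}a+\beta_i$ with $k_i\ge 0$ and $\beta_i\in\mathbb{Z}$; since $\ast$ is applied at least every other step, an irrational $a$ would make all these values pairwise distinct, contradicting finiteness and thus forcing $a$ to be rational.
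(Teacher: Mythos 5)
Your proof is correct and follows essentially the same route as the paper's: the rational-to-finite direction via denominator preservation, the converse by letting ${\tt P}$ cycle inside the finite algebra and combining the unique rational fixed point of the cycle's skeleton (the paper packages this as strict simplicity of $\langle a_i\rangle^*$ plus Corollary~\ref{determined}, you invoke Proposition~\ref{propSolutions}(i) directly) with the pullback through Proposition~\ref{propSolutions}(ii), and then (2) and (3) as immediate corollaries using the least common multiple of the denominators. Your explicit handling of the degenerate cycle at $1/2$ (where the cyclic skeleton is not a genuine sk-sequence) patches a small gap the paper's proof silently glosses over, and the alternative affine-orbit argument you sketch at the end is a valid, more elementary variant, but neither changes the substance of the argument.
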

\begin{proof}
	(1) Left-to-right. 
	Let $a \in [0,1]$ and assume, without loss of generality, that $a$ is positive, that is, $a > 1/2$ (clearly, if $a$ was not positive one could consider its negation $\neg a>1/2$). 
	
	If $\langle a\rangle^*$ is finite then ${\tt P}({\bf A},a) = [a_1, \ldots,a_k]$, where $a_{k+1} = a_i$ for some $i \leq k$. Then $\langle a_i \rangle^*$ is finite and strictly simple, and by Corollary \ref{determined}, $a_i$ is rational. But since $a_i \in \langle a \rangle^*$, 
	there exists a term $f(x)$ as those considered in Proposition \ref{propSolutions} such that $f(a)= a_i$, and by (ii) of Proposition \ref{propSolutions}, $a$ has to be rational as well.
	
	%$a_i$ has been obtained from $a$ by the application of the $*$ and $\ninv$ operations, and hence $a$ has to be rational as well. 
	
	%Let us consider $Sk(a,\langle a\rangle^*)$, the skeleton of $a$ in the subalgebra $\langle a \rangle^*$ of $[0,1]_{MV}^*$ generated by $a$. Since by hypothesis $\langle a\rangle^*$ is finite, there exists an element $b$ of $\langle a \rangle^*$ such that $f_{Sk(a, \langle a\rangle^*)}(a)=b$, that is, $b$ is the first element of $\langle a\rangle^*$ already met at a previous step of the procedure ${\tt P}$. Then, there is a subsequence $R$ of $Sk(a, \langle a \rangle^*)$, and actually an initial segment of it, such that $f_{Sk(a,\langle a\rangle^*)-R}(a)=b$ and $f_R(b) = b$.
	%
	%Then $b$ is rational number. Indeed, $f_R(b) = b$ uses only the operations $\neg, \ast$ defined by $\neg (x) = 1-x$ and $\ast(x) = 2x-1$ and with these operations the solution of $f_R(b) = b$ must be rational. %As an example take the equation $\neg(\ast^2(b)) = b$ which is equivalent to $1-(2(2b-1)-1) = b$ and it is clear that the solution is rational and unique. 
	%Moreover,  $a$ must be rational as well. Indeed, if $a$ was an irrational number, then $b =  f_{Sk(a,\langle a\rangle^*)-R}(a)$ would be irrational as well, in  contradiction to our previous claim.
	
	Right-to-left. Assume $a = n/d$ is a positive rational number of $[0,1]$. Then, as we already observed in the proof of Lemma \ref{lemmaKey}, %it is easy to see that 
	the application of either $\ninv$ or $*$ to $a$ produces another rational number in $[0,1]$ that, moreover, has  the same denominator $d$. Indeed, $\ninv a=1-(n/d)=(d-n)/d$ and $*(n/d)=2n/d-1=(2n-d)/d$. Thus $\langle a\rangle^*$ is necessarily finite because there are only $d+1$ rational numbers in $[0,1]$ sharing the same denominator $d$. 
	
	(2) It is an easy consequence of (1).  
	
	(3) Let $\bf A$ be a finite subalgebra of $[0, 1]_{MV}^*$. Then all its elements are rational, and hence there must exist $n$ such that $A \subseteq \{0, 1/n, \ldots, (n-1)/n, 1\}$ (for instance take $n$ as the l.c.m.\ of all denominators appearing in $A$), and therefore $\bf A$ must be a subalgebra of $\bL_{n+1}^*$. 
\end{proof}

In what follows, for every natural number $n$ and every sequence $R$, we will denote by $(n)R$ the concatenation of $R$ with itself $n$-times.

We  say that a sequence $S$ is {\em periodic} if it contains a strict subsequence $R$ such that $S=(n)R$ for some $n \geq 2$. A sequence $S$ will be called {\em non-periodic} if it is not periodic.

\begin{proposition}\label{non-periodical}
	For every finite strictly simple subalgebra ${\bf A}$ of $[0,1]_{MV}^*$, $Sk({\bf A}, \co)$ is non-periodic.  
\end{proposition}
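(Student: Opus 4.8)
The plan is to argue by contradiction, exploiting the rigidity of strictly simple subalgebras together with the uniqueness of fixed points established in Proposition~\ref{propSolutions}(i). Suppose $S := Sk({\bf A}, \co)$ were periodic, so that $S = (n)R$ for a strict block $R$ of length $\ell$ and some $n \geq 2$, with $|S| = n\ell$. First I would dispose of the degenerate cases where ${\bf A}$ has no positive element (i.e.\ ${\bf A} \in \{\{0,1\},\{0,1/2,1\}\}$, whose skeletons are trivially non-periodic), so that we may assume $\co > 1/2$ and hence $1/2 \notin A$ by strict simplicity (otherwise $\langle 1/2\rangle^* = \{0,1/2,1\}$ would be a proper non-trivial subalgebra).

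Next I would record the shape of $S$: since ${\bf A}$ is strictly simple, $\mathtt{P}({\bf A}, \co)$ cycles back to $\co$ (Proposition~\ref{strictly}) and ends at the atom $\msim\co$, so $S$ begins with $\ast$ and ends with $\msim$; its period $R$ inherits these endpoints, whence $\ell \geq 2$. The key observation is that concatenation of skeletons corresponds to composition of the associated functions, so $S = (n)R$ yields $f_S = f_R \circ \cdots \circ f_R$ ($n$-fold composition, written $f_R^{\,n}$), while the cyclic condition on $\mathtt{P}({\bf A},\co)$ reads $f_S(\co) = \co$.

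The heart of the argument is then to produce a \emph{second} fixed point of $f_S$. I would set $b := f_R(\co)$, which equals $a_{\ell+1}$, the element reached after performing exactly one copy of $R$; since $R$ ends with $\msim$ and $1/2 \notin A$, this $b$ lies in $(1/2,1)$. A one-line computation using $f_S = f_R^{\,n}$ and $f_S(\co)=\co$ gives $f_S(b) = f_R^{\,n}(f_R(\co)) = f_R(f_S(\co)) = f_R(\co) = b$, so $b$ is also a fixed point of $f_S$ greater than $1/2$. By the uniqueness clause of Proposition~\ref{propSolutions}(i) this forces $b = \co$. But $b = a_{\ell+1} = a_1 = \co$ with $\ell < k = n\ell$ means $\mathtt{P}({\bf A},\co)$ revisits its first element after only $\ell$ steps, so by the stopping rule of Definition~\ref{remProcP} it must have length $\ell$, contradicting $k = n\ell$ with $n \geq 2$.

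I expect the main difficulty to be bookkeeping rather than conceptual: one must make sure the degenerate small algebras and the role of $1/2$ are handled carefully so that $b$ genuinely lies in the open interval $(1/2,1)$, where Proposition~\ref{propSolutions}(i) supplies a \emph{unique} fixed point. The subtle point is that $0$ and $1$ can be spurious fixed points of $f_S$ when the number of involutions in $S$ is even, and these must be excluded; once $b \in (1/2,1)$ is secured, the uniqueness of the positive fixed point does all the remaining work.
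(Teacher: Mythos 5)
Your proof is correct and takes essentially the same route as the paper's: both derive $f_S=f_R^{\,n}$ from the assumed periodicity and then invoke the uniqueness of the positive rational fixed point in Proposition~\ref{propSolutions}(i) to force $f_R(\co)=\co$, which makes the procedure ${\tt P}$ stop after a single block $R$ and contradicts the skeleton having length $n\ell$ with $n\geq 2$. The only (harmless) variation is that you manufacture the second fixed point as $b=f_R(\co)$ and apply uniqueness to $f_S$ alone, whereas the paper applies Proposition~\ref{propSolutions}(i) to $f_R$ itself and transports its fixed point $k/m$ to $f_S$; your extra bookkeeping ($1/2\notin A$, $R$ ending in $\msim$, so $b\in(1/2,1)$) exactly compensates for not needing to observe that $R$ is itself an sk-sequence.
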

\begin{proof}
	Assume by way of contradiction that $Sk({\bf A}, \co)$ is periodic and hence that there exists a subsequence $R$ of $Sk({\bf A}, \co)$ such that $Sk({\bf A}, \co)=(n)R$ for some $n\geq 2$. Since ${\bf A}$ is strictly simple, by Corollary \ref{determined}, $\co$ is the unique rational solution of $f_{Sk({\bf A},\co)}(x) = x$. Denote it by $r/n$. Now, consider the equation $f_R(x) = x$ and let $k/m$ its unique rational solution. Notice that $ k/m$ is also a solution of the equation $f_{Sk({\bf A},\co)}(x) = x$. In fact, $f_{Sk({\bf A},\co)}(x) = f_R(f_R(\ldots f_R(x)\ldots))$, and since $f_R(k/m) = k/m$, $f_{Sk({\bf A},\co)}(k/m) = k/m$. This implies that $r/n$ and $k/m$ are solutions of the same equation $f_{Sk({\bf A},\co)}(x) = x$. But the solution is unique and so $r/n = k/m$. Therefore, ${\bf A} = \langle k/m \rangle^*$ and $Sk({\bf A}, \co) = R$ while we assumed that $Sk({\bf A}, \co) = (n)R$ for $n\geq 2$. Contradiction.
	%
	%\begin{corollary}\label{mapping:SK}
	%Let $Fin_ss([0,1]_{MV}^*)$ be the set of finite strictly simple subalgebras of $[0,1]_{MV}^*$ and $Fin(seq)$ be the set of finite sequences of operations $\{\ast, \neg\}$ that contains at least one item with value $\sim$. Then the image of the mapping $SK: Fin_{ss}([0,1]_{MV}^*) \longrightarrow Fin(seq)$ are the set of non periodical finite sequences.
	% \end{corollary}
	% \begin{corollary}\label{mapping:SK}
	%Let $Fin_ss([0,1]_{MV}^*)$ be the set of finite strictly simple subalgebras of $[0,1]_{MV}^*$ and $Fin(seq)$ be the set of finite sequences of operations $\{\ast, \neg\}$ that contains at least one item with value $\sim$. Then the image of the mapping $SK: Fin_{ss}([0,1]_{MV}^*) \longrightarrow Fin(seq)$ are the set of non periodical finite sequences.
	% \end{corollary}
	% \begin{proof}
	% 
\end{proof}

%Now we turn our attention to the structure of finite subalgebra of $[0,1]_{MV}^*$ in general.

Finally, the next proposition presents additional properties of finite subalgebras of $[0,1]^*_{MV}$ that will be useful in the next section.

\begin{proposition}\label{partition}
	Let ${\bf A}$ be a finite subalgebra of $[0,1]_{MV}^*$. Then:
	\begin{enumerate}
		\item For every positive $a \in A\setminus\{1\}$, either ${\bf A}$ is strictly simple (i.e.,  ${\bf A}= \langle a \rangle^*$ and $f_{Sk({\bf A},a)} = a$), or $ \langle a \rangle^*$ contains a unique strictly simple subalgebra.
		\item If ${\bf B},{\bf C}$ are two different strictly simple subalgebras of ${\bf A}$, then $Sk({\bf B},\co_B) \neq Sk({\bf C}, \co_C)$, where $\co_B$ and $\co_C$ respectively denote the coatom of ${\bf B}$ and the coatom of ${\bf C}$.
		\item If ${\bf B}_1,{\bf B}_2,\ldots,{\bf B}_r$ are the strictly simple subalgebras of ${\bf A}$ then $\{B_1^+, ..., B_r^+\}$ is a partition of $A$ where $B_i^+ = \{a \in A : \langle a \rangle \supseteq B_i\}$. 
	\end{enumerate}
\end{proposition}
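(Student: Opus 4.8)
The plan is to establish the three items in order, with item~(1) carrying essentially all the work and items~(2) and~(3) following formally. Throughout I work with the procedure ${\tt P}$, which by the remarks following Proposition~\ref{subalgprin} applies verbatim to any finite subalgebra of $[0,1]^*_{MV}$, and I use repeatedly that: (i) the positive elements of ${\tt R}({\bf A},a)$ are exactly those occurring in the sequence ${\tt P}({\bf A},a)$; (ii) if $b$ is positive and $b\in {\tt I}({\bf A},a)$ then ${\tt P}({\bf A},b)$ is the tail of ${\tt P}({\bf A},a)$ starting at $b$, so that $\langle b\rangle^*\subseteq\langle a\rangle^*$; and (iii) $\langle a\rangle^*=\langle\neg a\rangle^*$. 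I also note that the procedure is deterministic, hence has a single cycle, and that a negative element $<1/2$ is always followed by its (positive) negation, so the only cycle containing no positive element is the degenerate one $\{1/2\}$.

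For item~(1), fix a positive $a$ and write ${\tt P}({\bf A},a)=[a_1,\dots,a_k]$ with $a_{k+1}=a_j$, so that $C=\{a_j,\dots,a_k\}$ is the unique cycle. For existence I take a positive $c\in C$ (or $c=1/2$ in the degenerate case $C=\{1/2\}$). Since ${\tt P}({\bf A},c)$ starts inside $C$ it returns to $c$, so by Proposition~\ref{strictly} the subalgebra $\langle c\rangle^*$ is strictly simple (in the degenerate case $\langle 1/2\rangle^*=\{0,1/2,1\}$ is strictly simple directly), and $\langle c\rangle^*\subseteq\langle a\rangle^*$ by~(ii). For uniqueness, let ${\bf B}\subseteq\langle a\rangle^*$ be any strictly simple subalgebra; then ${\bf B}=\langle b\rangle^*$ for a positive $b\in {\bf B}$ (or $b=1/2$ if ${\bf B}=\{0,1/2,1\}$), and $b\in\langle a\rangle^*$ forces $b$ to appear in ${\tt P}({\bf A},a)$. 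If $b$ lay in the tail strictly before $C$, then ${\tt P}({\bf A},b)$ would reach $C$ without returning to $b$, so $\langle b\rangle^*$ would properly contain the cyclic subalgebra and fail to be strictly simple, a contradiction; hence $b\in C$. As any positive (resp.\ $1/2$) element of $C$ generates all of $C$, every such ${\bf B}$ equals $\langle c\rangle^*$. When $j=1$, i.e.\ ${\tt P}({\bf A},a)$ is itself a cycle and $\langle a\rangle^*={\bf A}$, we land in the first alternative ($\bf A$ strictly simple); otherwise in the second.

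Item~(2) is then immediate: ${\bf B}$ and ${\bf C}$ are strictly simple subalgebras of $[0,1]^*_{MV}$, so ${\bf B}\neq{\bf C}$ yields $Sk({\bf B},\co_B)\neq Sk({\bf C},\co_C)$ by Corollary~\ref{determined}(2). For item~(3), item~(1) assigns to each non-trivial $a\in A$ (using $\langle a\rangle^*=\langle\neg a\rangle^*$ when $a$ is negative) the unique strictly simple subalgebra contained in $\langle a\rangle^*$; this assignment is total by the existence half and single-valued by the uniqueness half of item~(1). Disjointness follows since $a\in B_i^+\cap B_j^+$ makes $B_i,B_j$ two strictly simple subalgebras of $\langle a\rangle^*$, whence $B_i=B_j$; covering is exactly existence. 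The only elements needing separate mention are $0$ and $1$, where $\langle 0\rangle^*=\langle 1\rangle^*=\{0,1\}$ contains no strictly simple subalgebra and which belong to every subalgebra; so the sets $\{B_1^+,\dots,B_r^+\}$ partition the non-trivial elements of $A$.

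I expect the only genuine obstacle to be the uniqueness half of item~(1), and within it the bookkeeping for the boundary value $1/2$: one must verify that a strictly simple subalgebra cannot be generated by an element sitting in the pre-cyclic tail of ${\tt P}$, which is precisely where Proposition~\ref{strictly} and the tail/subsequence property~(ii) do the work. Once item~(1) is secured, items~(2) and~(3) are routine.
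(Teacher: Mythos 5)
Your proof is correct and follows essentially the same route as the paper's: item (1) via the cycle of the procedure ${\tt P}$ together with Proposition \ref{strictly}, item (2) as an immediate consequence of Corollary \ref{determined}(2), and item (3) by combining the existence and uniqueness halves of item (1). In fact you are somewhat more careful than the paper itself, which glosses over the degenerate cycle $\{1/2\}$ (where the cycle entry point is not positive, so Proposition \ref{strictly} does not literally apply) and over the elements $0,1$, which belong to no $B_i^+$ — so, as you note and as the paper's own Proposition \ref{Newpartition} later concedes, the $B_i^+$'s really partition $A\setminus\{0,1\}$.
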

\begin{proof}
	(1) If ${\bf A}$ is strictly simple, the claim follows from Corollary \ref{determined} (1). Thus assume   ${\bf A}$ is not strictly simple and take a positive $a \in A\setminus\{1\}$.  By Proposition \ref{strictly} this implies that ${\tt P}({\bf A},a) = [a_1=a,...,a_k]$ with $a_{k+1} = a_i$ for some $i > 1$. Then it is obvious that ${\tt P}({\bf A},a_i)=[a_i, ..., a_k]$ with $a_{k+1} = a_i$. Then $ B_a = \langle a_i \rangle^\star$  is strictly simple and $B_a \subsetneq \langle a \rangle^\star$. Moreover by construction, for each $a \in A$ the subalgebra $B_a$ is the unique strictly simple subalgebra contained in  $\langle a \rangle^\star$
	
	(2) is an immediate consequence of Corollary \ref{determined} (2).

	(3) Observe that  (1) and (2) imply that the union of $B_i^+$'s is the whole domain $A$ of the algebra ${\bf A}$. Obviously, two different strictly simple subalgebras $B_i$ and $B_j$ must be disjoint since, if $a \in (B_i \cap B_j)$  by Proposition \ref{strictly} $B_i = B_j =\langle a\rangle^\star$. On the other hand, if $a \in (B_i^+\cap B_j^+)$ by the previous (2) $B_i = B_j$  and thus $B_i^+ = B_j^+$. 
\end{proof}

We end this first subsection with the following observations. 
\begin{remark}
	\mbox{}
	\begin{itemize}
		\item[(1)] Notice that in all finite subalgebras of $[0,1]_{MV}^*$, G\"odel implication is definable as we did for every $\bL_{n+1}^*$-algebra (see Section  \ref{sec:Lstaralgebras}).
		\item[(2)] The logic whose algebraic semantics is  the variety generated by a subalgebra ${\bf A}$ of $[0,1]_{MV}^*$ in the language of $\vee, \neg, \ast$, 
		%and its algebraic semantics, that is the variety generated by ${\bf A}$, 
		can be axiomatized following the same method used for $\bL_{n+1}^*$  in Section \ref{sec:logics}. 
	\end{itemize}
\end{remark}
The first remark clearly relates subalgebras of $[0,1]_{MV}^*$ with G\"odel chains with an involutive negation plus an $\ast$ operation. This relation is deepened in the next subsection.

\subsection{Adding a $\star$-operator to involutive G\"odel algebras}\label{subsec:IGStar}

In this subsection we present an alternative approach to capture the behaviour of \luk's square by adding a unary operator $\star$  to a  G\"odel algebra with an involution, from a different algebraic perspective. %In particular we will consider the case of axiomatizing $*$ without making use of further algebraic structure. 

Let us hence define the following structures.

\begin{definition}\label{def:Ginvstar}
	A {\em G\"odel-algebra with an involution $\ninv$ and a $\star$-operator} ({\em$IG^\star$-algebra} for short) is a triple $({\bf A}, \ninv, \star)$ where $({\bf A},\ninv)$ is a G\"odel algebra with involution and $\star$ is a unary operator on $A$ satisfying the following equations:
	\begin{itemize}
		\item[($\star1$)] $(x\vee\msim x)= \Delta(x\Leftrightarrow_G\star x)$;
		\item[($\star2$)] $\Delta(x \Rightarrow_G \msim x) = \neg_G \star x$;
		\item[$(\star3)$] $\msim\Delta(x\Rightarrow_G \msim x) \wedge \msim\Delta x \leq \msim\Delta(x \Rightarrow_G \star x )$;
		%\red{\item[($\star4$)] $\msim\Delta(x\Rightarrow_G \msim x)= \neg_G \neg_G \star x$;}
		\item[$(\star4)$] $\Delta(\msim x \Rightarrow_G  x)\wedge \Delta( \msim y\Rightarrow_G y)\wedge \msim \Delta(x\Rightarrow_G y) \leq\msim \Delta(\star x\Rightarrow_G \star y)$;
		\item[$(\star5)$] $\Delta(\msim x\Rightarrow_G x)\wedge \Delta(\msim y\Rightarrow_G y)\wedge \Delta(\star x\Leftrightarrow_G \star y)\leq \Delta( x\Leftrightarrow_G y)$;
	\end{itemize}
	where $\Delta x$ is an abbreviation for $\neg_G\msim x$. If ${\bf A}$ is a G\"odel algebra in the variety $\mathbb{IG}_{n+1}$ (recall Section \ref{sec :preliminaries}), we will say that $({\bf A},\ninv, \star)$ is an $IG^\star_{n+1}$-algebra.  The varieties of $IG^\star$-algebras and $IG^\star_{n+1}$-algebras will be denoted by $\mathbb{IG}^\star$  and $\mathbb{IG}_{n+1}^\star$  respectively. 
\end{definition}

Let us explain the equations above on an standard $IG$-algebra $([0,1]_G, \msim)$ where $\msim:[0,1]\to[0,1]$ is an involution with fixpoint $1/2$. Let us first of all recall that in $([0,1]_G,\msim)$ the following conditions hold for all $x$: $x\vee\msim x=1$ iff either $x=1$ or $x=0$; $x\Rightarrow_G \msim x = 1$ iff $x\leq\msim x$ and hence $x$ is negative, that is,  $x\leq 1/2$; and $\msim \Delta(x\Rightarrow_G \msim x)=1$ iff $\msim x< x$, and hence $x$ is a strictly positive, meaning that  $x> 1/2$. 

\begin{itemize}
	\item[($\star1$)] Since $\Delta z\in \{0,1\}$ for all $z\in [0,1]$, and for no $x\in [0,1]$, $x\vee\msim x=0$, the  formula $(x\vee\neg x)= \Delta(x\Leftrightarrow_G\star x)$ states that $x=\star x$ iff either $x=0$ or $x=1$.
	
	\item[($\star2$)] As we recalled above, $x \Rightarrow_G  \msim x=1$ iff $ x\leq \msim x$ iff $x\leq 1/2$. Thus, $\Delta(x \Rightarrow_G \msim x)=1$ if $x\leq 1/2$ and it is $0$ otherwise. Moreover $\neg \star x=1$ if $\star x=0$ and $\neg \star x=0$ if $\star x>0$. Therefore $\Delta(x \Rightarrow_G \msim x) = \neg\star x$ states that $x \leq 1/2$ iff $\star x=0$. Further notice that ($\star2$) is equivalent to $\msim\Delta(x\Rightarrow_G \msim x)= \neg_G \neg_G \star x$ stating that $x>1/2$ iff $\star x>0$.
	
	\item[($\star3$)] Recall that for all  $x$, $\msim\Delta(x\Rightarrow_G \msim x)=1$ if $x>1/2$ and it is $0$ otherwise,  while $\msim\Delta x=1$ iff $x\neq 1$. Furthermore, $\msim\Delta(x \Rightarrow_G \star x )=1$ iff $\star x<x$. Therefore the condition $(\star3)$ stands for requiring $\star$ to be a strictly below the identity on the open interval $(1/2,1)$.  
	
	%\red{
	%\item[($\star4$)] As above, $\msim\Delta(x\Rightarrow_G \msim x)=1$ if $x> 1/2$ and it is $0$ otherwise. On the other hand,   $\neg\neg\star x=1$ if $\star x>0$ and $\neg\neg \star x=0$ if $\star x=0$. Therefore the equation $(\star 4)$ states that $x>1/2$ iff $\star x>0$.}
	
	\item[$(\star4)$] The term $\Delta(\msim x \Rightarrow_G  x)\wedge \Delta( \msim y\Rightarrow_G y)\wedge \msim \Delta(x\Rightarrow_G y)$ only takes value $0$ or $1$. In particular, it take $1$ iff $x\geq f$, $y\geq f$ and $x>y$. Similarly, $\msim \Delta(\star x\Rightarrow_G \star y)=1$ if $\star x>\star y$. Thus ($\star5$) states a sort of strict monotonicity of $\star$ for positive elements:  for all positive $x,y$, if $x>y$, then $\star x>\star y$. 
	
	\item[$(\star5)$] Similarly to the above point, $\Delta(\msim x\Rightarrow_G  x)\wedge \Delta(\msim y\Rightarrow_G  y)\wedge \Delta(\star x\Leftrightarrow_G \star y)=1$ iff $x, y\geq f$ and $x=y$, while $ \Delta( x\Leftrightarrow_G y)=1$ iff $x=y$. Therefore $(\star 6)$ states that $\star$ is injective on positive elements:  for all positive $x$ and $y$, if $\star x=\star y$, then $x=y$.
\end{itemize}

Now, we rise the question whether the equations introduced in Definition \ref{def:Ginvstar} above are enough for $\star$ to capture the standard behavior of the \luk\ square operator $*$ on $[0,1]$. Equivalently, we are asking if every countable $IG^\star$-chain embeds into the algebra 
%$[0,1]_{GMV}^*$,  
\begin{equation}\label{eq:GMV}
[0,1]_{GMV}^*=([0,1], \ast, \To_G, \msim, 0, 1), 
\end{equation}
the expansion of $[0,1]_{MV}^*$ with G\"odel implication, where for all $x,y\in [0,1]$, $\ast x=\max\{0, 2x-1\}$, $x\To_G y=1$ if $x\leq y$ and $x\To_G y=y$ otherwise, and $\msim x=1-x$ is the involution.

By definition, the variety $\mathbb{IG}^\star$ of $IG^\star$-algebras is prelinear. 
%\blue{** la notacion para la variedad no deberia ser $\mathbb{IG}^\star$? **} . 
We begin investigating the finite linearly ordered algebras of $\mathbb{IG}^\star$. Basic properties are the following:
\begin{enumerate}
	\item If ${\bf A} = (A,  \ast, \msim, 0 ,1)$ is a finite subalgebra of $[0,1]_{MV}^*$, then $\To_G$ is definable in ${\bf A}$ and ${\bf A_G} = (A, \ast,  \msim, \To_G, 0 ,1)$ is a finite chain of $\mathbb{IG}^\star$. However, the variety generated by $[0,1]_{MV}^*$ is not the one generated by %the $\{\Rightarrow_G\}$-free reduct of 
	$[0,1]_{GMV}^*$, as $\To_G$ is not definable in the infinite chain $[0,1]_{MV}^*$.
	\item The procedure ${\tt P}$ described in Definition \ref{remProcP} can be easily adapted and used so as to  %is also valid to 
	define $\langle x \rangle^\star$, the subalgebra defined by an element $x$ in any finite chain of $\mathbb{IG}^\star$.
	\item For every finite IG$^\star$-chain ${\bf A}$ and every $a\in A$, the notion of  skeleton $Sk({\bf A}, a)$ is defined as for  subalgebras of $[0,1]_{MV}^*$ in the previous subsection.
	\item  Proposition \ref{partition} (1) is also valid for finite $IG^\star$-chains.
\end{enumerate}
It is clear that any finite subalgebra of $[0,1]_{MV}^*$ can be embedded into a finite chain of $\mathbb{IG}^\star$. The converse is not true in general as the following examples show. 

\begin{example} 
	Let $\bf A$ be the 6-element $IG^\star$-chain with support $A = \{1, a,b,\msim b, \msim a, 0\}$,  where $0 < \msim a < \msim b < b < a  < 1$ and the operations $\vee, \To_G, \neg_G$ defined according the order and $\star a = \msim b, \star b = \msim a$. This algebra is not embeddable in $[0,1]_{MV}^*$ because both elements $a,b$ satisfy in $\bf A$ the  equation $\ninv {\star} \ninv {\star} (x) = x$, while the corresponding equation in $[0,1]_{MV}^*$, ${\msim}{*} {\msim} {*} (x) = x$, has as a unique solution $x = 2/3$. The algebra generated by $2/3$ in  $[0,1]_{GMV}^*$, $\langle 2/3\rangle^*$, has universe $\{1, 2/3, 1/3, 0\}$ and hence $\langle 2/3\rangle^*$  is not isomorphic to ${\bf A}$. Notice that, by definition of ${\bf A} \in \mathbb{IG}^\star$,  $Sk({\bf A}, a) = Sk({\bf A}, b) = [\star, \ninv, \star, \ninv]$. This sequence is periodic and we have already proved in Proposition \ref{non-periodical} that there is no  strictly simple finite subalgebra of $[0,1]_{MV}^*$ with such a skeleton. 
	
	Also observe that in this algebra it is not possible to define the operators $\Delta_x$ for every $x \in A$  as defined in Proposition \ref{prop3}. In fact the algorithm given in the proof of that proposition does not terminate. % and therefore it is not possible to define $\Delta_x$ for any element $x \in A$. 
	This implies that the axiomatization given in Section \ref{sec:logics} for the many-valued logic with semantics on a $\bL_{n+1}^*$-chain is not generalizable to the case of a finite $IG^\star$-chain.
\end{example}

\begin{example} \label{ex14}
	Let $\bf A$ be the 14-element $IG^\star$-chain whose support is $A=\langle a\rangle^\star\cup\langle b\rangle^\star$, where $a>b$ and, for $x = a,b$, $ \langle x \rangle^\star $ is made of the elements 
	$$
	1> x= {\sim} {\star} {\sim} {\star^2} x  > \star x > \msim\star^2 x > {\star^2} x >  {\msim} {\star} x >\star\msim\star^2 x > 0
	$$ 
	as in Figure \ref{figureAyB}.

\begin{center}
\begin{figure}[h!]
\vspace*{-0.25cm}
  \centering
    \includegraphics[width=0.85\textwidth]{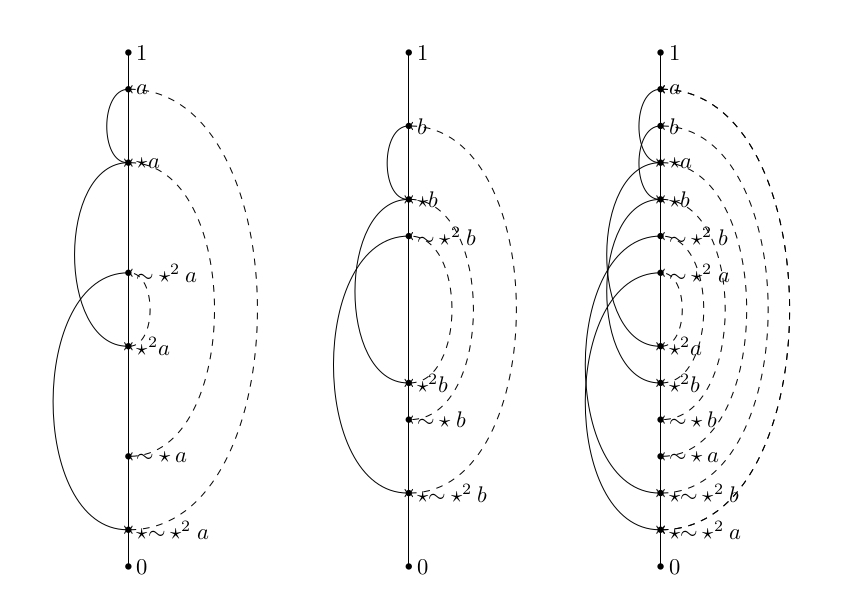} 
		\caption{\small Graphical representation of the algebras $\langle a\rangle^*$,  $\langle b\rangle^*$ and  $\bf A$ from Example \ref{ex14}.  }\label{figureAyB}
	\end{figure}
\end{center}

\vspace*{-0.75cm}

	Operations $\vee, \To_G, \neg_G$ are defined according to the order. An easy computation shows that $\bf A$ is not embeddable into $[0,1]_{MV}^*$ since $ \langle a \rangle^\star$ and  $\langle b \rangle^\star$ are strictly simple and $Sk( \langle a \rangle^\star, a) = Sk( \langle b \rangle^\star, b) = [\star, \star, \sim, \star, \sim]$  and in $[0,1]_{MV}^*$, by Proposition \ref{partition}, there are not two different strictly simple subalgebras with the same skeleton.
	%[A finite $IG^*$-algebra that do not embed into ${[0,1]_{GMV}^*}$]
	%\blue{Let $\bf X$ be the 10-element $IG^\star$-chain, generated by two positive elements $a>b$ such that for $x = a,b$, ${\bf X}=\langle x \rangle^\star$ is made of the elements 
	%$$
	%1> {\sim} {\star} x  > x> \star {\msim} x > \star x > {\star^2} {\msim} {\star} x = {\msim} x > 0
	%$$ 
	%and where $\vee, \To_G, \neg_G$ are defined according to the order . This algebra ${\bf X}$ is not embedable in $[0,1]_{MV}^*$ because an easy computation shows that  ${\bf X} = \langle a \rangle^\star \cup \langle b \rangle^\star$ ***Lluis:  no lo entiendo, de donde salen los 10 elementos?  esta algebra {\bf X} es la misma de arriba??***} and $Sk(a, {\bf X}) = Sk(b, {\bf X}) = (1,2)$. Then ${\bf X}$ is the union of two different strictly simple subalgebras with the same skeleton, which is impossible in ${[0,1]_{MV}^*}$ by Proposition \ref{prop:twoSubSS}. In fact, the generators $a,b$ satisfy the same equation $\neg \star^2 \neg \star x = x$   that in $[0,1]_{MV}^*$ has a unique solution $x = 4/7$ and so the unique solution is $a = b = 4/7$. However, the algebra generated by $4/7$ in  $[0,1]_{GMV}^*$ has elements $\{1, 6/7, 5/7, 4/7, 3/7, 1/7,  0\}$ and it cannot be  isomorphic to ${\bf X}$.
\end{example}

In the light of the examples above, let us introduce the following definition.

\begin{definition}
	A finite chain of $\mathbb{IG}^\star$ is called {\em representable}  when its implication free-reduct is embeddable into $[0,1]_{MV}^*$, or in other words, when  it is isomorphic to a finite subchain of $[0,1]_{MV}^*$.%} or \red{is embeddable into $[0,1]_{MV}^*$}. 
\end{definition}

Representable $IG^\star$-chains ($RIG^\star$-chains for short) form  a proper subset of finite chains of $IG^\star$.  
Our next theorem characterizes the $RIG^\star$-chains. 
Before proving it, we will need a first result that extends Proposition \ref{partition} to finite $IG^\star$-chains. To this end let us point out that, for every $IG^\star$-chain ${\bf A}$ which is not necessarily a subalgebra of $[0,1]^*_{MV}$ and for every $a\in A$, the procedure ${\tt P}$ (Definition \ref{remProcP}) still produces, once launched on $a$, a list of elements of $A$ and it stops when it finds an element $b$ already met at a previous step. Thus, one can easily define, for every $a\in A$ the {\em skeleton} of $a$ in ${\bf A}$, the strictly simple subalgebra $\langle b\rangle^\star$ associated to $a$ and, for every strictly simple subalgebra ${\bf B}$ of ${\bf A}$, the set $B^+$ as we did before Proposition \ref{partition}. Then the following holds.

\begin{proposition}\label{Newpartition}
	Let ${\bf A}$ be a finite $IG^\star$-chain and let ${\bf B}_1, {\bf B}_2,\ldots, {\bf B}_k$ the strictly simple subalgebras of ${\bf A}$. Then $\{B_1^+, ...B_k^+\}$ is a partition of $A\setminus\{0,1\}$. Furthermore,  if ${\bf B}_i$ has a non-periodic skeleton, ${\bf B}_i$ is representable and each $B_i^+$, regarded as partial algebra, partially embeds into $[0,1]^*_{MV}$.
\end{proposition}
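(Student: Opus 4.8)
The plan is to split the argument into two independent parts, following closely the pattern of Proposition~\ref{partition} but now carried out inside an abstract finite $IG^\star$-chain $\bf A$, and then isolate the only genuinely new difficulty, which lives in the second part.

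\textbf{The partition.} First I would check that $\{B_1^+,\ldots,B_k^+\}$ covers $A\setminus\{0,1\}$ and is pairwise disjoint. For coverage, fix $a\in A\setminus\{0,1\}$; recalling $\langle a\rangle^\star=\langle\msim a\rangle^\star$ we may take $a$ positive, and by the $IG^\star$-version of Proposition~\ref{partition}(1) (listed among the basic properties of finite $IG^\star$-chains above) $\langle a\rangle^\star$ contains a \emph{unique} strictly simple subalgebra $B_a$, whence $a\in B_a^+$. For disjointness, if $a\in B_i^+\cap B_j^+$ then $B_i$ and $B_j$ are both strictly simple subalgebras of $\langle a\rangle^\star$, so uniqueness forces $B_i=B_j$. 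Finally $0$ and $1$ are correctly excluded, since $\langle 0\rangle^\star=\langle 1\rangle^\star=\{0,1\}$, which by Definition~\ref{def:strictlySimple} is not strictly simple and therefore belongs to no $B_i^+$. This part is essentially a transcription of the proof of Proposition~\ref{partition}(3).

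\textbf{Representability of $B_i$ with non-periodic skeleton.} Fix $i$, let $\co_i$ be the coatom of $B_i$ and let $S=Sk(B_i,\co_i)=[o_1,\ldots,o_k]$ be non-periodic. Since $B_i$ is strictly simple, the (abstract analogue of) Proposition~\ref{strictly} gives a closed run ${\tt P}(B_i,\co_i)=[d_1,\ldots,d_k]$ with $d_{k+1}=d_1=\co_i$ and $o_j(d_j)=d_{j+1}$. As $S$ is a non-periodic sk-sequence, Proposition~\ref{propSolutions}(i) yields a unique rational $x_S>1/2$ with $f_S(x_S)=x_S$; I would then set $b_1=x_S$ and $b_{j+1}=o_j(b_j)$ in $[0,1]_{MV}^*$, so that $b_{k+1}=f_S(x_S)=b_1$. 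Granting the crucial claim that the concrete procedure ${\tt P}([0,1]_{MV}^*,x_S)$ reproduces $S$ (so that $\langle x_S\rangle^\ast$ is strictly simple with $Sk(\langle x_S\rangle^\ast,x_S)=S$), the map $d_j\mapsto b_j$, $\msim d_j\mapsto 1-b_j$, $0\mapsto 0$, $1\mapsto 1$ is an isomorphism of the $(\vee,\ast,\msim)$-reducts, so $B_i$ is representable.

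\textbf{Partial embedding of $B_i^+$.} Having fixed the isomorphism $h_0\colon B_i\to\langle x_S\rangle^\ast$, for $a\in B_i^+\setminus B_i$ the run from $a$ reaches $B_i$ through a tail $R_a$ of symbols ending at some $e\in B_i$; I would define $h(a)$ as the unique rational solution of $f_{R_a}(x)=h_0(e)$ provided by Proposition~\ref{propSolutions}(ii). One then checks that $h$ is injective and commutes with $\msim$ (total on $B_i^+$) and with $\ast$ whenever $\ast a$ remains in $B_i^+$, again using the uniqueness of solutions in Proposition~\ref{propSolutions}(ii); since $B_i^+$ is not closed under all operations, this is exactly a \emph{partial} embedding into $[0,1]_{MV}^*$. \textbf{The hard part} is the claim flagged above: that the procedure started at $x_S$ follows the symbols of $S$, i.e.\ that $b_j>1/2$ precisely when $o_j=\ast$, and that the run does not close before step $k$. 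The positivity matching should follow from equation $(\star2)$, which forces $\ast$ to act exactly on positive elements (with $\ast x=0$ on non-positive $x$), so the positivity pattern encoded by $S$ is order-theoretic and is transported by the order isomorphism onto the $b_j$'s; the non-premature-closure is exactly where \emph{non-periodicity} is indispensable, since a periodic $S=(n)R$ would make the concrete run close up already after its primitive period $R$, the phenomenon exhibited by the example following Corollary~\ref{determined}. I expect this rigidity step to be the main obstacle of the whole proof.
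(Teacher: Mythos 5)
Your proposal is correct and follows essentially the same route as the paper's proof: the partition part is obtained by re-running the proof of Proposition~\ref{partition} (which never uses that the ambient chain sits inside $[0,1]^*_{MV}$), representability comes from the unique rational fixed point of $f_{Sk({\bf B}_i,\co_i)}$ given by Proposition~\ref{propSolutions}(i), and the partial embedding of $B_i^+$ is built by solving $f_{R_a}(x)=h_0(e)$ via Proposition~\ref{propSolutions}(ii). The only difference is that you explicitly isolate and justify the rigidity step (positivity matching of the concrete run and non-premature closure, the latter being exactly where non-periodicity enters) that the paper compresses into ``it is then easy to see,'' and your sketch of that step is sound.
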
 
\begin{proof}
	The first part of the claim is proved, with no modification, by the same proof of Proposition \ref{partition}. Indeed, in that proof, no assumption on the fact that ${\bf A}$ is subalgebra of $[0,1]^*_{MV}$ is made and hence it perfectly applies to this more general case.
	
	As for the second part of the statement, assume that ${\bf B}_i$  has a non-periodic skeleton. Thus, in particular $ Sk({\bf B}_i,\co_i)$ for $\co_i$ being the coatom of $ {\bf B}_i$. Thus, the equation $f_{Sk({\bf B}_i, \co_i)}(x)=x$ has a unique rational solution $r$ in $[0,1]_{MV}^*$. It is then easy to see that the finite subalgebra $\langle r\rangle^*$ of $[0,1]_{MV}^*$ is indeed isomorphic to ${\bf B}_i$ and the assignment $\lambda: b\mapsto r$ determines an embedding of ${\bf B}_i$ into $[0,1]_{MV}^*$. 
	
	Finally, in order to partially embed the partial algebra $B_i^+$ into $[0,1]_{MV}^*$ recall that $B_i^+= \{a \in A : \langle a \rangle \supseteq B_i\}$, or equivalently, $ B_i^+ = B_i \cup \{a\in A\mid f_R(a)\in B_i$ for some finite  sk-sequence  $R\}$. Since we already showed that ${\bf B}_i$ embeds into $[0,1]_{MV}^*$, it is left to show how to map the elements $a$'s such that $f_R(a)=b_a\in B_i$ for some sequence $R$. Since ${\bf B}_i$ embeds, through a mapping $\lambda$, into the rational subalgebra of $[0,1]_{MV}^*$, the equation $f_R(x)=\lambda(b_a)$ has a rational solution, say $r_a$. Then, extend $\lambda$ to a mapping sending each $a$ of the above kind to $r_a$. The so obtained map clearly is a partial embedding of ${\bf B}_i^+$ into $[0,1]_{MV}^*$.
\end{proof}

Now, we are ready to characterize the representable $IG^\star$-chains.

\begin{theorem}\label{thmRepresentable}
	A finite $IG^\star$-chain ${\bf A}$ is representable iff
	\begin{enumerate}
		\item For any strictly simple subalgebra ${\bf B}$ of $\bf A$ and for any positive $b\in B\setminus\{1\}$, $Sk( {\bf B},b)$ is non-periodic,
		\item For each pair of strictly simple subalgebras ${\bf B}$ and ${\bf C}$ of ${\bf A}$ there are no positive elements $b\in B\setminus\{1\}$ and $c\in C\setminus\{1\}$ such that $Sk( {\bf B}, b)=Sk({\bf C}, c)$.
	\end{enumerate}
\end{theorem}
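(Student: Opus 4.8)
The plan is to prove the two implications separately, the forward (only-if) direction being essentially a repackaging of results already established for subalgebras of $[0,1]_{MV}^*$, and the backward (if) direction requiring a genuine gluing construction. For the only-if direction I would first use representability to identify ${\bf A}$ with a concrete finite subalgebra of $[0,1]_{MV}^*$, so that its strictly simple subalgebras and skeletons are literally those of a subalgebra of $[0,1]_{MV}^*$. Condition (1) is then immediate from Proposition \ref{non-periodical}: the skeleton of the coatom of any strictly simple subalgebra is non-periodic, and since the skeletons $Sk({\bf B},b)$ for the various positive $b\in B\setminus\{1\}$ are cyclic permutations of $Sk({\bf B},\co)$ while periodicity is invariant under cyclic permutation, all of them are non-periodic. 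Condition (2) follows from the uniqueness in Corollary \ref{determined}(1): if positive elements $b\in B$ and $c\in C$ satisfied $Sk({\bf B},b)=Sk({\bf C},c)=S$, then both $b$ and $c$ would be the unique rational solution of $f_S(x)=x$, whence $b=c$; but then $b\in B\cap C$ and, by strict simplicity (Proposition \ref{strictly}), ${\bf B}=\langle b\rangle^*=\langle c\rangle^*={\bf C}$, so distinct cores cannot share a skeleton.

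For the if-direction I would build the embedding through the decomposition supplied by Proposition \ref{Newpartition}. Let ${\bf B}_1,\ldots,{\bf B}_k$ be the strictly simple subalgebras of ${\bf A}$, so that $\{B_1^+,\ldots,B_k^+\}$ partitions $A\setminus\{0,1\}$. By hypothesis (1) each $Sk({\bf B}_i,\co_i)$ is non-periodic, so Proposition \ref{Newpartition} (via Proposition \ref{propSolutions}) yields, for each $i$, the unique rational solution $r_i$ of $f_{Sk({\bf B}_i,\co_i)}(x)=x$ together with an isomorphism ${\bf B}_i\cong\langle r_i\rangle^*\subseteq[0,1]_{MV}^*$, and this extends to a partial embedding $\lambda_i$ of $B_i^+$ obtained by solving $f_R(x)=\lambda_i(b_a)$ for each $a\in B_i^+$ with $f_R(a)=b_a\in B_i$. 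All the values produced are rational, so choosing $n$ a common multiple of their denominators places the entire image inside a single $\bL_{n+1}^*$; the resulting map $h\colon A\to\bL_{n+1}^*$ commutes with $\ast$ and $\msim$ by construction, since applying the skeletal operations to $h(a)$ reproduces the images of the successive elements of ${\tt P}({\bf A},a)$. Injectivity across different blocks is exactly where hypothesis (2) enters: if $h(a)=h(a')$ with $a\in B_i^+$ and $a'\in B_j^+$, then $\langle h(a)\rangle^*$ contains a single strictly simple subalgebra (Proposition \ref{partition}(1)) which would be simultaneously $\langle r_i\rangle^*$ and $\langle r_j\rangle^*$, forcing $r_i=r_j$; but then ${\bf B}_i\cong\langle r_i\rangle^*=\langle r_j\rangle^*\cong{\bf B}_j$ share the same coatom-skeleton by Corollary \ref{determined}(2), contradicting condition (2) unless $i=j$, while within a block $\lambda_i$ is already injective.

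The crux — and the step I expect to be the real obstacle — is verifying that $h$ preserves the order, equivalently that it is a $\vee$-homomorphism. The realizations $r_i$ are rigidly determined by the skeletal equations, so nothing in the component-wise construction a priori guarantees that $a>a'$ in ${\bf A}$ entails $h(a)>h(a')$ when $a,a'$ lie in different blocks; the real content of the theorem is that the $IG^\star$-axioms already rule this out. The argument I would give exploits that $\star$ is strictly order-preserving on positive elements (axiom $(\star4)$), injective there (axiom $(\star5)$) and strictly below the identity on the positive cone (axiom $(\star3)$), while $\msim$ is order-reversing and $(\star1)$–$(\star2)$ pin down the sign behaviour and the fixpoint. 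Consequently, running the procedure ${\tt P}$ simultaneously on two positive elements $a>a'$ transforms them by the same sequence of operations while preserving their strict order (up to the order-reversals contributed by $\msim$), in exactly the way the operations $\ast,\msim$ act on $h(a)>h(a')$ inside $[0,1]_{MV}^*$. Tracking both pairs until they reach the pinned core elements then reduces the comparison of $h(a)$ and $h(a')$ to one already decided inside a single $\langle r_i\rangle^*$, where order is manifestly preserved. Establishing this propagation carefully — that the abstract chain order coincides with the order induced by the forced rational realizations — is the heart of the proof; once it is in place, $h$ is an order-embedding commuting with $\ast$ and $\msim$, hence an embedding of the implication-free reduct of ${\bf A}$ into $[0,1]_{MV}^*$, and ${\bf A}$ is representable.
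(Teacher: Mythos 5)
Your proof follows essentially the same route as the paper's: the only-if direction via Proposition \ref{non-periodical} and Corollary \ref{determined}/Proposition \ref{partition}(2), and the if direction by gluing the per-block partial embeddings of the $B_i^+$ supplied by Proposition \ref{Newpartition} into a single $\bL^*_{k+1}$ obtained as an lcm of the $n_{B}$'s, with condition (2) guaranteeing the blocks land in distinct strictly simple cores. The order-preservation step you single out as the crux is in fact passed over in silence by the paper's own (very terse) proof, which simply asserts that the glued map $\lambda$ ``determines an embedding,'' so your scruple flags a verification the published argument leaves implicit rather than a divergence from its method.
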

\begin{proof}
	Left-to-right. If ${\bf A}$ is representable, then it is (isomorphic to) a subalgebra of $[0,1]^*_{MV}$. Therefore, (1) and (2) immediately follow from Proposition \ref{non-periodical} and Proposition \ref{partition} (2) respectively.% Thus, for every strictly simple subalgebra ${\bf B}$, $B^+$ partially embeds into $\L^*_k$ restricting the scope of $\lambda$ to the partial subalgebra $B^+$. Thus (1) follows from  Proposition \ref{non-periodical}.  As for (2) assume that there are two strictly simple subalgebras ${\bf B}$ and ${\bf C}$ of ${\bf A}$ such that $Sk(b, {\bf B})= Sk(c, {\bf C})$ for some positive $b\in B\setminus\{1\}$ and $c\in C\setminus\{1\}$. Then, $\lambda$ would map $B^+$ and $C^+$ in the same subset of $\L_k^*$ but since $B^+\neq C^+$, $\lambda$ would not be injective and this contradicts the hypothesis. 
	\vspace{.2cm}
	
	\noindent Right-to-left. Assume (1) and (2) hold. (1) implies, by Proposition \ref{Newpartition} that, for each strictly simple subalgebra ${\bf B}$ of ${\bf A}$ the partial algebra $B^+$ partially embeds into the rational subalgebra of $[0,1]_{MV}^*$ and hence it embeds into an $\bL^*_{n_B+1}$ for some natural number $n_B$. Moreover, (2) implies that for two different strictly simple subalgebras ${\bf B}$ and ${\bf C}$ of ${\bf A}$, $B^+$ and $C^+$ do not partially embed into the same $\bL^*_{n+1}$. In other words, for every strictly simple subalgebra ${\bf B}$ of ${\bf A}$ there exists a unique $n_B$ and a unique partial embedding $\lambda_B$ of $B^+$ into $\L^*_{n_B+1}$. Let $k={\rm lcm}\{n_B\mid {\bf B}\mbox{ is a strictly simple subalgebra of }{\bf A}\}$. Thus, each $B^+$ partially embeds into $\bL^*_{k+1}$ by the same map $\lambda$ which, adding $\lambda(0)=0$ and $\lambda(1)=1$ determines and embedding of ${\bf A}$  into $\bL^*_{k+1}$.
\end{proof}
A direct inspection on the proof of Theorem \ref{thmRepresentable} above suggests that points 1 and 2 of its statement can be equationally described. Indeed, in the following result, we will prove that for every $n$, representable $IG^\star_{n+1}$-algebras form a proper subvariety of $\mathbb{IG}^\star_{n+1}$.

In order to see it consider, for all $n\in \mathbb{N}$, for all
sk-sequences $R=[o_1,\ldots, o_t]$ and for all natural numbers $r$ such that $rt\leq n+1$, the following equations: %\blue{** no falta poner  $ ... = 1$ en cada expresion? **}  
%\begin{itemize}
%\item[$(R1n)$] $\Delta(x = 0) \vee\Delta(x = 1) \vee \Delta(x = y) \vee [\Delta(f_{[n_1,\ldots, n_k]} (x) = y) \To_G \neg \Delta(f_{(r-1)[n_1,\ldots,n_k]} (y) = x)]$;
%\item[$(R2n)$] $\Delta(x = 0) \vee\Delta(x = 1) \vee \Delta(y = 0) \vee\Delta(y = 1) \vee [(\Delta (f_{[n_1,\ldots,n_k]} (x)= x) \wedge (f_{[n_1,\ldots,n_k]}(y)= y)) \To_G \Delta(x = y)]$.
%\end{itemize}

%\begin{itemize}
%\item[$(R1n)$] $(x \TTo_G 0) \vee (x \TTo_G 1) \vee (x \TTo_G y) \vee $ \\ $ [\Delta(f_{[n_1,\ldots, n_k]} (x) \TTo_G y) \To_G \neg \Delta(f_{(r-1)[n_1,\ldots,n_k]} (y) \TTo_G x)] = 1$;
%\item[$(R2n)$] $(x \TTo_G 0) \vee (x \TTo_G 1) \vee (y \TTo_G 0) \vee (y \TTo_G 1) \vee $ \\ $ [(\Delta (f_{[n_1,\ldots,n_k]} (x) \TTo_G x) \wedge (f_{[n_1,\ldots,n_k]}(y) \TTo_G y)) \To_G \Delta(x \TTo_G y)] = 1$.
%\end{itemize}
%
%\blue{** se pueden simplificar a: **

\begin{itemize}
	\item[$(R1n)$] $\ninv x \vee x \vee (x \TTo_G y)\; \vee [\Delta(f_{R} (x) \TTo_G y) \To_G \ninv\Delta(f_{(r-1)R} (y) \TTo_G x)] = 1$;
	%$ [\Delta(f_{[n_1,\ldots, n_k]} (x) \TTo_G y) \To_G \neg \Delta(f_{(r-1)[n_1,\ldots,n_k]} (y) \TTo_G x)] = 1$;
	\item[$(R2n)$] $\ninv x  \vee x \vee \ninv y  \vee  y \; \vee  [(\Delta (f_{R} (x) \TTo_G x) \wedge (f_{R}(y) \TTo_G y)) \To_G \Delta(x \TTo_G y)] = 1$.
	
	%$ [(\Delta (f_{[n_1,\ldots,n_k]} (x) \TTo_G x) \wedge (f_{[n_1,\ldots,n_k]}(y) \TTo_G y)) \To_G \Delta(x \TTo_G y)] = 1$.
\end{itemize}

\begin{theorem}\label{axiomatic}
	Let ${\bf A}$ be a finite $IG^\star$-algebra such that its $G$-reduct belongs to $\mathbb{G}(n+1)$. Then ${\bf A}$ is representable iff, for all
	sk-sequences $S=[o_1,\ldots, o_k]$ and for all natural numbers $r$ such that $rk\leq n+1$, ${\bf A}$ satisfies $(R1n)$ and $(R2n)$.
\end{theorem}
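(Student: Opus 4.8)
The plan is to prove the equivalence by reducing everything to chains and then reading the two equation schemas as the two clauses of the characterisation in Theorem~\ref{thmRepresentable}. Since $\mathbb{IG}^\star$ is prelinear, ${\bf A}$ is a subdirect product of $IG^\star$-chains; as the schemas $(R1n)$, $(R2n)$ are equations they hold in ${\bf A}$ iff they hold in each chain factor, and representability is likewise a chain-wise property by Theorem~\ref{thmRepresentable}. Hence it suffices to argue for a finite $IG^\star$-chain ${\bf A}$ with $|A|\le n+1$. On such a chain every $\Delta$-term takes values in $\{0,1\}$; for $u,v\in\{0,1\}$ one has $u\To_G v=1$ unless $u=1,v=0$; $\ninv$ swaps $0$ and $1$; and $u\vee w=1$ iff $u=1$ or $w=1$. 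Using this I would first compute the truth value of each schema on ${\bf A}$.

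For $(R1n)$: the disjuncts $\ninv x\vee x\vee(x\TTo_G y)$ equal $1$ exactly when $x\in\{0,1\}$ or $x=y$, while the bracketed Gödel implication equals $0$ precisely when $f_R(x)=y$ and $f_{(r-1)R}(y)=x$, and equals $1$ otherwise. Hence $(R1n)$ fails in ${\bf A}$ iff there are $x,y$ with $0<x<1$, $x\ne y$, $f_R(x)=y$ and $f_{(r-1)R}(y)=x$; equivalently $f_{(r)R}(x)=x$ while $f_R(x)\ne x$. The analogous computation shows $(R2n)$ fails iff the equation $f_R(z)=z$ has two distinct solutions $x\ne y$ in $(0,1)$. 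Thus, ranging over all sk-sequences and all $r$ with $rk\le n+1$, $(R1n)$ is a \emph{no periodic cycle} statement and $(R2n)$ a \emph{no repeated skeleton} statement, matching clauses (1) and (2) of Theorem~\ref{thmRepresentable}.

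Then I would prove the two directions. For representability $\Rightarrow$ equations, I embed ${\bf A}$ into $[0,1]_{MV}^*$ and invoke Proposition~\ref{propSolutions}: for every sk-sequence $S$ the equation $f_S(z)=z$ has a \emph{unique} solution in $(0,1)$. If $(R2n)$ failed we would obtain two distinct fixpoints of $f_R$, contradicting uniqueness; if $(R1n)$ failed, then both $\bar x=f_{(r)R}(\bar x)$ and $\bar y=f_R(\bar x)$ would be fixpoints of $f_{(r)R}$ (the string $(r)R$ is again a sk-sequence after merging adjacent $\ast$-blocks), again contradicting uniqueness since $\bar x\ne\bar y$. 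For the converse I argue contrapositively: if ${\bf A}$ is not representable then by Theorem~\ref{thmRepresentable} either some strictly simple ${\bf B}\subseteq{\bf A}$ has a periodic skeleton $Sk({\bf B},b)=(r)R$ with $r\ge2$, whence $f_R(b)\ne b$ (else ${\tt P}$ would have stopped at length $k$) and $f_{(r-1)R}(f_R(b))=b$, so $(R1n)$ fails for this $R,r$ with $rk\le|B|\le n+1$; or two distinct strictly simple subalgebras share a skeleton $R$, giving two positive fixpoints $b\ne c$ of $f_R$, so $(R2n)$ fails for this $R$ with $k\le n+1$.

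The step I expect to be the main obstacle is the bookkeeping that turns an abstract relation ``$f_{(r)R}(x)=x$'' in a general $IG^\star$-chain into a genuine periodic \emph{skeleton} in the sense of the procedure ${\tt P}$. The point is that, using $(\star2)$ (so that $\star$ of a non-positive element is $0$) together with $\star 0=0$, $\star 1=1$ and $\ninv$ swapping $0,1$, any orbit returning to an $x\in(0,1)$ cannot meet $\{0,1\}$, which forces the symbol string $(r)R$ to coincide with the canonical choices made by ${\tt P}$ along that orbit; the minimal skeleton $\sigma$ of $x$ then satisfies $(r)R=(s)\sigma$ as words, and a Fine--Wilf type combinatorics-on-words argument (using $f_R(x)\ne x$) shows $\sigma$ is itself periodic, so clause (1) genuinely fails. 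The corresponding matching of sk-sequences for clause (2), and the verification that the minimal period of a periodic sk-sequence is again a sk-sequence, are the remaining technical points; all of them rely on Proposition~\ref{Newpartition} to organise the positive elements of ${\bf A}$ into the sets $B_i^+$ attached to its strictly simple subalgebras.
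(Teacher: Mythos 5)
Your proposal is correct and, on one of the two directions, takes a genuinely different and arguably cleaner route than the paper. The direction ``equations imply representable'' you prove contrapositively exactly as the paper does: a violation of clause (1) or (2) of Theorem~\ref{thmRepresentable} yields explicit witnesses against $(R1n)$ resp.\ $(R2n)$ (your witnesses $x=b$, $y=f_R(b)$ with $Sk({\bf B},b)=(r)R$, and $b\neq c$ with a shared skeleton, are the paper's, and your observation that $f_R(b)\neq b$ by minimality of the first repetition, with $rk$ bounded by the cycle length $\leq |A|-2\leq n+1$, fills in details the paper states without comment). Where you diverge is ``representable implies equations'': the paper proves its contrapositive inside the abstract chain, asserting that $f_{(r)R}(x)=x$ ``means'' that $\langle x\rangle^\star$ has a periodic skeleton --- precisely the step you flag as the main obstacle, and which the paper leaves unargued (one must check that a word-orbit avoiding $\{0,1\}$ follows the canonical choices of the procedure ${\tt P}$, via $(\star2)$, and that the \emph{minimal} skeleton of $x$ is itself periodic, e.g.\ by your Fine--Wilf argument using $f_R(x)\neq x$; the paper's treatment of a failing $(R2n)$ has a parallel unaddressed case, namely when $b$ and $c$ lie on the same cycle and generate the \emph{same} strictly simple subalgebra). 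Your architecture sidesteps all of this: since your semantic unwinding reduces failure of the schemas to equalities among $\{*,\ninv\}$-terms on elements of $(0,1)$, these transfer along the implication-free embedding into $[0,1]^*_{MV}$, and both failure patterns are killed at once by the uniqueness of the interior fixpoint in Proposition~\ref{propSolutions}(i), noting that $(r)R$ is again a sk-sequence and that $f_R(\bar x)$ is a second fixpoint of $f_{(r)R}$. Consequently your closing paragraph on canonical bookkeeping is not actually needed in \emph{your} decomposition --- it is the repair the paper's route requires, not yours --- though it is valuable that you can supply it. One small caveat: the opening subdirect reduction is either unnecessary or circular as stated, since representability (an embedding of the implication-free reduct, including $\vee$, into the chain $[0,1]^*_{MV}$) forces ${\bf A}$ to be linearly ordered; the theorem, like the paper's own proof invoking Theorem~\ref{thmRepresentable}, should simply be read for finite $IG^\star$-chains.
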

\begin{proof}
	(Left-to-right). Assume ${\bf A}$ is not representable. Then, by Theorem \ref{thmRepresentable}, either: (1) ${\bf A}$ has a strictly simple subalgebra ${\bf B}$ such that $Sk({\bf B},b)$ is periodic, for a positive $b\in B\setminus\{1\}$, or (2) ${\bf A}$ has two strictly simple subalgebras ${\bf B}$ and ${\bf C}$ with positive elements $b\in B\setminus\{1\}$ and $c\in C\setminus\{1\}$ such that $Sk({\bf B}, b)=Sk({\bf C},c)$. 
	
	Assume  that (1) is the case and let $S=[o_1,\ldots, o_k]$ be the periodic skeleton of $b$ in ${\bf B}$. Then there is an initial non-periodic sk-subsequence $R=[o_1,\ldots, o_t]$ of $Sk(b, {\bf B})$ and a natural number $r$ such that $[o_1,\ldots, o_k]$ is the repetition $r$-times of $[o_1,\ldots, o_t]$, that is, $S=(r)R $. Call $c=f_{R}(b)$. Thus we have that $\msim b < 1$, $b < 1$ and $b\TTo_G c < 1$ %$\Delta(b=0)$, $\Delta(b = 1)$, $\Delta(b=c)$. 
	On the other hand $\Delta(f_{R}(b)\TTo_Gc) = 1$ holds by definition of $c$, and also $\Delta(f_{(r-1)R}(c)\TTo_G b) = 1$  holds because $(r)[o_1,\ldots, o_t]=[o_1,\ldots, o_k]$ is the skeleton of $b$. Thus, $\neg\Delta(f_{(r-1)R}(c)\TTo_Gb) = 0$  and hence $(R1n)$ is not satisfied. 
	
	Hence, assume that (2) is the case. Since ${\bf B}$ and ${\bf C}$ are both strictly simple, $B{\setminus}\{0,1\}\cap C{\setminus}\{0,1\}=\emptyset$. Take positive elements $b\in B\setminus\{1\}$ and $c\in C\setminus\{1\}$. By hypothesis $Sk({\bf B},b)=Sk({\bf C},c)=S=[o_1,\ldots, o_k]$. Then one has $\Delta((f_{S}(b)\TTo_Gb)\wedge (f_{S}(c)\TTo_Gc)) = 1$ while $\Delta(b \TTo_G c) = 0$. This shows that $(R2n)$ fails as well.
	\vspace{.05cm}
	
	(Right-to-left). Let us assume that there exists a non-periodic sk-sequence $S=[o_1,\ldots,o_k]$ and a natural number $r$ such that $rk\leq n$ and either $(R1n)$ fails or $(R2n)$ fails.
	
	If $(R1n)$ fails, then there exist $x,y\in A$ different from $0$ and $1$ such that  $x\neq y$, $f_{R}(x)\TTo_Gy=1$ and $f_{(r-1)R}(y)\TTo_Gx=1$. Thus $f_{(r)R}(x)\TTo_Gx=1$, meaning that the subalgebra $\langle x\rangle^\star$ generated by $x$ has a periodic skeleton. %, while $\langle x\rangle^\star$ is strictly simple.
	Thus ${\bf A}$ is not representable by Theorem \ref{thmRepresentable}.
	
	If $(R2n)$ fails, then there are two distinct positive elements $b, c\in A\setminus\{1\}$ having the same skeleton. The strictly simple subalgebras $\langle b\rangle^\star$ and $\langle c\rangle^\star$ of ${\bf A}$ witness the fact that ${\bf A}$ is not representable again by Theorem \ref{thmRepresentable}.
\end{proof}

\begin{remark}
	(1) As it was observed after the Proposition \ref{prop3}, in any finite RIG$^\star$-chain $\bf A$  it is possible to define the operators $\Delta_x$ for every $x \in A$. This implies that the axiomatization of the variety generated by the chain $\bL_{n+1}^*$ given in Definition \ref{def:GInvStarAlgebras} and the proof of Theorem \ref{thm:GenerateLambda}  can be easily generalized to axiomatize the variety generated by a single finite RIG$^\star$-chain.
	\vspace{.2cm}
	
	\noindent(2) Theorem \ref{axiomatic} gives an axiomatization of the variety generated by the representable $IG^\star$-chains whose length is less or equal to $n+1$. This is the axiomatization of a variety generated by a finite family of chains, very different from the axiomatization in Definition \ref{calLin} that gives the axiomatization of the variety generated by a single  $RIG^\star$-chain. 
\end{remark}

Now we know that the answer to the question posed after Definition \ref{def:Ginvstar} is negative and we reformulate the question as whether every countable $RIG^\star$-chain embeds into the algebra $[0,1]^*_{GMV}$.
In the next result we will denote by $\mathbb{RIG}^\star$ the variety generated by the finite representable $\mathbb{IG}^\star$-chains while $\mathbb{V}^\star$ will denote the variety generated by $[0,1]^*_{GMV}$.
%\begin{definition}
%\red{By $\mathbb{RIG}^\star$ we indicate the variety generated by the algebra $[0,1]^*_{GMV}$ defined as in (\ref{eq:GMV}).}
%%The variety $\mathbb{RIG}^\star$ is generated by the standard algebra $[0,1]_{GMV}^\ast$ with G\"odel implication. 
%%\blue{Equivalently it is the variety generated by the chains $[0,1]_{GMV}^*$ whose G\"odel implication-free reduct is embeddable in $[0,1]_{MV}^\ast$. ??}
%\end{definition}

\begin{proposition} The following statements are valid:
	\begin{enumerate}
		\item The variety $\mathbb{V}^\star$ has the finite model property.
		\item  The varieties $\mathbb{V}^\star$ and $\mathbb{RIG}^\star$ coincide
		%The unique $RIG^\star$-algebra over $[0,1]$ is the expansion with G\"odel implication of  $[0,1]_{MV}^*$ (up to isomorphisms), i.e., the variety $\mathbb{RIG}^\star$ is generated by $[0,1]_{GMV}^\ast$.
		\item The variety $\mathbb{V}^\star$ is axiomatized by the axioms of $\mathbb{IG}^\star$ plus the infinite set of axioms (R1n) and (R2n) for every $n \geq 2$.
	\end{enumerate}
\end{proposition}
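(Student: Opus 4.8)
The plan is to prove the three statements in a chained way, with a single finite-approximation lemma as the technical core. Since $[0,1]^*_{GMV}$ is an $IG^\star$-chain and $\mathbb{IG}^\star$ is prelinear, the variety $\mathbb{V}^\star$ is semilinear, so an equation holds in $\mathbb{V}^\star$ iff it holds in $[0,1]^*_{GMV}$, and every failure is witnessed at a single assignment of the finitely many variables to reals in $[0,1]$. First I would isolate the \emph{key lemma}: every equation that fails in $[0,1]^*_{GMV}$ already fails in a finite subalgebra of $[0,1]^*_{GMV}$. Granting this, Proposition~\ref{prop:generated} identifies each such finite subalgebra as a subalgebra of some $\bL^*_{n+1}$, hence a finite subchain of $[0,1]_{MV}^*$; since such a subchain is order preserving and $\Rightarrow_G$ is determined by the order, it is a representable $IG^\star$-chain. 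This immediately yields statement (1), the finite model property, and also shows $[0,1]^*_{GMV}\in\mathbb{RIG}^\star$.

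To prove the key lemma I would use two observations. Gödel implication applied to elements of $[0,1]^*_{GMV}$ returns either $1$ or one of its arguments, so for a finite $X\subseteq[0,1]$ the generated subalgebra coincides with the $\{\ast,\msim\}$-subalgebra generated by $X\cup\{0,1\}$, which by Proposition~\ref{prop:generated}(1) is finite when $X$ consists of rationals. Thus it suffices to transfer a real witness to a rational one. Reading the two sides $s,t$ of the failing equation as functions $[0,1]^k\to[0,1]$, they are piecewise affine with rational coefficients: the pieces are the cells of a finite partition of $[0,1]^k$ cut out by linear (in)equalities with rational data, the thresholds coming from $\ast$ and $\msim$ and the comparisons resolving each occurrence of $\Rightarrow_G$. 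A witness $\bar r$ with $s(\bar r)\neq t(\bar r)$ lies in the cell consisting of all points sharing its complete comparison pattern; on that cell $s-t$ is affine and nonzero at $\bar r$. As the cell is a rational polyhedron, its rational points are dense near $\bar r$, they preserve the pattern defining the cell, and they avoid the proper affine subspace $\{s=t\}$, so some rational $\bar q$ in the cell satisfies $s(\bar q)\neq t(\bar q)$. Evaluating at $\bar q$ happens inside the finite (hence representable) subalgebra generated by $\bar q$, completing the lemma. This rational transfer, and in particular the faithful preservation of every $\Rightarrow_G$-branch under perturbation, is the first delicate point.

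For (2), the inclusion $\mathbb{RIG}^\star\subseteq\mathbb{V}^\star$ holds because every finite representable chain embeds, by definition, into $[0,1]_{MV}^*$, and this order-preserving embedding respects $\Rightarrow_G$, so it embeds into $[0,1]^*_{GMV}\in\mathbb{V}^\star$; the reverse inclusion is exactly $[0,1]^*_{GMV}\in\mathbb{RIG}^\star$, supplied by the key lemma, giving $\mathbb{V}^\star=\mathbb{RIG}^\star$. For (3), write $\mathbb{W}$ for the variety axiomatized by the equations of $\mathbb{IG}^\star$ together with all $(R1n)$ and $(R2n)$. By the left-to-right direction of Theorem~\ref{axiomatic}, every finite representable chain satisfies all these axioms (instances involving sk-sequences too long to be skeletons in a short chain holding vacuously), so $\mathbb{RIG}^\star=\mathbb{V}^\star\subseteq\mathbb{W}$. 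For the converse, $\mathbb{W}$ is again semilinear, being an axiomatic extension of the prelinear $\mathbb{IG}^\star$, hence determined by its chains; its finite chains satisfy the axioms and, by the other direction of Theorem~\ref{axiomatic}, are representable, so they lie in $\mathbb{RIG}^\star$.

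The hard part will be ruling out infinite $\mathbb{W}$-chains outside $\mathbb{RIG}^\star$, i.e.\ establishing a finite model property for $\mathbb{W}$ itself; note $\mathbb{W}$ is not locally finite, since $(\star3)$ allows $\star$ to generate an infinite strictly decreasing sequence of positive elements. I would close this gap with a finite embeddability argument: given a failure in a $\mathbb{W}$-chain, gather the finitely many subterm values at the witnessing assignment into a finite partial subalgebra and show, using the skeleton analysis and the partition/partial-embedding machinery of Theorem~\ref{thmRepresentable} and Proposition~\ref{Newpartition}, that this partial structure embeds into a finite representable chain, which then refutes the equation inside $\mathbb{RIG}^\star$. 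This reduces every $\mathbb{W}$-failure to a finite representable chain, yielding $\mathbb{W}\subseteq\mathbb{RIG}^\star$ and hence $\mathbb{V}^\star=\mathbb{RIG}^\star=\mathbb{W}$. Thus the two genuine obstacles are the rational-cell transfer in the key lemma and the completion of a finite partial $\mathbb{W}$-structure to a finite representable chain in the finite embeddability step; everything else is bookkeeping on top of the results already proved in this section.
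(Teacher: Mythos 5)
For items (1) and (2) your route is essentially the paper's: the paper also proves the finite model property by perturbing a real counter-evaluation on $[0,1]^*_{GMV}$ to a rational one and invoking Proposition~\ref{prop:generated} to land in a finite (hence representable) subalgebra, and it also gets (2) immediately from the fact that both $\mathbb{V}^\star$ and $\mathbb{RIG}^\star$ are then generated by the finite subalgebras of $[0,1]^*_{GMV}$. Where you differ is in granularity: the paper dispatches the rational-approximation step in one line (``a good approximation of $e$''), whereas you spell it out via the piecewise-affine cell decomposition with rational data and density of rational points in rational polyhedra, taking care that the perturbed point preserves every $\Rightarrow_G$-comparison branch. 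That fleshed-out argument is correct and is in fact what the paper's one-liner silently requires, so on (1)--(2) your proposal is a sound and more detailed version of the published proof.

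On (3), however, your proposal contains a genuine unfinished step, which you yourself flag: having set $\mathbb{W}$ to be the variety axiomatized by the $\mathbb{IG}^\star$ equations plus all $(R1n)$, $(R2n)$, the inclusion $\mathbb{W}\subseteq\mathbb{RIG}^\star$ needs every \emph{chain} of $\mathbb{W}$, not just every finite one, to satisfy the equations of $\mathbb{RIG}^\star$; Theorem~\ref{axiomatic} only characterizes representability among \emph{finite} $IG^\star$-chains. Your proposed repair --- extract the finite partial subalgebra of subterm values from an infinite $\mathbb{W}$-chain and complete it to a finite representable chain --- is not supported by the tools you cite: the skeleton analysis and Proposition~\ref{Newpartition} rest on the procedure ${\tt P}$ terminating and on elements generating finite subalgebras, and in an infinite $\mathbb{W}$-chain an element may generate an infinite subalgebra (as you note, $(\star3)$ permits infinite strictly decreasing $\star$-orbits), so no skeleton exists and the partition machinery does not apply. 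Thus the finite embeddability claim is an open lemma, not bookkeeping. It is worth saying that the paper does not resolve this either: its proof of (3) is the single sentence that it ``is a direct consequence of (2)'', which implicitly compares $\mathbb{W}$ and $\mathbb{V}^\star$ only on finite chains, i.e.\ it tacitly assumes $\mathbb{W}$ is generated by its finite chains --- precisely the finite model property for $\mathbb{W}$ that you correctly identify as the missing ingredient (and which resonates with the open problem stated in the paper's conclusions). So your diagnosis is sharper than the published argument, but as a proof of (3) your proposal, like the paper's, is incomplete at exactly this point.
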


\begin{proof}
	%The first statement is obvious from the definition.
	To prove (1), suppose that $\varphi$ is not a tautology in $\mathbb{V}^\star$. Then there is an evaluation $e$ to the  chain $[0,1]^*_{GMV}$ such that $e(\varphi) < 1$. Then there is also a rational evaluation $v$ (that is a good approximation of $e$) such that $v(\varphi) < 1$ and for all propositional variable $p$ appearing in $\varphi$, $e(p)$ is rational. Since the subalgebra generated by the set of values $\{v(p)$:  $p$ is a propositional variable appearing in $\varphi\} \subseteq [0, 1]$ is finite, $\varphi$ is not valid in a finite $RIG^\star$-chain.
	
	On the other hand, (2) is immediate from (1) since both varieties are generated by finite subalgebras of $[0,1]^*_{GMV}$ which are the representable $\mathbb{IG}^\star$-chains.
	
	Finally, (3) is a direct consequence of (2).
\end{proof}

\section{Conclusions and final remarks} \label{sec:conclusions}

In this paper we have been concerned with the logical and algebraic  analysis of the reduct of finite-valued {\L}ukasiewicz logics over the signature $(\lor, \land, \neg, * )$, where $*$ represents the square operator $* x = x \odot x$, with $\odot$ being {\L}ukasiewicz strong conjunction.  Our main contributions are the following. 
First of all, we have characterized for which  $n$ of the corresponding structures $\L_{n+1}^*$, over the $(n+1)$-element domain $\{0, 1/n, \ldots, 1\}$, the {\L}ukasiewicz implication is definable, and thus for which $n$ the algebra $\L_{n+1}^*$  is term-equivalent to the MV-chain $\L_{n+1}$. 
Second, we have studied the matrix logics arising from the $\L_{n+1}^*$ structures with order filters. We have shown they are all algebraizable, we have described the resulting varieties of $\Lambda_{n+1}^*$-algebras that constitute their equivalent algebraic semantics, and provided a complete and uniform Hilbert-style axiomatisation in a suitable signature that enjoys nice logical properties. 
And third, we have considered an alternative approach to capture the behaviour of the square operator in algebraic structures obtained by adding a unary operator $\star$ to $n$-valued G\"odel chains with an involutive negation, and have identified the conditions under which they can be embedded into some $\L_m^*$.

%\begin{itemize}
%
%\item First of all we have characterised for which cardinalities $n$ of the corresponding structures $\L_n^*$ over the $n+1$-element domain $\{0, 1/n, \ldots, 1\}$ the {\L}ukasiewicz implication is definable, and thus for which $n$ the algebra $\L_n^*$  is term-equivalent to the MV-chain $\L_n$. 
%
%\item Second, we have studied the matrix logics arising from the $\L_n^*$ structures with order filters. We have shown they are all algebraizable, we have described the resulting varieties of $\Lambda_n^*$-algebras that constitute their equivalent algebraic semantics, and provided a complete and uniform Hilbert-style axiomatisation in a suitable signature that enjoys nice logical properties. 
%
%\item Third, we have considered an alternative approach to capture the behaviour of the square operator in algebraic structures obtained by adding a unary operator $\star$ to $n$-valued G\"odel chains with an involutive negation, and have identified the conditions under which they can be embedded into some $\L_m^*$. 
%
%\end{itemize}

At this point we would like to make a couple of additional remarks we deem interesting to highlight. 
%
%\begin{itemize}
%
%%\item \blue{The square operator $\ast$ in the logics $\L_{n+1}$ or in $\L$ can be interpreted as a truth-stresser operator, in the sense of the class of truth-hedge operators axiomatically introduced by H\'ajek in~\cite{Ha01c} in the context of H\'ajek's Basic Fuzzy Logic BL to formalize the notion of `very true'. In fact, $*$ is a model of H\'ajek's truth-stresser operators for both \luk's and G\"odel fuzzy logics, as well as of the operators considered in a more general logical in the setting of MFL studied in \cite{EsGoNo:13}.  }
%
%\item 
%
An interesting question is whether the well-known relationship between the finite-valued logics $\L_n$ and the $[0, 1]$-valued logic $\L$  is preserved  between the logics $\Lambda_n^*$ and their corresponding $[0, 1]$-valued version $\Lambda^*$.  It is well-known that, with respect to their finitary consequence relations,  $\L$ is the intersection of all the finite-valued logics  $\L_n$, i.e. $\bigcap_n \L_n = \L$. It is not difficult to  check that this relationship extends to our setting as follows:

\begin{itemize}
	\item[-]  In the signature $(\neg, *, \lor)$, we  have that $\bigcap_n \Lambda_n^* = \Lambda^*$, where  $\Lambda^*$ is the matrix logic $\langle [0, 1]_{MV}^*, \{1\}\rangle$ defined in the obvious way similarly as the logics $\Lambda_n^*$. 
	
	\item[-] In the expanded signature $(\neg, *, \lor, \Rightarrow_G )$, we also have $\bigcap_n \Lambda_n^* = \Lambda^{*,\Rightarrow}$, where now $\Lambda^{*,\Rightarrow} = \langle [0, 1]_{GMV}^*, \{1\}\rangle$. 
	
\end{itemize}
Another way to look at the relation $\bigcap_n \L_{n} = \L$ is that \luk\ logic is complete with respect to the whole class of finite MV-chains. From the results of Subsection \ref{subsec:primes}, we know that, if $n$ is a prime number in $\Pi$ (recall Definition \ref{def:campinas}), then the algebras $\bL_{n+1}$ and $\L^*_{n+1}$ are term equivalent. Thus, we rise the question of whether primes from $\Pi$ are enough to define a complete semantics for \L. In order words, we will study if \L\ is complete with respect to the set of finite chains $\bL^*_{n+1}$ where $n\in \Pi$. Clearly, in order to provide an answer to the question above, we would need first to prove if $\Pi$ is an infinite set or not. 

Our future work in this topic will also concern the variety $\mathbb{IG}^\star$ that we introduced in Subsection \ref{subsec:IGStar}. In particular we will investigate whether $\mathbb{IG}^\star$ can be generated by {\em standard algebras}, that is to say, by $IG^\star$-chains based on the real unit interval and if, moreover, $\mathbb{IG}^\star$ can be generated by its finite chains. The latter, then, would give the finite model property for its associated logic.

%\blue{
%\begin{itemize}
%
%\item 
%Assuming the conjecture that the set of Campinas prime numbers is infinite, then  $\bigcap_{n \in \Pi} \Lambda^*_n = \L_{[0, 1]}$?
%
%\item Open problems?
%
%\end{itemize}
%}

\subsection*{Acknowledgments}  Coniglio acknowledges support from  the  National Council for Scientific and Technological Development (CNPq), Brazil
under research grant 306530/2019-8.  Esteva, Flaminio and Godo acknowledge partial support by the Spanish project 
PID2019-111544GB-C21. Flaminio also acknowledges partial support by the Spanish Ram\'on y Cajal research program RYC-2016-19799. 

%See \cite[In Vol. 1 of \cite{Cintula-Hajek-Noguera:Handbook}]{Cintula-Noguera}

\addcontentsline{toc}{section}{References}


\begin{thebibliography}{99}

\bibitem{Baaz}
M. Baaz. Infinite-valued G\"odel logics with $0$-$1$-projections and relativizations. In 
G\"ODEL 96, LNL {\bf 6}, H\'ajek P. (Ed.), Springer-Velag, 23--33, 1996.

\bibitem{BP89}  W.J. Blok, D. Pigozzi. 
    Algebraizable Logics
    Mem. Amer. Math. Soc., vol. 396, Amer. Math. Soc., Providence, 1989.

\bibitem{BP97}  W.J. Blok, D. Pigozzi. Abstract algebraic logic and the deduction theorem, manuscript, 1997.  (See  \url{http://orion.math.iastate.edu/dpigozzi/} for the updated version, 2001).

\bibitem{Bou-EFGGTV:PreservingDegreesResiduated}
F. Bou, F. Esteva, J.~M. Font, A. Gil,
 L. Godo, A. Torrens, and V. Verd{\'{u}}.
\newblock Logics preserving degrees of truth from varieties of residuated
  lattices.
\newblock {\em Journal of Logic and Computation}, 19(6):1031--1069, 2009.


\bibitem{CDM}
R. Cignoli, I.M.L. D'Ottaviano, D. Mundici. 
{\em Algebraic Foundations of Many-valued Reasoning}. Kluwer, Dordrecht, 2000.

\bibitem{DBLP:journals/soco/CignoliEGT00}
Roberto Cignoli, Francesc Esteva, Llu{\'{\i}}s Godo, and Antoni Torrens.
\newblock Basic fuzzy logic is the logic of continuous t-norms and their
  residua.
\newblock {\em Soft Comput.}, 4(2):106--112, 2000.


\bibitem{Cintula-Hajek-Noguera:Handbook}
P. Cintula, C. Ferm\"uller, P. H{\'{a}}jek and C. Noguera, editors.
\newblock {\em Handbook of Mathematical Fuzzy Logic - Volumes 1, 2 and 3}. Series Studies in Logic, Mathematical logic and Foundations, Volumes 37, 38 and 58. 
%of {\em Studies in Logic, Mathematical Logic and Foundations}.
\newblock College Publications, London, 2011-2015.

\bibitem{CN2010} P. Cintula and C. Noguera. Implicational (semilinear) logics I: a new hierarchy. {\em Arch. Math. Logic}, 49:417-446, 2010. 

\bibitem{Cintula-Noguera} P. Cintula and C. Noguera. A general framework for mathematical fuzzy logic. In: P. Cintula, P. H\'ajek and C. Noguera (eds.) {\em Handbook of Mathematical Fuzzy Logic - Volume 1}. Series Studies in Logic, Mathematical logic and Foundations,  Volume 37. College Publications, London, pp 103-207, 2011. 

\bibitem{CEFG}
M. E. Coniglio, F. Esteva, T. Flaminio, L. Godo. On an implication-free reduct of $MV_n$ chains. Proceedings of LATD2018 - Logic, Algebra and Truth Degrees. Bern, Switzerland, 2018.

\bibitem{CEFG2}
M. E. Coniglio, F. Esteva, T. Flaminio, L. Godo. Prime numbers and implication free reducts of $MV_n$-chains. In N. Bezhanishvili and Y. Venema, (Eds.), {\em SYSMICS 2019: Syntax Meets Semantics - Book of Abstracts}, pp. 66--69. Institute for Logic, Language and Computation, University of Amsterdam, 2019.


\bibitem{CEGG}
M. E. Coniglio, F. Esteva, J. Gispert, L. Godo. Maximality in finite-valued \luk\ logics defined by order filters. {\em Journal of Logic and Computation}, vol. 29, no. 1, pp. 125-156, Jan. 2019, doi: 10.1093/logcom/exy032 

\bibitem{CEGG:Godel}
M. E. Coniglio, F. Esteva, J. Gispert, and L. Godo. 
Degree-preserving G\"odel logics with an
involution: intermediate logics and (ideal)
paraconsistency. In {\em Arnon Avron on Semantics and Proof Theory of Non Classical Logics}, O. Arieli and A. Zamansky (eds.), Vol. 21 of Outstanding Contributions to Logic, Springer, to appear. 

\bibitem{DiNEsGaGoSe:02} 
A. Di Nola, F. Esteva, P. Garcia, L. Godo, and S. Sessa.
Subvarieties of BL-algebras generated by single-component chains.
{\em Archive for Mathematical Logic} 41(7):673--685, 2002.

\bibitem{DiNGriVi:19}
A. Di Nola, R. Grigolia and G. Vitale. On the variety of G\"odel MV-algebras. {\em Soft Computing} 23(24):12929--12935, 2019.



\bibitem{EGHN00}
F. Esteva, L. Godo, P. H\'ajek, M. Navara. Residuated fuzzy logics with an involutive negation, {\em Archive for Mathematical Logic} 39: 103--124, 2000.

\bibitem{EsGoNo:13}
F. Esteva, L. Godo and C. Noguera.
A logical approach to fuzzy truth hedges.
{\em Information Sciences} 232:366--385, 2013.

\bibitem{FM06}
T. Flaminio, E. Marchioni. T-norm-based logics with an independent involutive negation. {\em Fuzzy Sets and Systems} 157: 3125--3144, 2006.

\bibitem{Font:DegreesSeriously}
J.~M. Font.
\newblock Taking degrees of truth seriously.
\newblock {\em Studia Logica}, 91(3):383--406, 2009.

\bibitem{Font-GTV:LogicPreservingDegrees}
J.~M. Font, A. Gil, A. Torrens, and V. Verd{\'{u}}.
\newblock On the infinite-valued {{\L}}ukasiewicz logic that preserves degrees of truth.
\newblock {\em Archive for Mathematical Logic}, 45(7):839--868, 2006.


\bibitem{GJKO}
N. Galatos, P. Jipsen, T. Kowalski, and H. Ono. {\em Residuated Lattices: an Algebraic Glimpse at Substructural Logics}. Vol. 151 of Studies in Logic and the Foundations of Mathematics. Elsevier, 2007.

\bibitem{Got01} S. Gottwald. {\em  A Treatise on Many-valued Logics}, Research Studies Press, UK, 2001. 

\bibitem{Grigolia}
R. Grigolia, Algebraic analysis of \luk-Tarski $n$-valued logical systems, in: R. W\'ojcicki, G. Malinowski (Eds.), {\em Selected Papers on \luk\ Sentencial Calculi}, Wroc{\l}aw, Polish Academy of Science, Ossolineum, pp. 81--91, 1977.

\bibitem{Ha98}
P.~H{\'a}jek.
\newblock {\em Metamathematics of fuzzy logic}, volume~4 of {\em Trends in
  Logic---Studia Logica Library}.
\newblock Kluwer Academic Publishers, Dordrecht, 1998.


\bibitem{Ha01c}
P. H\'{a}jek.
\newblock On very true.
\newblock {\em Fuzzy Sets and Systems}, 124(3):329--333, 2001.

\bibitem{HP2010}
P. Hilton, J. Pedersen. {\em
A Mathematical Tapestry: Demonstrating the Beautiful Unity of Mathematics}. Cambridge University Press, 2010.  

\bibitem{Lakoff}
G. Lakoff. Hedges: a study in meaning criteria and the logic of fuzzy concepts. {\em Journal of Philosophical Logic} 2(4):458--508, 1973.

\bibitem{Zadeh}
L.A. Zadeh. A fuzzy set theoretic interpretation of linguistic hedges. {\em Journal of Cybernetics} 2(3):4--34, 1972.


\end{thebibliography}
\end{document}